\documentclass[11pt,reqno,twoside]{article}

\usepackage[hang]{footmisc}
\usepackage{lipsum}

\usepackage{cmap} 

\usepackage[T1]{fontenc}
\usepackage[utf8]{inputenc}
\usepackage{graphicx}
\usepackage{placeins}
\usepackage{enumerate}
\usepackage{algcompatible}
\usepackage{subcaption}
\usepackage{booktabs}
\usepackage{dblfloatfix}  
\usepackage{verbatim}
\newcommand{\comments}[1]{}

\usepackage{comment}
\excludecomment{supplementaryexperiments}

\usepackage{soul}

\usepackage{setspace}
\usepackage{dblfloatfix}
\usepackage{subcaption}
\usepackage{graphicx}
\usepackage{caption}   
\usepackage{cuted}    
\usepackage{placeins}

\usepackage{lmodern} 
\usepackage[scale=0.88]{tgheros} 

\usepackage{bm} 

\usepackage{bbold}

\usepackage{pgfplots}

\usepackage{amsmath,amsbsy,amsgen,amscd,amsthm,amsfonts,amssymb} 
\allowdisplaybreaks[4]

\usepackage[centering,top=0.9in,bottom=0.9in,left=0.9in,right=0.9in]{geometry}

\usepackage{titling}
\usepackage{musicography}
\graphicspath{ {./images/} }

\usepackage[sf,bf,compact]{titlesec}

\titleformat{\subsubsection}[runin]{\bfseries\sffamily}{\thesubsubsection}{1em}{}

\usepackage[font=small,margin=20pt,labelfont={sf,bf},labelsep={space}]{caption}

\definecolor{dark-gray}{gray}{0.3}
\definecolor{dkgray}{rgb}{.4,.4,.4}
\definecolor{dkblue}{rgb}{0,0,.5}
\definecolor{medblue}{rgb}{0,0,.75}
\definecolor{rust}{rgb}{0.5,0.1,0.1}

\usepackage{url}
\usepackage[colorlinks=true]{hyperref}
\hypersetup{linkcolor=dkblue}    
\hypersetup{citecolor=rust}      
\hypersetup{urlcolor=rust}     

\usepackage[final]{microtype} 

\newtheoremstyle{myThm} 
    {\topsep}                    
    {\topsep}                    
    {\itshape}                   
    {}                           
    {\sffamily\bfseries}                   
    {.}                          
    {.5em}                       
    {}  

\newtheoremstyle{myRem} 
    {\topsep}                    
    {\topsep}                    
    {}                   
    {}                           
    {\sffamily}                   
    {.}                          
    {.5em}                       
    {}  

\newtheoremstyle{myDef} 
    {\topsep}                    
    {\topsep}                    
    {}                   
    {}                           
    {\sffamily\bfseries}                   
    {.}                          
    {.5em}                       
    {}  

\theoremstyle{myThm}
\newtheorem{theorem}{Theorem}[section]
\newtheorem{lemma}[theorem]{Lemma}
\newtheorem{proposition}[theorem]{Proposition}
\newtheorem{corollary}[theorem]{Corollary}

\newtheorem{assumption}[theorem]{Assumption}

\theoremstyle{myRem}
\newtheorem{remarkx}[theorem]{Remark}
 \newenvironment{remark}
  {\pushQED{\qed}\remarkx}
  {\popQED\endremarkx}

\theoremstyle{myDef}
\newtheorem{definition}[theorem]{Definition}
\theoremstyle{plain}
 \newtheorem{example}[theorem]{Example}

\usepackage{fancyhdr}
\let\originalleft\left
\let\originalright\right
\renewcommand{\left}{\mathopen{}\mathclose\bgroup\originalleft}
\renewcommand{\right}{\aftergroup\egroup\originalright}

\usepackage{mathtools}
\mathtoolsset{centercolon}  

\numberwithin{equation}{section} 

\definecolor{mygreen}{rgb}{0.1,0.75,0.2}

\newcommand{\nc}{\normalcolor}

\newcommand{\Id}{{I}} 

\providecommand{\mathbbm}{\mathbb} 

\newcommand{\R}{\mathbbm{R}}

\newcommand{\N}{\mathbbm{N}}

\usepackage[font = small, margin=30pt]{caption}

\usepackage[]{algorithm}
\usepackage{algpseudocode}
\usepackage{enumerate}

\usepackage{authblk}
\usepackage[square,numbers]{natbib}
\usepackage{chngcntr}
\usepackage{mathrsfs}
\counterwithin{table}{section}
\counterwithin{algorithm}{section}

\newcommand{\E}{\mathbb{E}}

\usepackage{multirow}
\usepackage{enumitem}

\newcommand{\Tr}{\operatorname{Tr}}

\usepackage[scr = esstix, cal = cm, frak=euler]{mathalfa}

\definecolor{mygreen}{rgb}{0.1,0.75,0.2}

\newcommand{\1}{\boldsymbol{1}}

\usepackage{cleveref}
\usepackage[scr = esstix, cal = cm, frak=euler]{mathalfa}

\newcommand{\cB}{\mathcal{B}}
\newcommand{\Lip}{\mathrm{Lip}}
\newcommand{\dtint}{\Delta t_{\mathrm{int}}}
\DeclareMathOperator{\Pdim}{Pdim}

\usepackage[normalem]{ulem}

\usepackage{authblk}
\title{\Huge 
Continuous Data Assimilation \\ with Learned Surrogate Dynamics} 
\author{\LARGE Wenwen Li}
\author{\LARGE Daniel Sanz-Alonso}
\affil{\Large  University of Chicago, USA}
\vspace{-4cm} 
\date{ }

\makeatletter\@addtoreset{section}{part}\makeatother%

\newcommand{\upperRomannumeral}[1]{\uppercase\expandafter{\romannumeral#1}}

\renewcommand{\hat}{\widehat}
\newcommand{\normH}[1]{\left\lVert #1\right\rVert_{H}}
\newcommand{\normV}[1]{\left\lVert #1\right\rVert_{V}}
\newcommand{\ip}[2]{\left\langle #1,#2\right\rangle}  

\providecommand{\keywords}[1]{\textbf{{Keywords:}} #1}

\begin{document}

\maketitle 
\vspace{-1.2cm}
\abstract{  
Continuous data assimilation seeks to estimate the state of a dynamical system from partial observations. In many applications, however, the state dynamics are unknown or prohibitively expensive to simulate at the required resolution, leading to model error. Motivated by this challenge and the increasing adoption of machine learning surrogates in data assimilation, this paper develops a unified finite-dimensional analysis of nudging algorithms that employ learned surrogate models of the dynamics. 
We first establish general conditions on the dynamics and observations that guarantee accurate tracking for nudging with the true dynamics model, both in the noise-free and noisy settings.
We then show that 
nudging algorithms that employ surrogate models retain exponential convergence up to an explicit error floor that quantifies the effects of surrogate approximation error and observation noise. 
Finally, we analyze surrogate models obtained by learning either the vector field or the short-time solution map of the system, and quantify the amount of training data needed to ensure accurate nudging in the noise-free setting. 
Numerical experiments support the theory. 
}

\keywords{Continuous data assimilation, nudging, surrogate models, machine learning}
\section{Introduction}
\label{sec:intro}

Consider a dynamical system \begin{equation}\label{eq:truthintro}
\dot u = F(u),
\qquad
u(0)=u_0,
\end{equation}
with unknown initial condition $u_0.$ \emph{Continuous data assimilation} (CDA) is concerned with estimating, or \emph{tracking}, the state $u(t)$ from observations \(\{I_hu(s)\}_{0 \le s \le t}\) where $I_h$ is a given observation map. 

Nudging algorithms play a central role in CDA due to their simplicity, wide applicability, and strong theoretical guarantees. The idea is to initialize an assimilating trajectory at an arbitrary state
\(v_0\) and define the nudged system
\begin{equation}\label{eq:aotintro}
\dot v
=
F(v)-\mu\bigl(I_hv-I_hu\bigr),
\qquad
v(0)=v_0,
\end{equation}
where $\mu>0$ is a nudging parameter. The particular
feedback-based nudged dynamics in \eqref{eq:aotintro} belongs to the class of Azouani--Olson--Titi (AOT)  algorithms, in which the feedback is imposed through general observables \cite{AOT2013}.
Under suitable assumptions on the dynamics and observations, the systems \eqref{eq:truthintro} and \eqref{eq:aotintro} synchronize: the nudged trajectory $\{v(t)\}_{t \ge 0}$ converges to the true trajectory $\{u(t)\}_{t \ge 0}$ exponentially fast.

Motivated by applications in which the dynamics are unknown or expensive to simulate, this paper studies nudging algorithms of the form
\begin{equation}\label{eq:sur-aotintro}
\dot v
=
F_M(v)-\mu\bigl(I_hv-I_hu\bigr),
\qquad 
v(0)=v_0,
\end{equation}
where $F_M$ is a surrogate model of the true dynamics given by $F.$ In particular, we consider surrogate models learned offline from different forms of training data and quantify the amount of data required to guarantee accurate tracking of the true trajectory \eqref{eq:truthintro} using the nudged system \eqref{eq:sur-aotintro}.


\subsection{Main contributions and outline}
This paper develops a unified theory for nudging with learned surrogate models. Our analysis provides new insights into nudging accuracy under model error and helps establish a rigorous foundation for the use of machine learning surrogate models in CDA. The main contributions and organization of the paper are as follows:
\label{subsec:contrib}
\begin{itemize}
    \item Section \ref{sec:baseline} establishes exponential convergence of the nudged system \eqref{eq:aotintro} to the true system \eqref{eq:truthintro} under general conditions on the dynamics $F$ and the observation map $I_h.$ 
    We show that these conditions---used as standing assumptions throughout the paper---hold for several finite-dimensional dissipative systems commonly used as testbeds for data assimilation algorithms, as well as for many natural observation models. The main results are Theorem \ref{thm:conv} for noise-free observations and Theorem \ref{thm:stoch-tracking} for noisy observations. 
    \item Section \ref{sec:surrogate} studies the convergence of the nudged system with surrogate dynamics \eqref{eq:sur-aotintro} to the true system \eqref{eq:truthintro}. We work under our standing assumptions on the dynamics model $F$ and observation map $I_h,$ and show exponential convergence up to an error level determined by the accuracy of the surrogate model $F_M$. The main results are Theorem \ref{thm:sur-track} in the noise-free setting and Theorem \ref{thm:sur-track-noisy} under noisy observations. 
    These results show that nudging can remain accurate even with model error.
    \item Section \ref{sec:bridge} introduces two approaches for learning surrogate models from different types of training data.  The first approach directly learns the vector field $F,$ whereas the second learns the solution map for a short time step. We instantiate these two approaches using dictionary learning and neural networks, respectively. 
    For each approach, we show how surrogate learning errors affect tracking accuracy.
    \item Section \ref{sec:complexity} establishes sample complexity bounds for learning the vector-field and solution-map surrogates introduced in Section \ref{sec:bridge}. Thus, in this section we quantify the amount of training data required for nudging algorithms  based on learned surrogate models to provide accurate tracking, focusing on noise-free observations. The main results are Corollary \ref{cor:dict-aot-valid} for nudging based on vector-field surrogates obtained via dictionary learning and Corollary \ref{cor:r2-aot-valid} for nudging based on solution-map surrogates learned via deep super ReLU networks. 
    \item Section \ref{sec:numerics} contains numerical experiments on the Lorenz-96 system that demonstrate accurate tracking with surrogate models based on dictionary learning and neural networks under a variety of noiseless and noisy observation models.
\end{itemize}

\subsection{Related work}
\label{subsec:related}

We review the literature most closely related to our work along several directions. We begin with the broader data assimilation literature and the more specific CDA framework, with emphasis on nudging accuracy and tracking guarantees. We then discuss extensions of CDA to different systems, observation models, and discretizations, followed by work on data assimilation with model error and learned surrogate dynamics. Finally, we recall the learning-theoretic tools used to control surrogate approximation errors.

\paragraph{Data assimilation and tracking accuracy guarantees}
Data assimilation provides a general framework for combining dynamical models with partial and noisy observations in order to estimate the evolving state of a system; see, for example, \cite{LawStuartZygalakis2015,ReichCotter2015,Evensen2009,Kalnay2003,sanz2023inverse}. Within this broad area, continuous data assimilation (CDA) refers to methods that assimilate observations continuously in time. A central class of CDA methods is nudging, which can be viewed as an online filtering approach: the estimate \(v(t)\) at time \(t\) depends only on observations collected up to time \(t\). A particularly influential rigorous framework is the Azouani--Olson--Titi (AOT) algorithm \cite{AOT2013}, where feedback through general interpolant observables is used to synchronize an assimilating trajectory with the true state under suitable resolution and nudging conditions.
Since the seminal work \cite{AOT2013}, the analysis of AOT algorithms has been extended to a variety of dissipative models arising in fluid dynamics and geophysical flows, including the three-dimensional Navier--Stokes equations \cite{biswas2021cda3dns}, the three-dimensional Boussinesq system \cite{balakrishna2022determining}, the simplified Bardina model \cite{AlbanezBenvenutti}, the three-dimensional Navier--Stokes-\(\alpha\) model \cite{AlbanezLopesTitiNSalpha}, the three-dimensional Ladyzhenskaya model \cite{CaoGiorginiJollyPakzad}, the two-dimensional B\'enard convection problem with velocity measurements alone \cite{FarhatJollyTitiBenard}, and the subcritical surface quasi-geostrophic equation \cite{JollyMartinezTiti2017}.

Beyond these model-specific developments, AOT algorithms have also been studied in more general observation and discretization settings. Examples include blurred-in-time measurements \cite{JollyMartinezOlsonTiti2019}, higher-order finite element interpolants \cite{JollyPakzadFEM}, and fully discrete schemes with uniform-in-time error estimates \cite{IbdahMondainiTiti2020}. 
More abstract CDA frameworks for semilinear parabolic equations have also been developed; see, for example, \cite{delsarto2026cda}, as well as a recent stochastic extension that treats multiplicative observation noise and establishes mean-square and almost-sure synchronization \cite{BrockerDelSartoHieberPalmaZochling2026}.
Much of this literature is formulated for infinite-dimensional dissipative PDEs or their numerical approximations. In this paper, we do not aim to extend the continuous-space PDE theory. Rather, we use this literature as the conceptual and analytical background for a unified finite-dimensional framework that yields nudging-accuracy guarantees for both noise-free and noisy observations. Our unified framework relaxes the requirements on both the dynamics and observations required for accurate discrete-time data assimilation in \cite{law2016filter,sanz2015long}.
This analysis then serves as the basis for the learned-surrogate tracking results we develop. 

\paragraph{Model error and parameter estimation in data assimilation}
The exact-model theory overviewed above assumes that the nudged system leverages the dynamics model describing the true state evolution. In practice, however, the forecast model may be misspecified, simplified, or affected by unresolved dynamics. Data assimilation with model error has been investigated for example in \cite{ChenFarhatLunasin,cibik2025modelerror,moodey2013nonlinear,sanz2025long}. Another related direction uses CDA not only to recover the state but also to infer unknown parameters or identify missing model components online; see \cite{CarlsonHudsonLarios2020,NeweyWhiteheadCarlson2025}. These works provide important insight into robustness and model mismatch, but they typically focus on particular systems or on structured forms of model error, such as parametric mismatch or missing terms, and often rely on prior knowledge of the underlying model form. In contrast, the present work develops a unified CDA framework in which the imperfect model is represented by learned surrogate dynamics.

\paragraph{Data assimilation with machine-learned surrogate models}
Recent work has increasingly incorporated machine learning into data assimilation, either by replacing the forecast model with a learned surrogate or by using data assimilation to train, correct, or stabilize learned dynamics. Broader perspectives on the interaction between machine learning, data assimilation, inverse problems, and uncertainty quantification can be found in \cite{ChengEtAl2023,BachEtAl2024,chen2023stochastic}. Specific examples include data assimilation with FourCastNet-based surrogate models \cite{adrian2025data}, ensemble Kalman filtering with learned or climate-model surrogates \cite{takeshima2025bridging,sanz2025long}, the combination of data assimilation and machine learning to emulate dynamics from sparse and noisy observations \cite{BrajardCarrassiBocquetBertino2020}, and deep-learning surrogates for data assimilation in dynamic subsurface flow problems \cite{TangLiuDurlofsky2020}. These studies demonstrate the practical promise of learned surrogate models in data assimilation. The present work is complementary: rather than focusing on a specific learned forecast architecture or a specific filtering method, we analyze continuous-time nudging with learned surrogate dynamics and quantify how surrogate approximation errors propagate into long-time tracking accuracy. In particular, our results provide verifiable criteria under which a learned surrogate can be used in the nudging algorithm without compromising synchronization.

\paragraph{Learning theory and derivative-informed training}
Our analysis also connects CDA accuracy with learning-theoretic guarantees for the surrogate model, showing that AOT-valid surrogates can be learned under explicit sample-complexity conditions. Motivated by the need to control both the value and derivative errors of the learned surrogate, we use derivative-informed training objectives, which are well established in machine learning and scientific computing. Examples include Sobolev training \cite{Czarnecki2017}, Sobolev-type physics-informed neural network (PINN) losses \cite{SobolevPINN2021}, gradient-enhanced PINNs \cite{Yu2022gPINN}, and first-order formulations of PINNs \cite{Gladstone2022FOPINNs}. For approximation and sample-complexity tools, we mainly build on Sobolev approximation results for deep super ReLU networks \cite{yang2023dsrn} and classical learning-theoretic foundations for neural networks \cite{AnthonyBartlett1999}. In this paper, these tools are used to connect surrogate learning errors to the residual quantities that determine the tracking error of the nudged dynamics, thereby ensuring that the learned model error remains sufficiently small to preserve synchronization.

\section{Nudging accuracy with true dynamics}
\label{sec:baseline}
In this section, we establish exponential convergence of the nudged system \eqref{eq:aotintro} to the true system \eqref{eq:truthintro} under general conditions on the dynamics $F$ and the observation map $I_h.$ We introduce our standing assumptions on the true dynamics and on the observation model in Subsections \ref{subsec:baseline-setting} and \ref{subsec:obsmodel}, respectively. We then establish the well-posedness and dissipativity properties of the true and nudged systems in Subsection \ref{subsec:baseline-wp}. The main result of this section is Theorem \ref{thm:conv} in Subsection \ref{subsec:baseline-conv}, which establishes the desired exponential convergence. We generalize the theory to noisy observations in Subsection \ref{subsec:baseline-noisy}.


\subsection{Dynamics model: setting and examples}
\label{subsec:baseline-setting}
Throughout this paper, we consider dynamical systems on $\R^d.$ To emphasize the connection with CDA theory for infinite-dimensional dynamical systems, we introduce the Hilbert space structure
\[
H :=\R^d,
\qquad
\ip{z}{z'}_H := z^\top z',
\qquad
\normH{z}:=\sqrt{\ip{z}{z}_H}.
\]
We consider a reference trajectory \(u\), which plays the role of the truth,
and a nudged trajectory \(v\), driven by coarse observations of \(u\), whose
purpose is to track the reference trajectory.

\paragraph{True system}
We assume that the vector field $F:\R^d \to \R^d$ governing the state evolution in \eqref{eq:truthintro} admits the form $F(z) = f - \nu Az - N(z),$ so that
\begin{equation}\label{eq:truth}
\dot u(t)+\nu A u(t)+N(u(t))=f,
\qquad
u(0)=u_0.
\end{equation}
Here $u_0 \in \R^d$ is an unknown initial condition, \(\nu>0\) is a fixed dissipation parameter, \(f\in\R^d\) is a constant
forcing term, \(A\in\R^{d\times d}\) is a linear map, and
\(N:\R^d\to\R^d\) is a nonlinear map. The aim of this subsection is to introduce our standing assumptions on this dynamics model, and to verify these assumptions in several examples. We refer to \eqref{eq:truth} as the true system and to the trajectory $\{u(t)\}_{t \ge 0}$ as the truth.

\paragraph{Exact-model nudged system}
Substituting the vector field \(F(z)=f-\nu Az-N(z)\) into \eqref{eq:aotintro}, we obtain the nudged dynamics
\begin{equation}\label{eq:aot}
\dot v(t)+\nu A v(t)+N(v(t))
=
f-\mu\bigl(I_h v(t)-I_h u(t)\bigr),
\qquad
v(0)=v_0.
\end{equation}
Here \(v_0\in\R^d\) is an arbitrary initialization, \(\mu>0\) is the nudging parameter, and \(I_h:\R^d\to\R^d\) is a given linear feedback/interpolant operator depending on a resolution parameter \(h>0\). Our assumption on \(I_h\) will be introduced in Subsection~\ref{subsec:obsmodel}. We refer to \eqref{eq:aot} as the exact-model nudged system, since the nudging equation uses the exact vector field \(F\). The corresponding AOT-type data assimilation procedure is referred to as exact-model AOT.

\bigskip

We make the following assumptions on the matrix $A \in \R^{d \times d}$ and nonlinearity $N: \R^d \to \R^d$ defining the dynamics model \eqref{eq:truth}.

\begin{assumption}[Linear dissipation]\label{ass:H1}
The matrix \(A\in\R^{d\times d}\) is symmetric positive definite.
\end{assumption}

\begin{assumption}[\(H\)-dissipativity]
\label{ass:H4}
There exist constants \(\alpha>0\) and \(\beta\ge0\) such that, for every
\(z\in\R^d\),
\begin{equation}\label{eq:alpha-beta}
\nu\,z^\top A z+\ip{N(z)}{z}_H
\ge
\alpha\,\normH{z}^2-\beta.
\end{equation}
\end{assumption}

\begin{assumption}[Nonlinearity]\label{ass:H2}
The mapping \(N:\R^d\to\R^d\) is locally Lipschitz and satisfies:
\begin{enumerate}
\item \textbf{One-sided energy bound.}
There exists a constant \(C_{\mathrm E}\ge 0\) such that, for every
\(z\in\R^d\),
\begin{equation}\label{eq:onesided}
\ip{N(z)}{z}_H \ge -C_{\mathrm E}\,\normH{z}^2.
\end{equation}

\item \textbf{Local monotonicity on bounded sets.}
For every \(R>0\), there exists a constant \(C_R\ge 0\) such that, whenever
\(z,z'\in\R^d\) satisfy \(\normH{z}\le R\) and \(\normH{z'}\le R\), one has
\begin{equation}\label{eq:local-mono}
-\ip{N(z')-N(z)}{z'-z}_H
\le
C_R\,\normH{z'-z}^2.
\end{equation}
\end{enumerate}
\end{assumption}

In the following remarks, we discuss these assumptions in turn. 

\begin{remark}[Dissipation and energy norm]\label{rem:energynorm}
Under Assumption \ref{ass:H1}, we denote by
\[
0<\lambda_1:=\lambda_{\min}(A)\le \lambda_{\max}(A)=:\lambda_{\max}<\infty
\]
the smallest and largest eigenvalues of \(A\). We can equip \(\R^d\) with
the associated energy norm
\[
\normV{z}:=\bigl\|A^{1/2}z \bigr\|_H=\sqrt{z^\top A z},
\qquad z\in\R^d.
\]
Since \(A\) is symmetric positive definite, the norms \(\normH{\cdot}\) and
\(\normV{\cdot}\) are equivalent. More precisely,
\begin{equation}\label{eq:norm-equivalence}
\lambda_1\,\normH{z}^2
\le
\normV{z}^2
\le
\lambda_{\max}\,\normH{z}^2,
\qquad
\forall z\in\R^d.
\end{equation}
Despite the $V$ and $H$ norms being equivalent in our finite-dimensional setting, we will use both norms to highlight connections with existing theory developed in functional settings. 
\end{remark}

\begin{remark}[H-dissipativity]
    Assumption~\ref{ass:H4} ensures dissipativity at the
\(H\)-level. In particular, it provides the quantitative coercivity needed to
derive absorbing-ball estimates for the dynamics. It also includes, as a
special case, the standard regime in which the linear dissipation dominates the
nonlinear contribution. Indeed, if
\[
\ip{N(z)}{z}_H\ge -C_{\mathrm E}\normH{z}^2
\qquad\forall z\in\R^d,
\]
and \(\nu\lambda_1>C_{\mathrm E}\), then Assumption~\ref{ass:H4} holds with
\[
\alpha=\nu\lambda_1-C_{\mathrm E},
\qquad
\beta=0.
\]
Thus, the classical situation in which the linear part controls the possible
energy production of the nonlinearity is recovered within this framework. We formulate Assumption~\ref{ass:H4} at the level of the full drift
\(-\nu Az-N(z)\), rather than solely through a lower bound on the nonlinear
term, because this form is more flexible and is better suited to situations in
which dissipativity results from the combined effect of the linear and
nonlinear parts.
\end{remark}

\begin{remark}[Local monotonicity]
\label{rem:examples-local-mono}
In this remark we obtain a sufficient, easy-to-verify condition to ensure local monotonicity. 
Let \(N\in C^1(\R^d;\R^d)\). Then for every \(R>0\) there exists
\[
L_R:=\sup_{\|z\|_2\le R}\|DN(z)\|_{\mathrm{op}}<\infty.
\]
By the mean value theorem, for all \(z,z'\in\R^d\) with
\(\|z\|_2,\|z'\|_2\le R\),
\[
\|N(z')-N(z)\|_2\le L_R\|z'-z\|_2,
\]
and hence
\[
-\ip{N(z')-N(z)}{z'-z}_H
\le
L_R\|z'-z\|_2^2.
\]
Thus Assumption~\ref{ass:H2}\emph{(ii)} holds on bounded sets, with
\(C_R=L_R\).
\end{remark}

We next present several illustrative examples of dynamical systems that satisfy our assumptions. 
\subsubsection{Stuart--Landau oscillator}
\label{subsec:stuart-landau}

The Stuart--Landau oscillator \cite{stuart1958} 
is a canonical normal form for smooth limit-cycle dynamics arising from oscillatory instability. In real variables it is given by
\begin{equation}\label{eq:stuart-landau}
\dot u
=
\lambda u+\omega J u-|u|^2u,
\qquad
u(t)\in\R^2,
\end{equation}
where
\[
J=
\begin{pmatrix}
0&-1\\
1&0
\end{pmatrix}.
\]

We can write \eqref{eq:stuart-landau} in the form \eqref{eq:truth} by setting
\[
H=V=\R^2,
\qquad
A=\Id,
\qquad
\nu=1,
\qquad
f=0,
\]
and
\[
N(u):=-(\lambda+1)u-\omega Ju+|u|^2u.
\]
Assumption~\ref{ass:H1} is immediate. The key point for
Assumption~\ref{ass:H4} is that the positive quartic contribution in
\(\ip{N(u)}{u}_H\) compensates for the potentially unfavorable quadratic part
and yields \(H\)-dissipativity of the full drift. The corresponding bounds and
an explicit admissible choice of \((\alpha,\beta)\) are summarized in
Appendix~\ref{app:examples}. The skew-symmetry of \(J\) and the
quartic damping term imply Assumption~\ref{ass:H2}\emph{(i)}, while
Assumption~\ref{ass:H2}\emph{(ii)} follows from the smoothness of \(N\) and
Remark~\ref{rem:examples-local-mono}.

\subsubsection{FitzHugh--Nagumo system}
\label{subsec:fhn}

The FitzHugh--Nagumo system is a classical model for excitable and oscillatory
dynamics \cite{fitzhugh1961,nagumo1962}. It is designed to capture threshold response,
recovery, and fast--slow effects typical of excitable systems. We consider the
form
\begin{equation}\label{eq:fhn}
\dot u_1 = u_1-\frac{u_1^3}{3}-u_2+I,
\qquad
\dot u_2 = \varepsilon(u_1+a-bu_2),
\end{equation}
with \(\varepsilon>0\) and \(b>0\). Let \(u=(u_1,u_2)^\top\). We fit \eqref{eq:fhn} into \eqref{eq:truth} by taking
\[
H=V=\R^2,
\qquad
A=
\begin{pmatrix}
1&0\\
0&\varepsilon b
\end{pmatrix},
\qquad
\nu=1,
\qquad
f=
\begin{pmatrix}
I\\
\varepsilon a
\end{pmatrix},
\]
and
\[
N(u)=
\begin{pmatrix}
-2u_1+\frac{u_1^3}{3}+u_2\\
-\varepsilon u_1
\end{pmatrix}.
\]
Then Assumption~\ref{ass:H1} holds. For Assumption~\ref{ass:H4}, the main
mechanism is that the cubic nonlinearity provides the dominant dissipative
effect, whereas the remaining lower-order terms can be absorbed into the
combined \((\alpha,\beta)\)-form. The detailed estimates can be found in
Appendix~\ref{app:examples}. The cubic term gives the required one-sided energy bound, while
Assumption~\ref{ass:H2}\emph{(ii)} again follows from
Remark~\ref{rem:examples-local-mono}.


\subsubsection{Lorenz--96}
\label{subsec:l96}

The Lorenz--96 system is a classical benchmark in predictability, model error,
and data assimilation \cite{lorenz1996}. It is given by 
\begin{equation}\label{eq:l96}
\dot u_i=(u_{i+1}-u_{i-2})u_{i-1}-u_i+\mathsf{f},
\qquad i=1,\dots,d,
\end{equation}
with periodic indexing.
We fit \eqref{eq:l96} into \eqref{eq:truth} by taking
\[
H=V=\R^d,
\qquad
A=\Id,
\qquad
\nu=1,
\qquad
f=\mathsf{f}\mathbf 1,
\]
and
\[
N(u)_i:=-(u_{i+1}-u_{i-2})u_{i-1}.
\]
A key property of this system is that the quadratic nonlinearity $N$ is energy conserving: it holds that $\langle N(z),z \rangle_H = 0$ for all $z \in \R^d.$
Assumption~\ref{ass:H1} is immediate. The fact that the nonlinearity is energy-conserving implies Assumption~\ref{ass:H4} with \(\alpha=1\) and \(\beta=0\) and Assumption~\ref{ass:H2}\emph{(i)} with
\(C_{\mathrm E}=0\). Finally, since \(N\) is smooth,
Assumption~\ref{ass:H2}\emph{(ii)} follows from
Remark~\ref{rem:examples-local-mono}. A full verification of these claims is
included in Appendix~\ref{app:examples}.

\subsubsection{Dissipative models with energy-conserving bilinear nonlinearity}
\label{rem:l63}
A similar verification applies to the Lorenz--63 system
\cite{lorenz1963}. In particular, after a standard reformulation, the
nonlinear term also satisfies conservation of energy, while the linear part provides dissipation, see e.g. \cite{sanz2015long}. More broadly, the same mechanism also applies to
finite-dimensional Galerkin-type models for viscous incompressible flows,
built on divergence-free reduced spaces, such as those arising from
Fourier--Galerkin truncations. In such cases, incompressibility can be retained at
the reduced level, and the
reduced convective term inherits the usual conservation of energy from
the continuous incompressible model. Consequently, the reduced dynamics take
the form of a dissipative linear part together with an energy-conserving
bilinear term, so that Assumptions~\ref{ass:H1}, \ref{ass:H4} and \ref{ass:H2}
 hold. Further details are provided in Appendix~\ref{app:examples}.

\subsection{Observation model: setting and examples}\label{subsec:obsmodel}
Recall the energy norm $\| \cdot \|_V$ introduced in Remark \ref{rem:energynorm}. We make the following assumption on the observation map $I_h$. 
\begin{assumption}[Feedback operator]\label{ass:H3}
For each resolution parameter \(h\in[0,1]\), the operator
\(I_h:\R^d\to\R^d\) is linear and satisfies
\begin{equation}\label{eq:interp}
\normH{z-I_h z}
\le
c_0\,h\,\normV{z},
\qquad \forall\, z\in\R^d,
\end{equation}
for some constant \(c_0>0\) independent of both \(z\) and \(h\).
\end{assumption}

\begin{remark}[\(H\)-boundedness]\label{rem:Ih-bounded}
Since the \(H\)- and \(V\)-norms are equivalent on \(\R^d\), \eqref{eq:interp}
implies that \(I_h\) is uniformly bounded on \(H\). More precisely, there
exists a constant \(C_I>0\), independent of \(h\in[0,1]\), such that
\begin{equation}\label{eq:stab}
\normH{I_h z}\le C_I\,\normH{z},
\qquad \forall\, z\in\R^d.
\end{equation}
\end{remark}

Feedback operators arising in applications can often be expressed in the form 
\[
I_h = R_h \circ O_h,
\]
where \(O_h: \R^d \to \R^k\) maps the state to the available observations and \(R_h:\R^k\to \R^d\)
reconstructs a state vector from those observations. In the classical
infinite-dimensional AOT literature, the operator \(I_h\) is typically realized
through nodal interpolation, local spatial averages, or spectral projection. In
the present finite-dimensional setting, the same idea is encoded at the matrix
level: \(I_h\) is simply a \(d \times d\) matrix obtained by composing a sensing
map with a reconstruction map.  Assumption~\ref{ass:H3} is a natural structural requirement on
the sensing and reconstruction mechanism: it requires \(I_h\) to be near the
identity in the \(H\)-norm. 
We now provide three paradigmatic examples of feedback operators satisfying Assumption \ref{ass:H3}.

\subsubsection{Linear sensing measurements.}

This setting arises when the state is represented by a finite-dimensional
vector of physically meaningful variables, while the observations are
indirect sensor outputs rather than direct access to the state coordinates.
Such a measurement structure is common in power-system monitoring, process
engineering, and networked control, where the recorded quantities are linear or
linearized combinations of the underlying state variables
\cite{abur2004power,simon2006}. A similar finite-dimensional observation
structure also appears when a spatially distributed system has already been
discretized in space, so that the state is a vector in \(\R^d\), but the
instrumentation provides only indirect linear observations of that discretized
state. In that setting, blurred or coarse-resolution sensing may be viewed as a
natural special case of the same measurement paradigm. In all of these
situations, \(I_h\) represents the resulting measurement-and-reconstruction map
from the available sensor outputs back to the state space.

\begin{example}
Suppose the observations are of the form
\[
y = Gu,
\qquad
G \in \R^{k \times d}.
\]
A natural reconstruction is given
by the Tikhonov-regularized inverse
\[
R_h = (G^\top G + hI)^{-1} G^\top,
\qquad
O_h = G,
\]
so that
\begin{equation}
\label{eq:redundant linear}
    I_h = R_h O_h = (G^\top G + hI)^{-1} G^\top G.
\end{equation}
If the sensing matrix \(G\) has full column rank, then \(I_h\) is globally
close to the identity, and the parameter \(h\) plays the role of an effective
resolution or regularization scale. In particular, the singular values of
\(G\) quantify how well the observation mechanism resolves the state space.
Indeed, the singular value decomposition yields
\[
\|I-I_h\|_{H\to H}\le \frac{h}{\sigma_{\min}(G)^2+h}=\mathcal{O}(h),
\]
so Assumption~\ref{ass:H3} is naturally satisfied.
An important special case is that of blurred or averaged observations,
where \(G \in \R^{k \times d}\) is a smoothing or averaging matrix. 
In that case, \(h\) may be
interpreted either as a regularization parameter or as the effective smoothing
scale of the observation mechanism.
\end{example}

\subsubsection{Band-limited spectral measurements.}

Here the state represents a sampled or discretized spatial field, while the measurements are frequency-domain observations that capture only its resolved large-scale content. Such observations arise in imaging and array-sensing modalities in which the device records partial spectral information. For example, in medical imaging modalities such as magnetic resonance imaging (MRI), the machine does not directly record the final image point by point; instead, it first collects transformed frequency-domain data from which the image is later reconstructed. Likewise, in radio interferometry the directly observed quantities are visibilities, namely sampled Fourier components of the sky brightness distribution \cite{hansen2014image,thompson2017interferometry}. In this situation, the observations are spectral coefficients of the state, whereas \(I_h\) represents the corresponding reconstruction operator that maps the measured band-limited spectral data back to the physical state space, with the stronger \(V\)-norm controlling the unresolved high-frequency tail.

\begin{example}
Let \(\mathscr F\) denote the discrete Fourier transform and \(\mathscr F^\ast\) its inverse. Given a cutoff level \(K\), let \(P_K\) be the projection onto the lowest \(K\) frequency modes. One may then define
\[
O_h = P_K \mathscr F,
\qquad
R_h = \mathscr F^\ast,
\qquad
I_h = \mathscr F^\ast P_K \mathscr F.
\]
This is the finite-dimensional analogue of the determining-modes viewpoint in the classical CDA literature. Here the parameter \(h\) may be interpreted as an inverse resolution scale, typically with \(h \asymp K^{-1}\). The operator \(I_h\) is the projection onto the resolved low-frequency subspace, and the unresolved high-frequency tail is controlled by the stronger \(V\)-norm in a way consistent with the approximation property in Assumption~\ref{ass:H3}.
\end{example}

\subsubsection{Dominant modal coefficient measurements.}

This setting arises when the observations used for
feedback consist only of a small number of dominant modal coefficients, obtained for instance via leading terms of a principal orthogonal decomposition (POD) or Karhunen--Lo\`eve (KL) expansion.
This type of
measurement structure is common in flow reconstruction from sparse sensors and in
reduced-order state estimation for fluid and thermal-fluid systems
\cite{willcox2006,loiseau2018,nair2020}, where full-state sensing,
storage, or simulation is expensive. Here, the observations
are the reduced modal coordinates, and \(I_h\) denotes the
associated lifting operator that reconstructs an approximate full state from
the measured dominant modes.

\begin{example}
Let \(U_k \in \R^{d \times k}\) have orthonormal columns given, for instance, by the leading
POD or KL modes extracted from training data. Define
\[
O_h = U_k^\top,
\qquad
R_h = U_k,
\qquad
I_h = U_k U_k^\top.
\]
Then \(O_h\) extracts the leading modal coefficients of the state, \(R_h\)
lifts them back to the original state space, and \(I_h\) is the orthogonal
projector onto the resulting \(k\)-dimensional reduced subspace. This
construction is natural when the informative part of the measurement or
feedback is well captured by a low-dimensional modal structure. In this case,
\(h\) may be interpreted as a rank-resolution parameter, and one chooses
\(k\) sufficiently large so that the truncated energy, or equivalently the
neglected covariance spectrum, falls below a threshold of order \(h\).
Assumption~\ref{ass:H3} is then natural in this setting, provided that
states bounded in \(V\)-norm are well approximated by their projections onto
the dominant POD/KL modes.
\end{example}

\subsection{Well-posedness and dissipativity analysis}
\label{subsec:baseline-wp}

We begin with the basic well-posedness statement. In finite dimensions, local
existence and uniqueness follow from the local Lipschitz continuity of the
vector fields. Global existence then follows from the a priori estimates
established below. The proofs are postponed to Appendix~\ref{app:baseline}.

\begin{proposition}[Global well-posedness of the true and nudged systems]
\label{prop:wp}
Let Assumptions~\ref{ass:H1},\ref{ass:H4}, \ref{ass:H2}, and
\ref{ass:H3} be in force. Then, for every initial
condition pair \(u_0,v_0\in\R^d\), the true system \eqref{eq:truth} and the
nudged system \eqref{eq:aot} admit unique global solutions
\[
u,v\in C^1([0,\infty);\R^d).
\]
\end{proposition}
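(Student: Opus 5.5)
The plan is the standard finite-dimensional ODE argument: local existence and uniqueness by Picard--Lindelöf, then global existence by ruling out finite-time blow-up with a priori energy bounds. The true system is treated first, since the nudged system depends on \(u\) through the forcing term \(\mu I_h u(t)\).

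\textbf{Local well-posedness.} The vector field \(F(z)=f-\nu Az-N(z)\) is locally Lipschitz, because \(A\) is linear and \(N\) is locally Lipschitz by Assumption~\ref{ass:H2}. Picard--Lindelöf then gives a unique maximal \(C^1\) solution \(u\) on \([0,T^\ast)\). If \(T^\ast<\infty\), then \(\normH{u(t)}\to\infty\) as \(t\uparrow T^\ast\).

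\textbf{Global bound for \(u\).} Take the inner product of \eqref{eq:truth} with \(u\) and use Assumption~\ref{ass:H4} together with Young's inequality \(\ip{f}{u}_H\le \tfrac{\alpha}{2}\normH{u}^2+\tfrac{1}{2\alpha}\normH{f}^2\). This gives
\[
\frac{d}{dt}\normH{u}^2+\alpha\normH{u}^2\le 2\beta+\frac{\normH{f}^2}{\alpha}.
\]
By Grönwall,
\[
\normH{u(t)}^2\le e^{-\alpha t}\normH{u_0}^2+\frac{2\beta+\normH{f}^2/\alpha}{\alpha}.
\]
This bound is uniform on \([0,T^\ast)\), which contradicts blow-up. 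Hence \(T^\ast=\infty\) and \(u\in C^1([0,\infty);\R^d)\), with \(\sup_{t\ge0}\normH{u(t)}=:M_u<\infty\).

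\textbf{Nudged system.} With \(u\) fixed, \eqref{eq:aot} is a nonautonomous ODE \(\dot v=G(t,v)\), where
\[
G(t,v)=f-\nu Av-N(v)-\mu I_hv+\mu I_hu(t).
\]
\(G\) is continuous in \(t\) and locally Lipschitz in \(v\) uniformly for \(t\) in compact sets, since \(I_h\) is linear and \(u\) is continuous. So a unique maximal \(C^1\) solution exists on some \([0,T_v)\). For the a priori bound, pair the equation with \(v\).

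\textbf{Main obstacle.} The term \(-\mu\ip{I_hv}{v}_H\) need not have a sign, since \(I_h\) need not be symmetric or positive. I would avoid needing one by using Remark~\ref{rem:Ih-bounded}: \(\mu\,|\ip{I_hv}{v}_H|\le \mu C_I\normH{v}^2\). For the forcing, \(\mu\ip{I_hu}{v}_H\le \mu C_I M_u\normH{v}\le \tfrac{\alpha}{2}\normH{v}^2+C\). Together with Assumption~\ref{ass:H4} this yields
\[
\frac{d}{dt}\normH{v}^2\le 2\mu C_I\normH{v}^2+C'.
\]
Grönwall then gives an exponentially growing bound that is still finite on every bounded interval. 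That rules out blow-up, so \(T_v=\infty\) and \(v\in C^1([0,\infty);\R^d)\). A uniform-in-time bound is not needed for well-posedness, so this crude estimate suffices. A sharper dissipative bound for \(v\), using \(z-I_hz\) and Assumption~\ref{ass:H3} with \(\mu c_0^2h^2\) small, can be deferred to the subsequent absorbing-ball estimates.
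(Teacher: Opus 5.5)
Your proposal is correct and follows essentially the same route as the paper's proof. Both use Picard--Lindel\"of for local solutions, then an \(H\)-energy estimate in which the sign-indefinite feedback term is controlled crudely by \(\mu C_I\|v\|_H^2\), so that Gr\"onwall bounds the solution on every bounded interval and rules out blow-up. The only difference is that you invoke the dissipativity Assumption~\ref{ass:H4}, whereas the paper uses the one-sided bound of Assumption~\ref{ass:H2}\emph{(i)} and drops the dissipation term; your choice gives a uniform-in-time bound on \(u\), which is harmless but not needed for well-posedness.
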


We next establish dissipativity at the \(H\)-level. The next result gives separate \(H\)-bounds for the true trajectory and for the exact-model nudged trajectory. The truth estimate follows from the \(H\)-dissipativity of the underlying dynamics, while the nudged estimate additionally uses the  approximation property of the observation map together with suitable compatibility conditions among the nudging strength, the feedback resolution, and the dissipation.

\begin{proposition}[Dissipativity and absorbing balls in \(H\)]
\label{prop:dissipativity}
Let
Assumptions~\ref{ass:H1},\ref{ass:H4}, \ref{ass:H2}, and \ref{ass:H3}
be in force. Let \(u\) and \(v\) denote the corresponding global solutions
of \eqref{eq:truth} and \eqref{eq:aot}, respectively, associated with an
arbitrary initial condition pair \((u_0,v_0)\in\R^d\times\R^d\).
Then the following hold.

\begin{enumerate}
\item[(i)] The true solution satisfies
\begin{equation}\label{eq:H-abs-bound}
\|u(t)\|_H^2
\le
e^{-\alpha t}\|u_0\|_H^2
+
\frac{2\beta+\|f\|_H^2/\alpha}{\alpha}
\bigl(1-e^{-\alpha t}\bigr),
\qquad t\ge0.
\end{equation}
In particular,
\[
\limsup_{t\to\infty}\|u(t)\|_H^2
\le
\frac{2\beta+\|f\|_H^2/\alpha}{\alpha}.
\]

\item[(ii)] Suppose, in addition, that
\begin{equation}\label{eq:h-small-H}
\mu c_0^2 h^2<\nu .
\end{equation}
Set
\begin{equation}\label{eq:delta-def}
\delta
:=
\frac{\nu\lambda_1}{2}
+\frac{\mu}{2}
-C_{\mathrm E}.
\end{equation}
If \(\delta>0\), then there exist constants \(T_H\ge0\) and \(R_H>0\),
depending only on the system parameters, the forcing, and the initial data,
such that the nudged solution satisfies
\begin{equation}\label{eq:nudged-H-bound}
\sup_{t\ge T_H}\|v(t)\|_H\le R_H .
\end{equation}
\end{enumerate}
\end{proposition}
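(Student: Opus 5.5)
The plan is a standard energy method: pair each equation with its own solution in \(H\), turn the result into a linear differential inequality for the squared \(H\)-norm, and conclude with Gronwall's lemma. Since Proposition~\ref{prop:wp} gives \(u,v\in C^1\), these computations are justified pointwise in time.

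\emph{Part (i).} Pair \eqref{eq:truth} with \(u\) to get
\[
\tfrac12\tfrac{d}{dt}\normH{u}^2+\nu u^\top Au+\ip{N(u)}{u}_H=\ip{f}{u}_H .
\]
By Assumption~\ref{ass:H4}, the left side is at least \(\tfrac12\tfrac{d}{dt}\normH{u}^2+\alpha\normH{u}^2-\beta\). Young's inequality gives \(\ip{f}{u}_H\le \normH{f}^2/(2\alpha)+\tfrac{\alpha}{2}\normH{u}^2\). Multiplying by \(2\) yields
\[
\tfrac{d}{dt}\normH{u}^2+\alpha\normH{u}^2\le 2\beta+\normH{f}^2/\alpha .
\]
Gronwall's lemma then gives exactly \eqref{eq:H-abs-bound}, and the \(\limsup\) statement follows. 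A by-product used in part (ii) is the uniform bound \(\sup_{t\ge0}\normH{u(t)}^2\le M_u:=\max\{\normH{u_0}^2,(2\beta+\normH{f}^2/\alpha)/\alpha\}\).

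\emph{Part (ii).} Pair \eqref{eq:aot} with \(v\). The key step, and the only delicate one, is the nudging term \(-\mu\ip{I_hv}{v}_H\). Rather than bounding it crudely by \(C_I\), I split \(I_hv=v-(v-I_hv)\) and use Assumption~\ref{ass:H3} together with Young's inequality:
\[
-\mu\ip{I_hv}{v}_H\le-\mu\normH{v}^2+\mu c_0h\normV{v}\normH{v}
\le-\tfrac{\mu}{2}\normH{v}^2+\tfrac{\mu c_0^2h^2}{2}\normV{v}^2 .
\]
Condition \eqref{eq:h-small-H} lets the last term be absorbed by half of the dissipation \(\nu\normV{v}^2\). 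The remaining half satisfies \(\tfrac{\nu}{2}\normV{v}^2\ge\tfrac{\nu\lambda_1}{2}\normH{v}^2\) by \eqref{eq:norm-equivalence}. The nonlinearity is controlled by the one-sided bound \eqref{eq:onesided}. Collecting terms,
\[
\tfrac12\tfrac{d}{dt}\normH{v}^2+\delta\normH{v}^2\le\ip{f}{v}_H+\mu\ip{I_hu}{v}_H\le\bigl(\normH{f}+\mu C_I\normH{u}\bigr)\normH{v},
\]
where the last step uses \eqref{eq:stab}. Young's inequality with weight \(\delta\), together with the bound \(M_u\) from part (i), gives
\[
\tfrac{d}{dt}\normH{v}^2+\delta\normH{v}^2\le K:=\bigl(\normH{f}+\mu C_I\sqrt{M_u}\bigr)^2/\delta .
\]
Gronwall's lemma then yields \(\normH{v(t)}^2\le e^{-\delta t}\normH{v_0}^2+K/\delta\).

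This bound shows that one may take \(T_H=0\) and \(R_H^2=\normH{v_0}^2+K/\delta\). Alternatively, choose \(T_H\) so large that \(e^{-\delta T_H}\normH{v_0}^2\le K/\delta\) and take \(R_H^2=2K/\delta\). In either case \(R_H\) depends only on the parameters, the forcing and the initial data, as the statement requires. The main obstacle is precisely this splitting of the nudging term so that the positivity \(\mu>0\) is exploited rather than lost; the rest is routine.
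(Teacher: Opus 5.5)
Your proposal is correct and follows the paper's proof essentially step for step. For (i) it uses the same energy identity with Young and Gronwall, and for (ii) the same splitting $I_hv=v-(v-I_hv)$ (the paper's Lemma~\ref{lem:ih-lower}), absorption via $\mu c_0^2h^2<\nu$, coercivity to produce $\delta$, and the uniform truth bound from (i), ending with $T_H=0$; the only cosmetic difference is that you combine $\ip{f}{v}_H+\mu\ip{I_hu}{v}_H$ before applying Young's inequality, whereas the paper bounds the two terms separately with weight $\delta/4$ each, which changes the constant by at most a factor of two.
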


The previous proposition yields eventual boundedness in the \(H\)-norm for both the truth and the exact-model nudged trajectory. Since the \(H\)- and \(V\)-norms are equivalent in our finite-dimensional setting, this immediately gives a common post-absorption region that is bounded in both norms. We record this consequence separately, since it will be used repeatedly in the sequel.

\begin{corollary}[Common post-absorption region in \(H\) and \(V\)]
\label{cor:common-absorbing}
In the setting of Proposition~\ref{prop:dissipativity}, suppose in addition that the parameter conditions \eqref{eq:h-small-H} and \(\delta>0\), with
\(\delta\) defined in \eqref{eq:delta-def}, hold. Then, for every
initial condition pair \(u_0,v_0\in\R^d\), there exist constants
\(T_\ast\ge0\) and \(R_\ast>0\) such that the corresponding solutions \(u\)
and \(v\) satisfy
\begin{equation}\label{eq:common-absorbing-HV}
\sup_{t\ge T_\ast}\|u(t)\|_H\le R_\ast,
\qquad
\sup_{t\ge T_\ast}\|v(t)\|_H\le R_\ast,
\end{equation}
and
\begin{equation}\label{eq:common-absorbing-V}
\sup_{t\ge T_\ast}\|u(t)\|_V\le R_\ast,
\qquad
\sup_{t\ge T_\ast}\|v(t)\|_V\le R_\ast.
\end{equation}
\end{corollary}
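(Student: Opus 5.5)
The plan is to combine the two parts of Proposition~\ref{prop:dissipativity} and then pass from $H$-bounds to $V$-bounds using the norm equivalence \eqref{eq:norm-equivalence}.

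\emph{Truth trajectory.} From \eqref{eq:H-abs-bound}, for all $t\ge0$ we have
\[
\|u(t)\|_H^2\le e^{-\alpha t}\|u_0\|_H^2+\rho_0,
\qquad
\rho_0:=\frac{2\beta+\|f\|_H^2/\alpha}{\alpha}.
\]
Choose $T_u\ge0$ so that $e^{-\alpha T_u}\|u_0\|_H^2\le \rho_0+1$. For instance, take $T_u=0$ if $\|u_0\|_H^2\le\rho_0+1$, and $T_u=\alpha^{-1}\log\bigl(\|u_0\|_H^2/(\rho_0+1)\bigr)$ otherwise. Then
\[
\sup_{t\ge T_u}\|u(t)\|_H\le R_u:=\sqrt{2\rho_0+1}.
\]
The bound holds uniformly on $[T_u,\infty)$ because the exponential term is decreasing in $t$.

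\emph{Nudged trajectory.} The conditions \eqref{eq:h-small-H} and $\delta>0$ are assumed, so Proposition~\ref{prop:dissipativity}(ii) applies directly. It gives $T_H$ and $R_H$ with $\sup_{t\ge T_H}\|v(t)\|_H\le R_H$.

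\emph{Combining and passing to the $V$-norm.} Set $T_\ast:=\max\{T_u,T_H\}$. For $t\ge T_\ast$ both $H$-bounds hold, since each was established on a larger interval. By the upper inequality in \eqref{eq:norm-equivalence}, $\|z\|_V\le \lambda_{\max}^{1/2}\|z\|_H$ for every $z\in\R^d$. We therefore define
\[
R_\ast:=\max\{1,\lambda_{\max}^{1/2}\}\cdot\max\{R_u,R_H\}.
\]
With this choice, $\|u(t)\|_H$ and $\|v(t)\|_H$ are at most $\max\{R_u,R_H\}\le R_\ast$, which gives \eqref{eq:common-absorbing-HV}. Likewise $\|u(t)\|_V$ and $\|v(t)\|_V$ are at most $\lambda_{\max}^{1/2}\max\{R_u,R_H\}\le R_\ast$, which gives \eqref{eq:common-absorbing-V}.

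There is no substantive obstacle here. The only care needed is to make the truth's absorption time explicit, because part (i) is stated as an inequality valid for all $t$ rather than as a $\sup$ bound. It is also worth recording that $T_\ast$ and $R_\ast$ depend on the initial data only through $\|u_0\|_H$ and through the constants in part (ii).
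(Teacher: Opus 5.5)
Your proof is correct and follows the paper's argument: you combine parts (i) and (ii) of Proposition~\ref{prop:dissipativity}, take the larger of the two radii, and pass to the $V$-norm via $\|z\|_V\le\lambda_{\max}^{1/2}\|z\|_H$ with the same factor $\max\{1,\lambda_{\max}^{1/2}\}$. The one difference is cosmetic. The paper needs no waiting time for the truth, because the right-hand side of \eqref{eq:H-abs-bound} is a convex combination of $\|u_0\|_H^2$ and $\rho_0$ and hence bounded by $\max\{\|u_0\|_H^2,\rho_0\}$ for all $t\ge0$; you instead introduce an explicit absorption time $T_u$ in exchange for a truth radius that does not depend on $u_0$.
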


\subsection{Exact-model convergence}
\label{subsec:baseline-conv}

We now turn to the synchronization property of the exact-model nudged dynamics. After both trajectories have entered a common absorbing region, the local monotonicity assumption on the nonlinearity holds with a uniform constant. Together with the coercive contribution of the nudging term, this implies exponential convergence in the \(H\)-norm; see Theorem~\ref{thm:conv}. A detailed proof is provided in Appendix~\ref{app:baseline}.

\begin{theorem}[Exponential convergence in \(H\)]
\label{thm:conv}
Let
Assumptions~\ref{ass:H1},\ref{ass:H4}, \ref{ass:H2}, and \ref{ass:H3} be in force. Suppose further that
\[
\mu c_0^2 h^2<\nu,
\qquad
\delta>0,
\]
where \(\delta\) is defined in \eqref{eq:delta-def}.
Let \((u,v)\) be the corresponding global solutions of
\eqref{eq:truth} and \eqref{eq:aot} associated with an arbitrary initial pair \((u_0,v_0)\in\R^d\times\R^d\). Let \(T_\ast\) and \(R_\ast\) be as in Corollary~\ref{cor:common-absorbing}, and define
\(C_\ast:=C_{R_\ast},\) where \(C_{R_\ast}\) denotes the local monotonicity constant from Assumption~\ref{ass:H2}\emph{(ii)} on the \(H\)-ball of radius \(R_\ast\).
If \(\mu>2C_\ast\), then, for every \(t\ge T_\ast\),
\begin{equation}\label{eq:exp-conv}
\|v(t)-u(t)\|_H^2
\le
\exp\bigl(-(\mu-2C_\ast)(t-T_\ast)\bigr)
\|v(T_\ast)-u(T_\ast)\|_H^2.
\end{equation}
\end{theorem}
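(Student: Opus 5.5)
Set \(w:=v-u\) and run an energy estimate for \(w\) in \(H\) on the interval \([T_\ast,\infty)\), where both trajectories lie in the common absorbing region of Corollary~\ref{cor:common-absorbing}. Subtracting \eqref{eq:truth} from \eqref{eq:aot} gives
\[
\dot w+\nu Aw+N(v)-N(u)=-\mu I_h w .
\]
Since \(u,v\in C^1\) by Proposition~\ref{prop:wp}, I will take the \(H\)-inner product with \(w\). Using \(\ip{Aw}{w}_H=\normV{w}^2\), this yields
\[
\tfrac12\tfrac{d}{dt}\normH{w}^2+\nu\normV{w}^2+\ip{N(v)-N(u)}{w}_H=-\mu\ip{I_hw}{w}_H .
\]

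The key step is handling the feedback term. I plan to split \(I_h w=w-(w-I_hw)\), so that
\[
-\mu\ip{I_hw}{w}_H=-\mu\normH{w}^2+\mu\ip{w-I_hw}{w}_H .
\]
For the second term I will use Assumption~\ref{ass:H3}, Cauchy--Schwarz and Young:
\[
\mu\ip{w-I_hw}{w}_H\le \mu c_0h\normV{w}\normH{w}\le \tfrac{\mu}{2}\normH{w}^2+\tfrac{\mu c_0^2h^2}{2}\normV{w}^2 .
\]
The resolution condition \(\mu c_0^2h^2<\nu\) then lets the last term be absorbed into \(\nu\normV{w}^2\) on the left-hand side. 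After this absorption, what remains of the dissipation is nonnegative and can be dropped.

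For the nonlinear term, for \(t\ge T_\ast\) both \(\normH{u(t)}\) and \(\normH{v(t)}\) are at most \(R_\ast\) by \eqref{eq:common-absorbing-HV}. Assumption~\ref{ass:H2}(ii) with \(R=R_\ast\) therefore gives
\[
-\ip{N(v)-N(u)}{w}_H\le C_\ast\normH{w}^2 .
\]
Combining the three estimates,
\[
\tfrac12\tfrac{d}{dt}\normH{w}^2\le -\tfrac{\mu}{2}\normH{w}^2+C_\ast\normH{w}^2,
\qquad\text{i.e.}\qquad
\tfrac{d}{dt}\normH{w}^2\le-(\mu-2C_\ast)\normH{w}^2 \quad (t\ge T_\ast).
\]
Grönwall's inequality on \([T_\ast,t]\) then gives \eqref{eq:exp-conv}; the rate is positive because \(\mu>2C_\ast\).

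The only delicate point is the feedback splitting. The Young weights must be chosen as \(\mu/2\) on \(\normH{w}^2\) and \(\mu c_0^2h^2/2\) on \(\normV{w}^2\), so that the \(V\)-term is dominated by \(\nu\normV{w}^2\) under exactly the stated condition. This choice leaves the net coercivity \(\mu/2\), which produces the rate \(\mu-2C_\ast\) appearing in the statement. A secondary point is that the monotonicity constant must be uniform in time. This holds because Corollary~\ref{cor:common-absorbing} confines both trajectories to the fixed \(H\)-ball of radius \(R_\ast\) for all \(t\ge T_\ast\).
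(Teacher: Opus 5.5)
Your proposal is correct and follows the paper's proof essentially step for step. The paper also derives the energy identity for \(w=v-u\), bounds the feedback term via \(\ip{I_hw}{w}_H\ge\frac12\normH{w}^2-\frac{c_0^2h^2}{2}\normV{w}^2\) (Lemma~\ref{lem:ih-lower}, which is exactly your splitting plus Young), drops the nonnegative \(V\)-term using \(\mu c_0^2h^2<\nu\), applies Assumption~\ref{ass:H2}(ii) with \(R=R_\ast\) for \(t\ge T_\ast\), and concludes by Gr\"onwall.
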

In particular, Theorem \ref{thm:conv} shows that the nudged trajectory \(v\) synchronizes exponentially fast to
the truth \(u\) in the \(H\)-norm after the entrance time \(T_\ast\).

\subsection{Noisy observations and stochastic nudging}
\label{subsec:baseline-noisy}

Recall from Subsection~\ref{subsec:obsmodel} that the feedback operator is
written in the form
\(I_h = R_h \circ O_h,\)
where \(O_h:\R^d\to\R^k\) denotes the observation map and
\(R_h:\R^k\to\R^d\) denotes the reconstruction operator.
So far, we have worked in the idealized setting in which the exact observations \(I_hu(t)=R_h(O_hu(t))\) are available to the nudged
dynamics. In practice, however, the exact observations \(O_hu(t)\) are
typically not directly accessible. Rather, the available data are contaminated
by random measurement errors, and the feedback must therefore be
constructed from noisy observations. In this subsection, we introduce a
stochastic extension of the exact-model nudged dynamics that incorporates such
observation errors.

In the absence of
observation errors, the exact-model nudged dynamics is given by \eqref{eq:aot},
\[
\dot v(t)+\nu A v(t)+N(v(t))
=
f-\mu\bigl(I_hv(t)-I_hu(t)\bigr),
\]
where the nudging term is driven by the exact coarse observable \(I_hu(t)\).

We now replace the exact observations by noisy ones. More precisely, we
introduce the noisy observation process
\begin{equation}\label{eq:noisy-obs-level}
\widetilde O_h(u(t)) = O_hu(t) + \mathcal E(t),
\end{equation}
where \(\mathcal E(t)\in\R^k\) denotes the observation error.

Following \cite{Bessaih_2015}, we assume that the components of the
measurement error are of  Gaussian type. More precisely, let
\[
(\Omega,\mathcal F,(\mathcal F_t)_{t\ge0},\mathbb P)
\]
be a filtered probability space supporting a standard \(k\)-dimensional
Brownian motion
\[
W_t=(b_1(t),\dots,b_k(t))^\top,
\]
and let \(\Sigma\in\R^{k\times k}\) be a deterministic covariance factor.
We represent the observation error in differential form by
\begin{equation}\label{eq:noisy-obs-error}
\mathcal E(t)\,dt = \Sigma\,dW_t.
\end{equation}
Thus the observation-noise increments are centered Gaussian with covariance
\[
\mathbb E \left[\Sigma\bigl(W_t-W_s\bigr)\right]=0,
\qquad
\mathbb E \left[
\Sigma\bigl(W_t-W_s\bigr)\bigl(\Sigma(W_t-W_s)\bigr)^\top
\right]
=
(t-s)\Sigma\Sigma^\top,
\]
for all \(0\le s\le t\). More generally, for every \(p\ge1\), there exists a
constant \(C_p>0\) such that
\[
\mathbb E \bigl\|\Sigma\bigl(W_t-W_s\bigr)\bigr\|_2^{2p}
\le
C_p \|\Sigma\|_{\mathrm{op}}^{2p} |t-s|^p.
\]
In the isotropic case \(\Sigma=\sigma I_k\), the components of the
observation error are independent and identically distributed Gaussian noises
with variance parameter \(\sigma^2\).

Applying the reconstruction operator \(R_h\) to the noisy observations
\eqref{eq:noisy-obs-level}, and using the linearity of \(R_h\), we obtain the
noisy reconstructed observable
\begin{equation}\label{eq:noisy-reconstructed-observable}
\widetilde I_hu(t)\,dt
:=
R_h\bigl(\widetilde O_h(u(t))\bigr)\,dt
=
I_hu(t)\,dt + \Gamma_h\,dW_t,
\qquad
\Gamma_h:=R_h\Sigma\in\R^{d\times k}.
\end{equation}
Thus the observation error in the measurement space induces, after
reconstruction, an additive random perturbation in the feedback channel. Since
the state space is finite-dimensional, one automatically has
\[
\Tr(\Gamma_h\Gamma_h^\top)=\|\Gamma_h\|_{\mathrm F}^2<\infty.
\]

Accordingly, when the inaccessible exact feedback \(I_hu(t)\) in
\eqref{eq:aot} is replaced by the noisy reconstructed observable
\(\widetilde I_hu(t)\), the feedback law is driven by the available quantity
\(\widetilde I_hu(t)\). Thus the noisy nudged dynamics may first be written in
the form
\begin{equation}\label{eq:aot-noisy-pre}
dv(t)+\nu A v(t)\,dt+N(v(t))\,dt
=
f\,dt-\mu\bigl(I_hv(t)-\widetilde I_hu(t)\bigr)\,dt,
\qquad
v(0)=v_0\in\R^d.
\end{equation}
Using \eqref{eq:noisy-reconstructed-observable}, we may equivalently rewrite
\eqref{eq:aot-noisy-pre} as
\begin{equation}\label{eq:aot-noisy}
dv(t)+\nu A v(t)\,dt+N(v(t))\,dt
=
f\,dt-\mu\bigl(I_hv(t)-I_hu(t)\bigr)\,dt
+\mu \Gamma_h\,dW_t,
\qquad
v(0)=v_0\in\R^d.
\end{equation}
In other words, the measurement error enters the nudging term through the
reconstruction map and appears in the nudged dynamics as an additive
stochastic forcing.

We first establish the basic well-posedness and dissipativity properties of
\eqref{eq:aot-noisy}. As in the deterministic setting, the parameter condition
\(\mu c_0^2 h^2<\nu\) ensures that the coercive effect of the linear part is
not destroyed by the feedback term. The following result is the stochastic
counterpart of Proposition~\ref{prop:dissipativity}: although one can no longer expect a deterministic absorbing ball, one still obtains a uniform \(H\)-moment bound.
\begin{proposition}[Global strong well-posedness and \(H\)-moment boundedness]
\label{prop:stoch-wp-diss}
Suppose that the assumptions of Proposition~\ref{prop:dissipativity} are in force,  and that the parameter condition \eqref{eq:h-small-H} holds together with \(\delta>0\), where \(\delta\) is defined in \eqref{eq:delta-def}.
Let \(u\) be
the global solution of \eqref{eq:truth}, and let
\(\Gamma_h\in\R^{d\times k}\) be defined by
\eqref{eq:noisy-reconstructed-observable}. Then, for every
\(v_0\in\R^d\), the following statements hold.

\begin{enumerate}
\item[\emph{(i)}] The stochastic nudged system \eqref{eq:aot-noisy} admits a
unique global adapted strong solution with continuous paths,
\[
v\in C([0,\infty);\R^d)
\qquad\text{a.s.}
\]

\item[\emph{(ii)}] For every \(t\ge0\),
\begin{equation}\label{eq:stoch-moment-bound}
\mathbb E\normH{v(t)}^2
\le
e^{-\delta t}\normH{v_0}^2
+
\int_0^t e^{-\delta(t-s)}
\left(
\frac{2}{\delta}\normH{f}^2
+
\frac{2\mu^2 C_I^2}{\delta}\normH{u(s)}^2
+
\mu^2 \Tr(\Gamma_h\Gamma_h^\top)
\right)\,ds,
\end{equation}
where \(C_I\) is the \(H\)-operator bound from
Remark~\ref{rem:Ih-bounded}, and \(\delta>0\) is the constant defined in
\eqref{eq:delta-def}.

\item[\emph{(iii)}] Let
\(R_u^2:=\frac{2\alpha\beta+\normH{f}^2}{\alpha^2},\)
where \(\alpha\) and \(\beta\) are the constants in
Assumption~\ref{ass:H4}. Then the deterministic dissipativity estimate gives
\[
\limsup_{t\to\infty}\normH{u(t)}^2 \le R_u^2,
\]
and consequently
\begin{equation}\label{eq:stoch-absorbing}
\limsup_{t\to\infty}\mathbb E\normH{v(t)}^2
\le
\frac{2}{\delta^2}\normH{f}^2
+
\frac{2\mu^2 C_I^2}{\delta^2}R_u^2
+
\frac{\mu^2}{\delta}\Tr(\Gamma_h\Gamma_h^\top).
\end{equation}
In particular, there exist constants \(T_H^{\rm sto}\ge0\) and
\(R_H^{\rm sto}>0\), depending only on the system parameters, the forcing, the
noise level, and the initial data, such that
\[
\sup_{t\ge T_H^{\rm sto}}\mathbb E\normH{v(t)}^2
\le (R_H^{\rm sto})^2 .
\]
\end{enumerate}
\end{proposition}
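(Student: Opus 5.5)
Part (i) is proved by a localization argument. The drift $b(t,v):=f-\nu Av-N(v)-\mu I_hv+\mu I_hu(t)$ is locally Lipschitz in $v$, uniformly for $t$ in compact intervals, because $N$ is locally Lipschitz and $u$ is continuous. The diffusion $\mu\Gamma_h$ is constant. Standard SDE theory therefore gives a unique adapted strong solution up to an explosion time $\tau=\lim_n\tau_n$, where $\tau_n:=\inf\{t:\normH{v(t)}\ge n\}$. Uniqueness holds pathwise. To rule out explosion, we apply the energy estimate of part (ii) to the stopped process $v(t\wedge\tau_n)$. This yields $\E\normH{v(t\wedge\tau_n)}^2\le K(t)$ with $K(t)$ independent of $n$. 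Hence $n^2\,\mathbb P(\tau_n\le t)\le K(t)$, which forces $\tau=\infty$ a.s.

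For part (ii), apply Itô's formula to $\normH{v}^2$:
\[
d\normH{v}^2=\Bigl(-2\nu\normV{v}^2-2\ip{N(v)}{v}_H+2\ip{f}{v}_H-2\mu\ip{I_hv}{v}_H+2\mu\ip{I_hu}{v}_H+\mu^2\Tr(\Gamma_h\Gamma_h^\top)\Bigr)dt+2\mu\ip{v}{\Gamma_h dW_t}_H .
\]
The key step is the feedback term. We write $-\ip{I_hv}{v}_H=-\normH{v}^2+\ip{v-I_hv}{v}_H$. By Assumption~\ref{ass:H3} and Young's inequality,
\[
\mu\ip{v-I_hv}{v}_H\le \mu c_0h\normV{v}\normH{v}\le \tfrac{\mu c_0^2h^2}{2}\normV{v}^2+\tfrac{\mu}{2}\normH{v}^2 .
\]
Since $\mu c_0^2h^2<\nu$, this is absorbed, leaving at least $\nu\normV{v}^2+\mu\normH{v}^2\ge(\nu\lambda_1+\mu)\normH{v}^2$ of dissipation. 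This is the same computation as in Proposition~\ref{prop:dissipativity}(ii), which we reuse. Combined with $\ip{N(v)}{v}_H\ge -C_{\mathrm E}\normH{v}^2$, the drift is bounded by
\[
-2\Bigl(\tfrac{\nu\lambda_1}{2}+\tfrac{\mu}{2}+\tfrac{\nu\lambda_1+\mu}{2}-C_{\mathrm E}\Bigr)\normH{v}^2+\dots\le -2\delta\normH{v}^2+\dots
\]
The remaining terms are handled by Young's inequality. We use $2\ip{f}{v}_H\le\frac{\delta}{2}\normH{v}^2+\frac{2}{\delta}\normH{f}^2$. Using Remark~\ref{rem:Ih-bounded}, we also have $2\mu\ip{I_hu}{v}_H\le\frac{\delta}{2}\normH{v}^2+\frac{2\mu^2C_I^2}{\delta}\normH{u}^2$. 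The net result is a drift of at most $-\delta\normH{v}^2+g(s)$, where $g(s)$ is exactly the integrand in \eqref{eq:stoch-moment-bound}.

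We then apply Itô's formula to $e^{\delta t}\normH{v(t\wedge\tau_n)}^2$ and take expectations. The stopped stochastic integral is a true martingale, since its integrand is bounded before $\tau_n$, so its expectation vanishes. Letting $n\to\infty$ via Fatou's lemma gives \eqref{eq:stoch-moment-bound}. I expect the main obstacle to be this bookkeeping: the Young splits must produce exactly the constants $2/\delta$ and $2\mu^2C_I^2/\delta$ with the decay rate $\delta$. If the first split does not land on them, I would rebalance the weights, using the extra $\frac{\nu\lambda_1+\mu}{2}$ of dissipation as slack.

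For part (iii), Proposition~\ref{prop:dissipativity}(i) gives $\limsup_{t\to\infty}\normH{u(t)}^2\le R_u^2$, and $\normH{u}$ is bounded on $[0,\infty)$. We split the integral in \eqref{eq:stoch-moment-bound} at a time $T$ beyond which $\normH{u}^2\le R_u^2+\epsilon$. The contribution from $[0,T]$ is multiplied by $e^{-\delta(t-T)}\to0$. The tail contributes at most $\delta^{-1}$ times the bound on the integrand. This yields \eqref{eq:stoch-absorbing}. Finally, choosing $(R_H^{\rm sto})^2$ equal to that limsup plus one, and $T_H^{\rm sto}$ sufficiently large, gives the uniform bound.
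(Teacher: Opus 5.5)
Your proposal is correct and follows the paper's proof essentially step for step. The shared steps are local well-posedness plus stopping times with the bound $n^2\,\mathbb P(\tau_n\le t)\le K(t)$ to rule out explosion, It\^o's formula with the same lower bound on $\ip{I_hv}{v}_H$, the same two $\delta/2$ Young splits, and an $\varepsilon$-splitting argument for (iii). Your $e^{\delta t}$-weighted stopped It\^o formula followed by Fatou's lemma is, if anything, a more careful justification of the expectation step than the paper's. The paper simply asserts that the stochastic integral of the unstopped global solution has mean zero.

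There is one slip. The displayed coefficient $\frac{\nu\lambda_1}{2}+\frac{\mu}{2}+\frac{\nu\lambda_1+\mu}{2}-C_{\mathrm E}$ double-counts the dissipation. For $d\normH{v}^2$ the assumptions only guarantee $\nu\lambda_1+\mu-2C_{\mathrm E}=2\delta$, so there is no extra slack available for rebalancing. None is needed, however: $-2\delta+\frac{\delta}{2}+\frac{\delta}{2}=-\delta$ produces exactly the constants $2/\delta$ and $2\mu^2C_I^2/\delta$ in the stated bound.
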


The previous proposition gives the stochastic analogue of the dissipative \(H\)-bound from the deterministic analysis. However, unlike
Corollary~\ref{cor:common-absorbing}, uniform second-moment bounds do not, in general, provide a fixed deterministic post-absorption region on which the local monotonicity constant in Assumption~\ref{ass:H2}\emph{(ii)} can be
frozen. For this reason, the tracking theorem below is stated under the following global two-point condition.

\begin{assumption}[Global one-sided Lipschitz condition]
\label{ass:H2-global}
There exists a constant \(C_{\rm gl}\ge0\) such that, for every
\(z,z'\in\R^d\),
\begin{equation}\label{eq:global-mono}
-\ip{N(z')-N(z)}{z'-z}_H
\le
C_{\rm gl}\normH{z'-z}^2.
\end{equation}
\end{assumption}

Under this additional assumption, one obtains a global mean-square tracking estimate. This is the stochastic counterpart of Theorem~\ref{thm:conv}: the nudged trajectory converges exponentially fast to the truth up to an explicit noise-dependent floor. The proofs of Proposition~\ref{prop:stoch-wp-diss} and Theorem~\ref{thm:stoch-tracking} are deferred to Appendix~\ref{app:baseline-noisy}.

\begin{theorem}[Mean-square tracking under noisy observations]
\label{thm:stoch-tracking}
In the setting of Proposition~\ref{prop:stoch-wp-diss}, suppose in addition that Assumption~\ref{ass:H2-global} holds and that
\begin{equation}\label{eq:stoch-track-smallness}
\mu>2C_{\rm gl}.
\end{equation}
Let \(u\) be the global solution of \eqref{eq:truth}, and let \(v\) be the unique global strong solution of \eqref{eq:aot-noisy}, associated with an arbitrary initial condition pair \((u_0,v_0)\in\R^d\times\R^d\). Then, for
every \(t\ge0\),
\begin{equation}
\mathbb E \normH{v(t)-u(t)}^2
\le
e^{-(\mu-2C_{\rm gl})t}\normH{v_0-u_0}^2
+
\frac{\mu^2\,\Tr(\Gamma_h\Gamma_h^\top)}
{\mu-2C_{\rm gl}}
\Bigl(1-e^{-(\mu-2C_{\rm gl})t}\Bigr).
\label{eq:stoch-tracking}
\end{equation}
Consequently,
\begin{equation}\label{eq:stoch-floor}
\limsup_{t\to\infty}\mathbb E \normH{v(t)-u(t)}^2
\le
\frac{\mu^2\,\Tr(\Gamma_h\Gamma_h^\top)}
{\mu-2C_{\rm gl}}.
\end{equation}
\end{theorem}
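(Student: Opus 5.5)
The plan is a direct Itô energy estimate for the error process $e(t):=v(t)-u(t)$. We combine coercivity from the nudging term, the interpolation bound \eqref{eq:interp}, and the global one-sided Lipschitz condition of Assumption~\ref{ass:H2-global}, and then localize to take expectations.

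\textbf{Step 1: error equation and Itô's formula.} Subtracting \eqref{eq:truth} from \eqref{eq:aot-noisy}, the error satisfies the additive-noise SDE
\[
de=\bigl[-\nu Ae-(N(v)-N(u))-\mu I_h e\bigr]\,dt+\mu\Gamma_h\,dW_t,\qquad e(0)=v_0-u_0 .
\]
Here $u$ is a deterministic $C^1$ path, and $v$ is the strong solution from Proposition~\ref{prop:stoch-wp-diss}. Set $\kappa:=\mu-2C_{\rm gl}>0$. Apply Itô's formula to $e^{\kappa t}\normH{e(t)}^2$, which gives
\[
d\bigl(e^{\kappa t}\normH{e}^2\bigr)=e^{\kappa t}\Bigl(\kappa\normH{e}^2+2\ip{e}{\text{drift}}_H+\mu^2\Tr(\Gamma_h\Gamma_h^\top)\Bigr)dt+2\mu e^{\kappa t}\ip{e}{\Gamma_h dW_t}_H .
\]

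\textbf{Step 2: pathwise drift bound.} We bound the three drift contributions separately.
\begin{itemize}
\item Linear part: $2\ip{e}{-\nu Ae}_H=-2\nu\normV{e}^2$.
\item Nonlinear part: by Assumption~\ref{ass:H2-global}, $-2\ip{N(v)-N(u)}{e}_H\le 2C_{\rm gl}\normH{e}^2$.
\item Nudging part: write $I_he=e-(e-I_he)$. Then Cauchy--Schwarz, \eqref{eq:interp} and Young's inequality give
\[
-2\mu\ip{I_he}{e}_H\le -2\mu\normH{e}^2+2\mu c_0h\normV{e}\normH{e}\le -\mu\normH{e}^2+\mu c_0^2h^2\normV{e}^2 .
\]
\end{itemize}
Since $\mu c_0^2h^2<\nu$ by \eqref{eq:h-small-H}, the $V$-terms combine to at most $-\nu\normV{e}^2\le 0$, and we drop them. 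Hence
\[
2\ip{e}{\text{drift}}_H\le-(\mu-2C_{\rm gl})\normH{e}^2=-\kappa\normH{e}^2 .
\]
The $\kappa\normH{e}^2$ term in Step 1 therefore cancels, leaving only
\[
d\bigl(e^{\kappa t}\normH{e}^2\bigr)\le \mu^2\Tr(\Gamma_h\Gamma_h^\top)e^{\kappa t}\,dt+dM_t,
\]
where $M_t$ denotes the stochastic integral.

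\textbf{Step 3: expectation via localization (the main technical point).} The local martingale $M_t$ need not be a true martingale a priori, so we introduce stopping times $\tau_n:=\inf\{t:\normH{e(t)}\ge n\}$. On $[0,t\wedge\tau_n]$ the integrand of $M$ is bounded, so $\mathbb E M_{t\wedge\tau_n}=0$. Integrating the inequality from Step 2 and taking expectations yields
\[
\mathbb E\bigl[e^{\kappa (t\wedge\tau_n)}\normH{e(t\wedge\tau_n)}^2\bigr]\le \normH{e(0)}^2+\frac{\mu^2\Tr(\Gamma_h\Gamma_h^\top)}{\kappa}\bigl(e^{\kappa t}-1\bigr).
\]
Global existence with continuous paths (Proposition~\ref{prop:stoch-wp-diss}(i)) gives $\tau_n\to\infty$ almost surely. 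Letting $n\to\infty$ and applying Fatou's lemma then yields the same bound for $e^{\kappa t}\,\mathbb E\normH{e(t)}^2$. Multiplying by $e^{-\kappa t}$ gives exactly \eqref{eq:stoch-tracking}. Letting $t\to\infty$ gives \eqref{eq:stoch-floor}.

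The only delicate parts are this localization argument and choosing the Young split so that the interpolation error is absorbed by dissipation using precisely the condition $\mu c_0^2h^2<\nu$. Everything else is algebra, and no absorbing ball is needed because the monotonicity constant is global.
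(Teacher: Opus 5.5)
Your proof is correct and follows the paper's argument: It\^o's formula for the error, the interpolation--Young bound $-2\mu\ip{I_he}{e}_H\le-\mu\normH{e}^2+\mu c_0^2h^2\normV{e}^2$, the global one-sided Lipschitz bound, discarding the $V$-term via $\mu c_0^2h^2<\nu$, and an exponential Gronwall step. The only difference is in the expectation step. You build the integrating factor $e^{\kappa t}$ into It\^o's formula and justify the zero mean of the stochastic integral by localization and Fatou, whereas the paper takes expectations directly and applies Gronwall to the differential inequality for $\mathbb E\normH{w}^2$ (its martingale claim can be backed by the moment bound in Proposition~\ref{prop:stoch-wp-diss}(ii)); your route is slightly more self-contained.
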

Thus noisy observations prevent exact synchronization in general, but the mean-square tracking error remains uniformly controlled by an explicit noise-dependent floor. In the noiseless case \(\Gamma_h=0\), one recovers global exponential synchronization under the stronger global condition \eqref{eq:global-mono}.

\begin{remark}[All-time stochastic tracking estimate]
Unlike Theorem~\ref{thm:conv}, we state Theorem~\ref{thm:stoch-tracking} for
all \(t\ge0\). The reason is that, in the deterministic setting, the local
monotonicity condition in Assumption~\ref{ass:H2}\emph{(ii)} can be used only after the trajectories enter the common post-absorption region from Corollary~\ref{cor:common-absorbing}, which introduces the waiting time \(T_\ast\). In the stochastic setting, however,
Proposition~\ref{prop:stoch-wp-diss} yields only a uniform \(H\)-moment bound, rather than a fixed deterministic absorbing ball on which the local monotonicity constant can be frozen. We therefore replace the post-absorption argument by the global condition in
Assumption~\ref{ass:H2-global}, which allows us to derive the mean-square estimate directly for all \(t\ge0\).
\end{remark}

\section{Nudging accuracy with surrogate dynamics}
\label{sec:surrogate}

In this section, we extend the theory from Section~\ref{sec:baseline} to the setting in which the nudged dynamics are driven by a surrogate model. We introduce the problem set-up in 
Subsection \ref{subsec:sur-setup}.
Subsection \ref{subsec:sur-cutoff} establishes global well-posedness of the nudged surrogate dynamics, followed by an analysis of its dissipative properties in 
Subsection \ref{subsec:sur-postabsorb}. The main result is Theorem \ref{thm:sur-track} in 
Subsection \ref{subsec:sur-track}, which establishes tracking accuracy. 
Subsection \ref{subsec:sur-noisy} generalizes the theory to the noisy-observation setting.

\subsection{True and surrogate model nudged systems}
\label{subsec:sur-setup}

Throughout this section, we consider the following setting:

\paragraph{True system}
As in the previous section, we assume that the true system is given by \begin{equation}\label{eq:sur-truth}
\dot u = F(u),
\qquad
u(0)=u_0\in\R^d,
\end{equation}
where the vector field $F(z):=f-\nu Az-N(z)$ satisfies Assumptions~\ref{ass:H1}, \ref{ass:H4}, and \ref{ass:H2}. We return to the compact notation of \eqref{eq:truthintro} by collecting the forcing term, the linear dissipative part, and the nonlinearity into the full exact vector field $F.$

\paragraph{Surrogate model nudged system}
We consider the nudged system
\begin{equation}\label{eq:sur-aot}
\dot v
=
F_M(v)-\mu\bigl(I_hv-I_hu\bigr),
\qquad
v(0)=v_0\in\R^d ,
\end{equation}
where \(I_h\) is an observation map satifying Assumption~\ref{ass:H3}, \(\mu>0\) is the nudging parameter, and \(F_M:\R^d\to\R^d\) is a surrogate drift. We refer to \eqref{eq:sur-aot} as the \emph{surrogate nudged system} and to the corresponding data assimilation procedure as \emph{surrogate AOT}. 

\bigskip

In this section we identify general and checkable residual conditions on the surrogate drift that ensure global well-posedness of the surrogate-based nudged dynamics and preserve tracking of the truth up to an explicit model-error floor. These conditions do not require the surrogate \(F_M\) to share the structural decomposition of the exact model \(F\) into forcing, linear dissipative, and nonlinear components.
Thus, the learned surrogate \(F_M\) can be viewed as an approximation of the full vector field \(F\), rather than of its individual components.

\subsection{Cutoff construction and global well-posedness}
\label{subsec:sur-cutoff}

In practice, a data-driven surrogate is typically trained only on a bounded
region of phase space visited by the trajectories of interest. One should
therefore not expect the learned model to be globally accurate. For the
tracking analysis below, however, global accuracy is not needed. What is needed
is a globally defined drift that agrees with the learned model on the relevant
region and remains well behaved outside it. We construct such a drift by
blending the learned local surrogate with a reference dissipative field through
a smooth cutoff.

Let \(\widehat F_M\) denote a learned local surrogate drift, defined at least on an open neighborhood of
\(\{z\in\R^d:\normV{z}\le R_{\mathrm{ext}}^+\}.\)
Fix radii
\[
R_{\mathrm{ext}}^{+}>R_{\mathrm{ext}}>0,
\]
and choose a smooth cutoff function
\(\psi\in C^\infty([0,\infty);[0,1])\) such that
\[
\psi(r)=1 \quad \text{for } r\le R_{\mathrm{ext}}^2,
\qquad
\psi(r)=0 \quad \text{for } r\ge (R_{\mathrm{ext}}^{+})^2.
\]
Define
\begin{equation}
    \label{eq:cutoff}
    \chi(z):=\psi(\normV{z}^2),
\qquad z\in\R^d.
\end{equation}
Then \(\chi\) is smooth and bounded, and satisfies
\[
\chi(z)=1 \quad \text{for } \normV{z}\le R_{\mathrm{ext}},
\qquad
\chi(z)=0 \quad \text{for } \normV{z}\ge R_{\mathrm{ext}}^{+}.
\]

Let \(F_{\mathrm{diss}}:\R^d\to\R^d\) be a reference dissipative drift. A
simple choice is
\begin{equation}\label{eq:f_diss}
F_{\mathrm{diss}}(z)=f-Bz,
\end{equation}
where \(B\in\R^{d\times d}\) is any fixed symmetric positive definite matrix.
For simplicity of analysis, in the remainder of this section we restrict
attention to this linear choice.

We then define the cutoff-extended surrogate drift by
\begin{equation}\label{eq:sur-FM-ext}
F_M(z)
:=
\chi(z)\,\widehat F_M(z)
+
\bigl(1-\chi(z)\bigr)\,F_{\mathrm{diss}}(z),
\qquad z\in\R^d.
\end{equation}
By construction,
\[
F_M(z)=\widehat F_M(z)
\qquad\text{whenever }\normV{z}\le R_{\mathrm{ext}},
\]
so the extension does not alter the learned model on the region where its
accuracy is intended to be used.

In view of Proposition~\ref{prop:wp}, the global well-posedness of the true system follows from Assumptions~\ref{ass:H1}, \ref{ass:H4}, and \ref{ass:H2}. The next result addresses the global well-posedness of the surrogate nudged system \eqref{eq:sur-aot} generated by the cutoff-extended drift \eqref{eq:sur-FM-ext}. Its proof is deferred to Appendix~\ref{app:sur-extension}.

\begin{proposition}[Global well-posedness under the cutoff construction]
\label{prop:sur-wp-cutoff}
Let Assumptions~\ref{ass:H1}, \ref{ass:H4}, \ref{ass:H2} and \ref{ass:H3} hold. Assume moreover that
\(\widehat F_M\) is locally Lipschitz on an open neighborhood of
\[
\{z\in\R^d:\normV{z}\le R_{\mathrm{ext}}^{+}\}.
\]
Let \(F_{\mathrm{diss}}\) be defined as in \eqref{eq:f_diss}, and let \(F_M\)
be defined by \eqref{eq:sur-FM-ext}. Then, for every initial pair
\((u_0,v_0)\in\R^d\times\R^d\), if \(u\) denotes the corresponding global
true solution of \eqref{eq:sur-truth}, the surrogate nudged system
\eqref{eq:sur-aot} admits a unique global solution
\[
v\in C^1([0,\infty);\R^d).
\]
\end{proposition}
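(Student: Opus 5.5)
The plan is the standard one in finite dimensions: local existence and uniqueness from local Lipschitz continuity of the vector field, then global existence from an a priori bound that rules out finite-time blow-up. By Proposition~\ref{prop:wp}, the true solution \(u\in C^1([0,\infty);\R^d)\) exists globally, so \(t\mapsto \mu I_h u(t)\) is a continuous forcing term. Write the right-hand side of \eqref{eq:sur-aot} as \(G(t,z):=F_M(z)-\mu I_h z+\mu I_h u(t)\). This map is continuous in \(t\) and, as shown next, locally Lipschitz in \(z\) uniformly on compact time intervals.

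\textbf{Local Lipschitz continuity of \(F_M\).} The cutoff \(\chi(z)=\psi(\normV{z}^2)\) is a composition of smooth maps, so it is smooth and, in particular, locally Lipschitz and bounded. On the open neighborhood \(\mathcal U\) of \(\{\normV{z}\le R_{\mathrm{ext}}^+\}\) where \(\widehat F_M\) is locally Lipschitz, the product \(\chi\widehat F_M\) is locally Lipschitz, because products of locally Lipschitz maps are locally Lipschitz. Outside the closed ball \(\{\normV{z}\le R_{\mathrm{ext}}^+\}\) the function \(\chi\) vanishes, so we extend \(\chi\widehat F_M\) by zero there. The two open sets \(\mathcal U\) and \(\{\normV{z}>R_{\mathrm{ext}}^+\}\) cover \(\R^d\), and on their overlap both descriptions give zero. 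Hence \(\chi\widehat F_M\) is locally Lipschitz on all of \(\R^d\), since local Lipschitz continuity is a local property. The term \((1-\chi)F_{\mathrm{diss}}\) is smooth. With \(I_h\) linear, Picard--Lindelöf gives a unique maximal \(C^1\) solution on \([0,T_{\max})\).

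\textbf{A priori bound, the main step.} I expect this to be the only real obstacle. Outside the ball, \(F_M=f-Bz\) is affine, so the tail growth is linear. To prove global existence I would avoid the dissipativity of \(B\) and use a cruder linear-growth bound. First, \(\chi\widehat F_M\) is continuous and supported in the compact set \(\{\normV{z}\le R_{\mathrm{ext}}^+\}\), so it is bounded by some constant \(M_1\). Since \(0\le 1-\chi\le 1\), we get \(\normH{(1-\chi(z))F_{\mathrm{diss}}(z)}\le \normH{f}+\|B\|\normH{z}\). Combining these with Remark~\ref{rem:Ih-bounded},
\[
\tfrac12\tfrac{d}{dt}\normH{v}^2\le \bigl(M_1+\normH{f}+\mu C_I\normH{u(t)}\bigr)\normH{v}+(\|B\|+\mu C_I)\normH{v}^2 .
\]
Here \(\normH{u(t)}\) is bounded on \([0,\infty)\) by Proposition~\ref{prop:dissipativity}(i). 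Applying Gronwall's inequality to \(1+\normH{v}^2\) shows that \(\normH{v(t)}\) grows at most exponentially, and in particular stays finite on every bounded interval.

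\textbf{Conclusion.} The blow-up alternative then forces \(T_{\max}=\infty\). The solution is \(C^1\) because the right-hand side \(G\) is continuous along it. Uniqueness follows from the local Lipschitz property established above.
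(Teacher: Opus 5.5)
Your proposal is correct and follows essentially the same route as the paper. Both arguments get local Lipschitz continuity of $F_M$ from the compact support of $\chi$ by extending $\chi\widehat F_M$ by zero. Both then use a linear-growth bound for the nonautonomous right-hand side and Gronwall to exclude finite-time blow-up. The only differences are cosmetic: you run the energy estimate on $\normH{v}^2$ and bound $u$ globally via Proposition~\ref{prop:dissipativity}(i), whereas the paper bounds $\frac{d}{dt}\normH{v}$ directly and uses continuity of $u$ on $[0,T]$.
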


\subsection{Post-absorption ball for tracking}
\label{subsec:sur-postabsorb}

For the tracking analysis, global well-posedness alone is not sufficient. We also need a bounded region in phase space that eventually contains both the true trajectory and the surrogate nudged trajectory, so that the local properties of the exact and surrogate drifts can be applied. This is the surrogate analogue of the post-absorption regime used in the exact-model analysis of Section~\ref{sec:baseline}. The next proposition establishes such an absorbing region for the cutoff-extended surrogate dynamics under a compatibility condition between the exterior dissipation and the nudging strength. The proofs of the results in this subsection are deferred to Appendix~\ref{app:sur-extension}.

\begin{proposition}[Absorbing ball for the surrogate dynamics]
\label{prop:sur-absorb}
Suppose Assumptions~\ref{ass:H1}, \ref{ass:H4}, \ref{ass:H2}, and
\ref{ass:H3} are satisfied. Suppose moreover that \(F_{\mathrm{diss}}\) is defined as in \eqref{eq:f_diss}, namely \(F_{\mathrm{diss}}(z)=f-Bz,\, z\in\R^d,\) where \(B\in\R^{d\times d}\) is symmetric positive definite. Let
\(\lambda_B:=\lambda_{\min}(B)\), and assume that
\begin{equation}\label{eq:sur-betaB}
\lambda_B-\mu C_I>0.
\end{equation}
Then, for every initial pair \((u_0,v_0)\in\R^d\times\R^d\), if \(u\) and
\(v\) denote the corresponding global solutions of \eqref{eq:sur-truth} and
\eqref{eq:sur-aot}, respectively, there exist constants \(T_M\ge0\) and
\(R_M^H>0\), depending only on the system parameters, the cutoff construction,
and the initial data, such that
\begin{equation}\label{eq:sur-H-absorb}
\sup_{t\ge T_M}\normH{v(t)}\le R_M^H.
\end{equation}
Consequently, there exists a constant \(R_M>0\) such that
\begin{equation}\label{eq:sur-V-absorb}
\sup_{t\ge T_M}\normV{v(t)}\le R_M.
\end{equation}
\end{proposition}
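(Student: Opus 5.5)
The plan is an energy estimate for \(\normH{v}^2\) along the surrogate nudged system \eqref{eq:sur-aot}, treating the inner and outer regions of the cutoff separately. Outside the ball \(\{\normV{z}\le R_{\mathrm{ext}}^+\}\) the drift reduces exactly to the dissipative reference field \(F_{\mathrm{diss}}(z)=f-Bz\). Inside it, \(F_M\) is a continuous function on a compact set. Write
\[
F_M(z)=F_{\mathrm{diss}}(z)+\chi(z)\bigl(\widehat F_M(z)-F_{\mathrm{diss}}(z)\bigr),
\]
and set
\[
K:=\sup_{\normV{z}\le R_{\mathrm{ext}}^+}\normH{\widehat F_M(z)-F_{\mathrm{diss}}(z)}.
\]
This constant is finite because \(\widehat F_M\) is locally Lipschitz, hence continuous, on a neighborhood of that compact ball. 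It follows that \(F_M(z)=f-Bz+g(z)\) with \(g\) supported in the ball and \(\normH{g(z)}\le K\) everywhere. The surrogate dynamics is therefore a linear dissipative field plus a bounded, compactly supported perturbation.

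Next, compute \(\tfrac12\frac{d}{dt}\normH{v}^2=\ip{F_M(v)}{v}_H-\mu\ip{I_hv}{v}_H+\mu\ip{I_hu}{v}_H\). Each term is bounded as follows:
\begin{itemize}
\item \(\ip{-Bv}{v}_H\le-\lambda_B\normH{v}^2\);
\item \(-\mu\ip{I_hv}{v}_H\le\mu C_I\normH{v}^2\), by Remark~\ref{rem:Ih-bounded};
\item \(\ip{f+g(v)}{v}_H\le(\normH{f}+K)\normH{v}\).
\end{itemize}
For the last term, use that \(u\) is eventually bounded by Proposition~\ref{prop:dissipativity}(i): \(\normH{u(t)}\le R_u+1=:\bar R_u\) for \(t\ge T_u\), and \(\normH{u(t)}\) is bounded for all \(t\ge0\) by \eqref{eq:H-abs-bound}. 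This gives \(\mu\ip{I_hu}{v}_H\le\mu C_I\bar R_u\normH{v}\) for \(t\ge T_u\).

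Set \(\gamma:=\lambda_B-\mu C_I>0\), which is exactly \eqref{eq:sur-betaB}. Absorbing the linear terms with Young's inequality gives
\[
\frac{d}{dt}\normH{v}^2\le-\gamma\normH{v}^2+\frac{1}{\gamma}\bigl(\normH{f}+K+\mu C_I\bar R_u\bigr)^2 .
\]
This holds for \(t\ge T_u\); on \([0,T_u]\) the same bound holds with \(\bar R_u\) replaced by \(\sup_t\normH{u(t)}\). Grönwall's inequality then yields
\[
\normH{v(t)}^2\le e^{-\gamma(t-T_u)}\normH{v(T_u)}^2+\frac{M^2}{\gamma^2},\qquad M:=\normH{f}+K+\mu C_I\bar R_u,
\]
where \(\normH{v(T_u)}\) is finite by Proposition~\ref{prop:sur-wp-cutoff}. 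Choose \((R_M^H)^2:=2M^2/\gamma^2\), and take \(T_M\) large enough that the transient term falls below \(M^2/\gamma^2\). This proves \eqref{eq:sur-H-absorb}. The \(V\)-bound \eqref{eq:sur-V-absorb} then follows from the norm equivalence \eqref{eq:norm-equivalence} with \(R_M:=\lambda_{\max}^{1/2}R_M^H\).

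The main obstacle is the cross term \(-\mu\ip{I_hv}{v}_H\). The nudging term is not assumed to be coercive in general, so it must be dominated by the exterior dissipation; this is precisely why the hypothesis \(\lambda_B>\mu C_I\) is imposed. The second delicate point is that nothing is assumed about the sign of \(\ip{\widehat F_M(z)}{z}_H\) inside the ball. This is handled by treating the learned part only as a bounded perturbation with constant \(K\), which is legitimate because the cutoff confines it to a compact set.
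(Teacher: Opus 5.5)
Your proposal is correct and follows essentially the same argument as the paper. Like the paper, you write $F_M(z)=f-Bz+G_M(z)$ with $G_M$ a bounded, compactly supported cutoff correction, and bound $-\mu\ip{I_hv}{v}_H\le\mu C_I\normH{v}^2$ so that $\lambda_B-\mu C_I>0$ supplies the damping. You then use the eventual $H$-bound on $u$ from Proposition~\ref{prop:dissipativity}, apply Young and Gr\"onwall, and pass to the $V$-norm by norm equivalence.

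The only cosmetic difference is the choice of constants. The paper takes $T_M:=T_u$ and $R_M^H:=\max\{\normH{v(T_u)},C_u/\rho_B\}$, whereas you wait longer to obtain the radius $\sqrt2\,M/\gamma$, which is independent of $v_0$. Both choices are admissible under the statement.
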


Combining the preceding proposition with the dissipativity of the true system established in
Section~\ref{sec:baseline}, we obtain a common post-absorption ball for the
true and surrogate trajectories.

\begin{corollary}[Common post-absorption ball]
\label{cor:sur-common-ball}
Under the assumptions and notation
of Proposition~\ref{prop:sur-absorb}, there exist constants \(T_\ast\ge0\) and
\(R_\ast>0\) such that
\begin{equation}\label{eq:sur-common-ball}
\sup_{t\ge T_\ast}\normV{u(t)}\le R_\ast,
\qquad
\sup_{t\ge T_\ast}\normV{v(t)}\le R_\ast.
\end{equation}
\end{corollary}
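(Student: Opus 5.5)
The plan is to combine the truth bound from Proposition~\ref{prop:dissipativity}\emph{(i)} with the surrogate bound from Proposition~\ref{prop:sur-absorb}, using norm equivalence to convert \(H\)-bounds into \(V\)-bounds. The truth estimate \eqref{eq:H-abs-bound} relies only on Assumption~\ref{ass:H4} and the form of \(F\), so it applies unchanged to \eqref{eq:sur-truth}. Since \(e^{-\alpha t}\le 1\) and \(1-e^{-\alpha t}\le 1\), it gives for every \(t\ge0\)
\[
\normH{u(t)}^2\le \normH{u_0}^2+\frac{2\beta+\normH{f}^2/\alpha}{\alpha}=:\bigl(R_u^{H}\bigr)^2 .
\]
This bound holds for all times, so no entrance time is needed for the truth. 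If a bound independent of \(u_0\) is preferred, one can instead pick \(T_u\) such that \(e^{-\alpha T_u}\normH{u_0}^2\le 1\) and use radius \(\bigl(1+(2\beta+\normH{f}^2/\alpha)/\alpha\bigr)^{1/2}\) for \(t\ge T_u\).

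Next I would invoke \eqref{eq:norm-equivalence}, namely \(\normV{z}\le\lambda_{\max}^{1/2}\normH{z}\), to get \(\sup_{t\ge0}\normV{u(t)}\le\lambda_{\max}^{1/2}R_u^H\). For the surrogate trajectory, Proposition~\ref{prop:sur-absorb} holds under exactly the stated hypotheses, including \eqref{eq:sur-betaB}, and \eqref{eq:sur-V-absorb} gives \(\sup_{t\ge T_M}\normV{v(t)}\le R_M\).

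Finally I would set
\[
T_\ast:=T_M\ (\text{or }\max\{T_M,T_u\}),\qquad R_\ast:=\max\bigl\{\lambda_{\max}^{1/2}R_u^H,\;R_M\bigr\}.
\]
Then both inequalities in \eqref{eq:sur-common-ball} hold for all \(t\ge T_\ast\), because each supremum is taken over a subset of the times where the corresponding individual bound holds.

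There is no real obstacle, since the substantive work sits inside Proposition~\ref{prop:sur-absorb}. The only point to check is that Proposition~\ref{prop:dissipativity}\emph{(i)} needs none of the nudging conditions (\eqref{eq:h-small-H}, \(\delta>0\)). This is the case, because that part concerns only the true system, which does not involve \(\mu\), \(h\), or \(F_M\). The dependence of \(T_\ast\) and \(R_\ast\) on the initial data is inherited from the two propositions.
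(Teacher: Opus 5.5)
Your proposal is correct and follows the paper's proof. You bound the truth in \(H\) via Proposition~\ref{prop:dissipativity}\emph{(i)}, which indeed uses only Assumption~\ref{ass:H4}, convert this to a \(V\)-bound by \eqref{eq:norm-equivalence}, combine it with the \(V\)-bound \eqref{eq:sur-V-absorb} for \(v\), and take \(T_\ast,R_\ast\) as maxima; your use of the all-time truth bound, so that \(T_\ast=T_M\) already suffices, is only a cosmetic simplification of the paper's \(T_\ast=\max\{T_u,T_v\}\).
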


The values of \(T_\ast\) and \(R_\ast\) here may differ from the correponding constants used in the exact-model analysis of Section~2; see Corollary~\ref{cor:common-absorbing}. For simplicity, we use this notation generically throughout the paper for the waiting time and radius of the relevant common post-absorption ball, both in the exact-model AOT and surrogate AOT settings. 
We define the corresponding post-absorption ball by
\begin{equation}\label{eq:sur-Bstar}
\cB_\ast:=\{z\in\R^d:\normV{z}\le R_\ast\}.
\end{equation}
In what follows,  we require the cutoff radii to be chosen so that
\begin{equation}\label{eq:choice-of-radii}
R_\ast\le R_{\mathrm{ext}}.
\end{equation}
 This condition ensures that the post-absorption region used in the tracking analysis lies inside the region where the cutoff is inactive. The cutoff construction is designed so that the exterior dissipative field provides global control outside the cutoff region, while the tracking error analysis uses the learned surrogate only on the post-absorption ball. In the numerical experiments, this post-absorption region is estimated from the long-time dynamics as an empirical proxy. Since \(R_{\rm ext}\) and the exterior dissipation matrix \(B\) are design parameters, the cutoff radii are chosen so that this empirical region lies inside the cutoff-inactive set.
Hence the cutoff is inactive on \(\cB_\ast\), and
\[
F_M\equiv \widehat F_M
\qquad\text{on }\cB_\ast.
\]
Therefore, the residual conditions used below are purely local: actually they are conditions on the learned surrogate \(\widehat F_M\) only on the
post-absorption ball.

We next record two structural estimates that will be used repeatedly. Both
follow directly from our assumptions on the observation model and the true dynamics.

\begin{lemma}[Lower bound for the feedback term]
\label{lem:ih-lower-finitedim}
Let Assumption~\ref{ass:H3} hold. Then, for every \(z\in\R^d\),
\begin{equation}\label{eq:ih-lower-finitedim}
\ip{I_hz}{z}_H
\ge
\frac12\,\normH{z}^2
-
\frac{c_0^2h^2}{2}\,\normV{z}^2.
\end{equation}
\end{lemma}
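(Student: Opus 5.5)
The plan is to use polarization together with the interpolation bound \eqref{eq:interp}. For \(z\in\R^d\), write \(I_hz = z-(z-I_hz)\). Then
\[
\ip{I_hz}{z}_H=\normH{z}^2-\ip{z-I_hz}{z}_H .
\]
The argument therefore reduces to an upper bound on the cross term \(\ip{z-I_hz}{z}_H\). This bound must cost at most half of \(\normH{z}^2\) plus a multiple of \(\normV{z}^2\).

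To get it, I will apply the Cauchy--Schwarz inequality and then Young's inequality \(ab\le \tfrac12 a^2+\tfrac12 b^2\):
\[
\ip{z-I_hz}{z}_H\le \normH{z-I_hz}\,\normH{z}\le \tfrac12\normH{z}^2+\tfrac12\normH{z-I_hz}^2 .
\]
Alternatively, I can use the exact identity \(\ip{z-I_hz}{z}_H=\tfrac12\normH{z}^2+\tfrac12\normH{z-I_hz}^2-\tfrac12\normH{I_hz}^2\) and drop the nonpositive last term. Either route leads to the same estimate.

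Next I will invoke Assumption~\ref{ass:H3} to bound \(\normH{z-I_hz}^2\le c_0^2h^2\normV{z}^2\). Substituting this gives
\[
\ip{I_hz}{z}_H\ge \normH{z}^2-\tfrac12\normH{z}^2-\tfrac{c_0^2h^2}{2}\normV{z}^2 ,
\]
which is exactly \eqref{eq:ih-lower-finitedim}.

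I do not expect a real obstacle. The only choice to make is the Young weight: equal weights \(\tfrac12,\tfrac12\) are what produce the stated constants. Linearity of \(I_h\) is not needed beyond the pointwise bound \eqref{eq:interp}, and the norm equivalence \eqref{eq:norm-equivalence} is not needed at all.
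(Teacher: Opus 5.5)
Your proposal is correct and follows essentially the paper's own proof: the same splitting $I_hz=z-(z-I_hz)$, then Cauchy--Schwarz and Young's inequality with equal weights $\tfrac12,\tfrac12$. The only difference is that the paper inserts the interpolation bound \eqref{eq:interp} before applying Young rather than after, which is immaterial.
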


\begin{lemma}[Squeezing property of the exact drift on \(\cB_\ast\)]
\label{lem:sur-squeeze-F}
Let Assumptions~\ref{ass:H1} and \ref{ass:H2}\emph{(ii)} hold. Let
\(\cB_\ast\) be defined by \eqref{eq:sur-Bstar}, and set
\[
R_H^\ast:=\frac{R_\ast}{\sqrt{\lambda_1}}.
\]
Then there exists a constant \(C_{\mathrm{sq}}:=C_{R_H^\ast}\ge0\) such that,
for all \(z,z'\in\cB_\ast\),
\begin{equation}\label{eq:sur-squeeze-F}
\ip{F(z')-F(z)}{z'-z}_H
\le
-\nu\,\normV{z'-z}^2
+
C_{\mathrm{sq}}\,\normH{z'-z}^2.
\end{equation}
\end{lemma}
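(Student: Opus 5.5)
The plan is to expand $F(z')-F(z)$ using the structure $F(z)=f-\nu Az-N(z)$. The constant forcing cancels, leaving
\[
\ip{F(z')-F(z)}{z'-z}_H
=
-\nu\,\ip{A(z'-z)}{z'-z}_H
-\ip{N(z')-N(z)}{z'-z}_H .
\]
We then treat the two terms separately.

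\emph{Linear term.} By Assumption~\ref{ass:H1} and the definition of the energy norm in Remark~\ref{rem:energynorm}, $\ip{A w}{w}_H=w^\top A w=\normV{w}^2$. Taking $w=z'-z$, the linear term equals exactly $-\nu\normV{z'-z}^2$, with no loss.

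\emph{Nonlinear term.} We want to invoke the local monotonicity bound \eqref{eq:local-mono} of Assumption~\ref{ass:H2}\emph{(ii)}. That bound is formulated on $H$-balls, whereas $\cB_\ast$ is a $V$-ball, so the one step needing care is converting between the two. By the norm equivalence \eqref{eq:norm-equivalence}, $\lambda_1\normH{z}^2\le\normV{z}^2\le R_\ast^2$ for every $z\in\cB_\ast$. Hence $\normH{z}\le R_\ast/\sqrt{\lambda_1}=R_H^\ast$, and $\cB_\ast$ is contained in the $H$-ball of radius $R_H^\ast$.

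Applying \eqref{eq:local-mono} with $R=R_H^\ast$ then gives
\[
-\ip{N(z')-N(z)}{z'-z}_H\le C_{R_H^\ast}\normH{z'-z}^2
\]
for all $z,z'\in\cB_\ast$.

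Adding the bounds for the linear and nonlinear terms yields \eqref{eq:sur-squeeze-F} with $C_{\mathrm{sq}}=C_{R_H^\ast}$, which is nonnegative by Assumption~\ref{ass:H2}. None of these steps presents a real obstacle; the only subtlety is the $V$-to-$H$ radius conversion above.
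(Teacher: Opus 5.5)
Your proof is correct and follows essentially the same route as the paper: you cancel the forcing, identify the linear term exactly as $-\nu\normV{z'-z}^2$, and use the norm equivalence \eqref{eq:norm-equivalence} to place $\cB_\ast$ inside the $H$-ball of radius $R_H^\ast$. You then apply Assumption~\ref{ass:H2}\emph{(ii)} there to obtain $C_{\mathrm{sq}}=C_{R_H^\ast}$.
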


We now introduce the only new quantities needed for the
surrogate tracking analysis: the sup-norm and Lipschitz constant of the residual $r_M: = F_M-F$ on the post-absorption ball.

\begin{definition}[Surrogate residual on \(\cB_\ast\)]
\label{def:sur-residual}
Let
\[
r_M:=F_M-F:\R^d\to\R^d.
\]
We define
\begin{equation}\label{eq:sur-residual-def}
\delta_M
:=
\sup_{z\in\cB_\ast}\normH{r_M(z)},
\qquad
\ell_M
:=
\Lip(r_M;\cB_\ast)
:=
\sup_{\substack{z,z'\in\cB_\ast\\ z\neq z'}}
\frac{\normH{r_M(z')-r_M(z)}}{\normH{z'-z}}.
\end{equation}
\end{definition}

The next lemma shows that the surrogate model inherits the squeezing property from the true dynamics.
\begin{lemma}[Inherited squeezing for \(F_M\) on \(\cB_\ast\)]
\label{lem:sur-squeeze-FM}
Retain the hypotheses and notation of Lemma~\ref{lem:sur-squeeze-F}, and let \(\ell_M\) be as in Definition~\ref{def:sur-residual}. Then, for all \(z,z'\in\cB_\ast\),
\begin{equation}\label{eq:sur-squeeze-FM}
\ip{F_M(z')-F_M(z)}{z'-z}_H
\le
-\nu\,\normV{z'-z}^2
+
\bigl(C_{\mathrm{sq}}+\ell_M\bigr)\,\normH{z'-z}^2.
\end{equation}
\end{lemma}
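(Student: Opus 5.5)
The plan is to write $F_M = F + r_M$ and split the inner product into an exact-drift part and a residual part:
\[
\ip{F_M(z')-F_M(z)}{z'-z}_H
=
\ip{F(z')-F(z)}{z'-z}_H
+
\ip{r_M(z')-r_M(z)}{z'-z}_H ,
\qquad z,z'\in\cB_\ast .
\]
The first term is handled directly by Lemma~\ref{lem:sur-squeeze-F}. Since both $z$ and $z'$ lie in $\cB_\ast$, that lemma gives the bound $-\nu\normV{z'-z}^2 + C_{\mathrm{sq}}\normH{z'-z}^2$.

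For the second term we use the Cauchy--Schwarz inequality in $H$ together with Definition~\ref{def:sur-residual}. For $z,z'\in\cB_\ast$ with $z\neq z'$, we have
\[
\ip{r_M(z')-r_M(z)}{z'-z}_H \le \normH{r_M(z')-r_M(z)}\,\normH{z'-z} \le \ell_M \normH{z'-z}^2 .
\]
When $z=z'$ both sides of \eqref{eq:sur-squeeze-FM} vanish, so that case is trivial. Adding the two bounds yields \eqref{eq:sur-squeeze-FM}.

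The only point requiring care is the definition of $\ell_M$. It is a Lipschitz constant of $r_M$ on $\cB_\ast$, which we take to be finite; otherwise the statement holds vacuously. It is defined using $H$-norms, and this matches the $H$-norm appearing in the Cauchy--Schwarz step, so no norm-equivalence constants enter. No cutoff considerations arise, because the inequality is stated only for points of $\cB_\ast$ and uses $F_M$ only through $r_M=F_M-F$ there. I do not expect any real obstacle: the argument is a direct perturbation of Lemma~\ref{lem:sur-squeeze-F}.
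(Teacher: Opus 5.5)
Your proposal is correct and follows the paper's proof essentially verbatim: write $F_M=F+r_M$, bound the exact-drift part by Lemma~\ref{lem:sur-squeeze-F}, and bound the residual part by Cauchy--Schwarz and the definition of $\ell_M$. Your extra remarks on the case $z=z'$ and on finiteness of $\ell_M$ are harmless clarifications that the paper leaves implicit.
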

This lemma will be used to establish tracking accuracy guarantees under model error. 

\subsection{Tracking under model error}
\label{subsec:sur-track}

The main result of this section is the following:

\begin{theorem}[Exponential tracking up to model-error floor]
\label{thm:sur-track}
Suppose that Assumptions~\ref{ass:H1}, \ref{ass:H4},  \ref{ass:H2}, and \ref{ass:H3}  hold. Let \(F_M\) be the cutoff-extended surrogate drift defined in \eqref{eq:sur-FM-ext}, with exterior drift \(F_{\mathrm{diss}}(z)=f-Bz,\) where \(B\in\R^{d\times d}\) is symmetric positive definite, and with local learned surrogate \(\widehat F_M\) locally Lipschitz on an open neighborhood of \(\{z\in\R^d:\normV{z}\le R_{\mathrm{ext}}^+\}.\) Set \(\lambda_B:=\lambda_{\min}(B)\), and suppose that
\[
\lambda_B-\mu C_I>0,
\]
where \(C_I\) is the \(H\)-operator bound for \(I_h\) from Remark~\ref{rem:Ih-bounded}. For arbitrary \(u_0,v_0\in\R^d\), let \(u\) and \(v\) be the corresponding
solutions of \eqref{eq:sur-truth} and \eqref{eq:sur-aot}, respectively. Let \(T_\ast\), \(R_\ast\), and \(\cB_\ast\) be the corresponding post-absorption time, radius, and ball from Corollary~\ref{cor:sur-common-ball}, and let \(C_{\mathrm{sq}}\) be the constant from Lemma~\ref{lem:sur-squeeze-F}. Suppose that \(\delta_M,\ell_M<\infty\), where \(\delta_M\) and \(\ell_M\) are as in Definition~\ref{def:sur-residual}. If
\begin{equation}\label{eq:sur-cond-mu-h}
\mu>2\bigl(C_{\mathrm{sq}}+\ell_M\bigr),
\qquad
\mu c_0^2h^2<\nu,
\end{equation}
then, with
\[
\gamma_M:=\mu-2\bigl(C_{\mathrm{sq}}+\ell_M\bigr)>0,
\qquad
\nu_{\mathrm{eff}}:=\nu-\frac{\mu c_0^2h^2}{2}>0,
\]
for every \(t\ge T_\ast\),
\begin{equation}\label{eq:sur-track-bound}
\|v(t)-u(t)\|_H^2
\le
e^{-\gamma_M(t-T_\ast)}\|v(T_\ast)-u(T_\ast)\|_H^2
+
\frac{\delta_M^2}{\lambda_1\nu_{\mathrm{eff}}\gamma_M}
\Bigl(1-e^{-\gamma_M(t-T_\ast)}\Bigr).
\end{equation}
Consequently,
\[
\limsup_{t\to\infty}\|v(t)-u(t)\|_H^2
\le
\frac{\delta_M^2}{\lambda_1\nu_{\mathrm{eff}}\gamma_M}.
\]
\end{theorem}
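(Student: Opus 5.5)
Set $w:=v-u$ and take $t\ge T_\ast$, so that by Corollary~\ref{cor:sur-common-ball} both $u(t)$ and $v(t)$ lie in $\cB_\ast$. Subtracting \eqref{eq:sur-truth} from \eqref{eq:sur-aot} and splitting $F_M(v)-F(u)=[F_M(v)-F_M(u)]+r_M(u)$ gives
\[
\dot w = F_M(v)-F_M(u) + r_M(u) - \mu I_h w .
\]
Pairing with $w$ in $H$ yields
\[
\tfrac12\tfrac{d}{dt}\normH{w}^2 = \ip{F_M(v)-F_M(u)}{w}_H + \ip{r_M(u)}{w}_H - \mu\ip{I_hw}{w}_H .
\]
The first term is bounded by Lemma~\ref{lem:sur-squeeze-FM}, which gives $-\nu\normV{w}^2+(C_{\mathrm{sq}}+\ell_M)\normH{w}^2$. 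The feedback term is bounded by Lemma~\ref{lem:ih-lower-finitedim}, which gives $-\tfrac{\mu}{2}\normH{w}^2+\tfrac{\mu c_0^2h^2}{2}\normV{w}^2$. Together these leave
\[
-\nu_{\mathrm{eff}}\normV{w}^2-\Bigl(\tfrac{\mu}{2}-C_{\mathrm{sq}}-\ell_M\Bigr)\normH{w}^2 = -\nu_{\mathrm{eff}}\normV{w}^2-\tfrac{\gamma_M}{2}\normH{w}^2 .
\]

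For the residual term I would use $\normH{r_M(u)}\le\delta_M$, valid since $u(t)\in\cB_\ast$. The main point is to absorb this term into the $V$-dissipation and not into the $\gamma_M$ part; this is what produces the factor $\lambda_1\nu_{\mathrm{eff}}$ in the floor. By Cauchy--Schwarz, \eqref{eq:norm-equivalence} and Young's inequality,
\[
\ip{r_M(u)}{w}_H\le \delta_M\normH{w}\le \frac{\delta_M}{\sqrt{\lambda_1}}\normV{w}\le \nu_{\mathrm{eff}}\normV{w}^2+\frac{\delta_M^2}{4\lambda_1\nu_{\mathrm{eff}}}.
\]
The $V$-terms then cancel exactly. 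Multiplying by $2$ gives
\[
\tfrac{d}{dt}\normH{w}^2\le -\gamma_M\normH{w}^2+\frac{\delta_M^2}{2\lambda_1\nu_{\mathrm{eff}}}.
\]
This is even slightly better than the claim, so the stated constant $\delta_M^2/(\lambda_1\nu_{\mathrm{eff}})$ follows a fortiori. Alternatively, one can use Young with the weight $\tfrac{1}{2}\nu_{\mathrm{eff}}$ and keep the leftover $-\nu_{\mathrm{eff}}\normV{w}^2\le0$.

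Grönwall's inequality on $[T_\ast,t]$ then yields \eqref{eq:sur-track-bound}. The $\limsup$ statement follows by letting $t\to\infty$, using $\gamma_M>0$ from \eqref{eq:sur-cond-mu-h}. Also $\nu_{\mathrm{eff}}>\nu/2>0$ by $\mu c_0^2h^2<\nu$, so the floor is finite.

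The only delicate points are bookkeeping ones. First, Lemma~\ref{lem:sur-squeeze-FM} needs both points in $\cB_\ast$, and the residual bound needs $u\in\cB_\ast$; both hold for $t\ge T_\ast$ by Corollary~\ref{cor:sur-common-ball}. The standing choice \eqref{eq:choice-of-radii} guarantees $F_M=\widehat F_M$ there, so $\delta_M$ and $\ell_M$ are the local residual quantities of Definition~\ref{def:sur-residual}. Second, $w$ is $C^1$ by Propositions~\ref{prop:wp} and~\ref{prop:sur-wp-cutoff}, so the energy identity and Grönwall's inequality apply classically. I expect the main obstacle to be choosing the Young splitting so that the residual is charged to the $V$-dissipation, giving $\lambda_1\nu_{\mathrm{eff}}$ rather than $\gamma_M$ in the floor.
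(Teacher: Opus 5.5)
Your proposal is correct and follows the paper's proof essentially step by step. It uses the same splitting $F_M(v)-F(u)=[F_M(v)-F_M(u)]+r_M(u)$, the same Lemmas~\ref{lem:sur-squeeze-FM} and~\ref{lem:ih-lower-finitedim}, charges the residual to the $V$-dissipation through $\|w\|_H\le\lambda_1^{-1/2}\|w\|_V$, and concludes with Gr\"onwall on $[T_\ast,t]$.

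The only difference is in the Young step. The paper uses weight $\nu_{\mathrm{eff}}/2$ and discards the leftover $-\tfrac{\nu_{\mathrm{eff}}}{2}\|w\|_V^2$. Your weight $\nu_{\mathrm{eff}}$ absorbs all of it and gives a floor smaller by a factor of $2$, which implies the stated bound.
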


\begin{proof}
Set
\(w:=v-u.\)
Subtracting \eqref{eq:sur-truth} from \eqref{eq:sur-aot} gives
\[
\dot w
=
F_M(v)-F(u)-\mu I_hw.
\]
Taking the \(H\)-inner product with \(w\) yields
\begin{equation}\label{eq:sur-track-energy}
\frac12\frac{d}{dt}\normH{w}^2
=
\ip{F_M(v)-F(u)}{w}_H
-\mu\,\ip{I_hw}{w}_H.
\end{equation}
Since \(r_M=F_M-F\), we can write
\[
\ip{F_M(v)-F(u)}{w}_H
=
\ip{F_M(v)-F_M(u)}{w}_H+\ip{r_M(u)}{w}_H.
\]

For \(t\ge T_\ast\), Corollary~\ref{cor:sur-common-ball} ensures that
\(u(t),v(t)\in\cB_\ast\). Hence Lemma~\ref{lem:sur-squeeze-FM} implies
\begin{equation}\label{eq:sur-FM-sq-use}
\ip{F_M(v)-F_M(u)}{w}_H
\le
-\nu\,\normV{w}^2
+
\bigl(C_{\mathrm{sq}}+\ell_M\bigr)\,\normH{w}^2.
\end{equation}
Moreover, by Cauchy--Schwarz and the definition of \(\delta_M\),
\begin{equation}\label{eq:sur-resid-term}
\ip{r_M(u)}{w}_H
\le
\normH{r_M(u)}\,\normH{w}
\le
\delta_M\,\normH{w}
\le
\lambda_1^{-1/2}\delta_M\,\normV{w},
\end{equation}
where we used \(\normH{w}\le \lambda_1^{-1/2}\normV{w}\), which follows from
\eqref{eq:norm-equivalence}.

Finally, Lemma~\ref{lem:ih-lower-finitedim} yields
\begin{equation}\label{eq:sur-ih-use}
-\mu\,\ip{I_hw}{w}_H
\le
-\frac{\mu}{2}\normH{w}^2
+\frac{\mu c_0^2h^2}{2}\normV{w}^2.
\end{equation}

Substituting \eqref{eq:sur-FM-sq-use}--\eqref{eq:sur-ih-use} into
\eqref{eq:sur-track-energy}, we obtain for \(t\ge T_\ast\),
\[
\frac12\frac{d}{dt}\normH{w}^2
\le
-\Bigl(\nu-\frac{\mu c_0^2h^2}{2}\Bigr)\normV{w}^2
-\Bigl(\frac{\mu}{2}-\bigl(C_{\mathrm{sq}}+\ell_M\bigr)\Bigr)\normH{w}^2
+\lambda_1^{-1/2}\delta_M\,\normV{w}.
\]
Introduce
\[
\nu_{\mathrm{eff}}:=\nu-\frac{\mu c_0^2h^2}{2}>0,
\qquad
\gamma_M:=\mu-2\bigl(C_{\mathrm{sq}}+\ell_M\bigr)>0.
\]
Then the inequality becomes
\[
\frac12\frac{d}{dt}\normH{w}^2
\le
-\nu_{\mathrm{eff}}\normV{w}^2
-\frac{\gamma_M}{2}\normH{w}^2
+\lambda_1^{-1/2}\delta_M\,\normV{w}.
\]
Applying Young's inequality to the last term with parameter
\(\nu_{\mathrm{eff}}\), we obtain
\[
\lambda_1^{-1/2}\delta_M\,\normV{w}
\le
\frac{\nu_{\mathrm{eff}}}{2}\normV{w}^2
+\frac{\delta_M^2}{2\lambda_1\nu_{\mathrm{eff}}}.
\]
Substituting this bound and discarding the remaining nonpositive term
\(-\frac{\nu_{\mathrm{eff}}}{2}\normV{w}^2\), we arrive at
\begin{equation}
\label{eq:sur-track-error}
    \frac{d}{dt}\normH{w}^2
\le
-\gamma_M\,\normH{w}^2
+\frac{\delta_M^2}{\lambda_1\nu_{\mathrm{eff}}},
\qquad t\ge T_\ast.
\end{equation}
By Gronwall's inequality applied on \([T_\ast,t]\), this scalar differential
inequality gives
\[
\normH{w(t)}^2
\le
e^{-\gamma_M(t-T_\ast)}\normH{w(T_\ast)}^2
+
\frac{\delta_M^2}{\gamma_M\lambda_1\nu_{\mathrm{eff}}}
\bigl(1-e^{-\gamma_M(t-T_\ast)}\bigr),
\qquad t\ge T_\ast.
\]
In particular,
\[
\normH{w(t)}^2
\le
e^{-\gamma_M(t-T_\ast)}\normH{w(T_\ast)}^2
+
\frac{\delta_M^2}{\gamma_M\lambda_1\nu_{\mathrm{eff}}},
\qquad t\ge T_\ast,
\]
which gives \eqref{eq:sur-track-bound}.
\end{proof}

\begin{remark}[Exact-model limit]\label{rem:sur-exact}
If \(F_M\equiv F\) on \(\cB_\ast\), then \(\delta_M=\ell_M=0\), and
\eqref{eq:sur-track-bound} reduces to exponential synchronization in the
\(H\)-norm.
\end{remark}

\begin{remark}[Regularity for well-posedness and residual control for tracking]
\label{rem:sur-requirement}
It is useful to distinguish two roles of the surrogate assumptions. The local Lipschitz regularity of \(\widehat F_M\) on a neighborhood of \(\{z\in\R^d:\normV{z}\le R_{\mathrm{ext}}^+\}\) is used in
Proposition~\ref{prop:sur-wp-cutoff} to ensure global well-posedness of the cutoff-extended surrogate system. By contrast, the model-error contribution in the tracking estimate of Theorem~\ref{thm:sur-track} is determined only by the residual quantities \(\delta_M\) and \(\ell_M\) on the post-absorption ball \(\cB_\ast\). Thus, in the later applications and complexity analysis in Section~\ref{sec:complexity}, we focus on controlling the model error on \(\cB_\ast\), while the required local Lipschitz regularity on the larger cutoff region can be easily guaranteed by the choice of hypothesis class.
\end{remark}

\subsection{Noisy observations and stochastic surrogate nudging}
\label{subsec:sur-noisy}

We conclude this section by recording the corresponding noisy-observation
extension of the surrogate nudging framework. We retain the observation model
introduced in Subsection~\ref{subsec:baseline-noisy}, in which the exact coarse
observable \(I_hu(t)\) is no longer directly accessible. Instead, after
reconstruction, the available feedback takes the form
\begin{equation}\label{eq:sur-noisy-feedback}
\widetilde I_hu(t)\,dt
=
I_hu(t)\,dt+\Gamma_h\,dW_t,
\qquad
\Gamma_h:=R_h\Sigma\in\R^{d\times k},
\end{equation}
where \(W\) is a standard \(k\)-dimensional Brownian motion and
\(\Sigma\in\R^{k\times k}\) is the observation-noise covariance factor.

Under this observation model, the stochastic surrogate nudged system is given by
\begin{equation}\label{eq:sur-aot-noisy}
dv
=
\Bigl(
F_M(v)-\mu\bigl(I_hv-I_hu\bigr)
\Bigr)\,dt
+
\mu \Gamma_h\,dW_t,
\qquad
v(0)=v_0\in\R^d,
\end{equation}
which is the natural noisy counterpart of the deterministic surrogate nudged
system \eqref{eq:sur-aot}. Here the observation error enters, after
reconstruction, as an additive stochastic forcing term in the nudged
dynamics.

We first show a stochastic boundedness result for the surrogate dynamics. In
contrast with the deterministic setting, one can no longer expect a fixed
deterministic absorbing ball based solely on local information. For this
reason, an additional global requirement on the surrogate drift is needed in
order to derive a uniform second-moment bound.

\begin{assumption}[Global dissipativity of the surrogate drift]
\label{ass:sur-global-diss}
There exist constants \(\alpha_M>0\) and \(\beta_M\ge0\) such that
\begin{equation}\label{eq:sur-global-diss}
\ip{F_M(z)}{z}_H
\le
\beta_M-\alpha_M\normH{z}^2,
\qquad
\forall z\in\R^d.
\end{equation}
\end{assumption}

\begin{remark}[Need for an additional global surrogate assumption]
Assumption~\ref{ass:sur-global-diss} is different from Assumption~\ref{ass:H4}. The latter is a dissipativity condition for the true system, whereas here we impose a global dissipativity condition directly on the surrogate drift \(F_M\). In the previous deterministic surrogate analysis, the relevant estimates were local, since they were used only on the post-absorption ball \(\cB_\ast\). In the present stochastic setting, however, one does not a priori have a fixed deterministic bounded region that contains the surrogate trajectory. For this reason, local control on \(\cB_\ast\) is not sufficient to derive a uniform \(H\)-moment bound for \(v\), and a global dissipativity condition on \(F_M\) is imposed instead.
\end{remark}

\begin{proposition}[Global strong well-posedness and \(H\)-moment boundedness]
\label{prop:sur-wp-noisy-bdd}
Suppose that the hypotheses and notation of
Proposition~\ref{prop:sur-wp-cutoff} are in force, and that
Assumption~\ref{ass:sur-global-diss} holds. Let \(W\) be a standard
\(k\)-dimensional Brownian motion, and let \(\Gamma_h\in\R^{d\times k}\) be as
in \eqref{eq:sur-noisy-feedback}. Suppose moreover that
\begin{equation}\label{eq:sur-noisy-bdd-cond}
\vartheta_M:=\alpha_M-\mu C_I>0,
\end{equation}
where \(C_I\) is the \(H\)-operator bound from
Remark~\ref{rem:Ih-bounded}. Then, for every initial pair
\((u_0,v_0)\in\R^d\times\R^d\), the stochastic surrogate nudged system
\eqref{eq:sur-aot-noisy} admits a unique global adapted strong solution
\[
v\in C([0,\infty);\R^d)
\qquad \text{a.s.}
\]
Moreover, for every \(t\ge0\),
\begin{equation}
\mathbb E\normH{v(t)}^2
\le
e^{-\vartheta_M t}\normH{v_0}^2
+
\int_0^t e^{-\vartheta_M(t-s)}
\left(
2\beta_M
+\frac{\mu^2 C_I^2}{\vartheta_M}\normH{u(s)}^2
+\mu^2\Tr(\Gamma_h\Gamma_h^\top)
\right)\,ds.
\label{eq:sur-noisy-moment-bound}
\end{equation}
In particular, there exist constants \(T_M^{\rm sto}\ge0\) and
\(R_M^{\rm sto}>0\), depending only on the system parameters, the surrogate
drift, the noise level, and the initial data, such that
\begin{equation}\label{eq:sur-noisy-moment-absorb}
\sup_{t\ge T_M^{\rm sto}}\mathbb E\normH{v(t)}^2
\le
(R_M^{\rm sto})^2.
\end{equation}
\end{proposition}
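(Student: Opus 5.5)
The proof has two parts: well-posedness by a localization argument, and the moment bound via Itô's formula for $\normH{v}^2$.

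\emph{Well-posedness.} The drift $b(t,z):=F_M(z)-\mu I_hz+\mu I_hu(t)$ is locally Lipschitz in $z$, uniformly for $t$ in compact intervals. This holds because $\chi$ is smooth and compactly supported, $\widehat F_M$ is locally Lipschitz near the support of $\chi$, $F_{\mathrm{diss}}$ is affine, $I_h$ is linear, and $u$ is continuous by Proposition~\ref{prop:wp}. The noise $\mu\Gamma_h\,dW_t$ is additive with a constant coefficient. Standard SDE theory therefore gives a unique strong solution up to the explosion time $\tau_\infty=\lim_n\tau_n$, where $\tau_n:=\inf\{t:\normH{v(t)}\ge n\}$. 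To exclude explosion, we will show that the a priori estimate below holds for the stopped process $v(t\wedge\tau_n)$ with constants independent of $n$. Then $\mathbb P(\tau_n\le t)\,n^2\le \mathbb E\normH{v(t\wedge\tau_n)}^2\le C(t)$, so $\tau_\infty=\infty$ a.s.

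\emph{Energy estimate.} Itô's formula gives
\[
d\normH{v}^2=\Bigl(2\ip{F_M(v)}{v}_H-2\mu\ip{I_hv}{v}_H+2\mu\ip{I_hu}{v}_H+\mu^2\Tr(\Gamma_h\Gamma_h^\top)\Bigr)dt+2\mu\ip{v}{\Gamma_h\,dW_t}_H .
\]
We bound the drift terms one at a time:
\begin{itemize}
\item Assumption~\ref{ass:sur-global-diss} gives $2\ip{F_M(v)}{v}_H\le 2\beta_M-2\alpha_M\normH{v}^2$.
\item Remark~\ref{rem:Ih-bounded} and Cauchy--Schwarz give $-2\mu\ip{I_hv}{v}_H\le 2\mu C_I\normH{v}^2$.
\item The cross term satisfies $2\mu\ip{I_hu}{v}_H\le 2\mu C_I\normH{u}\normH{v}\le \vartheta_M\normH{v}^2+\frac{\mu^2C_I^2}{\vartheta_M}\normH{u}^2$ by Young's inequality.
\end{itemize}
Adding these, the coefficient of $\normH{v}^2$ is $-2\alpha_M+2\mu C_I+\vartheta_M=-\vartheta_M$. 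Next apply Itô's product rule to $e^{\vartheta_M t}\normH{v}^2$ and integrate up to $t\wedge\tau_n$. The stochastic integral stopped at $\tau_n$ is a true martingale, since its integrand is bounded by $2\mu n\|\Gamma_h\|$, so its expectation vanishes. This gives \eqref{eq:sur-noisy-moment-bound} for the stopped process. Letting $n\to\infty$ via Fatou's lemma yields \eqref{eq:sur-noisy-moment-bound} for all $t\ge0$.

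\emph{Absorbing moment bound.} By Proposition~\ref{prop:dissipativity}(i), $\sup_{s\ge0}\normH{u(s)}^2\le \normH{u_0}^2+(2\alpha\beta+\normH{f}^2)/\alpha^2$. The integral in \eqref{eq:sur-noisy-moment-bound} is therefore bounded by
\[
\vartheta_M^{-1}\Bigl(2\beta_M+\tfrac{\mu^2C_I^2}{\vartheta_M}\sup_s\normH{u(s)}^2+\mu^2\Tr(\Gamma_h\Gamma_h^\top)\Bigr).
\]
The term $e^{-\vartheta_M t}\normH{v_0}^2$ is at most $1$ once $t\ge T_M^{\rm sto}:=\vartheta_M^{-1}\log(1+\normH{v_0}^2)$. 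This gives \eqref{eq:sur-noisy-moment-absorb} with an explicit $R_M^{\rm sto}$.

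The only delicate step is the localization. Two things must be checked: the martingale term really has zero mean after stopping, and the one-sided dissipativity is applied pathwise before taking expectations, because $F_M$ need not be globally Lipschitz. Everything else is routine.
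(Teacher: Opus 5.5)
Your proposal is correct and follows essentially the same route as the paper: local well-posedness from the locally Lipschitz drift, It\^o's formula for $\normH{v}^2$ with the same three drift bounds giving the coefficient $-\vartheta_M$, and a stopping-time argument with $n$-independent constants to rule out explosion. The differences are minor. Your integrating-factor-plus-Fatou passage to the global solution is slightly more careful than the paper's step of ``averaging the differential inequality directly''. For the absorbing bound you use $\sup_{s\ge0}\normH{u(s)}$ where the paper uses a $\limsup$/$\varepsilon$ argument, which is a harmless simplification.
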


To derive a global tracking estimate, noting that the stochastic surrogate trajectory does not remain in a fixed deterministic post-absorption ball, we strengthen the structural assumption on the surrogate drift from a local squeezing condition to a global one.

\begin{assumption}[Global squeezing of the surrogate drift]
\label{ass:sur-global-sq}
There exists a constant \(C_{M,\mathrm{gl}}\ge0\) such that, for every
\(z,z'\in\R^d\),
\begin{equation}\label{eq:sur-global-sq}
\ip{F_M(z')-F_M(z)}{z'-z}_H
\le
-\nu\,\normV{z'-z}^2
+
C_{M,\mathrm{gl}}\normH{z'-z}^2.
\end{equation}
\end{assumption}

Under this additional assumption, one obtains a noisy analogue of
Theorem~\ref{thm:sur-track} without imposing a separate post-absorption
assumption on the stochastic surrogate trajectory.
To control the residual, we still use the same local quantity \(\delta_M\) from Definition~\ref{def:sur-residual},
since it is evaluated only along the true trajectory, which enters
\(\cB_\ast\) after time \(T_\ast\). The proof of Theorem~\ref{thm:sur-track-noisy} is deferred to Appendix~\ref{app:sur-noisy}.

\begin{theorem}[Mean-square tracking up to model and noise floors]
\label{thm:sur-track-noisy}
Suppose that the hypotheses of Proposition~\ref{prop:sur-wp-noisy-bdd} hold,
and that the global squeezing condition
Assumption~\ref{ass:sur-global-sq} is satisfied. Let \(u\) and \(v\) be the
corresponding solutions of \eqref{eq:sur-truth} and
\eqref{eq:sur-aot-noisy}, respectively. Let \(T_\ast\) and \(\cB_\ast\) be as
in Corollary~\ref{cor:sur-common-ball}, and suppose that the residual
quantity \(\delta_M\) from Definition~\ref{def:sur-residual} is finite. If
\begin{equation}\label{eq:sur-cond-mu-h-noisy}
\mu>2C_{M,\mathrm{gl}},
\qquad
\mu c_0^2h^2<\nu,
\end{equation}
then, with
\[
\gamma_{M,\mathrm{gl}}:=\mu-2C_{M,\mathrm{gl}}>0,
\qquad
\nu_{\mathrm{eff}}:=\nu-\frac{\mu c_0^2h^2}{2}>0,
\]
the following estimate holds for every \(t\ge T_\ast\):
\begin{equation}
\mathbb E\|v(t)-u(t)\|_H^2
\le
e^{-\gamma_{M,\mathrm{gl}}(t-T_\ast)}
\mathbb E\|v(T_\ast)-u(T_\ast)\|_H^2
+
\frac{1-e^{-\gamma_{M,\mathrm{gl}}(t-T_\ast)}}{\gamma_{M,\mathrm{gl}}}
\left(
\frac{\delta_M^2}{\lambda_1\nu_{\mathrm{eff}}}
+
\mu^2\Tr(\Gamma_h\Gamma_h^\top)
\right).
\label{eq:sur-track-bound-noisy}
\end{equation}
Consequently,
\begin{equation}\label{eq:sur-track-floor-noisy}
\limsup_{t\to\infty}\mathbb E\|v(t)-u(t)\|_H^2
\le
\frac{1}{\gamma_{M,\mathrm{gl}}}
\left(
\frac{\delta_M^2}{\lambda_1\nu_{\mathrm{eff}}}
+
\mu^2\Tr(\Gamma_h\Gamma_h^\top)
\right).
\end{equation}
\end{theorem}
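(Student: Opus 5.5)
The plan mirrors the proof of Theorem~\ref{thm:sur-track}, with Itô's formula replacing the chain rule and Assumption~\ref{ass:sur-global-sq} replacing the local squeezing Lemma~\ref{lem:sur-squeeze-FM}.

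Set \(w:=v-u\). Subtracting \eqref{eq:sur-truth} from \eqref{eq:sur-aot-noisy} gives
\[
dw=\bigl(F_M(v)-F(u)-\mu I_hw\bigr)\,dt+\mu\Gamma_h\,dW_t .
\]
Applying Itô's formula to \(\normH{w}^2\) yields
\[
d\normH{w}^2=\Bigl(2\ip{F_M(v)-F(u)}{w}_H-2\mu\ip{I_hw}{w}_H+\mu^2\Tr(\Gamma_h\Gamma_h^\top)\Bigr)dt+2\mu\ip{w}{\Gamma_h dW_t}_H .
\]
As before, split \(F_M(v)-F(u)=\bigl(F_M(v)-F_M(u)\bigr)+r_M(u)\), and estimate the pieces as follows.

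\emph{Drift difference.} Assumption~\ref{ass:sur-global-sq} gives, with no confinement of \(v\) required,
\[
\ip{F_M(v)-F_M(u)}{w}_H\le-\nu\normV{w}^2+C_{M,\mathrm{gl}}\normH{w}^2 .
\]
\emph{Residual.} For \(t\ge T_\ast\), Corollary~\ref{cor:sur-common-ball} places the deterministic truth \(u(t)\) in \(\cB_\ast\). Hence \(\normH{r_M(u)}\le\delta_M\), and
\[
\ip{r_M(u)}{w}_H\le\lambda_1^{-1/2}\delta_M\normV{w}.
\]
\emph{Feedback.} Lemma~\ref{lem:ih-lower-finitedim} handles the feedback term.

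Combining these exactly as in the deterministic proof, and using Young's inequality with parameter \(\nu_{\mathrm{eff}}\), gives for \(t\ge T_\ast\)
\[
d\normH{w}^2\le\Bigl(-\gamma_{M,\mathrm{gl}}\normH{w}^2+\frac{\delta_M^2}{\lambda_1\nu_{\mathrm{eff}}}+\mu^2\Tr(\Gamma_h\Gamma_h^\top)\Bigr)dt+2\mu\ip{w}{\Gamma_h dW_t}_H .
\]

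To conclude, apply Itô's formula to \(e^{\gamma_{M,\mathrm{gl}}t}\normH{w}^2\) on \([T_\ast,t]\) and take expectations. For the constant floor term this gives
\[
\int_{T_\ast}^t e^{-\gamma_{M,\mathrm{gl}}(t-s)}\,ds=\frac{1-e^{-\gamma_{M,\mathrm{gl}}(t-T_\ast)}}{\gamma_{M,\mathrm{gl}}},
\]
which is exactly \eqref{eq:sur-track-bound-noisy}. The \(\limsup\) bound \eqref{eq:sur-track-floor-noisy} follows immediately, since \(\mathbb E\normH{w(T_\ast)}^2<\infty\) by Proposition~\ref{prop:sur-wp-noisy-bdd} together with the boundedness of \(u\).

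The main technical point is justifying that the stochastic integral has zero mean. I would do this by localization. Define \(\tau_n:=\inf\{s\ge T_\ast:\normH{w(s)}\ge n\}\), work with the stopped process, and note that the stopped stochastic integral is a true martingale because its integrand is bounded. One then passes \(n\to\infty\) in two steps. The bounded-below, time-weighted left side is handled by Fatou's lemma, using that \(\tau_n\to\infty\) a.s. by global existence. The right side converges monotonically, or one can simply use the uniform second moments from Proposition~\ref{prop:sur-wp-noisy-bdd}, which give \(\mathbb E\int_{T_\ast}^t\normH{w}^2ds<\infty\) and hence a genuine martingale directly. A minor point is that the pathwise differential inequality must be kept in integrated form before taking expectations; everything else is routine.
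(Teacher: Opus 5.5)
Your proposal is correct and follows the paper's proof essentially step for step. It uses the same decomposition $F_M(v)-F(u)=\bigl(F_M(v)-F_M(u)\bigr)+r_M(u)$, the global squeezing bound, the residual estimate along the deterministic truth for $t\ge T_\ast$, Lemma~\ref{lem:ih-lower-finitedim}, and Young's inequality with parameter $\nu_{\mathrm{eff}}$. The only difference is how you close: you apply It\^o's formula to $e^{\gamma_{M,\mathrm{gl}}t}\normH{w}^2$ with a localization and Fatou argument, and you check $\mathbb E\normH{w(T_\ast)}^2<\infty$ for the $\limsup$. This makes rigorous the zero-mean and Gronwall steps that the paper simply asserts.
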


Theorem \ref{thm:sur-track-noisy} shows that the stochastic surrogate nudged dynamics tracks the truth up to two
explicit contributions: a model-error floor, determined by \(\delta_M\), and a
noise-induced floor, determined by \(\Tr(\Gamma_h\Gamma_h^\top)\). In the
noise-free case \(\Gamma_h=0\), the estimate reduces to the corresponding
deterministic-type tracking bound with the global squeezing constant
\(C_{M,\mathrm{gl}}\).

\begin{remark}[Persistence of the waiting time]
In contrast to Theorem~\ref{thm:stoch-tracking}, here we do not state Theorem~\ref{thm:sur-track-noisy} for all \(t\ge0\), but only for \(t\ge T_\ast\). The reason is that, although
Assumption~\ref{ass:sur-global-sq} removes the need to place the stochastic surrogate trajectory in a fixed deterministic post-absorption ball, the residual quantity \(\delta_M\) in Definition~\ref{def:sur-residual} remains a
local quantity defined on \(\cB_\ast\). Since this residual term is evaluated
along the true trajectory, one still needs to wait until \(u(t)\) enters
\(\cB_\ast\), which is guaranteed only after time \(T_\ast\) by
Corollary~\ref{cor:sur-common-ball}. For this reason, the mean-square
tracking estimate in Theorem~\ref{thm:sur-track-noisy} is stated only for
\(t\ge T_\ast\).
\end{remark}

\section{From learning errors to nudging accuracy}
\label{sec:bridge}

In Section~\ref{sec:surrogate}, the surrogate tracking theorem, Theorem~\ref{thm:sur-track}, is formulated in terms of the deterministic residual quantities
\[
\delta_M := \sup_{z\in\cB_\ast}\normH{F_M(z)-F(z)},
\qquad
\ell_M := \Lip(F_M-F;\cB_\ast),
\]
defined on the post-absorption ball \(\cB_\ast\); see Definition~\ref{def:sur-residual}. As discussed in Remark~\ref{rem:sur-requirement}, the learned surrogate is required to be regular on the larger cutoff region, whereas the model-error contribution to tracking is quantified only on \(\cB_\ast\). The purpose of this section is to relate the abstract residual quantities $\delta_M$ and $\ell_M$ to concrete learning errors through two surrogate-construction routes.
 The learning-error quantities introduced below for the two learning routes should be understood as theoretical measures of surrogate accuracy. They are not required to be directly observable or exactly computable in practice. Rather, they provide a bridge between the abstract residual quantities \(\delta_M\) and \(\ell_M\) in the tracking analysis and more standard approximation errors arising from offline surrogate learning.
In \emph{vector-field learning}, one constructs a local learned surrogate \(\widehat F_M\) for the drift \(F\); see Subsection~\ref{subsec:bridge-route1}. Since the cutoff extension in Subsection~\ref{subsec:sur-cutoff} satisfies \(F_M=\widehat F_M\) on \(\cB_\ast\), the residual quantities $\delta_M$ and $\ell_M$ can be bounded directly by the value and Lipschitz errors of \(\widehat F_M\) on \(\cB_\ast\).

In \emph{solution-map learning}, one learns a short-time solution map and uses a first-order difference quotient to construct a local surrogate drift \(\widehat F_M\); see Subsection~\ref{subsec:bridge-route2}. In this case, an additional bridging argument is needed to convert map-level learning errors into drift residual bounds.

\subsection{Vector-field learning}
\label{subsec:bridge-route1}

We first consider the case in which the drift \(F\) is learned directly. This route is natural when prior structural information is available, for instance when \(F\) is known or expected to belong to a low-complexity hypothesis class such as a polynomial space, a trigonometric dictionary, or a sparse feature model.

Following the cutoff construction in Subsection~\ref{subsec:sur-cutoff}, let \(\widehat F_M\) be a local learned approximation of \(F\), constructed from \(M\) training samples and defined on a neighborhood of
\[
\bigl\{z\in\R^d:\normV{z}\le R_{\mathrm{ext}}^+ \bigr\}.
\]
We assume that \(\widehat F_M\) is locally Lipschitz on this neighborhood, as required in Proposition~\ref{prop:sur-wp-cutoff}. The corresponding global surrogate drift \(F_M\) is obtained from \(\widehat F_M\) by the cutoff extension \eqref{eq:sur-FM-ext}. In particular, by the choice of radii in \eqref{eq:choice-of-radii}, one has \(F_M=\widehat F_M\) on \(\cB_\ast\).

We quantify the direct vector-field learning error on \(\cB_\ast\) by
\begin{equation}\label{eq:bridge-direct-errors}
\varepsilon_M^F
:=
\sup_{z\in\cB_\ast}\normH{\widehat F_M(z)-F(z)},
\qquad
\eta_M^F
:=
\sup_{z\in\cB_\ast}
\bigl\|D\widehat F_M(z)-DF(z)\bigr\|_{\mathcal{L}(H,H)}.
\end{equation}

The next proposition shows that, in this direct-learning setting, the residual quantities from Definition~\ref{def:sur-residual} are controlled directly by the value and Jacobian errors of the learned drift on \(\cB_\ast\). The proof is provided in Appendix~\ref{app:bridge}.

\begin{proposition}[Direct learning errors imply AOT-valid residual bounds]
\label{prop:bridge-route1}
Assume that \(\widehat F_M\in C^1(\cB_\ast;H)\), and let \(F_M\) denote a global cutoff extension satisfying \(F_M\equiv \widehat F_M\) on \(\cB_\ast\). Then the residual quantities in Definition~\ref{def:sur-residual} satisfy
\begin{equation}\label{eq:bridge-route1-residual}
\delta_M\le \varepsilon_M^F,
\qquad
\ell_M\le \eta_M^F.
\end{equation}
Consequently, if
\[
\mu>2\bigl(C_{\mathrm{sq}}+\eta_M^F\bigr),
\qquad
\mu c_0^2 h^2<\nu,
\]
then the hypotheses of Theorem~\ref{thm:sur-track} are satisfied. In particular, the corresponding surrogate nudged dynamics converges exponentially to the truth up to the error floor obtained from Theorem~\ref{thm:sur-track}, with \(\delta_M\) and \(\ell_M\) replaced by \(\varepsilon_M^F\) and \(\eta_M^F\), respectively.
\end{proposition}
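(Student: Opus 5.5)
Since $F_M\equiv\widehat F_M$ on $\cB_\ast$, the residual $r_M=F_M-F$ coincides on $\cB_\ast$ with $\widehat F_M-F$. The bound $\delta_M\le\varepsilon_M^F$ is then immediate: for every $z\in\cB_\ast$ we have $\normH{r_M(z)}=\normH{\widehat F_M(z)-F(z)}\le\varepsilon_M^F$, and we take the supremum over $z\in\cB_\ast$.

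For the Lipschitz bound we need $F\in C^1$ near $\cB_\ast$, so that $DF$ in \eqref{eq:bridge-direct-errors} makes sense. We take this as implicit in the definition of $\eta_M^F$ (it holds whenever $N\in C^1$, as in all the examples). The key observation is that $\cB_\ast=\{z:\normV{z}\le R_\ast\}$ is convex, being a ball for the norm $\normV{\cdot}$. Hence for $z,z'\in\cB_\ast$ the segment $z_s:=z+s(z'-z)$, $s\in[0,1]$, stays in $\cB_\ast$. The map $g:=\widehat F_M-F$ is $C^1$ along this segment, so the fundamental theorem of calculus gives
\[
r_M(z')-r_M(z)=\int_0^1\bigl(D\widehat F_M(z_s)-DF(z_s)\bigr)(z'-z)\,ds .
\]
Taking $H$-norms yields $\normH{r_M(z')-r_M(z)}\le\eta_M^F\normH{z'-z}$, and therefore $\ell_M\le\eta_M^F$.

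The one technical point is regularity at the boundary of $\cB_\ast$: the hypothesis $\widehat F_M\in C^1(\cB_\ast;H)$ concerns a closed set. This is harmless, because $\widehat F_M$ is defined and locally Lipschitz on an open neighborhood, and the integral identity needs only $g\circ z_s$ to be $C^1$ on $[0,1]$, with one-sided derivatives at the endpoints. Alternatively, one can apply the argument on $[\epsilon,1-\epsilon]$ and pass to the limit using continuity. Convexity of the $V$-ball is the main structural fact being used.

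Finally, $\eta_M^F<\infty$ gives $\ell_M<\infty$, and $\varepsilon_M^F<\infty$ gives $\delta_M<\infty$. Moreover, $\mu>2(C_{\mathrm{sq}}+\eta_M^F)\ge2(C_{\mathrm{sq}}+\ell_M)$, so together with $\mu c_0^2h^2<\nu$ condition \eqref{eq:sur-cond-mu-h} holds; the remaining hypotheses of Theorem~\ref{thm:sur-track} are assumed. The bound \eqref{eq:sur-track-bound} therefore holds with $\gamma_M\ge\mu-2(C_{\mathrm{sq}}+\eta_M^F)$. The floor is $\delta_M^2/(\lambda_1\nu_{\mathrm{eff}}\gamma_M)$, which is increasing in $\delta_M$ and decreasing in $\gamma_M$. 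We can therefore replace $\delta_M$ by $\varepsilon_M^F$, and $\gamma_M$ by the smaller rate $\mu-2(C_{\mathrm{sq}}+\eta_M^F)$ in both the decay exponent and the floor, and still have a valid upper bound.
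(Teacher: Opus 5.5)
Your proof is correct and follows the paper's argument. On $\cB_\ast$ you identify $r_M$ with $\widehat F_M-F$, you use convexity of the $V$-ball together with the integral mean value theorem to get $\ell_M\le\eta_M^F$, and you substitute into Theorem~\ref{thm:sur-track}. In the last step, the claim that shrinking $\gamma_M$ only weakens the bound is true for the full finite-time term, not just the floor, because $(1-e^{-\gamma s})/\gamma=\int_0^s e^{-\gamma\tau}\,d\tau$ is decreasing in $\gamma$. Alternatively, as in the paper's corollary proofs, re-solve the differential inequality \eqref{eq:sur-track-error} with the weakened constants.
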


Proposition~\ref{prop:bridge-route1} is useful because it makes the residual conditions from Section~\ref{sec:surrogate} directly checkable from standard learning quantities on \(\cB_\ast\). Once one has uniform control of both the vector-field error and its Jacobian error, the tracking theorem can be immediately applied.

\subsection{Solution-map learning and induced surrogate drift}
\label{subsec:bridge-route2}

We next consider the second route, in which one learns a short-time solution map rather than the drift itself. This route is natural when training data are available as short trajectories, or when the dynamics are regular but no reliable parametric representation of the drift is available.

For \(z\in H\), let \(u(t;z)\) denote the state at time \(t\) of the exact dynamics initialized at \(z\), namely
\begin{equation}\label{eq:u-t-z}
\dot u(t;z)=F\bigl(u(t;z)\bigr),
\qquad
u(0;z)=z.
\end{equation}
By Proposition~\ref{prop:wp}, this solution is well defined for all \(t\ge0\). For a fixed lag \(\Delta t>0\), we define the exact lag-\(\Delta t\) solution map by
\begin{equation}\label{eq:solution-map}
S_{\Delta t}(z):=u(\Delta t;z),
\qquad z\in H.
\end{equation}

Let \(\widehat S_{\Delta t}^{(M)}\) be a learned local approximation of \(S_{\Delta t}\), constructed from \(M\) training samples and defined on a neighborhood of
\[
\bigl\{z\in\R^d:\normV{z}\le R_{\mathrm{ext}}^+ \bigr\}.
\]
We assume that \(\widehat S_{\Delta t}^{(M)}\) is sufficiently regular on this neighborhood so that the locally induced drift defined below satisfies the
regularity required in Proposition~\ref{prop:sur-wp-cutoff}. We then define the locally induced surrogate drift by
\begin{equation}\label{eq:bridge-def-FhatM}
\widehat F_M(z)
:=
\frac{\widehat S_{\Delta t}^{(M)}(z)-z}{\Delta t},
\qquad
\normV{z}\le R_{\mathrm{ext}}^+ .
\end{equation}
The global surrogate dynamics used in the continuous-time surrogate nudged system are then generated by the cutoff-extended drift \(F_M\), which is obtained from the locally induced surrogate drift \(\widehat F_M\) through the construction in Subsection~\ref{subsec:sur-cutoff}; see \eqref{eq:sur-FM-ext}. Accordingly, the global object entering the surrogate nudged dynamics is the drift \(F_M\), rather than a separately defined global surrogate solution map.

The solution-map learning errors relevant for tracking are measured on
\(\cB_\ast\):
\begin{equation}\label{eq:bridge-operator-errors}
\varepsilon_M^S
:=
\sup_{z\in\cB_\ast}
\normH{\widehat S_{\Delta t}^{(M)}(z)-S_{\Delta t}(z)},
\qquad
\eta_M^S
:=
\sup_{z\in\cB_\ast}
\bigl\|D\widehat S_{\Delta t}^{(M)}(z)-DS_{\Delta t}(z)\bigr\|_{\mathcal{L}(H,H)}.
\end{equation}
To convert these solution-map learning errors into residual bounds for \(\widehat F_M\), we use a first-order expansion of the exact flow on a slightly larger neighborhood of \(\cB_\ast\).

\subsection{From solution-map learning errors to residual bounds}
\label{subsec:bridge-flow}

We now state the bridge connecting the solution-map learning errors in \eqref{eq:bridge-operator-errors} to the residual quantities in Definition~\ref{def:sur-residual}. The key ingredient is a first-order flow expansion on a slightly larger ball.

\begin{lemma}[First-order flow expansion on \(\cB_\ast\)]
\label{lem:bridge-flow-expansion}
Fix \(R_\ast^+>R_\ast\), and define
\[
\cB_\ast^+:= \bigl\{z\in H:\normV{z}<R_\ast^+ \bigr\}.
\]
Assume that, for every \(z\in\cB_\ast\), the exact trajectory remains in \(\cB_\ast^+\) over the interval \([0,\Delta t]\), namely
\begin{equation}\label{eq:bridge-stay-in-ball}
S_\tau(z)\in \cB_\ast^+,
\qquad 0\le \tau\le \Delta t.
\end{equation}
Assume moreover that \(F\in C^{1,1}(\cB_\ast^+;H)\), with \(F\) and \(DF\) uniformly bounded on \(\overline{\cB_\ast^+}\), and with \(DF\) Lipschitz on \(\overline{\cB_\ast^+}\). Then there exists a constant \(C_{\mathrm{flow}}<\infty\), depending only on these bounds and on \(\Delta t\), such that, for every \(z\in\cB_\ast\),
\begin{equation}\label{eq:bridge-flow-expansion}
S_{\Delta t}(z)
=
z+\Delta t\,F(z)+R_{\Delta t}(z),
\qquad
\sup_{z\in\cB_\ast}\normH{R_{\Delta t}(z)}
\le
C_{\mathrm{flow}}\Delta t^2,
\end{equation}
and
\begin{equation}\label{eq:bridge-Dflow-expansion}
DS_{\Delta t}(z)
=
I+\Delta t\,DF(z)+\widetilde R_{\Delta t}(z),
\qquad
\sup_{z\in\cB_\ast}
\bigl\|\widetilde R_{\Delta t}(z)\bigr\|_{\mathcal{L}(H,H)}
\le
C_{\mathrm{flow}}\Delta t^2.
\end{equation}
\end{lemma}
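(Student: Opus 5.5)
The plan is to derive both expansions from integral identities along the exact flow. We use the stay-in-ball hypothesis \eqref{eq:bridge-stay-in-ball} so that every evaluation of $F$, $DF$ and its Lipschitz bound happens on $\overline{\cB_\ast^+}$. Set
\[
M_0:=\sup_{\overline{\cB_\ast^+}}\normH{F},\qquad M_1:=\sup_{\overline{\cB_\ast^+}}\|DF\|_{\mathcal L(H,H)},\qquad L_1:=\Lip(DF;\overline{\cB_\ast^+}),
\]
with all norms measured in $H$. Since the $H$- and $V$-norms are equivalent, a $V$-ball is a bounded convex set in $H$, so these suprema are finite constants.

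For the value expansion, write $S_{\Delta t}(z)-z=\int_0^{\Delta t}F(S_\tau(z))\,d\tau$, so that
\[
R_{\Delta t}(z)=\int_0^{\Delta t}\bigl(F(S_\tau(z))-F(z)\bigr)\,d\tau .
\]
The trajectory stays in $\cB_\ast^+$, so $\normH{S_\tau(z)-z}\le M_0\tau$. Because $\cB_\ast^+$ is convex, the segment between $z$ and $S_\tau(z)$ lies in it, and the mean value inequality gives $\normH{F(S_\tau(z))-F(z)}\le M_1M_0\tau$. Integrating yields $\normH{R_{\Delta t}(z)}\le \tfrac12 M_0M_1\Delta t^2$, uniformly in $z\in\cB_\ast$.

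For the Jacobian expansion, first note that $S_\tau$ is $C^1$ in $z$, since $F$ is $C^1$ on the open set where the trajectories live. The derivative $J(\tau):=DS_\tau(z)$ solves the variational equation $\dot J=DF(S_\tau(z))J$ with $J(0)=I$. Gronwall then gives $\|J(\tau)\|\le e^{M_1\tau}$, and hence
\[
\|J(\tau)-I\|\le \int_0^\tau M_1e^{M_1s}\,ds\le M_1\tau e^{M_1\tau}.
\]
Next, write
\[
\widetilde R_{\Delta t}(z)=\int_0^{\Delta t}\Bigl[DF(S_\tau(z))\bigl(J(\tau)-I\bigr)+\bigl(DF(S_\tau(z))-DF(z)\bigr)\Bigr]\,d\tau .
\]
The first term in the integrand is bounded by $M_1^2\tau e^{M_1\tau}$. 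The second is bounded by $L_1\normH{S_\tau(z)-z}\le L_1M_0\tau$. Integrating gives
\[
\|\widetilde R_{\Delta t}(z)\|\le \tfrac12\bigl(M_1^2e^{M_1\Delta t}+L_1M_0\bigr)\Delta t^2 .
\]
Taking $C_{\mathrm{flow}}$ to be the maximum of the two constants finishes the proof. It depends only on $M_0$, $M_1$, $L_1$ and $\Delta t$, as claimed.

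The main technical point is justifying the variational equation and the differentiability of $S_{\Delta t}$ at $z\in\cB_\ast$. This needs the trajectory to remain in the open set $\cB_\ast^+$ where $F$ is $C^1$, which is exactly what \eqref{eq:bridge-stay-in-ball} provides. Nearby initial data stay in $\cB_\ast^+$ on $[0,\Delta t]$ by continuous dependence, so the standard smooth-dependence theorem for ODEs applies. The remaining estimates are routine Gronwall-type bounds.
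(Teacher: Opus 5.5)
Your proposal is correct and follows essentially the same route as the paper. Both arguments use the integral identity for $S_{\Delta t}$, the bound $\normH{S_\tau(z)-z}\le M_0\tau$, the variational equation with the Gronwall bound $\|J_\tau\|\le e^{M_1\tau}$, and a splitting of the Jacobian integrand that produces an $O(\Delta t^2)$ remainder.

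The differences are cosmetic. You group the integrand as $DF(S_\tau(z))(J_\tau-I)+\bigl(DF(S_\tau(z))-DF(z)\bigr)$, whereas the paper writes $\bigl(DF(S_\tau(z))-DF(z)\bigr)J_\tau+DF(z)(J_\tau-I)$; your grouping removes the factor $e^{M_1\Delta t}$ from the $L_1M_0$ term. You also state explicitly the convexity of $\cB_\ast^+$ and the differentiable-dependence justification, both of which the paper uses only implicitly.
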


The lemma above isolates the deterministic truncation error associated with reconstructing the drift from a lag-\(\Delta t\) flow map. Combining this truncation error with the solution-map learning errors
\(\varepsilon_M^S\) and \(\eta_M^S\) gives the residual bounds required in Theorem~\ref{thm:sur-track}.

\begin{proposition}[Solution-map learning errors imply residual bounds]
\label{prop:bridge-route2}
Assume the hypotheses of Lemma~\ref{lem:bridge-flow-expansion}. Suppose that \(\widehat S_{\Delta t}^{(M)}\in C^1(\cB_\ast;H)\), and define \(\widehat F_M\) by \eqref{eq:bridge-def-FhatM}. Then the residual quantities in Definition~\ref{def:sur-residual} satisfy
\begin{equation}\label{eq:bridge-delta-bound}
\delta_M
\le
\frac{\varepsilon_M^S}{\Delta t}
+
C_{\mathrm{flow}}\Delta t,
\end{equation}
and
\begin{equation}\label{eq:bridge-ell-bound}
\ell_M
\le
\frac{\eta_M^S}{\Delta t}
+
C_{\mathrm{flow}}\Delta t.
\end{equation}
Consequently, under the remaining structural and cutoff assumptions of Theorem~\ref{thm:sur-track}, if
\[
\mu>
2\left(
C_{\mathrm{sq}}
+
\frac{\eta_M^S}{\Delta t}
+
C_{\mathrm{flow}}\Delta t
\right),
\qquad
\mu c_0^2 h^2<\nu,
\]
then the residual condition in Theorem~\ref{thm:sur-track} is satisfied. In particular, the corresponding surrogate nudged dynamics converges exponentially to the truth up to the error floor from Theorem~\ref{thm:sur-track}, with \(\delta_M\) and \(\ell_M\) controlled by
\eqref{eq:bridge-delta-bound}--\eqref{eq:bridge-ell-bound}.
\end{proposition}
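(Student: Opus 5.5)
The plan is to work directly on \(\cB_\ast\), where the cutoff is inactive by \eqref{eq:choice-of-radii}, so \(F_M=\widehat F_M\) there and \(r_M=\widehat F_M-F\) on \(\cB_\ast\). By the definition \eqref{eq:bridge-def-FhatM} and the expansion \eqref{eq:bridge-flow-expansion} of Lemma~\ref{lem:bridge-flow-expansion}, for every \(z\in\cB_\ast\) we can write
\[
r_M(z)=\frac{\widehat S_{\Delta t}^{(M)}(z)-z}{\Delta t}-F(z)
=\frac{\widehat S_{\Delta t}^{(M)}(z)-S_{\Delta t}(z)}{\Delta t}+\frac{R_{\Delta t}(z)}{\Delta t}.
\]
The triangle inequality, the definition of \(\varepsilon_M^S\) in \eqref{eq:bridge-operator-errors}, and \(\normH{R_{\Delta t}(z)}\le C_{\mathrm{flow}}\Delta t^2\) then give \eqref{eq:bridge-delta-bound} after taking the supremum over \(\cB_\ast\).

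For the Lipschitz bound, differentiate the same identity. Since \(\widehat S_{\Delta t}^{(M)}\in C^1(\cB_\ast;H)\) and \(F\in C^1\) on \(\cB_\ast^+\supset\cB_\ast\), the map \(r_M\) is \(C^1\) on \(\cB_\ast\). Using \eqref{eq:bridge-Dflow-expansion} we get
\[
Dr_M(z)=\frac{D\widehat S_{\Delta t}^{(M)}(z)-DS_{\Delta t}(z)}{\Delta t}+\frac{\widetilde R_{\Delta t}(z)}{\Delta t},
\]
so \(\sup_{z\in\cB_\ast}\|Dr_M(z)\|_{\mathcal L(H,H)}\le \eta_M^S/\Delta t+C_{\mathrm{flow}}\Delta t\). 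To pass from a Jacobian bound to the Lipschitz constant \(\ell_M\), I will use that \(\cB_\ast=\{\normV{z}\le R_\ast\}\) is convex, being a ball in the norm \(\normV{\cdot}\). For \(z,z'\in\cB_\ast\) the segment \(z+s(z'-z)\), \(s\in[0,1]\), therefore stays in \(\cB_\ast\). The fundamental theorem of calculus gives \(r_M(z')-r_M(z)=\int_0^1 Dr_M(z+s(z'-z))(z'-z)\,ds\), and bounding the integrand by the Jacobian supremum yields \eqref{eq:bridge-ell-bound}. This is the same mean-value argument that underlies Proposition~\ref{prop:bridge-route1}.

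The consequence follows directly. Under the stated condition on \(\mu\), \eqref{eq:bridge-ell-bound} gives \(\mu>2(C_{\mathrm{sq}}+\ell_M)\). Both \(\delta_M\) and \(\ell_M\) are finite by the bounds above. So \eqref{eq:sur-cond-mu-h} holds, and Theorem~\ref{thm:sur-track} applies with the floor \(\delta_M^2/(\lambda_1\nu_{\mathrm{eff}}\gamma_M)\), where \(\delta_M\) is controlled by \eqref{eq:bridge-delta-bound}.

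The only delicate step is the differentiability on the closed ball. Derivatives at boundary points of \(\cB_\ast\) should be read as one-sided, or else one applies the integral identity to interior points and passes to the limit using continuity of \(Dr_M\). This is routine, so I expect no real obstacle: all the analytic work sits in Lemma~\ref{lem:bridge-flow-expansion}, which we take as given.
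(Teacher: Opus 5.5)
Your proposal is correct and follows essentially the same route as the paper's proof. Both arguments write \(\widehat F_M-F\) and \(D\widehat F_M-DF\) via the expansions of Lemma~\ref{lem:bridge-flow-expansion} and take norms; both then pass from the Jacobian bound to \(\ell_M\) by convexity of \(\cB_\ast\) and the mean value theorem, exactly as in Proposition~\ref{prop:bridge-route1}. Your extra remark on derivatives at boundary points of the closed ball is a harmless refinement that the paper leaves implicit.
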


\begin{remark}[Regularity of \(F\) on an enlarged region]
\label{rem:Bstar-plus-role}
The regularity requirements in Lemma~\ref{lem:bridge-flow-expansion} and Proposition~\ref{prop:bridge-route2} are imposed on different regions for different purposes. In Lemma~\ref{lem:bridge-flow-expansion}, we use the larger ball \(\cB_\ast^+\) in order to justify the short-time flow expansion uniformly for initial conditions \(z\in\cB_\ast\). Indeed, even if \(z\in\cB_\ast\), the trajectory \(S_\tau(z)\), \(0\le \tau \le\Delta t\), need not remain inside \(\cB_\ast\). Thus the Taylor expansion requires regularity of \(F\) on a slightly larger region containing these short-time trajectories. By contrast, the residual quantities \(\delta_M\) and \(\ell_M\) are evaluated only at points \(z\in\cB_\ast\). This is why
Proposition~\ref{prop:bridge-route2} states the resulting residual bounds on \(\cB_\ast\), while the auxiliary expansion lemma uses \(\cB_\ast^+\).
\end{remark}

\section{Sample complexity: learning surrogate models for nudging}
\label{sec:complexity}
In the previous section, we showed that verifying the surrogate tracking result, Theorem~\ref{thm:sur-track}, reduces to controlling learning errors on the post-absorption ball \(\cB_\ast\). In this section, we study the amount of training data needed to achieve such control. We consider both vector-field and surrogate-map learning.
Although these two learning problems are different, we analyze them in a unified framework. We provide explicit sample-size conditions ensuring that the learned
model satisfies the residual bounds in Proposition~\ref{prop:bridge-route1} or
Proposition~\ref{prop:bridge-route2}. Theorem~\ref{thm:sur-track} then implies
accurate tracking for the corresponding surrogate-based nudged dynamics.


We focus here on noise-free observations, as in Theorem~\ref{thm:sur-track},
to avoid imposing additional global structural assumptions. The results of Section~\ref{sec:bridge} reduce nudging accuracy to local approximation quantities on \(\cB_\ast\), such as \(\delta_M\) and \(\ell_M\),
which are exactly what the sample-complexity analysis controls. The noisy surrogate theory in Subsection~\ref{subsec:sur-noisy}, by contrast, requires
global assumptions on the cutoff-extended surrogate drift, including global dissipativity and squeezing, which do not follow directly from local learning bounds.


\subsection{Sample complexity for vector-field learning via dictionary learning}
\label{subsec:complexity-route1}

We begin with a concrete realization of the vector-field learning approach, in which the drift is learned directly from noisy pointwise evaluations by regression onto a finite dictionary. This setting is natural when the drift is expected to admit, at least locally on the post-absorption ball \(\cB_\ast\), a low-complexity representation in terms of prescribed features. The purpose of the present subsection is to convert statistical recovery of the dictionary coefficients into quantitative control of the direct learning errors \(\varepsilon_M^F\) and \(\eta_M^F\), and hence into the AOT-validity conditions appearing in Proposition~\ref{prop:bridge-route1}.

Following the cutoff construction in Subsection~\ref{subsec:sur-cutoff}, let \(\widehat F_M\) denote a learned local drift defined on a neighborhood of the cutoff region. Although the cutoff construction requires this local field to be regular on the larger region, the statistical estimates below concern only its restriction to \(\cB_\ast\), since the residual quantities entering Theorem~\ref{thm:sur-track} are evaluated only on this post-absorption ball. We measure the direct learning error by
\begin{equation}\label{eq:complexity-route1-errors}
\varepsilon_M^F
:=
\sup_{z\in\cB_\ast}\normH{\widehat F_M(z)-F(z)},
\qquad
\eta_M^F
:=
\sup_{z\in\cB_\ast}
\bigl\|D\widehat F_M(z)-DF(z)\bigr\|_{\mathcal L(H,H)}.
\end{equation}
By Proposition~\ref{prop:bridge-route1}, these are precisely the quantities that need to be controlled in order to obtain an AOT-valid surrogate through vector-field learning. All proofs in this subsection are deferred to Appendix~\ref{app:route1}.

\subsubsection{Training data and noisy drift labels}
\label{ssubsec:complexity-route1-data}

We assume access to a training dataset of noisy pointwise observations of the drift. More precisely, let
\begin{equation}\label{eq:dict-dataset}
\mathcal D_M
:=
\Big\{\bigl(z^{(m)},Y^{(m)}\bigr)\Big\}_{m=1}^M,
\qquad
Y^{(m)}:=F\bigl(z^{(m)}\bigr)+\xi^{(m)},
\end{equation}
where the input states \(\{z^{(m)}\}_{m=1}^M\) are drawn i.i.d.\ from a probability measure \(\varrho_F\) supported on \(\cB_\ast\), and \(\xi^{(m)}\in\R^d\) models observational or numerical noise in the drift label.

\begin{assumption}[Sub-Gaussian label noise]\label{ass:dict-subg}
The label-noise vectors \(\{\xi^{(m)}\}_{m=1}^M\) are independent and mean-zero. Moreover, there exists \(\sigma>0\) such that, for every \(m=1,\ldots,M\), every \(a\in\R^d\), and every \(t\in\R\),
\[
\E \left[
\exp\bigl(t\,a^\top \xi^{(m)}\bigr)
\right]
\le
\exp\left(\frac{\sigma^2 t^2\|a\|_2^2}{2}\right).
\]
\end{assumption}

We next specify the finite-dimensional hypothesis class used to regress these noisy drift labels.

\subsubsection{Dictionary hypothesis class}
\label{ssubsec:complexity-route1-dictionary}

Fix an integer \(p\ge1\), and let \(\{\varphi_k\}_{k=1}^p\subset C^2(\mathcal U_{\mathrm{ext}})\), where \(\mathcal U_{\mathrm{ext}}\subset\R^d\) is an open neighborhood of
\(\{z\in\R^d:\normV{z}\le R_{\mathrm{ext}}^+\}.\)
Define the feature map
\[
\phi(z):=
\bigl(\varphi_1(z),\dots,\varphi_p(z)\bigr)^\top\in\R^p,
\qquad z\in\cB_\ast.
\]
For each parameter matrix \(\Theta\in\R^{p\times d}\), define the associated candidate drift
\begin{equation}\label{eq:dict-candidate}
F_\Theta(z):=\Theta^\top \phi(z),
\qquad z\in\cB_\ast.
\end{equation}

\begin{assumption}[Local realizability on \(\cB_\ast\)]
\label{ass:dict-realizable}
There exists \(\Theta^\star\in\R^{p\times d}\) such that
\[
F(z)=(\Theta^\star)^\top \phi(z),
\qquad z\in\cB_\ast.
\]
\end{assumption}

\begin{remark}[Misspecified case]
If Assumption~\ref{ass:dict-realizable} fails, one may replace \(\Theta^\star\) by a best-in-class parameter, for instance an \(L^2(\varrho_F)\) projection of \(F\) onto the dictionary class. The resulting bounds then acquire an additional approximation term. Since the goal of this subsection is to illustrate how vector-field learning can be incorporated into the AOT framework, we state the main result in the realizable setting.
\end{remark}

\begin{lemma}[Uniform feature bounds on \(\cB_\ast\)]
\label{lem:dict-bounds}
There exist finite constants \(B_0,B_1>0\) such that
\[
\sup_{z\in\cB_\ast}\|\phi(z)\|_2\le B_0,
\qquad
\sup_{z\in\cB_\ast}\|D\phi(z)\|_{\mathcal L(H,\R^p)}\le B_1.
\]
\end{lemma}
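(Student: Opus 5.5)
The argument is a compactness argument: the features are $C^2$ on an open set that contains a compact set containing $\cB_\ast$. We carry it out in three steps.

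\emph{Step 1: $\cB_\ast$ lies in a compact subset of $\mathcal U_{\mathrm{ext}}$.} By \eqref{eq:choice-of-radii} we have $R_\ast\le R_{\mathrm{ext}}<R_{\mathrm{ext}}^+$. Hence
\[
\cB_\ast=\{z:\normV{z}\le R_\ast\}\subset K:=\{z:\normV{z}\le R_{\mathrm{ext}}^+\}\subset\mathcal U_{\mathrm{ext}}.
\]
The set $K$ is closed, being a sublevel set of the continuous function $\normV{\cdot}$. It is also bounded, since \eqref{eq:norm-equivalence} gives $\normH{z}\le R_{\mathrm{ext}}^+/\sqrt{\lambda_1}$ on $K$. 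By Heine--Borel, $K$ is compact in $\R^d$; in fact $\cB_\ast$ is itself compact by the same reasoning.

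\emph{Step 2: continuity of $\phi$ and $D\phi$.} Each $\varphi_k$ lies in $C^2(\mathcal U_{\mathrm{ext}})$, so $z\mapsto\|\phi(z)\|_2=\bigl(\sum_k\varphi_k(z)^2\bigr)^{1/2}$ is continuous on $\mathcal U_{\mathrm{ext}}$. The Jacobian $D\phi(z)\in\R^{p\times d}$ has rows $\nabla\varphi_k(z)^\top$, and these are continuous (indeed $C^1$). The operator norm $\mathcal L(H,\R^p)$ is the spectral norm with respect to the Euclidean norms, which are the norms on $H=\R^d$ and on $\R^p$. The spectral norm is a continuous function of the matrix entries, and it is bounded by the Frobenius norm. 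Therefore $z\mapsto\|D\phi(z)\|_{\mathcal L(H,\R^p)}$ is continuous on $\mathcal U_{\mathrm{ext}}$.

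\emph{Step 3: extreme value theorem.} A continuous real function on the nonempty compact set $\cB_\ast$ attains a finite maximum. We take
\[
B_0:=\max\Bigl(1,\ \max_{z\in\cB_\ast}\|\phi(z)\|_2\Bigr),\qquad B_1:=\max\Bigl(1,\ \max_{z\in\cB_\ast}\|D\phi(z)\|_{\mathcal L(H,\R^p)}\Bigr).
\]
Taking the maximum with $1$ enforces the strict positivity $B_0,B_1>0$ required by the statement, without affecting the upper bounds. The same argument also shows that $\sup_{\cB_\ast}\|D^2\varphi_k\|$ is finite, which is likely to be needed later.

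There is no real obstacle in this argument. The only point requiring care is Step 1: the features are assumed regular only on $\mathcal U_{\mathrm{ext}}$, so the radius condition \eqref{eq:choice-of-radii} is what guarantees that $\cB_\ast$ lies inside that domain.
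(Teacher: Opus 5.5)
Your proposal is correct and follows essentially the same compactness-plus-continuity argument as the paper: \(\cB_\ast\) is compact, and \(\phi\) and \(D\phi\) are continuous on a neighborhood of it because each \(\varphi_k\) is \(C^2\) on \(\mathcal U_{\mathrm{ext}}\), so both are bounded on \(\cB_\ast\). Your Step 1, which uses the radius condition \(R_\ast\le R_{\mathrm{ext}}<R_{\mathrm{ext}}^+\) to place \(\cB_\ast\) inside \(\mathcal U_{\mathrm{ext}}\), and your \(\max(1,\cdot)\) device for strict positivity make explicit details that the paper leaves implicit.
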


\begin{lemma}[Value and Jacobian errors reduce to parameter error]
\label{lem:dict-vJ-vs-theta}
Under Assumption~\ref{ass:dict-realizable}, for every
\(\Theta\in\R^{p\times d}\),
\[
\sup_{z\in\cB_\ast}\normH{F_\Theta(z)-F(z)}
\le
B_0\,\|\Theta-\Theta^\star\|_{\mathrm{op}},
\]
and
\[
\sup_{z\in\cB_\ast}\|DF_\Theta(z)-DF(z)\|_{\mathcal L(H,H)}
\le
B_1\,\|\Theta-\Theta^\star\|_{\mathrm{op}}.
\]
\end{lemma}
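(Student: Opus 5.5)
By Assumption~\ref{ass:dict-realizable}, for every \(z\in\cB_\ast\) we have \(F_\Theta(z)-F(z)=(\Theta-\Theta^\star)^\top\phi(z)\). This identity reduces both claims to operator-norm estimates for a fixed linear map applied to the feature vector and to its derivative. Set \(\Delta:=\Theta-\Theta^\star\in\R^{p\times d}\), and recall that \(\|\Delta^\top\|_{\mathrm{op}}=\|\Delta\|_{\mathrm{op}}\), since a matrix and its transpose have the same singular values.

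For the value bound we take \(H\)-norms and use the submultiplicativity of the operator norm together with Lemma~\ref{lem:dict-bounds}:
\[
\normH{F_\Theta(z)-F(z)}=\|\Delta^\top\phi(z)\|_2\le\|\Delta\|_{\mathrm{op}}\|\phi(z)\|_2\le B_0\|\Delta\|_{\mathrm{op}}.
\]
Taking the supremum over \(z\in\cB_\ast\) gives the first claim.

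For the Jacobian bound, one subtlety is that \(\cB_\ast\) is closed, so \(DF\) on \(\cB_\ast\) should be understood as follows. We first observe that \(F\) and \((\Theta^\star)^\top\phi\) coincide on \(\cB_\ast\). Then \(DF(z)=(\Theta^\star)^\top D\phi(z)\) at interior points. At boundary points the same identity holds by continuity of \(D\phi\), since \(\phi\in C^2(\mathcal U_{\mathrm{ext}})\) and \(F\) is smooth enough for the derivative errors in \eqref{eq:complexity-route1-errors} to be meaningful. Since \(\cB_\ast\) is the closure of its interior, this is the standard convention.

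With that settled, linearity gives \(DF_\Theta(z)-DF(z)=\Delta^\top D\phi(z)\), and for any \(x\in H\),
\[
\|\Delta^\top D\phi(z)x\|_2\le\|\Delta\|_{\mathrm{op}}\|D\phi(z)\|_{\mathcal L(H,\R^p)}\|x\|_H\le B_1\|\Delta\|_{\mathrm{op}}\|x\|_H.
\]
Taking the supremum over unit vectors \(x\) and then over \(z\in\cB_\ast\) yields the second claim.

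The argument is mostly routine. The only step needing care is justifying \(DF=(\Theta^\star)^\top D\phi\) on all of \(\cB_\ast\), including the boundary, which I would handle by the interior-plus-continuity argument above.
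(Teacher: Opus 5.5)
Your proposal is correct and follows the paper's proof essentially verbatim: you use realizability to write $F_\Theta-F=(\Theta-\Theta^\star)^\top\phi$ on $\cB_\ast$, then bound the value difference and the Jacobian difference by $\|\Theta-\Theta^\star\|_{\mathrm{op}}$ times $B_0$ and $B_1$ respectively, via Lemma~\ref{lem:dict-bounds}. Your extra care in identifying $DF$ with $(\Theta^\star)^\top D\phi$ at boundary points of $\cB_\ast$ is a detail the paper leaves implicit, and it is harmless. In fact, mere differentiability of $F$ at a boundary point already suffices for that identification: directional derivatives along the open cone of inward directions are determined by the values of $F$ on $\cB_\ast$, and these directions span $\R^d$.
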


\begin{remark}[Polynomial dictionaries]
\label{rem:dict-poly}
A canonical example is the polynomial dictionary of total degree at most \(r\), in which case \(p=\binom{d+r}{r}\). Then \(\{\varphi_k\}_{k=1}^p\subset C^\infty(\R^d)\), and the bounds in Lemma~\ref{lem:dict-bounds} are automatic on \(\cB_\ast\).
\end{remark}

It remains to estimate the dictionary coefficients from the noisy labels.

\subsubsection{OLS estimator and parameter recovery}
\label{ssubsec:complexity-route1-ols}

We fit the drift by multivariate ordinary least squares:
\begin{equation}\label{eq:dict-ols}
\hat\Theta_M
\in
\arg\min_{\Theta\in\R^{p\times d}}
\frac1M\sum_{m=1}^M
\Bigl\|
Y^{(m)}-\Theta^\top\phi\bigl(z^{(m)}\bigr)
\Bigr\|_2^2.
\end{equation}

The corresponding population feature covariance matrix is
\begin{equation}\label{eq:dict-sigma}
\Sigma
:=
\E\bigl[\phi(Z)\phi(Z)^\top\bigr]\in\R^{p\times p},
\qquad
Z\sim\varrho_F.
\end{equation}

\begin{assumption}[Nondegenerate feature covariance]
\label{ass:dict-sigma}
There exists \(\lambda_{\min}>0\) such that
\(\Sigma\succeq \lambda_{\min} I_p.\)
\end{assumption}

\begin{proposition}[OLS parameter error \cite{HsuKakadeZhang2014}]
\label{prop:dict-param}
Suppose Assumptions~\ref{ass:dict-subg}, \ref{ass:dict-realizable}, and \ref{ass:dict-sigma} are satisfied. Then there exist absolute constants \(C_{\mathrm{cov}},C_{\mathrm{ols}}>0\) such that the following holds. If
\begin{equation}\label{eq:dict-M-lower}
M
\ge
C_{\mathrm{cov}}\,
\frac{B_0^2}{\lambda_{\min}}
\Bigl(p+\log(3d/\delta)\Bigr),
\end{equation}
then, with probability at least \(1-\delta\),
\begin{equation}\label{eq:dict-param-bound}
\|\hat\Theta_M-\Theta^\star\|_{\mathrm{op}}
\le
\frac{C_{\mathrm{ols}}\sqrt d}{\sqrt{M\,\lambda_{\min}}}
\,
\sigma\sqrt{p+\log(3d/\delta)}.
\end{equation}
\end{proposition}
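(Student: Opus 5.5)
The plan is to work with the normal equations and split the parameter error into a design-matrix factor and a noise factor. Write \(\Phi\in\R^{M\times p}\) for the design matrix with rows \(\phi(z^{(m)})^\top\), \(\Xi\in\R^{M\times d}\) for the noise matrix with rows \((\xi^{(m)})^\top\), and \(\hat\Sigma:=\frac1M\Phi^\top\Phi\). By Assumption~\ref{ass:dict-realizable}, \(Y=\Phi\Theta^\star+\Xi\). Whenever \(\hat\Sigma\) is invertible, the OLS solution in \eqref{eq:dict-ols} is unique and
\[
\hat\Theta_M-\Theta^\star=\hat\Sigma^{-1}\,\tfrac1M\Phi^\top\Xi .
\]
The proof then reduces to two probabilistic estimates, each allotted failure probability of order \(\delta\), combined through a union bound. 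This is where the factor \(3\) in \(\log(3d/\delta)\) comes from.

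\textbf{Step 1 (covariance concentration).} Since \(\|\phi(z)\|_2\le B_0\) on \(\cB_\ast\) by Lemma~\ref{lem:dict-bounds}, the summands \(\phi(z^{(m)})\phi(z^{(m)})^\top\) are i.i.d.\ bounded PSD matrices. I would apply matrix Chernoff (or matrix Bernstein) to the whitened matrices \(\Sigma^{-1/2}\phi\phi^\top\Sigma^{-1/2}\), whose operator norm is at most \(B_0^2/\lambda_{\min}\). This gives \(\hat\Sigma\succeq\frac12\Sigma\succeq\frac{\lambda_{\min}}2 I_p\) with probability at least \(1-\delta/3\), provided \(M\gtrsim (B_0^2/\lambda_{\min})\log(3p/\delta)\). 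The stated condition \eqref{eq:dict-M-lower} suffices, with the \(p+\log\) form absorbing \(\log p\le p\). The \(p\) term is actually convenient here, because it allows one to use the net-based argument of \cite{HsuKakadeZhang2014} directly.

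\textbf{Step 2 (noise term).} Condition on the inputs, so that \(\Phi\) is fixed and \(\Xi\) is independent of it. For each column \(j\) of the noise (equivalently, each direction \(e_j\in\R^d\)), the vector \(\hat\Sigma^{-1/2}\frac1{\sqrt M}\Phi^\top\Xi e_j\in\R^p\) is sub-Gaussian by Assumption~\ref{ass:dict-subg}, with proxy covariance at most \(\sigma^2\hat\Sigma^{-1/2}\hat\Sigma\hat\Sigma^{-1/2}=\sigma^2 I_p\). The Hsu--Kakade--Zhang quadratic-form tail bound then gives norm at most \(C\sigma\sqrt{p+\log(3d/\delta)}\) with probability at least \(1-\delta/(3d)\).

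\textbf{Step 3 (combination).} A union bound over the \(d\) columns, combined with Step 1, controls every column. Moreover \(\|\hat\Sigma^{-1/2}\|_{\mathrm{op}}\le\sqrt{2/\lambda_{\min}}\), so each column of \(\hat\Theta_M-\Theta^\star\) has Euclidean norm at most \(C\sigma\sqrt{p+\log(3d/\delta)}/\sqrt{M\lambda_{\min}}\). Bounding the operator norm by the Frobenius norm (at most \(\sqrt d\) times the largest column norm) produces the factor \(\sqrt d\) in \eqref{eq:dict-param-bound}.

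The main obstacle is Step 1, specifically getting the sample-size condition in exactly the form \eqref{eq:dict-M-lower}, with the absolute constants taken from \cite{HsuKakadeZhang2014}. I would invoke their random-design covariance lemma for bounded features rather than re-derive it. Step 2 is routine once the conditioning on the inputs is made explicit.
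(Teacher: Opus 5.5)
Your argument is correct, and it reaches the bound by a somewhat different route from the paper. The paper does not open up the regression analysis. For each output coordinate $\ell$ it verifies the three conditions of \cite[Theorem~1, Remark~9]{HsuKakadeZhang2014} with $\lambda=0$:
\begin{itemize}
\item bounded leverage, via $\|\Sigma^{-1/2}\phi(z)\|_2\le B_0/\sqrt{\lambda_{\min}}$;
\item conditionally sub-Gaussian noise, from Assumption~\ref{ass:dict-subg} with $a=e_\ell$;
\item zero approximation bias, from realizability.
\end{itemize}
This yields a $\Sigma$-norm bound with probability $1-3e^{-t}$, which the paper converts to a Euclidean bound via $\Sigma\succeq\lambda_{\min}I_p$. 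It then sets $t=\log(3d/\delta)$ and takes a union bound over the $d$ coordinates, each of which carries its own design event. Finally it passes to the operator norm through the Frobenius norm exactly as you do. So the factor $3$ actually comes from the $3e^{-t}$ in the cited theorem, not from a three-way split of $\delta$; your split uses only $2\delta/3$, which is harmless.

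Your version re-derives what that theorem packages. It uses the normal-equation identity, a single matrix-Chernoff event $\hat\Sigma\succeq\tfrac12\Sigma$ shared by all $d$ columns, and the sub-Gaussian quadratic-form tail for the self-normalized noise $\hat\Sigma^{-1/2}M^{-1/2}\Phi^\top\Xi e_j$, whose proxy covariance is exactly $\sigma^2 I_p$. This makes the argument self-contained and transparent: only the noise events need a union bound over $d$. It also shows that the $p$ in \eqref{eq:dict-M-lower} enters only through $\log p\le p$, so your proof in fact works under the weaker requirement $M\gtrsim (B_0^2/\lambda_{\min})\log(3p/\delta)$. For the same reason, your remark that the $p$ term is needed for a net-based argument is unnecessary, since your Step~1 is a matrix-Chernoff bound. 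The paper's route is shorter and inherits its absolute constants directly from the reference.

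Both arguments need the label noise to be sub-Gaussian conditionally on the inputs. You condition on $\Phi$ and treat $\Xi$ as independent of it; the paper asserts conditional mean-zero and sub-Gaussianity. Assumption~\ref{ass:dict-subg} does not literally state this, so it is worth making that reading explicit.
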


\subsubsection{Vector-field learning sample complexity via dictionary learning}
\label{ssubsec:complexity-route1-main}

We now turn the parameter estimate from Proposition~\ref{prop:dict-param} into a sample-complexity statement for \(\varepsilon_M^F\) and \(\eta_M^F\). Once these two errors are below prescribed tolerances, Proposition~\ref{prop:bridge-route1} transfers them to the residual bounds required by the deterministic surrogate tracking theorem.

\begin{theorem}[Sample complexity of vector-field learning]
\label{thm:dict-complexity}
Let Assumptions~\ref{ass:dict-subg}, \ref{ass:dict-realizable}, and \ref{ass:dict-sigma} be in force. Fix target tolerances \(\bar\varepsilon,\bar\eta>0\) and confidence level \(\delta\in(0,1)\), and let \(\widehat F_M:=F_{\hat\Theta_M}\), with \(F_{\Theta}\) defined in \eqref{eq:dict-candidate}. With \(B_0,B_1\) as in Lemma~\ref{lem:dict-bounds}, \(\lambda_{\min}\) as in Assumption~\ref{ass:dict-sigma}, and \(C_{\mathrm{ols}}\) denoting the constant from Proposition~\ref{prop:dict-param}, define
\[
C_{\mathrm{dict}}
:=
\frac{C_{\mathrm{ols}}\sqrt d\,\max\{B_0,B_1\}}{\sqrt{\lambda_{\min}}}.
\]
Let \(C_{\mathrm{cov}}\) denote the constant from Proposition~\ref{prop:dict-param}. If
\begin{equation}\label{eq:dict-sample-complexity}
M
\ge
\max\left\{
C_{\mathrm{cov}}\,
\frac{B_0^2}{\lambda_{\min}}
\bigl(p+\log(3d/\delta)\bigr),
\quad
\frac{C_{\mathrm{dict}}^2\,\sigma^2\bigl(p+\log(3d/\delta)\bigr)}
{\min\{\bar\varepsilon,\bar\eta\}^2}
\right\},
\end{equation}
then, with probability at least \(1-\delta\),
\[
\varepsilon_M^F\le \bar\varepsilon,
\qquad
\eta_M^F\le \bar\eta.
\]
Moreover, up to multiplicative constants depending only on \(B_0,B_1,\lambda_{\min},C_{\mathrm{cov}},C_{\mathrm{ols}}\), the sample size requirement is of the order
\[
M
\gtrsim
\max\left\{
p+\log(d/\delta),
\quad
\frac{d\,\sigma^2\bigl(p+\log(d/\delta)\bigr)}
{\min\{\bar\varepsilon,\bar\eta\}^2}
\right\}.
\]
\end{theorem}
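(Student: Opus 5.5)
The plan is to chain Proposition~\ref{prop:dict-param} with Lemma~\ref{lem:dict-vJ-vs-theta}. First, the sample-size condition \eqref{eq:dict-sample-complexity} contains the first term of the maximum. That term is exactly the covariance threshold \eqref{eq:dict-M-lower}, so the hypotheses of Proposition~\ref{prop:dict-param} hold. We therefore obtain an event $\mathcal E$ of probability at least $1-\delta$ on which
\[
\|\hat\Theta_M-\Theta^\star\|_{\mathrm{op}}
\le
\frac{C_{\mathrm{ols}}\sqrt d\,\sigma\sqrt{p+\log(3d/\delta)}}{\sqrt{M\lambda_{\min}}}.
\]
All remaining steps are deterministic on $\mathcal E$.

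Second, on $\mathcal E$ we apply Lemma~\ref{lem:dict-vJ-vs-theta} with $\Theta=\hat\Theta_M$, which is legitimate under Assumption~\ref{ass:dict-realizable}. Since $\widehat F_M=F_{\hat\Theta_M}$, this yields
\[
\varepsilon_M^F\le B_0\|\hat\Theta_M-\Theta^\star\|_{\mathrm{op}},
\qquad
\eta_M^F\le B_1\|\hat\Theta_M-\Theta^\star\|_{\mathrm{op}}.
\]
Both quantities are then at most $\max\{B_0,B_1\}\|\hat\Theta_M-\Theta^\star\|_{\mathrm{op}}$. By the definition of $C_{\mathrm{dict}}$, this is at most $C_{\mathrm{dict}}\,\sigma\sqrt{p+\log(3d/\delta)}/\sqrt M$.

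Third, the second term of the maximum in \eqref{eq:dict-sample-complexity} gives $\sqrt M\ge C_{\mathrm{dict}}\sigma\sqrt{p+\log(3d/\delta)}/\min\{\bar\varepsilon,\bar\eta\}$. Hence both errors are at most $\min\{\bar\varepsilon,\bar\eta\}$, and so $\varepsilon_M^F\le\bar\varepsilon$ and $\eta_M^F\le\bar\eta$ on $\mathcal E$. For the order statement, absorb $B_0^2/\lambda_{\min}$, $C_{\mathrm{cov}}$, and the factor $C_{\mathrm{ols}}^2\max\{B_0,B_1\}^2/\lambda_{\min}$ into the implicit constants. This leaves $C_{\mathrm{dict}}^2\sigma^2\asymp d\sigma^2$, and we also use $\log(3d/\delta)\lesssim\log(d/\delta)$ for $d\ge1$, $\delta<1$.

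No step is a genuine obstacle, since the argument is bookkeeping. The one point to check carefully is consistency of the derivative norms in Lemma~\ref{lem:dict-vJ-vs-theta}: we need $DF_\Theta(z)-DF(z)=(\Theta-\Theta^\star)^\top D\phi(z)$ on $\cB_\ast$, which requires realizability to hold on a set where differentiation makes sense (e.g.\ interior points, extended to the boundary by continuity). We also need that the same event $\mathcal E$ controls both errors simultaneously, so that no union bound and no loss in $\delta$ is incurred.
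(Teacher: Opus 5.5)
Your proposal is correct and follows the paper's proof essentially line by line: you invoke Proposition~\ref{prop:dict-param} via the first term of the maximum, transfer the parameter bound through Lemma~\ref{lem:dict-vJ-vs-theta} on the same high-probability event, use the second term to bring $\max\{\varepsilon_M^F,\eta_M^F\}$ below $\min\{\bar\varepsilon,\bar\eta\}$, and read off the order statement from $C_{\mathrm{dict}}^2\propto d$. One small precision: $\log(3d/\delta)\lesssim\log(d/\delta)$ fails on its own as $d=1,\ \delta\to1$, but since $p\ge1$ you have $p+\log(3d/\delta)\le(1+\log 3)\bigl(p+\log(d/\delta)\bigr)$, and this is what the order claim actually needs.
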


\begin{corollary}[AOT-valid surrogates from vector-field learning]
\label{cor:dict-aot-valid}
Assume the structural and cutoff hypotheses of Theorem~\ref{thm:sur-track} are in force, and suppose that the statistical hypotheses of Theorem~\ref{thm:dict-complexity} hold. Fix target direct-learning tolerances \(\bar\varepsilon,\bar\eta>0\), and suppose that \(M\) satisfies \eqref{eq:dict-sample-complexity}. Assume in addition that
\[
\mu>2\bigl(C_{\mathrm{sq}}+\bar\eta\bigr),
\qquad
\mu c_0^2 h^2<\nu.
\]
Define
\[
\gamma_{\mathrm F}
:=
\mu-2\bigl(C_{\mathrm{sq}}+\bar\eta\bigr)>0,
\qquad
\nu_{\mathrm{eff}}
:=
\nu-\frac{\mu c_0^2 h^2}{2}>0.
\]
Then, with probability at least \(1-\delta\), the following hold.

\begin{enumerate}
\item[\emph{(i)}] The cutoff-extended surrogate drift obtained from the learned vector field satisfies the residual bounds
\[
\delta_M\le \bar\varepsilon,
\qquad
\ell_M\le \bar\eta.
\]

\item[\emph{(ii)}] The corresponding surrogate nudged dynamics satisfies, for every \(t\ge T_\ast\),
\begin{equation}\label{eq:dict-aot-valid-bound}
\|v(t)-u(t)\|_H^2
\le
e^{-\gamma_{\mathrm F}(t-T_\ast)}
\|v(T_\ast)-u(T_\ast)\|_H^2
+
\frac{\bar\varepsilon^2}{\lambda_1\,\nu_{\mathrm{eff}}\,\gamma_{\mathrm F}}
\Bigl(1-e^{-\gamma_{\mathrm F}(t-T_\ast)}\Bigr).
\end{equation}
Consequently,
\begin{equation}\label{eq:dict-aot-valid-floor}
\limsup_{t\to\infty}\|v(t)-u(t)\|_H^2
\le
\frac{\bar\varepsilon^2}
{\lambda_1\,\nu_{\mathrm{eff}}\,\gamma_{\mathrm F}}.
\end{equation}
\end{enumerate}
\end{corollary}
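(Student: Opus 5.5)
The plan is to chain three earlier results on a single high-probability event. Let \(\mathcal E\) be the event on which the conclusion of Theorem~\ref{thm:dict-complexity} holds, i.e. \(\varepsilon_M^F\le\bar\varepsilon\) and \(\eta_M^F\le\bar\eta\). Since \(M\) satisfies \eqref{eq:dict-sample-complexity} and the statistical hypotheses are in force, \(\mathbb P(\mathcal E)\ge 1-\delta\). All remaining steps are deterministic and are carried out on \(\mathcal E\).

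For part (i), I would invoke Proposition~\ref{prop:bridge-route1}. Its regularity hypothesis \(\widehat F_M\in C^1(\cB_\ast;H)\) holds because \(\widehat F_M=F_{\hat\Theta_M}=\hat\Theta_M^\top\phi\) with \(\varphi_k\in C^2(\mathcal U_{\mathrm{ext}})\). This also gives local Lipschitz continuity on a neighborhood of the cutoff region, as Proposition~\ref{prop:sur-wp-cutoff} and Theorem~\ref{thm:sur-track} require. By the radius choice \eqref{eq:choice-of-radii}, \(F_M\equiv\widehat F_M\) on \(\cB_\ast\). Proposition~\ref{prop:bridge-route1} then gives \(\delta_M\le\varepsilon_M^F\le\bar\varepsilon\) and \(\ell_M\le\eta_M^F\le\bar\eta\) on \(\mathcal E\); in particular both quantities are finite.

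For part (ii), I would check the hypotheses of Theorem~\ref{thm:sur-track}. From \(\ell_M\le\bar\eta\) we get \(\mu>2(C_{\mathrm{sq}}+\bar\eta)\ge 2(C_{\mathrm{sq}}+\ell_M)\), and \(\mu c_0^2h^2<\nu\) is assumed. The theorem yields \eqref{eq:sur-track-bound} with \(\gamma_M=\mu-2(C_{\mathrm{sq}}+\ell_M)\ge\gamma_{\mathrm F}>0\). It then remains to show that the right-hand side of \eqref{eq:sur-track-bound} is bounded by the right-hand side of \eqref{eq:dict-aot-valid-bound}.

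The only delicate point is this monotonicity in \(\gamma\), because the floor term carries the factor \((1-e^{-\gamma s})/\gamma\). My first attempt would use the fact that the first term \(e^{-\gamma s}a\) is decreasing in \(\gamma\), and that \(g(\gamma):=(1-e^{-\gamma s})/\gamma=\int_0^s e^{-\gamma r}\,dr\) is also decreasing in \(\gamma\). Replacing \(\gamma_M\) by the smaller \(\gamma_{\mathrm F}\) and \(\delta_M\) by \(\bar\varepsilon\) therefore only increases both terms, which gives \eqref{eq:dict-aot-valid-bound}. A cleaner alternative avoids the monotonicity argument: return to the differential inequality \eqref{eq:sur-track-error}, weaken it to \(\frac{d}{dt}\|w\|_H^2\le-\gamma_{\mathrm F}\|w\|_H^2+\bar\varepsilon^2/(\lambda_1\nu_{\mathrm{eff}})\), and apply Gronwall directly. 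Either route works.

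Finally, letting \(t\to\infty\) in \eqref{eq:dict-aot-valid-bound} gives the floor \eqref{eq:dict-aot-valid-floor}.
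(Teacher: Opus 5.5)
Your proposal is correct and follows the paper's proof. The paper likewise works on the event from Theorem~\ref{thm:dict-complexity}, obtains (i) from Proposition~\ref{prop:bridge-route1} using $F_M\equiv\widehat F_M$ on $\cB_\ast$, and proves (ii) by your ``cleaner alternative,'' weakening \eqref{eq:sur-track-error} to rate $\gamma_{\mathrm F}\le\gamma_M$ and source $\bar\varepsilon^2/(\lambda_1\nu_{\mathrm{eff}})$ and then integrating; your monotonicity route via $\gamma\mapsto(1-e^{-\gamma s})/\gamma=\int_0^s e^{-\gamma r}\,dr$ is an equally valid way to pass from \eqref{eq:sur-track-bound} to \eqref{eq:dict-aot-valid-bound}.
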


\begin{remark}[From sample complexity to tracking guarantees in vector-field learning]
\label{rem:dict-closes-loop}
Combining Theorem~\ref{thm:dict-complexity} with Corollary~\ref{cor:dict-aot-valid}, we complete the chain
\[
\text{sample size}
\ \Longrightarrow\
\text{parameter recovery}
\ \Longrightarrow\
(\varepsilon_M^F,\eta_M^F)
\ \Longrightarrow\
(\delta_M,\ell_M)
\ \Longrightarrow\
\text{tracking guarantee}.
\]
The theorem converts statistical information from the regression problem into uniform control of the learned drift and its Jacobian on \(\cB_\ast\). The corollary then transfers these direct learning errors to the residual bounds required by Proposition~\ref{prop:bridge-route1}.
\end{remark}

\subsection{Sample complexity for solution-map learning via deep super ReLU networks}
\label{subsec:complexity-route2}

We now turn to solution-map learning, in which one learns the lag-\(\Delta t\)
solution map \(S_{\Delta t}\) and then defines a surrogate drift through the
first-order difference quotient introduced in
Subsection~\ref{subsec:bridge-route2}. In contrast with direct vector-field
learning, the object to be learned is the map \(S_{\Delta t}\), rather than
the drift \(F\) itself.

Let \(\widehat S_{\Delta t}^{(M)}\) be a learned solution map defined on a
neighborhood of the cutoff region, with the regularity needed to form the
induced local surrogate drift and to apply Proposition~\ref{prop:bridge-route2}.
The statistical estimates below, however, concern only its restriction to
\(\cB_\ast\). This is sufficient for the tracking analysis, since the
solution-map learning errors entering Proposition~\ref{prop:bridge-route2} are
evaluated only on this post-absorption ball. The goal of this subsection is to
derive finite-sample conditions under which
\begin{equation}\label{eq:r2-operator-errors-recall}
\varepsilon_M^S
:=
\sup_{z\in\cB_\ast}
\normH{\widehat S_{\Delta t}^{(M)}(z)-S_{\Delta t}(z)},
\qquad
\eta_M^S
:=
\sup_{z\in\cB_\ast}
\bigl\|D\widehat S_{\Delta t}^{(M)}(z)-DS_{\Delta t}(z)\bigr\|_{\mathcal L(H,H)}
\end{equation}
are small enough to invoke Proposition~\ref{prop:bridge-route2}.

To obtain uniform control of both the learned map and its derivative, we work
with Sobolev training on \(\cB_\ast\) using deep super ReLU networks (DSRNs),
following the Sobolev approximation framework developed in
\cite{yang2023dsrn}. As in Subsection~\ref{subsec:complexity-route1}, all
assumptions and estimates in this subsection are local on \(\cB_\ast\). All
proofs in this subsection are deferred to Appendix~\ref{app:dsrn}.

\subsubsection{DSRN hypothesis class and Sobolev loss}
\label{ssubsec:complexity-route2-loss}

To obtain uniform control of both the learned map and its first derivative
from a Sobolev-type training error, we fix
\begin{equation}\label{eq:r2-s-def}
s:=\Big\lfloor\frac d2\Big\rfloor+2.
\end{equation}
Then \(s>1+d/2\), and the Sobolev embedding theorem gives
\(H^s(\cB_\ast;H)\hookrightarrow W^{1,\infty}(\cB_\ast;H).\)
Thus an \(H^s\)-error bound controls both the sup-norm error of the learned
solution map and the sup-norm error of its derivative on \(\cB_\ast\).
Here \(H^s(\cB_\ast;H)\) is understood as the Sobolev space on the interior of
\(\cB_\ast\), with continuous representatives evaluated on the closure.

\paragraph{DSRN hypothesis class}
For architecture bounds \(W,D\in\N\), let
\(\mathcal H_{W,D}^{\mathrm{DSRN}}\) denote the class of functions
\(\mathsf h:\cB_\ast\to H\) realized by deep super ReLU networks of width at
most \(W\) and depth at most \(D\). A DSRN is essentially a ReLU network,
with a small terminal block using super ReLU activations. This terminal block
provides the smoothness needed for Sobolev approximation, while most of the
architecture remains in standard ReLU form. In the analysis below, we use
DSRNs through their Sobolev approximation properties and the corresponding
width-depth scaling.

Since the empirical loss below uses pointwise evaluations of derivatives, we
assume throughout this subsection that
\(S_{\Delta t}\in W^{m,\infty}(\cB_\ast;H)\) for some \(m>s\). Then
\(D^\alpha S_{\Delta t}\) has a bounded Borel representative on \(\cB_\ast\)
for every \(|\alpha|\le s\), so the derivative labels below are well defined.
The analysis below also uses an envelope condition on the hypothesis class, a
comparability condition on the sampling distribution, and an approximate
empirical risk minimization condition; see
Assumptions~\ref{ass:r2-dsrn-envelope}, \ref{ass:r2-mu-density}, and
\ref{ass:r2-aerm}, respectively.

\begin{assumption}[Uniform Sobolev envelope for the DSRN class]
\label{ass:r2-dsrn-envelope}
For every pair of architecture parameters \(W,D\) considered below, the
hypothesis class \(\mathcal H_{W,D}^{\mathrm{DSRN}}\) is restricted so that
each \(\mathsf h\in\mathcal H_{W,D}^{\mathrm{DSRN}}\) has weak derivatives
\(D^\alpha\mathsf h\), \(|\alpha|\le s\), admitting Borel representatives on
\(\cB_\ast\). Moreover, there exists a constant \(C_{\mathrm{env}}\ge1\),
independent of \(W\) and \(D\) along the regime considered below, such that
\begin{equation}\label{eq:r2-dsrn-envelope}
\sup_{z\in\cB_\ast}
\sum_{|\alpha|\le s}\|D^\alpha \mathsf h(z)\|_2^2
\le
C_{\mathrm{env}}^2
\qquad
\forall\,\mathsf h\in\mathcal H_{W,D}^{\mathrm{DSRN}}.
\end{equation}
\end{assumption}

\begin{remark}[Role of the envelope assumption]
\label{rem:r2-envelope}
Assumption~\ref{ass:r2-dsrn-envelope} is a theoretical boundedness condition
on the Sobolev loss class. It restricts the admissible networks to have
uniformly bounded values and derivatives up to order \(s\) on \(\cB_\ast\).
Such envelope restrictions are common in empirical-process analyses of
neural-network estimators, where bounded outputs, bounded derivatives, bounded
weights, or related restrictions are used to obtain covering and concentration
bounds \cite{ShenJiaoLinHorowitzHuang2024,OuBolcskei2026}.
\end{remark}

We next introduce notation for collecting the value and all derivatives that
enter the Sobolev loss. Let
\[
A_s:=\#\{\alpha\in\N_0^d:\ |\alpha|\le s\}
=\binom{d+s}{s},
\qquad
r_s:=dA_s.
\]
For \(\mathsf h\in\mathcal H_{W,D}^{\mathrm{DSRN}}\), define the order-\(s\)
derivative feature map
\[
\mathcal Z_s\mathsf h(z)
:=
\bigl(D^\alpha \mathsf h_\ell(z)\bigr)_{\substack{|\alpha|\le s\\1\le \ell\le d}}
\in\R^{r_s},
\]
where the case \(\alpha=0\) corresponds to the value of \(\mathsf h\) itself.
We use the same notation \(\mathcal Z_s S_{\Delta t}(z)\) for the corresponding
collection of derivatives of the true solution map.

We assume access to a Sobolev training dataset
\begin{equation}\label{eq:r2-dataset}
\mathcal D_M^{S}
:=
\bigl\{(Z_j,Y_j^{S})\bigr\}_{j=1}^M,
\qquad
Z_j\overset{\mathrm{i.i.d.}}{\sim}\varrho,
\qquad
Y_j^{S}:=\mathcal Z_s S_{\Delta t}(Z_j)\in\R^{r_s},
\end{equation}
where \(\varrho\) is a probability distribution supported on \(\cB_\ast\).

For \(\mathsf h\in\mathcal H_{W,D}^{\mathrm{DSRN}}\), the empirical Sobolev
loss associated with \(\mathcal D_M^{S}\) is
\begin{equation}\label{eq:r2-loss-emp}
\widehat{\mathcal L}_{s,M}(\mathsf h)
:=
\frac1M\sum_{j=1}^M
\|\mathcal Z_s\mathsf h(Z_j)-Y_j^{S}\|_2^2.
\end{equation}
We also use the corresponding population Sobolev losses with respect to
Lebesgue measure and the sampling measure \(\varrho\):
\begin{equation}\label{eq:r2-pop-losses}
\mathcal L_s^{\mathrm{Leb}}(\mathsf h)
:=
\sum_{|\alpha|\le s}
\|D^\alpha(\mathsf h-S_{\Delta t})\|_{L^2(\cB_\ast;H)}^2,
\qquad
\mathcal L_s^\varrho(\mathsf h)
:=
\sum_{|\alpha|\le s}
\|D^\alpha(\mathsf h-S_{\Delta t})\|_{L^2(\varrho;H)}^2.
\end{equation}

Such Sobolev-type objectives are standard in derivative-informed training, including Sobolev training, Sobolev PINNs, and gradient-enhanced PINNs \cite{Czarnecki2017,SobolevPINN2021,Yu2022gPINN}. We use this objective as an analytical device because it gives a direct route from finite-sample learning to uniform control of value and derivative errors on \(\cB_\ast\). In the numerical experiments (see Section~\ref{sec:numerics}), however, we use a lighter  first-order training objective, since high-order derivative losses are expensive and often unnecessary for accurate downstream assimilation.

\begin{assumption}[Sampling distribution comparable to Lebesgue measure on \(\cB_\ast\)]
\label{ass:r2-mu-density}
The sampling distribution \(\varrho\) is absolutely continuous with respect to
Lebesgue measure on \(\cB_\ast\), with density
\(\omega:=\frac{d\varrho}{dz}\) satisfying
\[
0<\omega_{\min}\le \omega(z)\le \omega_{\max}<\infty
\qquad\text{for a.e. }z\in\cB_\ast.
\]
\end{assumption}

\begin{assumption}[Approximate empirical risk minimization]
\label{ass:r2-aerm}
Given a dataset \(\mathcal D_M^{S}\), the training algorithm returns
\(\widehat S_{\Delta t}^{(M)}\in \mathcal H_{W,D}^{\mathrm{DSRN}}\)
such that
\[
\widehat{\mathcal L}_{s,M}\bigl(\widehat S_{\Delta t}^{(M)}\bigr)
\le
\inf_{\mathsf h\in\mathcal H_{W,D}^{\mathrm{DSRN}}}
\widehat{\mathcal L}_{s,M}(\mathsf h)+\varepsilon_{\mathrm{opt}},
\]
where \(\varepsilon_{\mathrm{opt}}\ge0\) denotes the optimization error.
\end{assumption}

\subsubsection{Approximation and generalization inputs}
\label{ssubsec:complexity-route2-inputs}

We first state the Sobolev approximation guarantees provided by the DSRN theory.

\begin{proposition}[DSRN Sobolev approximation on \(\cB_\ast\)]
\label{prop:r2-dsrn-approx}
Let \(m\in\N\) satisfy \(m>s\), and suppose
\(S_{\Delta t}\in W^{m,\infty}(\cB_\ast;H).\)
Then there exist constants
\(\varepsilon_0,\ C_W,\ C_D>0,\)
depending only on \(d\), \(m\), \(s\), the geometry of \(\cB_\ast\), and
\(\|S_{\Delta t}\|_{W^{m,\infty}(\cB_\ast;H)}\), such that for every
\(0<\varepsilon_{\rm app}\le \varepsilon_0\), there exist integers
\(W_{\rm app},D_{\rm app}\in\N\) and a DSRN
\(\mathsf h^\star_{\varepsilon_{\rm app}}
\in
\mathcal H^{\mathrm{DSRN}}_{W_{\rm app},D_{\rm app}}\)
satisfying
\begin{equation}\label{eq:r2-approx}
\|\mathsf h^\star_{\varepsilon_{\rm app}}-S_{\Delta t}\|_{W^{s,2}(\cB_\ast;H)}
\le
\varepsilon_{\rm app},
\end{equation}
with
\begin{equation}\label{eq:r2-WD-from-epsapp}
W_{\rm app}
\le
C_W\,\varepsilon_{\rm app}^{-\frac{d}{4(m-s)}}
\log \bigl(2+\varepsilon_{\rm app}^{-1}\bigr),
\qquad
D_{\rm app}
\le
C_D\,\varepsilon_{\rm app}^{-\frac{d}{4(m-s)}}
\log \bigl(2+\varepsilon_{\rm app}^{-1}\bigr).
\end{equation}
\end{proposition}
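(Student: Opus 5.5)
This statement is essentially a transfer of the DSRN Sobolev approximation theorem of \cite{yang2023dsrn} from the unit cube to the ball \(\cB_\ast\), applied componentwise to the \(H\)-valued map \(S_{\Delta t}\). The plan has three steps.

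\textbf{Step 1: reduce to the cube by extension.} Since \(\cB_\ast=\{z:\normV{z}\le R_\ast\}\) is an ellipsoid, it is a bounded Lipschitz (indeed smooth, convex) domain. We therefore apply a Stein-type bounded extension operator \(E:W^{m,\infty}(\cB_\ast)\to W^{m,\infty}(\R^d)\) to each component \(S_{\Delta t,\ell}\), \(\ell=1,\dots,d\). Next we choose an affine map \(T\) sending \([0,1]^d\) onto a cube \(Q\supset\cB_\ast\), for instance of side \(2R_\ast/\sqrt{\lambda_1}\) centered at the origin. The rescaled functions \(g_\ell:=(ES_{\Delta t,\ell})\circ T\) lie in \(W^{m,\infty}([0,1]^d)\). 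Their norms are bounded by a constant, depending only on \(d,m\) and the geometry (through \(\|E\|\) and the scaling factors of \(T\)), times \(\|S_{\Delta t}\|_{W^{m,\infty}(\cB_\ast;H)}\).

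\textbf{Step 2: invoke the DSRN result on \([0,1]^d\).} The cited theorem says the following. For \(g\in W^{m,\infty}([0,1]^d)\) with \(\|g\|_{W^{m,\infty}}\le 1\) (rescale by the norm), and any \(N,L\in\N\), there is a DSRN of width \(C(N\log N)\) and depth \(C(L\log L)\) with
\[
\|\phi-g\|_{W^{s,\infty}([0,1]^d)}\le C\,N^{-2(m-s)/d}L^{-2(m-s)/d}.
\]
The key feature is the "super-convergence" exponent \(2(m-s)/d\). We take \(N=L\) and set the error equal to \(\varepsilon_{\rm app}\) divided by a geometric constant, which gives
\[
N=L\asymp \varepsilon_{\rm app}^{-d/(4(m-s))}.
\]
The widths and depths then carry exactly the factor \(\varepsilon^{-d/(4(m-s))}\log(2+\varepsilon^{-1})\) in \eqref{eq:r2-WD-from-epsapp}. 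The condition \(\varepsilon_{\rm app}\le\varepsilon_0\) ensures \(N,L\ge 2\), so the logarithms are harmless.

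\textbf{Step 3: assemble and transfer back.} We stack the \(d\) scalar networks in parallel, which multiplies the width by \(d\) (absorbed into \(C_W\)) and leaves the depth unchanged after padding shorter networks with identity layers. We then precompose with the affine map \(T^{-1}\), which can be merged into the first layer. The chain rule gives \(\|D^\alpha(\phi\circ T^{-1})\|\le \|T^{-1}\|^{|\alpha|}\,\|D^\alpha\phi\|\). Combining this with the inclusion \(W^{s,\infty}(\cB_\ast)\hookrightarrow W^{s,2}(\cB_\ast)\), whose constant is \(|\cB_\ast|^{1/2}\), gives \eqref{eq:r2-approx} after adjusting constants.

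The main obstacle is bookkeeping rather than any new estimate. We must check that the cited DSRN theorem holds for vector-valued targets and after an affine reparametrization, and that padding and parallelization stay inside the class \(\mathcal H^{\mathrm{DSRN}}_{W,D}\) as defined. This is where the dependence of \(C_W,C_D\) on the geometry of \(\cB_\ast\) and on \(\|S_{\Delta t}\|_{W^{m,\infty}}\) enters. If identity padding through super-ReLU layers is problematic, we would instead equalize depths by choosing a common \(L\) for all components, which the theorem permits directly.
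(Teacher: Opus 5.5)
Your proposal matches the paper's proof essentially step for step: extension to a cube, affine pull-back to $(0,1)^d$, the DSRN $W^{s,\infty}$ rate $N^{-2(m-s)/d}L^{-2(m-s)/d}$ with $N=L\asymp\varepsilon_{\rm app}^{-d/(4(m-s))}$, then pull-back and $W^{s,\infty}\hookrightarrow W^{s,2}$ on the bounded ball. Your componentwise stacking just makes explicit what the paper handles by reading the $H$-valued norms coordinatewise. The paper does two pieces of bookkeeping that you leave implicit, both easy to supply: choosing $\varepsilon_0$ so that $N,L$ exceed the cited corollary's thresholds $N_0,L_0$ (not merely $N,L\ge 2$), and checking that the approximant lies in the envelope-restricted class of Assumption~\ref{ass:r2-dsrn-envelope}; the latter follows from the approximant's uniform $W^{s,\infty}$ bound (using $\varepsilon_{\rm app}\le 1$) once $C_{\mathrm{env}}$ is taken large enough.
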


This approximation result controls the best Sobolev accuracy achievable by the
DSRN class. We next compare the Sobolev losses induced by the sampling
distribution \(\varrho\) and by Lebesgue measure.

\begin{lemma}[Comparison of \(\varrho\)- and Lebesgue-based Sobolev losses]
\label{lem:r2-loss-compare}
Under Assumption~\ref{ass:r2-mu-density}, one has
\begin{equation}\label{eq:r2-loss-compare}
\omega_{\min}\,\mathcal L_s^{\mathrm{Leb}}(\mathsf h)
\le
\mathcal L_s^\varrho(\mathsf h)
\le
\omega_{\max}\,\mathcal L_s^{\mathrm{Leb}}(\mathsf h)
\qquad
\forall\,\mathsf h\in\mathcal H_{W,D}^{\mathrm{DSRN}}.
\end{equation}
\end{lemma}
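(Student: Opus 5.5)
The comparison follows from the definition of the population losses together with the two-sided density bounds in Assumption~\ref{ass:r2-mu-density}. I will fix $\mathsf h\in\mathcal H_{W,D}^{\mathrm{DSRN}}$ and a multi-index $\alpha$ with $|\alpha|\le s$. Set $g_\alpha:=D^\alpha(\mathsf h-S_{\Delta t})$. This is well defined as a Borel function on $\cB_\ast$: for $\mathsf h$ by Assumption~\ref{ass:r2-dsrn-envelope}, and for $S_{\Delta t}$ by the standing assumption $S_{\Delta t}\in W^{m,\infty}(\cB_\ast;H)$ with $m>s$. In particular $\normH{g_\alpha(z)}^2$ is a bounded, nonnegative, measurable function on $\cB_\ast$, so all the integrals below are finite.

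Since $\varrho$ is absolutely continuous with density $\omega$, the change-of-measure formula gives
\[
\|g_\alpha\|_{L^2(\varrho;H)}^2=\int_{\cB_\ast}\normH{g_\alpha(z)}^2\,\omega(z)\,dz .
\]
The integrand is nonnegative, so the pointwise a.e. bounds $\omega_{\min}\le\omega\le\omega_{\max}$ yield
\[
\omega_{\min}\|g_\alpha\|_{L^2(\cB_\ast;H)}^2\le\|g_\alpha\|_{L^2(\varrho;H)}^2\le\omega_{\max}\|g_\alpha\|_{L^2(\cB_\ast;H)}^2 .
\]
Summing over the finitely many $\alpha$ with $|\alpha|\le s$ gives \eqref{eq:r2-loss-compare}, by the definitions in \eqref{eq:r2-pop-losses}.

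The only point that needs care is well-definedness. Weak derivatives are defined only up to Lebesgue-null sets, and the $\varrho$-integral must not depend on the choice of representative. This holds because $\varrho\ll$ Lebesgue, so Lebesgue-null sets are $\varrho$-null. Hence different Borel representatives of $D^\alpha\mathsf h$ and $D^\alpha S_{\Delta t}$ give the same $\mathcal L_s^\varrho(\mathsf h)$.
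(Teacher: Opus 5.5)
Your proposal is correct and is the same argument as the paper's proof: write $\mathcal L_s^\varrho(\mathsf h)$ as a Lebesgue integral against the density $\omega$, apply $\omega_{\min}\le\omega\le\omega_{\max}$ a.e. to the nonnegative integrand, and sum over $|\alpha|\le s$. Your extra remark, that $\varrho\ll$ Lebesgue makes the $\varrho$-loss independent of the choice of representatives of the weak derivatives, is a harmless refinement that the paper leaves implicit.
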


For the generalization analysis, it is useful to view the Sobolev loss as an ordinary squared-loss class applied to the derivative feature map
\(\mathcal Z_s\). We therefore introduce
\begin{equation}\label{eq:r2-loss-class}
\mathcal F_{s,W,D}
:=
\Bigl\{
z\mapsto
\|\mathcal Z_s\mathsf h(z)-\mathcal Z_sS_{\Delta t}(z)\|_2^2
:\ \mathsf h\in\mathcal H_{W,D}^{\mathrm{DSRN}}
\Bigr\}.
\end{equation}

 To control the complexity of this loss class, we use the notion of pseudo-dimension \cite{pollard1990empirical}. For a class \(\mathcal A\) of real-valued functions on a domain \(\mathcal X\), its pseudo-dimension \(\Pdim(\mathcal A)\) represents the largest integer \(m\) such that there exist points \(z_1,\ldots,z_m\in\mathcal X\) and thresholds \(r_1,\ldots,r_m\in\mathbb R\) with the following property: every binary labeling of these points can be realized by thresholding a suitable function from \(\mathcal A\). Equivalently, for every \((b_1,\ldots,b_m)\in\{0,1\}^m\), there exists \(f\in\mathcal A\) such that
\[
f(z_i)>r_i \ \text{if } b_i=1,
\qquad
f(z_i)\le r_i \ \text{if } b_i=0,
\qquad i=1,\ldots,m.
\] 

\begin{proposition}[Derivative-space complexity input for DSRNs]
\label{prop:r2-deriv-complexity}
For every fixed order \(s\ge2\), there exists a constant \(C_{\mathrm{pdim}}=C_{\mathrm{pdim}}(d,s)>0\) such that, for every multi-index \(\alpha\) with \(|\alpha|\le s\) and every output coordinate \(\ell\in\{1,\dots,d\}\), the scalar derivative class
\[
\mathcal G_{\alpha,\ell;W,D}
:=
\bigl\{
z\mapsto D^\alpha \mathsf h_\ell(z):\
\mathsf h\in\mathcal H_{W,D}^{\mathrm{DSRN}}
\bigr\}
\]
satisfies the pseudo-dimension bound
\begin{equation}\label{eq:r2-pdim}
\Pdim(\mathcal G_{\alpha,\ell;W,D})
\le
C_{\mathrm{pdim}}\,
W^2D^2\log(2+W)\,\log(2+D).
\end{equation}
\end{proposition}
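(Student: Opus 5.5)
The strategy is to reduce the pseudo-dimension of each derivative class $\mathcal G_{\alpha,\ell;W,D}$ to the standard bound for piecewise-polynomial networks, following the argument in \cite{yang2023dsrn} together with the classical Goldberg--Jerrum/Bartlett-type counting from \cite{AnthonyBartlett1999}.

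\textbf{Step 1: realize the derivative as a network.} For a fixed DSRN $\mathsf h$ of width $W$ and depth $D$, we exhibit $D^\alpha\mathsf h_\ell$ as the output of an auxiliary network $\widetilde{\mathsf h}$ whose activations are piecewise polynomial, namely ReLU, super ReLU $\mathrm{ReLU}^k$, and their derivatives (step functions and lower powers of ReLU). This network has width $\mathcal O_{d,s}(W)$ and depth $\mathcal O_{d,s}(D)$, and its parameters are exactly those of $\mathsf h$. The construction proceeds by induction on $|\alpha|$ using the chain and product rules. Each differentiation of a layer $\sigma(Wx+b)$ produces a product of $\sigma'(Wx+b)$ with the derivative of the previous layer. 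Since at most $s$ derivatives are taken, the number of parallel channels grows by a factor depending only on $(d,s)$, for instance by the Fa\`a di Bruno combinatorics bounded by $A_s$. Products can be kept as polynomial operations, because we only need the output to be piecewise polynomial in the parameters, not to be realized by a genuine ReLU gadget.

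\textbf{Step 2: bound the number of pieces and the degrees.} For fixed input points $z_1,\dots,z_m$ and thresholds $r_i$, the map from parameters $\theta\in\R^{P}$, with $P=\mathcal O(W^2D)$, to $\bigl(D^\alpha\mathsf h_\ell(z_i)-r_i\bigr)_{i\le m}$ is piecewise polynomial. The breakpoints are determined layer by layer by sign conditions on polynomials in $\theta$. We bound the number of regions by iterating Warren's theorem across the $\mathcal O(D)$ layers, and bound the degree in each region by $\mathcal O_{d,s}(D)$. The super ReLU block is terminal and has bounded length, so it only contributes constants, and the ReLU part keeps the degree linear in depth.

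\textbf{Step 3: count sign patterns.} Applying the counting lemma of \cite[Thm.~8.3]{AnthonyBartlett1999} gives at most
\[
\Bigl(\tfrac{C\,m\,W D\,\mathrm{deg}}{P}\Bigr)^{P\,\mathcal O(D)}
\]
sign patterns. Setting this to be at least $2^m$ and solving yields $m\lesssim P D\log(\cdots)\lesssim W^2D^2\log(2+W)\log(2+D)$. The constant depends on $(d,s)$ through the blowup in Step 1.

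\textbf{Main obstacle.} I expect the main obstacle to be Step 1: tracking carefully that differentiation up to order $s$ inflates width and depth only by $(d,s)$-dependent factors, and that the resulting activations remain piecewise polynomial with a bounded number of breakpoints. If a fully explicit construction becomes unwieldy, I would instead cite the corresponding pseudo-dimension estimate for derivatives of DSRNs in \cite{yang2023dsrn}, which was established for first derivatives and extends verbatim to order $s$ with $s$-dependent constants, and verify only that the extension holds.
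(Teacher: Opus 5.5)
Your proposal is correct, and your fallback is exactly the paper's proof. The paper invokes the derivative pseudo-dimension bound for DSRNs in \cite[Theorem~5]{yang2023dsrn} ``together with the higher-order derivative extension discussed there''. This gives a constant $C_{\alpha,\ell}$ for each pair $(\alpha,\ell)$, and the finitely many pairs with $|\alpha|\le s$, $\ell\le d$ are then absorbed into $C_{\mathrm{pdim}}(d,s)$.

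Your primary route is genuinely different: it reproves the cited result from the Warren/Goldberg--Jerrum counting in \cite[Thm.~8.3]{AnthonyBartlett1999}, iterated over layers. What this buys is an actual check of the step the paper takes on trust, namely that going from first- to order-$s$ derivatives costs only $(d,s)$-dependent constants. Three facts make this work, and you should state them explicitly.
\begin{itemize}
\item \emph{No new breakpoints.} Fix a cell of parameter space by the signs of the original pre-activations at $z_1,\dots,z_m$. On that cell, every $D^\beta\mathsf h_\ell(z_i)$ with $\beta\le\alpha$ is a polynomial in the same parameters, because ReLU contributes only step functions and its higher derivatives vanish off the breakpoints.
\item \emph{Degree stays $O_{d,s}(D)$.} Only the short super ReLU block squares degrees. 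This would survive even a block of depth $O(\log D)$, since the degree would remain polynomial in $D$.
\item \emph{Channel growth is harmless.} The Fa\`a di Bruno growth in the number of tracked quantities enters only inside the logarithm. The number of variables stays $P=O(W^2D)$ because parameters are shared.
\end{itemize}
You then obtain $O(W^2D^2\log(WD))$, which is dominated by the stated bound.

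The cost is length, plus one point you should make explicit. Pseudo-dimension depends on pointwise values, so $D^\alpha\mathsf h_\ell$ must be the specific representative given by the formal chain rule with a fixed convention at breakpoints, e.g.\ $\mathrm{ReLU}'(y)=\mathbf 1_{y>0}$ and the ReLU second derivative set to $0$. An arbitrary Borel representative of the weak derivative would not do. The paper and the cited works rely on such a convention implicitly.
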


The preceding complexity input is stated for scalar derivative classes. We now lift these scalar bounds to the loss-composed class \(\mathcal F_{s,W,D}\). The first step is to record uniform envelopes for the derivative feature map and the induced squared loss.

\begin{lemma}[Feature and loss envelopes]
\label{lem:r2-envelope}
Suppose Assumption~\ref{ass:r2-dsrn-envelope} holds, and suppose \(S_{\Delta t}\in W^{m,\infty}(\cB_\ast;H)\) with \(m>s\). Then there exists a constant \(C_{\mathrm{feat}}\ge1\), depending only on \(d,m,s,\cB_\ast\), \(C_{\mathrm{env}}\), and \(\|S_{\Delta t}\|_{W^{m,\infty}(\cB_\ast;H)}\), such that
\begin{equation}\label{eq:r2-feature-envelope}
\sup_{z\in\cB_\ast}\|\mathcal Z_s\mathsf h(z)\|_2\le C_{\mathrm{feat}},
\qquad
\sup_{z\in\cB_\ast}\|\mathcal Z_sS_{\Delta t}(z)\|_2\le C_{\mathrm{feat}}
\qquad
\forall\,\mathsf h\in\mathcal H_{W,D}^{\mathrm{DSRN}}.
\end{equation}
Consequently, every \(g\in\mathcal F_{s,W,D}\) satisfies
\begin{equation}\label{eq:r2-loss-envelope}
0\le g(z)\le 4C_{\mathrm{feat}}^2
\qquad
\forall\,z\in\cB_\ast.
\end{equation}
\end{lemma}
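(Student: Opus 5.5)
The plan is to bound the two feature vectors separately and then obtain the loss envelope from the elementary inequality $\|a-b\|_2^2\le(\|a\|_2+\|b\|_2)^2$. The feature map $\mathcal Z_s\mathsf h(z)$ collects the scalar entries $D^\alpha\mathsf h_\ell(z)$ for $|\alpha|\le s$ and $1\le\ell\le d$. Its Euclidean norm therefore satisfies
\[
\|\mathcal Z_s\mathsf h(z)\|_2^2=\sum_{|\alpha|\le s}\sum_{\ell=1}^d |D^\alpha\mathsf h_\ell(z)|^2=\sum_{|\alpha|\le s}\|D^\alpha\mathsf h(z)\|_2^2 .
\]
By Assumption~\ref{ass:r2-dsrn-envelope} this is at most $C_{\mathrm{env}}^2$, uniformly in $z\in\cB_\ast$, in $\mathsf h\in\mathcal H_{W,D}^{\mathrm{DSRN}}$, and in $W,D$ along the regime considered. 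Hence the bound for $\mathsf h$ holds with any constant $\ge C_{\mathrm{env}}$.

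For the true map, I would use $S_{\Delta t}\in W^{m,\infty}(\cB_\ast;H)$ with $m>s$. Every $D^\alpha S_{\Delta t}$ with $|\alpha|\le s$ has a bounded Borel representative, as noted before \eqref{eq:r2-dataset}, with
\[
\sup_{z}\|D^\alpha S_{\Delta t}(z)\|_2\le \|S_{\Delta t}\|_{W^{m,\infty}}.
\]
Summing over the $A_s=\binom{d+s}{s}$ multi-indices gives
\[
\sup_z\|\mathcal Z_sS_{\Delta t}(z)\|_2\le \sqrt{A_s}\,\|S_{\Delta t}\|_{W^{m,\infty}(\cB_\ast;H)}.
\]
Depending on the norm convention, a $\sqrt d$ factor may also be absorbed into this constant. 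Setting
\[
C_{\mathrm{feat}}:=\max\Bigl\{1,\;C_{\mathrm{env}},\;\sqrt{A_s}\,\|S_{\Delta t}\|_{W^{m,\infty}(\cB_\ast;H)}\Bigr\}
\]
gives \eqref{eq:r2-feature-envelope}, and this constant depends only on the listed quantities.

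For \eqref{eq:r2-loss-envelope}, take $g\in\mathcal F_{s,W,D}$, so $g(z)=\|\mathcal Z_s\mathsf h(z)-\mathcal Z_sS_{\Delta t}(z)\|_2^2$. Nonnegativity is immediate since $g$ is a squared norm. By the triangle inequality,
\[
g(z)\le(\|\mathcal Z_s\mathsf h(z)\|_2+\|\mathcal Z_sS_{\Delta t}(z)\|_2)^2\le(2C_{\mathrm{feat}})^2=4C_{\mathrm{feat}}^2 .
\]

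The only delicate point is pointwise evaluation of derivatives. Both the network derivatives and the true derivatives must be read as their fixed Borel representatives, which the envelope assumption and the $W^{m,\infty}$ hypothesis supply. The supremum in the statement then means a pointwise bound for those representatives; this holds everywhere after modifying them on a null set, which is harmless for the subsequent $\varrho$-expectations because $\varrho$ is absolutely continuous by Assumption~\ref{ass:r2-mu-density}.
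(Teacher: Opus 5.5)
Your proposal is correct and follows essentially the same argument as the paper. The paper also bounds the network features by $C_{\mathrm{env}}$ via Assumption~\ref{ass:r2-dsrn-envelope} and the true-map features via the $W^{m,\infty}$ hypothesis (your explicit $\sqrt{A_s}$ factor is a concrete instance of its generic constant $C$). It then takes a common $C_{\mathrm{feat}}\ge1$ and gets $0\le g\le 4C_{\mathrm{feat}}^2$ from the triangle inequality and squaring; your remark on choosing representatives bounded everywhere makes explicit a point the paper leaves implicit.
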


We next combine the pseudo-dimension estimate from Proposition~\ref{prop:r2-deriv-complexity} with the envelope bound above to obtain a covering-number estimate for the loss-composed class \(\mathcal F_{s,W,D}\). Following \cite{AnthonyBartlett1999}, for a general class \(\mathcal A\) of real-valued functions on \(\cB_\ast\) and a sample \(Z_M=(z^{(1)},\ldots,z^{(M)})\in\cB_\ast^M\), we write
\[
\mathcal A|_{Z_M}
:=
\bigl\{
(f(z^{(1)}),\ldots,f(z^{(M)})):\ f\in\mathcal A
\bigr\}
\subset \mathbb R^M
\]
for the restriction of \(\mathcal A\) to the sample \(Z_M\). We denote by
\(\mathcal N_\infty(\varepsilon,\mathcal A|_{Z_M})
:=
\mathcal N(\varepsilon,\mathcal A|_{Z_M},\|\cdot\|_\infty)\)
the \(\varepsilon\)-covering number of this restricted set with respect to the \(\ell^\infty\)-metric on \(\mathbb R^M\). The corresponding uniform covering number is
\[
\mathcal N_\infty(\varepsilon,\mathcal A,M)
:=
\sup_{Z_M\in\cB_\ast^M}
\mathcal N_\infty(\varepsilon,\mathcal A|_{Z_M}).
\]

\begin{lemma}[Empirical covering number of the Sobolev loss class]
\label{lem:r2-covering}
Suppose Assumption~\ref{ass:r2-dsrn-envelope} holds, and suppose Proposition~\ref{prop:r2-deriv-complexity} is in force. Let \(C_{\mathrm{feat}}\) be as in Lemma~\ref{lem:r2-envelope}. Then there exists a constant \(C_{\mathrm{cov}}=C_{\mathrm{cov}}(d,s)>0\) such that, for every sample \(Z_M=(z^{(1)},\dots,z^{(M)})\in\cB_\ast^M\) and every \(\varepsilon\in(0,1]\),
\begin{equation}\label{eq:r2-covering}
\log \mathcal N_\infty \bigl(
\varepsilon,\mathcal F_{s,W,D}|_{Z_M}
\bigr)
\le
C_{\mathrm{cov}}\,
W^2D^2\log(2+W)\,\log(2+D)\,
\log \Bigl(
\frac{C_{\mathrm{cov}}\,C_{\mathrm{feat}}^2\,M}{\varepsilon}
\Bigr).
\end{equation}
\end{lemma}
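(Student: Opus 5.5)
The plan is to reduce the covering of the loss class \(\mathcal F_{s,W,D}\) to coverings of the \(r_s=dA_s\) scalar derivative classes \(\mathcal G_{\alpha,\ell;W,D}\). Those classes are controlled by Proposition~\ref{prop:r2-deriv-complexity} together with the standard pseudo-dimension-to-covering bound of \cite{AnthonyBartlett1999}.

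\emph{Step 1: clip to the envelope.} Fix a sample \(Z_M\) and abbreviate \(P:=W^2D^2\log(2+W)\log(2+D)\). By Lemma~\ref{lem:r2-envelope}, every coordinate \(D^\alpha\mathsf h_\ell\) takes values in \([-C_{\mathrm{feat}},C_{\mathrm{feat}}]\) on \(\cB_\ast\). I would therefore regard each \(\mathcal G_{\alpha,\ell;W,D}\) as a class bounded by \(C_{\mathrm{feat}}\). Clipping to \([-C_{\mathrm{feat}},C_{\mathrm{feat}}]\) changes nothing and does not increase the pseudo-dimension.

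\emph{Step 2: cover each scalar class.} Apply the classical bound (Anthony--Bartlett, Theorem~12.2): for a class of functions with range in an interval of length \(2C_{\mathrm{feat}}\) and pseudo-dimension at most \(\mathrm{P}\), and for \(M\ge \mathrm{P}\),
\[
\log\mathcal N_\infty(\varepsilon',\mathcal G|_{Z_M})\le \mathrm P\log\Bigl(\frac{2eC_{\mathrm{feat}}M}{\varepsilon'\,\mathrm P}\Bigr).
\]
For \(M<\mathrm P\) a trivial grid bound of the same form applies. Using \(\mathrm P\le C_{\mathrm{pdim}}P\), each scalar class therefore has \(\log\)-covering at most \(C_{\mathrm{pdim}}P\log(2eC_{\mathrm{feat}}M/\varepsilon')\).

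\emph{Step 3: Lipschitz transfer to the loss.} For two networks \(\mathsf h,\mathsf h'\) and a fixed \(z\), write \(a=\mathcal Z_s\mathsf h(z)\), \(a'=\mathcal Z_s\mathsf h'(z)\) and \(b=\mathcal Z_sS_{\Delta t}(z)\). Then
\[
\bigl|\|a-b\|_2^2-\|a'-b\|_2^2\bigr|\le \|a-a'\|_2\,\|a+a'-2b\|_2\le 4C_{\mathrm{feat}}\sqrt{r_s}\,\|a-a'\|_\infty .
\]
Now choose \(\varepsilon'=\varepsilon/(4C_{\mathrm{feat}}\sqrt{r_s})\) and take the product of the \(r_s\) coordinatewise \(\varepsilon'\)-covers. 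This produces an \(\varepsilon\)-cover of \(\mathcal F_{s,W,D}|_{Z_M}\) in \(\ell^\infty\).

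There is one subtlety here. Covering the coordinates separately does not couple them through a common \(\mathsf h\). To handle this, I would take the product set of cover centers, which is a valid cover of the restricted set even though the centers need not be realized by a common \(\mathsf h\). If an internal cover is required instead, the radius is doubled, which is harmless.

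The log-covering is then at most
\[
r_s\,C_{\mathrm{pdim}}P\,\log\bigl(8e\sqrt{r_s}\,C_{\mathrm{feat}}^2M/\varepsilon\bigr).
\]
Since \(r_s\) depends only on \((d,s)\), setting \(C_{\mathrm{cov}}:=\max\{r_sC_{\mathrm{pdim}},\,8e\sqrt{r_s}\}\) gives \eqref{eq:r2-covering}. Here I use \(C_{\mathrm{feat}}\ge1\) and \(\varepsilon\le1\) so that the logarithm is positive.

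The main obstacle is Step 2: making sure the cited pseudo-dimension covering bound applies uniformly in \(M\), including the regime \(M<\mathrm P\), and absorbing the factor \(1/\mathrm P\) inside the logarithm. I would handle this by bounding \(\log(x/\mathrm P)\le\log x\) when \(\mathrm P\ge1\), and by treating the small-\(M\) case with the trivial grid bound \((2C_{\mathrm{feat}}/\varepsilon'+1)^{M}\). Its logarithm is also dominated by \(P\log(C C_{\mathrm{feat}}M/\varepsilon)\), because \(M<\mathrm P\le C_{\mathrm{pdim}}P\).
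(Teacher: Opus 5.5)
Your proposal is correct and follows essentially the same route as the paper: a pseudo-dimension covering bound for each of the $r_s$ scalar derivative classes (the paper invokes a covering lemma from the DSRN reference of the same form as the Anthony--Bartlett bound you cite, with the same grid fallback when $M$ is below the pseudo-dimension), then a product cover at radius proportional to $\varepsilon/(C_{\mathrm{feat}}\sqrt{r_s})$, a Lipschitz transfer through the squared loss, and summation of the logarithms. The one small difference is in how the cover centers are handled. The paper keeps the centers external and absorbs the mismatch by using $\|\eta'\|_2\le 2C_{\mathrm{feat}}$, Lipschitz constant $5C_{\mathrm{feat}}$, and radius $\varepsilon/(8C_{\mathrm{feat}}\sqrt{r_s})$; your constant $4C_{\mathrm{feat}}$ is valid only for centers realized by a common network, which your radius-doubling conversion to an internal cover supplies at the cost of a harmless constant.
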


This empirical covering estimate is the main complexity input for the uniform law of large numbers for the Sobolev loss class.

\begin{proposition}[Uniform generalization for the Sobolev loss class]
\label{prop:r2-uniform-generalization}
Assume the hypotheses of Lemma~\ref{lem:r2-covering}. Define
\[
\Gamma_{W,D,\delta}
:=
W^2D^2\log(2+W)\,\log(2+D)+\log\frac{2}{\delta}.
\]
Then there exists a constant \(\widetilde C_{\mathrm{gen}}\ge1\), depending
only on \(d\), \(s\), and \(C_{\mathrm{feat}}\), such that the following holds.
For every \(\varepsilon_{\rm gen}\in(0,1]\) and \(\delta\in(0,1)\), if
\begin{equation}\label{eq:r2-gen-sample}
M
\ge
\widetilde C_{\mathrm{gen}}
\frac{\Gamma_{W,D,\delta}}{\varepsilon_{\rm gen}^2}
\log \Bigl(
\frac{\widetilde C_{\mathrm{gen}}\Gamma_{W,D,\delta}}
{\varepsilon_{\rm gen}^3}
\Bigr),
\end{equation}
then, with probability at least \(1-\delta\),
\begin{equation}\label{eq:r2-generalization}
\sup_{\mathsf h\in\mathcal H_{W,D}^{\mathrm{DSRN}}}
\bigl|
\widehat{\mathcal L}_{s,M}(\mathsf h)-\mathcal L_s^\varrho(\mathsf h)
\bigr|
\le
\varepsilon_{\rm gen}.
\end{equation}
\end{proposition}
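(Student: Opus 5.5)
The plan is to derive this uniform law of large numbers from the empirical covering estimate of Lemma~\ref{lem:r2-covering} and the loss envelope of Lemma~\ref{lem:r2-envelope}, using the classical symmetrization/covering argument for bounded real-valued classes in \cite{AnthonyBartlett1999}. First observe that $\widehat{\mathcal L}_{s,M}(\mathsf h)=\frac1M\sum_j g_{\mathsf h}(Z_j)$ and $\mathcal L_s^\varrho(\mathsf h)=\E\, g_{\mathsf h}(Z)$ with $g_{\mathsf h}(z)=\|\mathcal Z_s\mathsf h(z)-\mathcal Z_sS_{\Delta t}(z)\|_2^2\in\mathcal F_{s,W,D}$. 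Here we use that the labels are exact derivative values of $S_{\Delta t}$, and that $\mathcal L_s^\varrho$ equals the sum over $|\alpha|\le s$ of the $L^2(\varrho)$ norms of the coordinatewise derivatives. So the claim reduces to bounding $\sup_{g\in\mathcal F_{s,W,D}}|\frac1M\sum_j g(Z_j)-\E g(Z)|$ for a class taking values in $[0,B]$ with $B:=4C_{\mathrm{feat}}^2$.

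Next, invoke the standard uniform convergence theorem for $[0,B]$-valued classes (e.g. Theorem 17.1 in \cite{AnthonyBartlett1999}):
\[
\mathbb P\Bigl(\sup_{g}\bigl|\tfrac1M\textstyle\sum_j g(Z_j)-\E g\bigr|>\varepsilon\Bigr)\le 4\,\mathcal N_\infty(\varepsilon/8,\mathcal F_{s,W,D},2M)\exp\Bigl(-\tfrac{M\varepsilon^2}{32B^2}\Bigr).
\]
This is proved by symmetrization with a ghost sample, then covering the restriction to $2M$ points in $\ell^\infty$, then Hoeffding plus a union bound over the cover. Lemma~\ref{lem:r2-covering} applies at sample size $2M$ with scale $\varepsilon/8\le1$. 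Writing $P:=W^2D^2\log(2+W)\log(2+D)$, this gives
\[
\log(\text{RHS})\le \log 4+C_{\mathrm{cov}}P\log\bigl(16C_{\mathrm{cov}}C_{\mathrm{feat}}^2M/\varepsilon\bigr)-M\varepsilon^2/(32B^2).
\]

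To make the failure probability at most $\delta$, it suffices that $M\varepsilon^2/(64B^2)\ge C_{\mathrm{cov}}P\log(cM/\varepsilon)$ and $M\varepsilon^2/(64B^2)\ge\log(4/\delta)$. Both follow from $M\ge C\,\Gamma_{W,D,\delta}\varepsilon^{-2}\log(cM/\varepsilon)$.

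The only delicate step is eliminating $M$ from the right-hand side: an implicit condition of the form $M\ge a\log(bM)$ must be turned into an explicit one. I would use the elementary fact that $M\ge 2a\log(ab)$ implies $M\ge a\log(bM)$ whenever $ab\ge e$. Take $a=C\Gamma/\varepsilon^2$ and $b=c/\varepsilon$, so that $ab\asymp\Gamma/\varepsilon^3$, which is exactly the shape in \eqref{eq:r2-gen-sample}. Then absorb all constants (depending on $d,s,C_{\mathrm{feat}}$ through $C_{\mathrm{cov}}$ and $B$) into a single $\widetilde C_{\mathrm{gen}}\ge1$, using $\Gamma\ge\log 2$ and $\varepsilon\le1$ to ensure $ab\ge e$. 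This bookkeeping, and checking that the covering lemma at scale $\varepsilon/8$ and size $2M$ stays within its stated range, is the main, though routine, obstacle. Measurability of the supremum is handled as usual by the continuity/separability of the parametrized DSRN class.
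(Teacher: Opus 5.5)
Your proposal is correct and follows the paper's proof essentially step for step: you rescale the $[0,4C_{\mathrm{feat}}^2]$-valued loss class, apply Theorem~17.1 of \cite{AnthonyBartlett1999} together with $\mathcal N_1\le\mathcal N_\infty$, insert Lemma~\ref{lem:r2-covering} at sample size $2M$, and then make the implicit condition on $M$ explicit. The only cosmetic difference is in that last step: the paper shows the exponent is decreasing in $M$ on $[M_0,\infty)$ and evaluates it at $M_0$, whereas you use the elementary implication $M\ge 2a\log(ab)\Rightarrow M\ge a\log(bM)$ for $ab\ge e$, and the two are equivalent bookkeeping.
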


Combining uniform generalization with approximate empirical risk minimization
gives a population Sobolev loss bound for the learned solution map.

\begin{proposition}[Population Sobolev loss bound for the approximate ERM]
\label{prop:r2-aerm-generalization}
Suppose Assumption~\ref{ass:r2-aerm} holds, and suppose that the hypotheses of
Proposition~\ref{prop:r2-uniform-generalization} hold. Then, with probability
at least \(1-\delta\),
\begin{equation}\label{eq:r2-aerm-generalization}
\mathcal L_s^\varrho\bigl(\widehat S_{\Delta t}^{(M)}\bigr)
\le
\inf_{\mathsf h\in\mathcal H_{W,D}^{\mathrm{DSRN}}}\mathcal L_s^\varrho(\mathsf h)
+
2\varepsilon_{\rm gen}
+
\varepsilon_{\mathrm{opt}}.
\end{equation}
\end{proposition}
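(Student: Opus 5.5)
This is the standard approximate-ERM argument: a three-term decomposition, with the uniform deviation bound of Proposition~\ref{prop:r2-uniform-generalization} used twice. We work on the event \(\mathcal E\) of probability at least \(1-\delta\) on which \eqref{eq:r2-generalization} holds. On \(\mathcal E\), for every \(\mathsf h\in\mathcal H_{W,D}^{\mathrm{DSRN}}\) we have \(|\widehat{\mathcal L}_{s,M}(\mathsf h)-\mathcal L_s^\varrho(\mathsf h)|\le\varepsilon_{\rm gen}\). The learned map \(\widehat S_{\Delta t}^{(M)}\) depends on the sample, but it lies in the class for every realization. The supremum bound therefore applies to it directly, and no union bound or independence argument is required.

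Fix an arbitrary comparator \(\mathsf h\in\mathcal H_{W,D}^{\mathrm{DSRN}}\). On \(\mathcal E\) we chain three inequalities. First, uniform generalization at the learned map gives \(\mathcal L_s^\varrho(\widehat S_{\Delta t}^{(M)})\le \widehat{\mathcal L}_{s,M}(\widehat S_{\Delta t}^{(M)})+\varepsilon_{\rm gen}\). Second, Assumption~\ref{ass:r2-aerm} gives \(\widehat{\mathcal L}_{s,M}(\widehat S_{\Delta t}^{(M)})\le \widehat{\mathcal L}_{s,M}(\mathsf h)+\varepsilon_{\mathrm{opt}}\), since the infimum is at most the value at \(\mathsf h\). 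Third, uniform generalization at \(\mathsf h\) gives \(\widehat{\mathcal L}_{s,M}(\mathsf h)\le \mathcal L_s^\varrho(\mathsf h)+\varepsilon_{\rm gen}\). Combining the three yields
\[
\mathcal L_s^\varrho\bigl(\widehat S_{\Delta t}^{(M)}\bigr)\le \mathcal L_s^\varrho(\mathsf h)+2\varepsilon_{\rm gen}+\varepsilon_{\mathrm{opt}}.
\]

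The left-hand side does not depend on \(\mathsf h\), and the inequality holds for every \(\mathsf h\) on the same event \(\mathcal E\). Taking the infimum over \(\mathsf h\in\mathcal H_{W,D}^{\mathrm{DSRN}}\) gives \eqref{eq:r2-aerm-generalization}. Because the event is fixed before the comparator is chosen, the infimum can be taken even if it is not attained.

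There is essentially no obstacle here. The only point needing care is measurability: the empirical and population losses must be well defined and finite on the class. This follows from the Borel representatives in Assumption~\ref{ass:r2-dsrn-envelope} together with the loss envelope of Lemma~\ref{lem:r2-envelope}, which bounds every loss by \(4C_{\mathrm{feat}}^2\). This boundedness also guarantees that the infimum is finite, so the subtraction implicit in the comparison is legitimate.
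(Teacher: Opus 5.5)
Your proof is correct and matches the paper's argument. On the event of Proposition~\ref{prop:r2-uniform-generalization}, you chain the uniform deviation bound at the learned map, the approximate-ERM condition, and the uniform deviation bound again. The only cosmetic difference is that you fix a comparator \(\mathsf h\) and take the infimum at the end, while the paper applies the deviation bound directly to \(\inf_{\mathsf h}\widehat{\mathcal L}_{s,M}(\mathsf h)\); the two are equivalent.
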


It remains to convert this population Sobolev control into the uniform value
and Jacobian errors required by \eqref{eq:r2-operator-errors-recall}.

\begin{lemma}[Sobolev loss implies uniform value and Jacobian control]
\label{lem:r2-loss-to-W1inf}
Suppose Assumption~\ref{ass:r2-mu-density} holds, \(s\) is chosen as in
\eqref{eq:r2-s-def}, and \(S_{\Delta t}\in W^{m,\infty}(\cB_\ast;H)\) for some \(m>s\). Then there
exists a constant \(C_{\mathrm{emb}}\ge1\), depending only on
\(d,s,\cB_\ast,\omega_{\min}\), such that for every
\(\mathsf h\in W^{s,2}(\cB_\ast;H)\),
\begin{equation}\label{eq:r2-loss-to-W1inf}
\|\mathsf h-S_{\Delta t}\|_{W^{1,\infty}(\cB_\ast;H)}
\le
C_{\mathrm{emb}}\,
\bigl(\mathcal L_s^\varrho(\mathsf h)\bigr)^{1/2}.
\end{equation}
In particular,
\begin{equation}\label{eq:r2-loss-to-operator}
\sup_{z\in\cB_\ast}\normH{\mathsf h(z)-S_{\Delta t}(z)}
\le
C_{\mathrm{emb}}\,
\bigl(\mathcal L_s^\varrho(\mathsf h)\bigr)^{1/2},
\end{equation}
and
\begin{equation}\label{eq:r2-loss-to-jac}
\sup_{z\in\cB_\ast}
\|D\mathsf h(z)-DS_{\Delta t}(z)\|_{\mathcal L(H,H)}
\le
C_{\mathrm{emb}}\,
\bigl(\mathcal L_s^\varrho(\mathsf h)\bigr)^{1/2}.
\end{equation}
\end{lemma}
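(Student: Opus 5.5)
The plan is to combine two facts: the Sobolev embedding $W^{s,2}\hookrightarrow W^{1,\infty}$ on the ball $\cB_\ast$, and the lower density bound from Assumption~\ref{ass:r2-mu-density}, applied through Lemma~\ref{lem:r2-loss-compare}. Fix $\mathsf h\in W^{s,2}(\cB_\ast;H)$ and set $e:=\mathsf h-S_{\Delta t}$. Since $S_{\Delta t}\in W^{m,\infty}(\cB_\ast;H)$ with $m>s$ and $\cB_\ast$ is bounded, we have $S_{\Delta t}\in W^{s,2}(\cB_\ast;H)$. Hence $e\in W^{s,2}(\cB_\ast;H)$ and
\[
\|e\|_{W^{s,2}(\cB_\ast;H)}^2=\sum_{|\alpha|\le s}\|D^\alpha e\|_{L^2(\cB_\ast;H)}^2=\mathcal L_s^{\mathrm{Leb}}(\mathsf h).
\]

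\textbf{Step 1 (density comparison).} The lower inequality in Lemma~\ref{lem:r2-loss-compare} gives $\mathcal L_s^{\mathrm{Leb}}(\mathsf h)\le \omega_{\min}^{-1}\mathcal L_s^\varrho(\mathsf h)$. That lemma is stated for the DSRN class, but its proof uses only $d\varrho=\omega\,dz$ and $\omega\ge\omega_{\min}$. It therefore holds verbatim for any $\mathsf h\in W^{s,2}$, and I would simply repeat that one-line argument here.

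\textbf{Step 2 (embedding).} The set $\cB_\ast=\{z:\normV{z}\le R_\ast\}$ is an ellipsoid, so its interior is a bounded convex domain with Lipschitz boundary and satisfies the cone condition. By \eqref{eq:r2-s-def}, $s-1>d/2$. The Sobolev embedding theorem, applied componentwise to $e$ and to each first derivative $\partial_j e\in W^{s-1,2}$, then yields a constant $C_{\mathrm{Sob}}=C_{\mathrm{Sob}}(d,s,\cB_\ast)$ such that
\[
\|e\|_{W^{1,\infty}(\cB_\ast;H)}\le C_{\mathrm{Sob}}\|e\|_{W^{s,2}(\cB_\ast;H)}.
\]
The embedding is into continuous functions, so $e$ and $De$ have continuous representatives on the closure. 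This lets us read the essential supremum as a genuine supremum over $\cB_\ast$, matching the convention stated after \eqref{eq:r2-s-def}.

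\textbf{Step 3 (norm conventions).} Combining Steps 1 and 2 gives \eqref{eq:r2-loss-to-W1inf} with $C_{\mathrm{emb}}:=\max\{1,C_{\mathrm{Sob}}\,\omega_{\min}^{-1/2}\}$. The value bound \eqref{eq:r2-loss-to-operator} follows directly. For the Jacobian bound \eqref{eq:r2-loss-to-jac}, note that $\|De(z)\|_{\mathcal L(H,H)}$ is at most the Frobenius norm $(\sum_{j}\|\partial_je(z)\|_2^2)^{1/2}$. The latter is dominated by the $W^{1,\infty}$ norm, possibly after a factor of $\sqrt d$ that is absorbed into $C_{\mathrm{emb}}$.

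\textbf{Main obstacle.} The only delicate point is Step 2: we must confirm that the embedding constant depends only on $d$, $s$ and the geometry of $\cB_\ast$, and that it applies to vector-valued functions. Both follow from the scalar theorem on the Lipschitz domain $\cB_\ast$ applied to each of the $d$ output components, using that the $H$-norm is the Euclidean norm. I would cite the standard statement, for instance Adams' embedding for domains with the cone condition, rather than reprove it.
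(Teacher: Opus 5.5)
Your proposal is correct and follows the paper's proof essentially verbatim. You set $e=\mathsf h-S_{\Delta t}$, bound $\|e\|_{W^{1,\infty}(\cB_\ast;H)}$ by $\|e\|_{W^{s,2}(\cB_\ast;H)}$ via the Sobolev embedding on the Lipschitz domain $\cB_\ast$ (using $s-1>d/2$), and then use $\omega_{\min}\mathcal L_s^{\mathrm{Leb}}(\mathsf h)\le\mathcal L_s^\varrho(\mathsf h)$ to get $C_{\mathrm{emb}}=C_{\mathrm{SE}}\,\omega_{\min}^{-1/2}$, exactly as the paper does; your added remarks (that Lemma~\ref{lem:r2-loss-compare} holds beyond the DSRN class, the $\max\{1,\cdot\}$ guaranteeing $C_{\mathrm{emb}}\ge1$, and the $\sqrt d$ factor from operator versus Frobenius norm) are minor points of care that the paper leaves implicit.
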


\subsubsection{Solution-map learning sample complexity via DSRNs}
\label{ssubsec:complexity-route2-main}

We now combine the approximation, generalization, and Sobolev-embedding inputs
above to obtain a sample-complexity theorem for the solution-map learning
errors in \eqref{eq:r2-operator-errors-recall}. This converts statistical
accuracy of the learned flow map into the deterministic quantities required by
Proposition~\ref{prop:bridge-route2}.

\begin{theorem}[Sample complexity of solution-map learning]
\label{thm:r2-complexity}
Let \(s:=\Big\lfloor\frac d2\Big\rfloor+2\), and let \(m>s\). Suppose that \(S_{\Delta t}\in W^{m,\infty}(\cB_\ast;H)\), and suppose that Assumptions~\ref{ass:r2-dsrn-envelope}, \ref{ass:r2-mu-density}, and~\ref{ass:r2-aerm} hold. Fix target operator accuracies \(\bar\varepsilon_S,\bar\eta_S>0\) and confidence level \(\delta\in(0,1)\). Let \(C_{\rm emb}\) be as in Lemma~\ref{lem:r2-loss-to-W1inf}. Define 
\begin{equation}\label{eq:r2-eps-star}
\varepsilon_\ast
:=
\min\left\{
\frac{1}{2C_{\mathrm{emb}}\sqrt{\omega_{\max}+3}}
\min\{\bar\varepsilon_S,\bar\eta_S,1\},
\varepsilon_0
\right\},
\end{equation}
where \(\omega_{\max}\) and \(\varepsilon_0\) are as in Assumption~\ref{ass:r2-mu-density} and Proposition~\ref{prop:r2-dsrn-approx}, respectively. Suppose also that
\begin{equation}\label{eq:r2-opt-small}
\varepsilon_{\mathrm{opt}}\le \varepsilon_\ast^2.
\end{equation}
Then the following hold.

\begin{enumerate}
\item[\emph{(i)}] \emph{Architecture size.}
Let \(\varepsilon_{\rm app}:=\varepsilon_\ast\), and choose the DSRN hypothesis class \(\mathcal H^{\rm DSRN}_{W,D}\) with width \(W=W_{\rm app}\) and depth \(D=D_{\rm app}\), as provided by Proposition~\ref{prop:r2-dsrn-approx} for approximation tolerance \(\varepsilon_{\rm app}\). Then
\begin{equation}\label{eq:r2-WD-choice}
W
\le
C_W\,\varepsilon_\ast^{-\frac{d}{4(m-s)}}
\log \bigl(2+\varepsilon_\ast^{-1}\bigr),
\qquad
D
\le
C_D\,\varepsilon_\ast^{-\frac{d}{4(m-s)}}
\log \bigl(2+\varepsilon_\ast^{-1}\bigr).
\end{equation}

\item[\emph{(ii)}] \emph{Sample size and learning accuracy.}
There exists a constant \(C_{\mathrm{samp}}>0\), depending only on \(d,m,s,\cB_\ast\), the constants \(\omega_{\min},\omega_{\max}\) and \(C_{\mathrm{env}}\) from Assumptions~\ref{ass:r2-mu-density} and \ref{ass:r2-dsrn-envelope}, and \(\|S_{\Delta t}\|_{W^{m,\infty}(\cB_\ast;H)}\), such that if
\begin{equation}\label{eq:r2-sample-complexity}
M
\ge
C_{\mathrm{samp}}
\left[
\varepsilon_\ast^{-4-\frac{d}{m-s}}
\,\mathrm{polylog}\Bigl(\frac1{\varepsilon_\ast}\Bigr)
+
\varepsilon_\ast^{-4}\log\frac1\delta
\right],
\end{equation}
then, with probability at least \(1-\delta\),
\begin{equation}\label{eq:r2-final-error}
\sup_{z\in\cB_\ast}
\normH{\widehat S_{\Delta t}^{(M)}(z)-S_{\Delta t}(z)}
\le
\bar\varepsilon_S,
\qquad
\sup_{z\in\cB_\ast}
\bigl\|D\widehat S_{\Delta t}^{(M)}(z)-DS_{\Delta t}(z)\bigr\|_{\mathcal L(H,H)}
\le
\bar\eta_S.
\end{equation}
Equivalently,
\[
\varepsilon_M^S\le \bar\varepsilon_S,
\qquad
\eta_M^S\le \bar\eta_S.
\]
\end{enumerate}
\end{theorem}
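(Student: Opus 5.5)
The argument chains the approximation, generalization and embedding inputs in the usual oracle-inequality pattern. The target is to show that, with probability at least \(1-\delta\),
\[
\mathcal L_s^\varrho\bigl(\widehat S_{\Delta t}^{(M)}\bigr)\le (\omega_{\max}+3)\varepsilon_\ast^2 .
\]
Once this holds, Lemma~\ref{lem:r2-loss-to-W1inf} gives
\[
\varepsilon_M^S,\ \eta_M^S\le C_{\mathrm{emb}}\sqrt{\omega_{\max}+3}\,\varepsilon_\ast\le \tfrac12\min\{\bar\varepsilon_S,\bar\eta_S,1\},
\]
by the definition \eqref{eq:r2-eps-star} of \(\varepsilon_\ast\). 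This explains the factor \(\sqrt{\omega_{\max}+3}\): it is designed exactly to absorb the three error contributions below.

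\textbf{Part (i).} Set \(\varepsilon_{\rm app}=\varepsilon_\ast\le\varepsilon_0\). Proposition~\ref{prop:r2-dsrn-approx} then produces \(W=W_{\rm app}\), \(D=D_{\rm app}\) obeying \eqref{eq:r2-WD-from-epsapp}, which is \eqref{eq:r2-WD-choice}. It also yields a network \(\mathsf h^\star\in\mathcal H^{\rm DSRN}_{W,D}\) with \(\|\mathsf h^\star-S_{\Delta t}\|_{W^{s,2}}\le\varepsilon_\ast\). I will assume \(\mathsf h^\star\) lies in the envelope-restricted class of Assumption~\ref{ass:r2-dsrn-envelope}; this needs a remark, but it is implicit in the standing assumptions. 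Lemma~\ref{lem:r2-loss-compare} then gives
\[
\inf_{\mathsf h}\mathcal L_s^\varrho(\mathsf h)\le \mathcal L_s^\varrho(\mathsf h^\star)\le\omega_{\max}\mathcal L_s^{\rm Leb}(\mathsf h^\star)\le\omega_{\max}\varepsilon_\ast^2 .
\]

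\textbf{Part (ii).} Apply Proposition~\ref{prop:r2-aerm-generalization} with \(\varepsilon_{\rm gen}:=\varepsilon_\ast^2\), which is at most \(1\). Together with \(\varepsilon_{\rm opt}\le\varepsilon_\ast^2\), this gives, with probability at least \(1-\delta\),
\[
\mathcal L_s^\varrho(\widehat S_{\Delta t}^{(M)})\le(\omega_{\max}+2+1)\varepsilon_\ast^2 ,
\]
which is the target bound.

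\textbf{Main obstacle: checking the sample-size condition.} It remains to verify that \eqref{eq:r2-sample-complexity} implies \eqref{eq:r2-gen-sample} with \(\varepsilon_{\rm gen}=\varepsilon_\ast^2\). Plugging the width and depth bounds into \(\Gamma_{W,D,\delta}\) gives
\[
W^2D^2\log(2+W)\log(2+D)\lesssim \varepsilon_\ast^{-\frac{d}{m-s}}\,\mathrm{polylog}(1/\varepsilon_\ast),
\]
so \(\Gamma_{W,D,\delta}\lesssim \varepsilon_\ast^{-d/(m-s)}\mathrm{polylog}(1/\varepsilon_\ast)+\log(2/\delta)\). Dividing by \(\varepsilon_{\rm gen}^2=\varepsilon_\ast^4\) produces the two terms in \eqref{eq:r2-sample-complexity}.

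The delicate point is the extra factor \(\log\bigl(\widetilde C_{\rm gen}\Gamma/\varepsilon_\ast^6\bigr)\). It is polylogarithmic in \(1/\varepsilon_\ast\), and it contributes additively a \(\log\log(1/\delta)\)-type term when \(\delta\) is small. I would handle it as follows:
\begin{itemize}
\item bound \(\log(a+b)\le\log a+\log b\) for \(a,b\ge 2\);
\item use \(\log\frac{2}{\delta}\cdot\log\log\frac{2}{\delta}\) only where needed, absorbing it into a polylog or enlarging the \(\log\frac1\delta\) term by a constant, since \(\varepsilon_\ast\le 1\) lets the factor \(\varepsilon_\ast^{-4}\) carry cross terms via \(ab\le a^2+b^2\)-type splitting.
\end{itemize}
If the \(\log\log(1/\delta)\) factor cannot be absorbed cleanly, I would enlarge \(C_{\rm samp}\) and interpret the polylog as also allowing \(\log\log\frac1\delta\). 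This bookkeeping is routine but is the only place where the constants need care.

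Finally, \(C_{\rm samp}\) inherits its dependencies from \(C_W\), \(C_D\), \(\widetilde C_{\rm gen}\) (through \(C_{\rm feat}\)) and \(C_{\rm emb}\). These depend only on the quantities listed in the statement, so \(C_{\rm samp}\) does too.
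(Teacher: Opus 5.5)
Your proposal is correct and is essentially the paper's proof. Taking $\varepsilon_{\rm app}=\varepsilon_\ast$ in Proposition~\ref{prop:r2-dsrn-approx} together with Lemma~\ref{lem:r2-loss-compare} bounds the infimum by $\omega_{\max}\varepsilon_\ast^2$; Proposition~\ref{prop:r2-aerm-generalization} with $\varepsilon_{\rm gen}=\varepsilon_\ast^2$ and $\varepsilon_{\rm opt}\le\varepsilon_\ast^2$ gives $(\omega_{\max}+3)\varepsilon_\ast^2$; and Lemma~\ref{lem:r2-loss-to-W1inf} finishes. The envelope membership of $\mathsf h^\star$ that you flag is arranged inside the paper's proof of Proposition~\ref{prop:r2-dsrn-approx} by enlarging $C_{\mathrm{env}}$.

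On the sample-size bookkeeping you flag, the paper just asserts that \eqref{eq:r2-sample-complexity} implies \eqref{eq:r2-gen-sample} after enlarging $C_{\mathrm{samp}}$, and this step does not hold uniformly in $\delta$. The factor $\log(\widetilde C_{\mathrm{gen}}\Gamma_{W,D,\delta}/\varepsilon_\ast^{6})$ produces a cross term of order $\varepsilon_\ast^{-4}\log(2/\delta)\log(1/\varepsilon_\ast)$. This term cannot be absorbed by any constant when $\log(1/\delta)\gg\varepsilon_\ast^{-d/(m-s)}\mathrm{polylog}(1/\varepsilon_\ast)$, and your $ab\le a^2+b^2$ splitting would only produce $\log^2(1/\delta)$. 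The clean fix is to argue directly from the tail bound \eqref{eq:r2-unif-dev-with-cover-proof}, where $\log(4/\delta)$ enters additively. The remaining term $K_{W,D}\log L$, with $L=\log(4/\delta)$, is then controlled by $K_{W,D}\log L\le 2K_{W,D}\log K_{W,D}+L$.
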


\begin{corollary}[AOT-valid surrogates from solution-map learning]
\label{cor:r2-aot-valid}
Suppose that the structural and cutoff hypotheses of
Theorem~\ref{thm:sur-track} hold. Suppose also that the flow-expansion
hypotheses of Lemma~\ref{lem:bridge-flow-expansion} hold with constant
\(C_{\mathrm{flow}}\), and that the statistical hypotheses of
Theorem~\ref{thm:r2-complexity} hold. Fix target operator accuracies
\(\bar\varepsilon_S,\bar\eta_S>0\), and suppose that \(M\) satisfies
\eqref{eq:r2-sample-complexity}. Define
\[
\bar\delta_{\mathrm S}
:=
\frac{\bar\varepsilon_S}{\Delta t}+C_{\mathrm{flow}}\Delta t,
\qquad
\bar\ell_{\mathrm S}
:=
\frac{\bar\eta_S}{\Delta t}+C_{\mathrm{flow}}\Delta t.
\]
Suppose in addition that
\[
\mu>2\bigl(C_{\mathrm{sq}}+\bar\ell_{\mathrm S}\bigr),
\qquad
\mu c_0^2 h^2<\nu.
\]
Define
\[
\gamma_{\mathrm S}
:=
\mu-2\bigl(C_{\mathrm{sq}}+\bar\ell_{\mathrm S}\bigr)>0,
\qquad
\nu_{\mathrm{eff}}
:=
\nu-\frac{\mu c_0^2 h^2}{2}>0.
\]
Then, with probability at least \(1-\delta\), the following hold.

\begin{enumerate}
\item[\emph{(i)}] The cutoff-extended surrogate drift induced by the learned
solution map satisfies the residual bounds
\[
\delta_M\le \bar\delta_{\mathrm S},
\qquad
\ell_M\le \bar\ell_{\mathrm S}.
\]

\item[\emph{(ii)}] The corresponding surrogate nudged dynamics satisfies, for
every \(t\ge T_\ast\),
\begin{equation}\label{eq:r2-aot-valid-bound}
\|v(t)-u(t)\|_H^2
\le
e^{-\gamma_{\mathrm S}(t-T_\ast)}
\|v(T_\ast)-u(T_\ast)\|_H^2
+
\frac{\bar\delta_{\mathrm S}^2}
{\lambda_1\,\nu_{\mathrm{eff}}\,\gamma_{\mathrm S}}
\Bigl(1-e^{-\gamma_{\mathrm S}(t-T_\ast)}\Bigr).
\end{equation}
Consequently,
\begin{equation}\label{eq:r2-aot-valid-floor}
\limsup_{t\to\infty}\|v(t)-u(t)\|_H^2
\le
\frac{\bar\delta_{\mathrm S}^2}
{\lambda_1\,\nu_{\mathrm{eff}}\,\gamma_{\mathrm S}}.
\end{equation}
\end{enumerate}
\end{corollary}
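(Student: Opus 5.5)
This is a composition of three earlier results: Theorem~\ref{thm:r2-complexity}, Proposition~\ref{prop:bridge-route2} and Theorem~\ref{thm:sur-track}. The plan is to restrict to a single high-probability event and then argue deterministically on that event.

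\textbf{Step 1 (the good event).} Let $\mathcal E$ be the event on which \eqref{eq:r2-final-error} holds, i.e.\ $\varepsilon_M^S\le\bar\varepsilon_S$ and $\eta_M^S\le\bar\eta_S$. Since $M$ satisfies \eqref{eq:r2-sample-complexity} and the statistical hypotheses of Theorem~\ref{thm:r2-complexity} are in force, part (ii) of that theorem gives $\mathbb P(\mathcal E)\ge1-\delta$. Everything below is carried out on $\mathcal E$ and involves no further randomness. The learned map does not enter the true trajectory $u$, and $T_\ast,\cB_\ast$ come from Corollary~\ref{cor:sur-common-ball}. The regularity needed for the cutoff construction is assumed as part of the cutoff hypotheses.

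\textbf{Step 2 (residual bounds, part (i)).} On $\mathcal E$, the learned map satisfies $\widehat S_{\Delta t}^{(M)}\in C^1(\cB_\ast;H)$. This is the regularity assumed for the induced drift; it is also implied by the $W^{1,\infty}$ control from Lemma~\ref{lem:r2-loss-to-W1inf} together with DSRN smoothness. Together with the flow-expansion hypotheses of Lemma~\ref{lem:bridge-flow-expansion}, this lets us apply Proposition~\ref{prop:bridge-route2}:
\[
\delta_M\le \frac{\varepsilon_M^S}{\Delta t}+C_{\mathrm{flow}}\Delta t\le \bar\delta_{\mathrm S},
\qquad
\ell_M\le \frac{\eta_M^S}{\Delta t}+C_{\mathrm{flow}}\Delta t\le \bar\ell_{\mathrm S}.
\]
The second inequality in each case uses only monotonicity in $\varepsilon_M^S$ and $\eta_M^S$. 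This proves (i). In particular $\delta_M,\ell_M<\infty$.

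\textbf{Step 3 (tracking, part (ii)).} Because $\ell_M\le\bar\ell_{\mathrm S}$, the assumption $\mu>2(C_{\mathrm{sq}}+\bar\ell_{\mathrm S})$ implies $\mu>2(C_{\mathrm{sq}}+\ell_M)$. Hence Theorem~\ref{thm:sur-track} applies with rate $\gamma_M=\mu-2(C_{\mathrm{sq}}+\ell_M)\ge\gamma_{\mathrm S}>0$. The main point needing care is replacing $(\gamma_M,\delta_M)$ by the worse constants $(\gamma_{\mathrm S},\bar\delta_{\mathrm S})$. The bound \eqref{eq:sur-track-bound} is a convex combination-type expression, and its two terms are not separately monotone in $\gamma$.

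I would handle this without monotonicity arguments. Return to the differential inequality \eqref{eq:sur-track-error} in the proof of Theorem~\ref{thm:sur-track}. Since $\gamma_M\ge\gamma_{\mathrm S}$ and $\delta_M\le\bar\delta_{\mathrm S}$, it yields
\[
\frac{d}{dt}\|w\|_H^2\le-\gamma_{\mathrm S}\|w\|_H^2+\frac{\bar\delta_{\mathrm S}^2}{\lambda_1\nu_{\mathrm{eff}}},\qquad t\ge T_\ast .
\]
Gronwall's inequality on $[T_\ast,t]$ then gives \eqref{eq:r2-aot-valid-bound} directly, and letting $t\to\infty$ gives \eqref{eq:r2-aot-valid-floor}.

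I do not expect a serious obstacle. The only delicate points are bookkeeping: checking that the $C^1$ regularity required by Proposition~\ref{prop:bridge-route2} holds on $\mathcal E$, and the constant-replacement step just described.
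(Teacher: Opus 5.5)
Your proposal is correct and matches the paper's proof essentially step for step: restrict to the high-probability event from Theorem~\ref{thm:r2-complexity}, apply Proposition~\ref{prop:bridge-route2} to get (i), then return to the differential inequality \eqref{eq:sur-track-error} with $\gamma_M\ge\gamma_{\mathrm S}$ and $\delta_M\le\bar\delta_{\mathrm S}$, and apply Gronwall. One minor correction: your remark that the two terms of \eqref{eq:sur-track-bound} are not monotone in $\gamma$ is inaccurate, since both are nonincreasing in $\gamma$ (note $(1-e^{-\gamma s})/\gamma=\int_0^s e^{-\gamma r}\,dr$), so direct substitution would also work; this does not affect your argument, which avoids that route.
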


\begin{remark}[From sample complexity to tracking guarantees in solution-map learning]
\label{rem:r2-closes-loop}
Theorem~\ref{thm:r2-complexity} and Corollary~\ref{cor:r2-aot-valid} close the chain
\[
\text{sample size}
\ \Longrightarrow\
(\varepsilon_M^S,\eta_M^S)
\ \Longrightarrow\
(\delta_M,\ell_M)
\ \Longrightarrow\
\text{tracking guarantee}.
\]
Thus the statistical accuracy of the learned solution map is converted, through Proposition~\ref{prop:bridge-route2}, into the residual bounds required by the surrogate tracking theorem.
\end{remark}

\begin{remark}[Nearly quartic sample complexity under very high regularity]
The exponent in \eqref{eq:r2-sample-complexity} is
\[
4+\frac{d}{m-s},
\qquad
s=\Big\lfloor\frac d2\Big\rfloor+2.
\]
Thus, if \(S_{\Delta t}\) is very smooth on \(\cB_\ast\), one may take \(m\)
large, making \(\frac{d}{m-s}\) arbitrarily small. In particular, if
\[
S_{\Delta t}\in W^{m,\infty}(\cB_\ast;H)
\qquad
\text{for every }m>s,
\]
then, at the level of the displayed exponent and with constants allowed to
depend on \(m\), the sample complexity in Theorem~\ref{thm:r2-complexity}
approaches the near-\(\varepsilon_\ast^{-4}\) regime, up to polylogarithmic
factors.
\end{remark}

\section{Numerical experiments}
\label{sec:numerics}

In this section, we illustrate the finite-dimensional AOT framework and the surrogate AOT theory developed in the previous sections using the Lorenz--96 system, which was verified in Subsection~\ref{subsec:l96} to satisfy our standing assumptions. The numerical study is organized around four goals.

First, we examine the performance of exact-model AOT under a conservative choice of the nudging parameter \(\mu\). This experiment is carried out in the noiseless setting and serves as a deterministic baseline for the feedback mechanism, allowing us to compare the observed synchronization behavior with the qualitative predictions of Section~\ref{sec:baseline}. Second, we test the surrogate AOT mechanism from Section~\ref{sec:surrogate} for the two learning routes introduced in Section~\ref{sec:bridge}: the direct vector-field learning route of Subsection~\ref{subsec:bridge-route1} and the solution-map learning route of Subsection~\ref{subsec:bridge-route2}. In both cases, we compare the surrogate nudged trajectory with the exact-model nudged trajectory, which serves as the exact-model AOT reference, and with the corresponding free surrogate run. We report these comparisons in both noiseless and noisy-observation settings: the noiseless experiments isolate the effect of surrogate model error, while the noisy experiments illustrate the additional tracking floor induced by observation noise. Third, we examine the sensitivity of the nudged dynamics to the feedback-resolution parameter \(h\). This study is carried out in the noiseless setting in order to isolate the effect of feedback resolution from noise-induced fluctuations. Finally, we study the sensitivity to the training sample size in both learning routes, using the sample size as a numerical proxy for surrogate accuracy. These experiments test how feedback resolution and surrogate model error affect the transient tracking behavior and the long-time tracking floor.

\subsection{Experimental setup}
\label{ssec:setup}
\paragraph{Lorenz--96 system}
We consider the Lorenz--96 system in dimension \(d=40\),
\begin{equation}
\label{eq:lorenz96}
F(u)_i=(u_{i+1}-u_{i-2})u_{i-1}-u_i+8,
\qquad i=1,\dots,d,
\end{equation}
with cyclic indexing and forcing term \(8\). This vector field defines both the true dynamics \eqref{eq:truth} and the exact-model nudged dynamics \eqref{eq:aot} and its stochastic counterpart \eqref{eq:aot-noisy}. All continuous-time systems simulated in the experiments are integrated numerically by a fourth-order Runge--Kutta scheme with internal step size \(\dtint=10^{-2}\). The construction of the training trajectories, the training state cloud, and the route-specific labels is described below.

\paragraph{Feedback operators}
We use the three feedback classes introduced in Subsection~\ref{subsec:obsmodel}. For \emph{linear sensing measurements}, 
the sensing matrix \(G\) is chosen to be a randomly generated overdetermined matrix, and the numerical parameter \(h\) is the regularization parameter in the associated Tikhonov-type reconstruction; see \eqref{eq:redundant linear}. For \emph{band-limited spectral measurements}, the observations are low-frequency Fourier coefficients, and the implementation is indexed by the retained rank \(K\). For \emph{dominant modal coefficient measurements}, the observations are the leading POD coefficients computed from the training state cloud \(\mathcal X_{\mathrm{train}}\), defined below in Equation~\eqref{eq:X_train}. For the latter two settings, we use the corresponding empirical tail-energy fraction as a numerical proxy for the feedback-resolution parameter \(h\). Since these three feedback classes display broadly similar qualitative behavior in the present regime, we focus in the remainder of this section on \emph{band-limited spectral measurements}. Additional numerical results for the other two feedback classes are available in the accompanying code repository~\cite{li2026cda_code}.

\paragraph{Reference trajectories, shared input states, and empirical absorbing ball}
To connect the numerics with the long-time dynamical regime relevant for data assimilation, we first construct a shared collection of input states from several long reference trajectories of the Lorenz--96 dynamics. These states are used as the input locations for the surrogate-learning datasets below, while the route-specific labels are described separately in the direct vector-field learning and solution-map learning experiments; see Subsections~\ref{subsec:numerics-route1} and \ref{subsec:numerics-route2}.

For each reference run, we discard an initial transient of \(1000\) time steps before recording a trajectory of length \(T_{\rm ref}=120\). Thus, we obtain 
post-transient trajectories
\[
\{u^{(m)}(t_j)\}_{j=0}^{N_{\rm traj}},
\qquad m=1,\dots,N_{\mathrm{ref}}.
\]
We use only these
post-transient portions to identify an empirical bounded region in phase space. We define
\[
R_{\rm emp}
:=
(1+\varepsilon)
\max_{1\le m\le N_{\mathrm{ref}}}
\max_{0\le j\le N_{\rm traj}}
\|u^{(m)}(t_j)\|_2,
\qquad \varepsilon=0.05,
\]
and regard \(B(0,R_{\rm emp})\) as an empirical proxy for the post-absorption region used to choose the cutoff radii in the construction of the globally defined surrogate dynamics; see Section~\ref{sec:surrogate}.

From each 
reference trajectory, we extract sample states using a fixed stride in order to reduce the strong temporal correlation between consecutive samples. Let \(\mathcal J\subset\{0,\dots,N_{\rm traj}\}\) denote the resulting set of sample indices. Taking the union over all reference trajectories gives the shared set of training input states
\begin{equation}
\label{eq:X_train}
\mathcal X_{\mathrm{train}}
=
\{u^{(m)}(t_j): j\in\mathcal J,\ m=1,\dots,N_{\mathrm{ref}}\}
\subset\R^d.
\end{equation}

We emphasize that the input states in \(\mathcal X_{\mathrm{train}}\) are sampled from multiple recorded long-time reference trajectories, rather than drawn uniformly from the empirical ball \(B(0,R_{\rm emp})\). This choice allows the training inputs to better capture the long-time dynamics that are most informative for data assimilation, by focusing on regions that are 
frequently visited by the trajectories. Using multiple reference trajectories improves coverage of the attractor-relevant region.
The ball \(B(0,R_{\rm emp})\) is used only as an empirical outer approximation of the post-absorption region for the cutoff construction.

\paragraph{Surrogates \(F_M\) via cutoff construction}
To obtain a globally defined surrogate from the learned local model, we use the same cutoff construction as in Section~\ref{sec:surrogate}. With the empirical radius \(R_{\rm emp}\) obtained from the reference trajectories, we take
\[
R_{\mathrm{ext}} = R_{\mathrm{emp}},
\qquad
R_{\mathrm{ext}}^{+} = 1.5\,R_{\mathrm{emp}},
\]
and define the cutoff function \(\chi(z)\) as in \eqref{eq:cutoff}. We then blend the learned local drift with the dissipative linear field
\[
F_{\mathrm{diss}}(z)=8\cdot\mathbf 1 - B z,
\qquad B=2I.
\]
Hence the surrogate coincides with the learned local model inside \(B(0,R_{\mathrm{ext}})\), agrees with \(F_{\mathrm{diss}}\) outside \(B(0,R_{\mathrm{ext}}^{+})\), and transitions smoothly between the two regions.

\paragraph{Evaluation metrics}

To assess tracking performance, we use the absolute tracking error
\[
e(t):=\|v(t)-u(t)\|_2
\]
as the primary pointwise metric. For the sensitivity studies with respect to the feedback-resolution parameter \(h\) and the training sample size, we further summarize the tracking performance using the final-time error \(e(T)\) and the integrated squared tracking error
\[
\int_0^T \|v(\tau)-u(\tau)\|_2^2\,d\tau .
\]
For these summary metrics, we take the terminal time \(T=5\) for the vector-field learning route (see Subsection~\ref{subsec:numerics-route1}) and use a slightly longer assimilation horizon, \(T=6\), for the solution-map learning route, where the learned surrogate is not built from prior knowledge of the vector-field structure (see Subsection~\ref{subsec:numerics-route2}). The final-time error measures the terminal tracking accuracy and uses the same tracking-error quantity that appears in the error-floor bounds of Theorems~\ref{thm:sur-track} and \ref{thm:sur-track-noisy}, while the integrated squared error captures the cumulative tracking error over the full assimilation window. The latter is especially useful in the noisy-observation setting, since averaging over time gives a more stable metric and reduces the influence of any particular noise realization near the final time. Unless otherwise stated, in all subsequent plots, solid curves represent the median over independent runs, and shaded regions indicate the interquartile range between the 25th and 75th percentiles. In the noiseless setting, independent runs correspond to different initializations, while in the noisy-observation setting, each independent run also uses a different observation-noise realization.

\subsection{Direct vector-field learning}
\label{subsec:numerics-route1}

\paragraph{Training data set}
For the direct vector-field learning approach, we learn the vector field \eqref{eq:lorenz96} directly from a training dataset composed of state--label pairs
\[
\Bigl(
u^{(m)}(t_j),\, F\bigl(u^{(m)}(t_j)\bigr)+\xi^{(m,j)}\Bigr),
\]
where \(u^{(m)}(t_j)\in \mathcal X_{\mathrm{train}}\) and \(\xi^{(m,j)}\) is Gaussian label noise with level \(\sigma_{\mathrm{label}}=0.1\). To reduce temporal correlation among the samples, we keep every tenth state along each reference trajectory, yielding approximately \(7000\) state--label pairs in total. Under this setting, we test both the noiseless case, in which exact observations are used in the nudged system \eqref{eq:sur-aot}, and the noisy case, in which noisy observations are used in the stochastic nudged system \eqref{eq:sur-aot-noisy}. In the noisy case, the observation-noise level is taken to be \(\sigma_{\mathrm{noi}}=0.1\).

\paragraph{Choice of dictionary}
We choose the dictionary to be the Lorenz--96-specific sparse polynomial dictionary \texttt{l96\_local}, which contains one constant feature, all \(40\) linear coordinates, and the \(80\) quadratic monomials needed to represent the local bilinear structure of the Lorenz--96 vector field, for a total of \(121\) shared features. The coefficient matrix is fitted by ordinary least squares, as discussed in Subsection~\ref{subsec:complexity-route1}.

\subsubsection{Exact-model AOT baseline}
For the exact-model baseline, we use the band-limited spectral measurement setting described in Subsection~\ref{ssec:setup}. Specifically, we consider the nudging-parameter and resolution grids
\[
\mu \in \{10,15,20,25\}, \qquad
h\in\{0,0.09,0.18\},
\]
where the three reported values of \(h\) denote the empirical tail-energy fractions associated with the DFT truncation orders \(K\in\{40,30,25\}\), respectively. We fix the resolution parameter at the median value in this tested \(h\)-grid and use the same choice in the subsequent surrogate experiments, so that the comparisons are made under a common feedback resolution. 
For the nudging parameter, by contrast,  throughout the current and subsequent comparison experiments we use the smallest value in the prescribed \(\mu\)-grid for the exact-model dynamics.
This provides a deliberately conservative exact-model reference and highlights that exact-model AOT already synchronizes reliably even under the weakest tested nudging strength. 

Figure~\ref{fig:numerics-exact-baseline-dft-1} shows that, even under this conservative choice of parameters, the exact-model nudged dynamics lead to rapid synchronization and maintain a small long-time tracking error, in qualitative agreement with Theorem~\ref{thm:conv}.

\begin{figure}[tbp]
\centering

\begin{subfigure}[t]{0.40\textwidth}
    \centering
    \includegraphics[width=\linewidth,trim={0 7 0 6},clip]
    {\detokenize{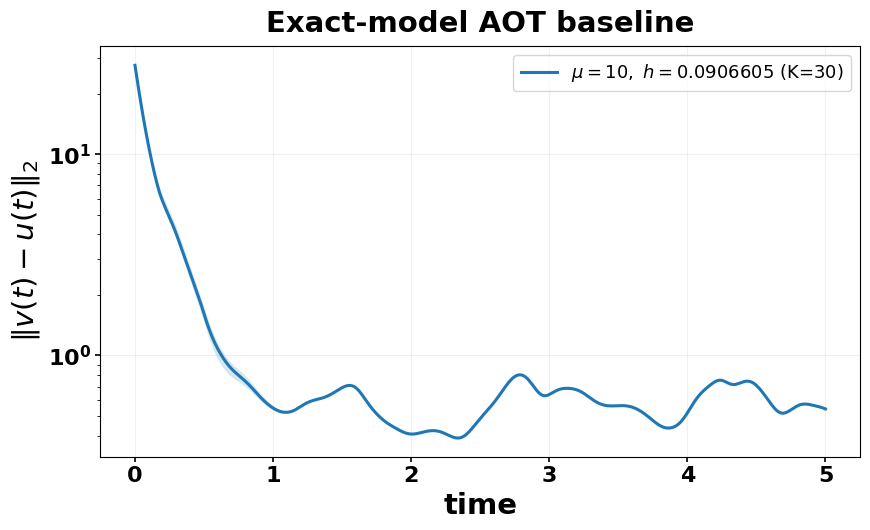}}
    \caption{Absolute tracking error.}
\end{subfigure}
\hspace{0.015\textwidth}
\begin{subfigure}[t]{0.40\textwidth}
    \centering
    \includegraphics[width=\linewidth,trim={0 7 0 6},clip]
    {\detokenize{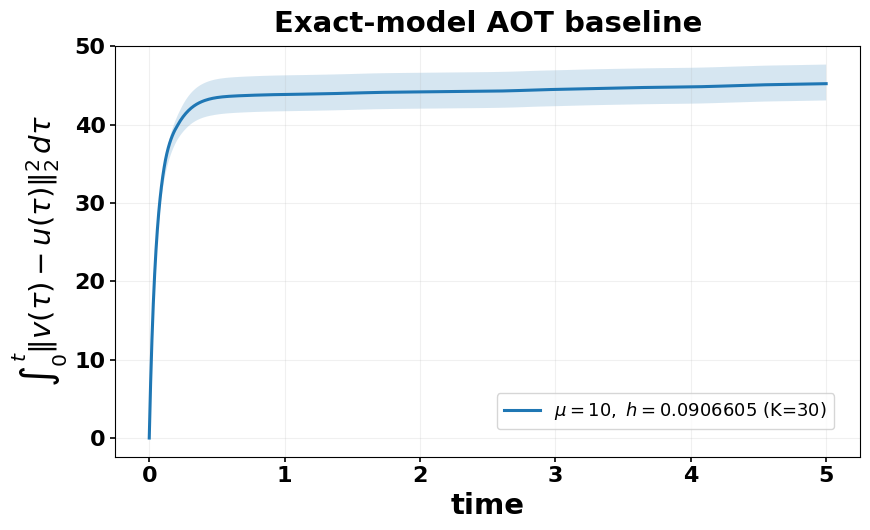}}
    \caption{Integrated squared error.}
\end{subfigure}

\caption{
Tracking performance of the exact-model AOT baseline.}
\label{fig:numerics-exact-baseline-dft-1}
\end{figure}

\subsubsection{Surrogate AOT via direct vector-field learning}

We next replace the exact drift \(F\) by the surrogate drift \(F_M\) constructed from direct vector-field learning, and study the resulting surrogate nudged dynamics under band-limited spectral measurements. We compare the following three dynamics across multiple initial conditions:
\begin{enumerate}
    \item the exact-model nudged system;
    \item the surrogate nudged system, obtained by replacing \(F\) with the cutoff-extended surrogate \(F_M\); 
    \item the surrogate free run, namely the learned dynamics without nudging.
\end{enumerate}

For this route, we use the fixed choices \(\mu^\ast=25\) in the noiseless setting and \(\mu^\ast=20\) in the noisy-observation setting throughout the following experiments, based on the tuning procedure described in Appendix~\ref{app:mu-selection-dft}. The exact-model AOT runs keep the conservative choice of \(\mu\) described above, providing a conservative exact-drift baseline.

\begin{figure}[tbp]
\centering

\begin{subfigure}[t]{0.44\textwidth}
    \centering
    \includegraphics[width=0.9\linewidth,trim={0 7 0 6},clip]
    {\detokenize{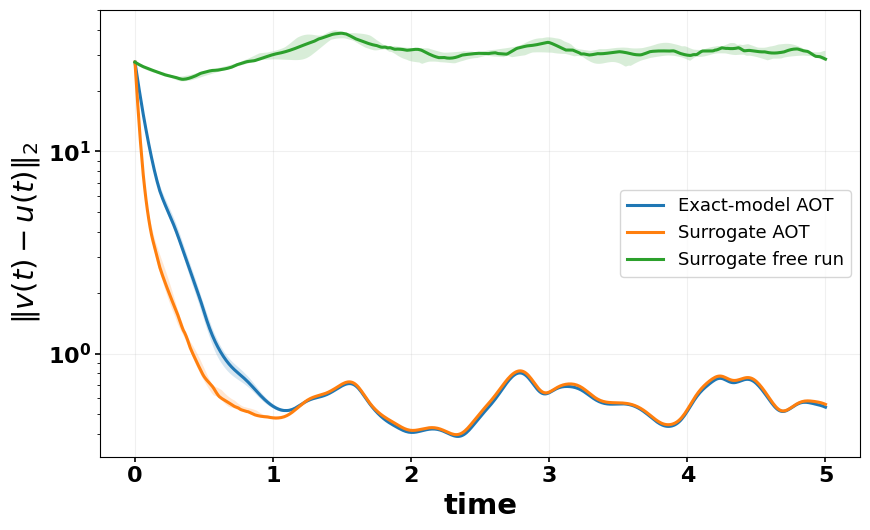}}
    \caption{Tracking error comparison.}
\end{subfigure}
\hspace{-0.015\textwidth}
\begin{subfigure}[t]{0.44\textwidth}
    \centering
    \includegraphics[width=0.9\linewidth,trim={0 7 0 6},clip]
    {\detokenize{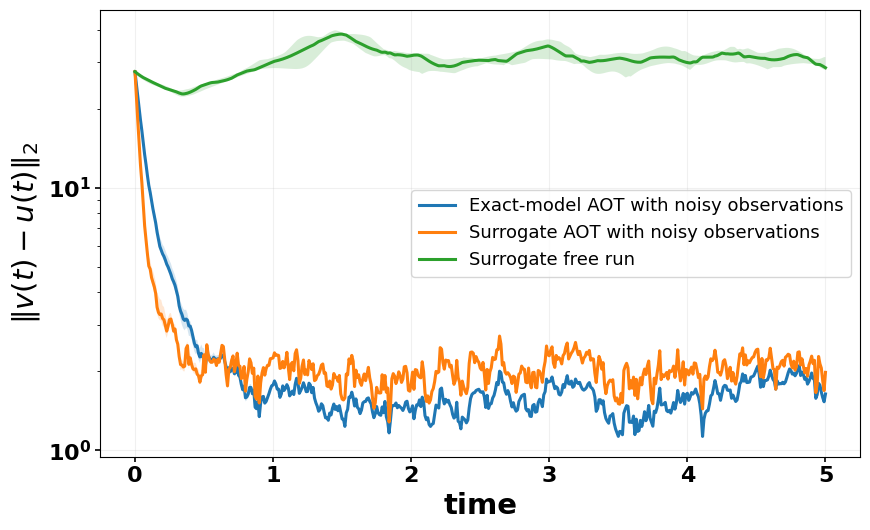}}
    \caption{Tracking error comparison (noisy observations).}
\end{subfigure}

\caption{
Tracking-error comparisons in the direct vector-field learning route.
}
\label{fig:numerics-surrogate-errors-dft-1}
\end{figure}

Figure~\ref{fig:numerics-surrogate-errors-dft-1} shows that the surrogate AOT algorithm successfully tracks the true trajectory in both the noiseless and noisy settings. In contrast, the surrogate free run, which evolves without nudging feedback, fails to track the truth. This demonstrates that the AOT feedback is essential for stabilizing the learned surrogate dynamics.

Compared with exact-model AOT, surrogate AOT exhibits a longer synchronization transient and a small residual error floor. This behavior appears in both the noiseless and noisy experiments, and is consistent with the model-error floor predicted by Theorem~\ref{thm:sur-track}. The effect is more pronounced in the noisy setting. Moreover, comparing the noisy and noiseless cases shows that both exact-model AOT and surrogate AOT have an additional error floor when the observations are noisy. This additional floor is caused by stochastic noise in the observation process, in agreement with the analysis in Theorem~\ref{thm:sur-track-noisy}. Overall, the surrogate model can still be successfully synchronized with the truth when coupled with the AOT feedback mechanism.

\paragraph{Coordinate-wise comparison}

Figure~\ref{fig:numerics-coordinates-dft-1} provides a coordinate-level view of the same behavior. The exact-model nudged trajectory synchronizes most rapidly, while the surrogate nudged trajectory stays much closer to the truth than the free surrogate trajectory. The remaining small discrepancy for the surrogate nudged trajectory is consistent with the model-error floor predicted by Theorem~\ref{thm:sur-track}.

In the noisy case, Figure~\ref{fig:numerics-coordinates-dft-noisy-1} shows the same qualitative ordering. Both exact-model AOT and surrogate AOT continue to track the truth, but with visible noise-induced fluctuations and a larger residual level, in agreement with the stochastic tracking result in Theorem~\ref{thm:sur-track-noisy}.

\begin{figure}[t]
\centering
\includegraphics[width=0.95\linewidth,trim={0 8 0 8},clip]
{\detokenize{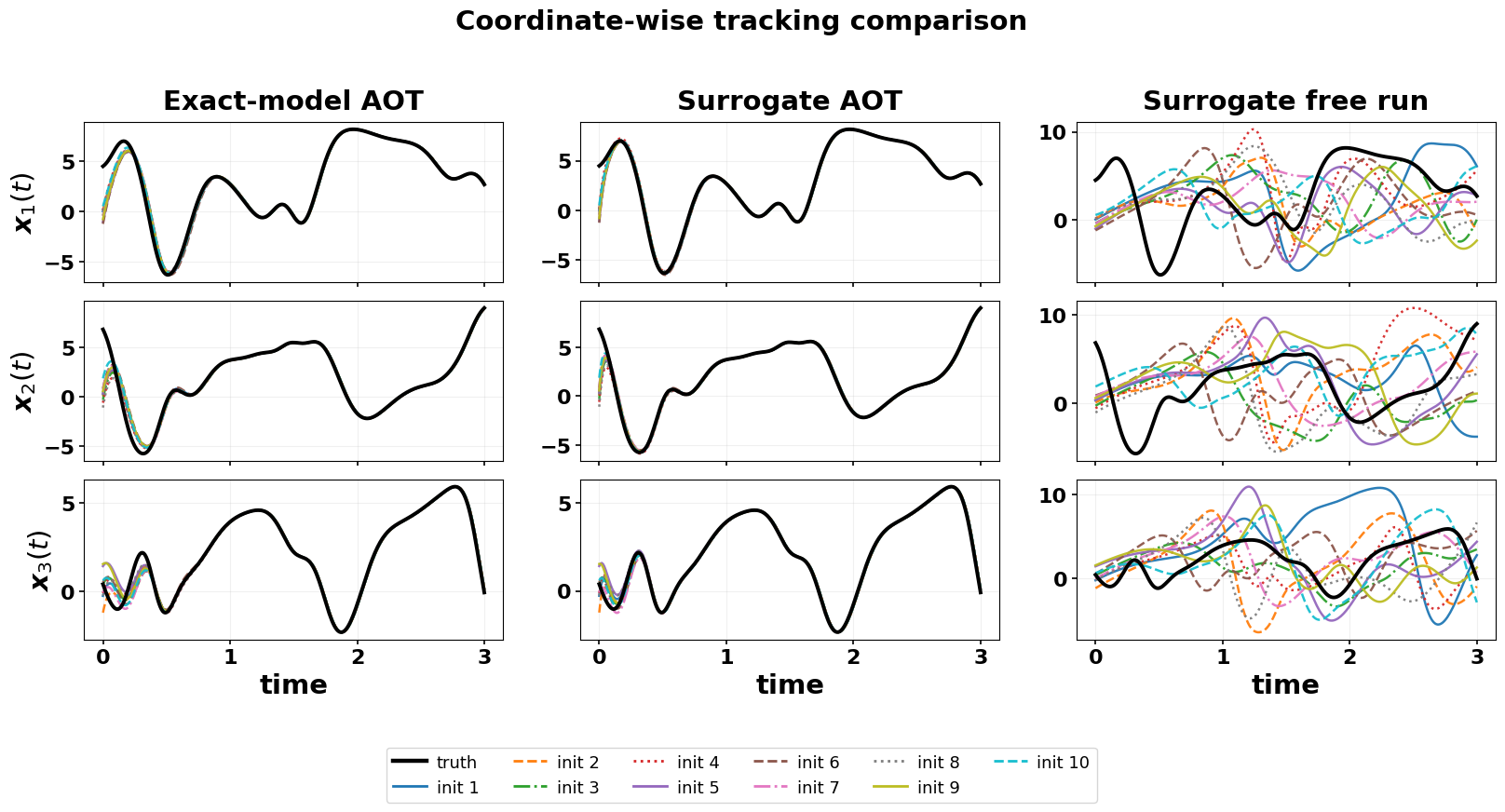}}
\caption{
Coordinate-wise tracking comparison in the direct vector-field learning route.
}
\label{fig:numerics-coordinates-dft-1}
\end{figure}

\begin{figure}[tbp]
\centering
\includegraphics[width=0.95\linewidth,trim={0 8 0 8},clip]
{\detokenize{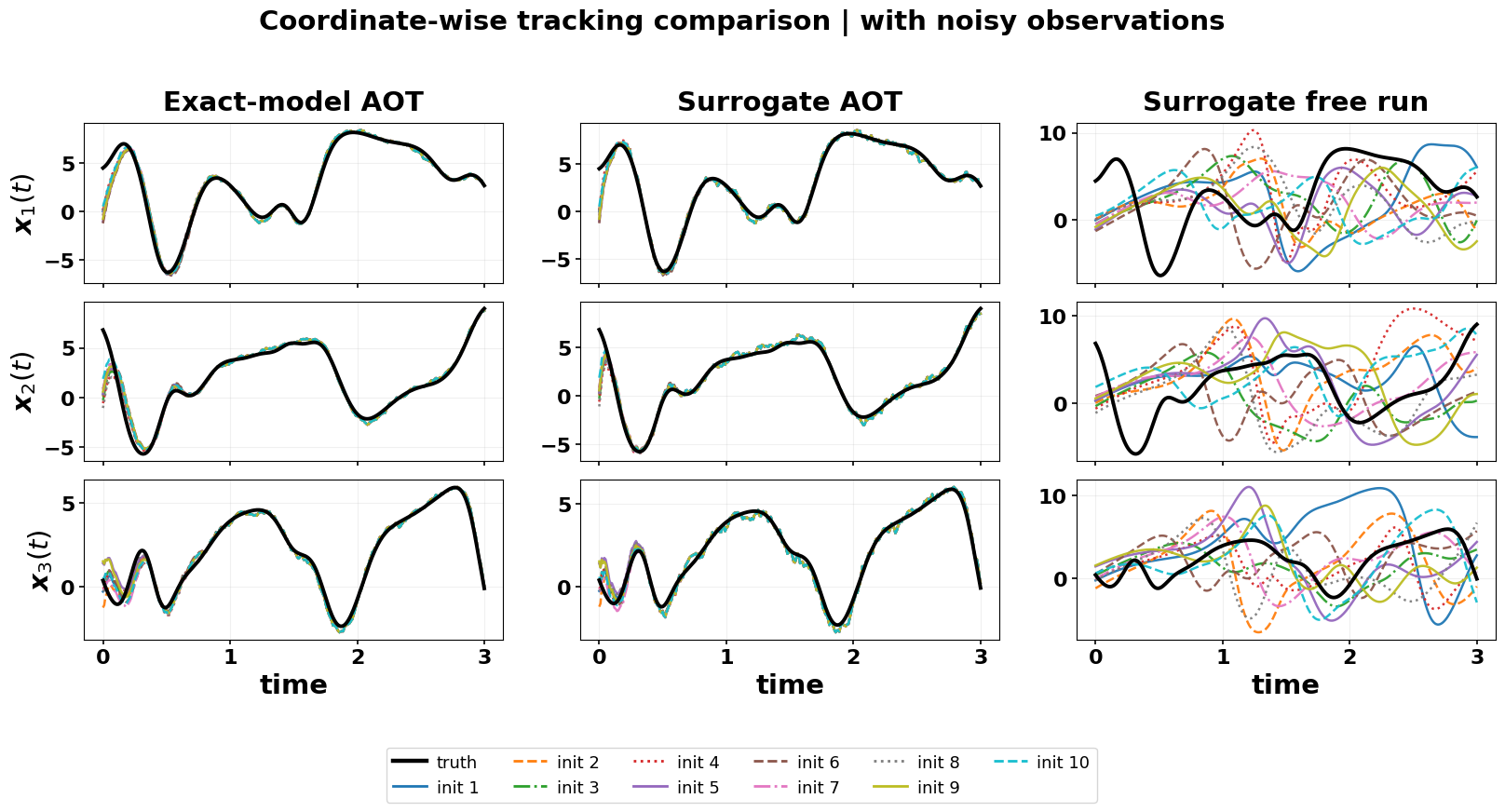}}
\caption{
Coordinate-wise tracking comparison in the direct vector-field learning route
with noisy observations.
}
\label{fig:numerics-coordinates-dft-noisy-1}
\end{figure}

\subsubsection{Sensitivity with respect to the feedback resolution}

We next study the sensitivity of surrogate AOT with respect to the feedback resolution parameter \(h\). In the stochastic setting, the influence of \(h\) is mixed with the observation noise entering the nudging term, making it difficult to isolate the role of the feedback resolution itself. Therefore, we focus on the noiseless setting for the \(h\)-sensitivity analysis.

For band-limited spectral measurements, we fix \(\mu=\mu^\ast=25\), as selected by the tuning procedure, and sweep over the admissible values \(h\in\{0,0.09,0.18\}\). This allows us to examine how the quality of the feedback operator affects the tracking performance of the learned surrogate dynamics.

\begin{figure}[t]
\centering

\begin{subfigure}[t]{0.40\textwidth}
    \centering
    \includegraphics[width=\linewidth,trim={0 7 0 6},clip]
    {\detokenize{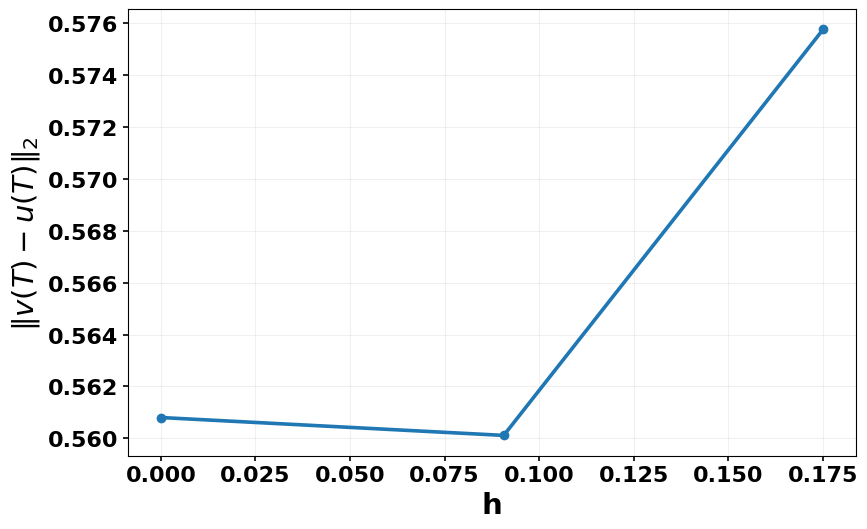}}
    \caption{Absolute tracking error.}
\end{subfigure}
\hspace{0.015\textwidth}
\begin{subfigure}[t]{0.40\textwidth}
    \centering
    \includegraphics[width=\linewidth,trim={0 7 0 6},clip]
    {\detokenize{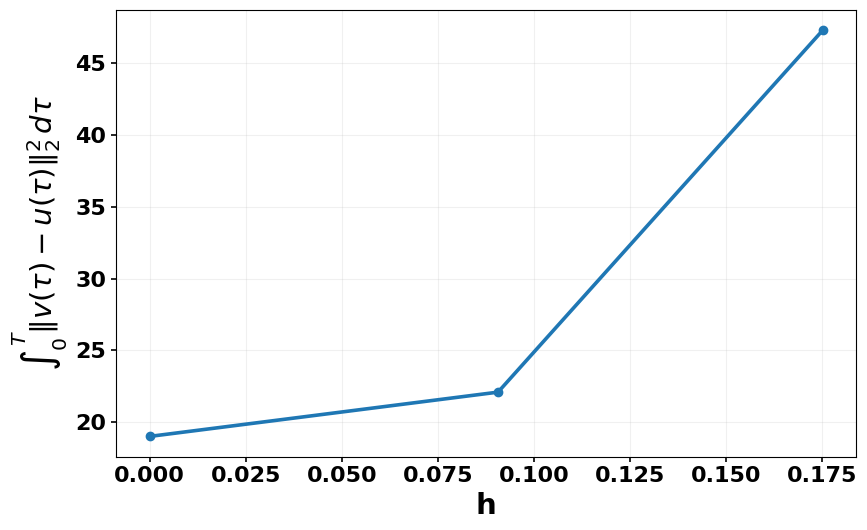}}
    \caption{Integrated squared error.}
\end{subfigure}

\caption{
Sensitivity of surrogate AOT performance to the feedback-resolution parameter
\(h\) in the direct vector-field learning route.
}
\label{fig:numerics-h-summary-dft-1}
\end{figure}

Figure~\ref{fig:numerics-h-summary-dft-1} shows the sensitivity of surrogate AOT performance to the feedback-resolution parameter \(h\) under band-limited spectral measurements. Smaller values of \(h\), corresponding to finer band-limited spectral resolution, generally improve tracking performance. The effect is more clearly visible in the integrated squared error, which accumulates the tracking error over the full assimilation window. In contrast, the final-time absolute error varies only mildly across the tested values of \(h\). This is consistent with the nearly realizable dictionary-learning setting: after synchronization, the surrogate AOT trajectory quickly reaches a residual error floor, so the terminal tracking errors are close to one another even when the transient behavior differs.

\subsubsection{Sensitivity with respect to the training sample size}

We also study the sensitivity of the direct vector-field learning route with respect to the training sample size, which serves as a numerical proxy for model error. To make the comparison as controlled as possible, we use a nested construction. We first generate the largest available training pool from \(N_{\mathrm{ref}}=5\) long reference trajectories, using a fixed stride of \(200\) time steps to reduce temporal correlation. From this common pool, we then form smaller nested training sets, so that the small, medium, and large sample-size regimes differ primarily in the number of training samples rather than in the qualitative coverage of phase space. This nested design makes the comparison more directly attributable to sample size, rather than to changes in data quality.

Figure~\ref{fig:numerics-sample-summary-dft-1} shows a clear improvement in tracking performance as the training sample size increases. This is consistent with the surrogate-tracking theory in Theorems~\ref{thm:sur-track} and \ref{thm:sur-track-noisy}: the long-time tracking floor is influenced by the surrogate error, and improved training generally reduces that error. The same overall trend remains visible in the noisy setting, although it is partially masked by the additional observation-noise floor. For the largest sample size, the surrogate AOT curves become comparable to, and in some metrics slightly outperform, the dashed exact-model AOT baseline. This is a finite-horizon effect reflecting the conservative choice of the nudging parameter used for the exact-model AOT baseline. Once the surrogate residual is sufficiently small, a different choice of \(\mu\) can lead to slightly smaller finite-time errors.

\begin{figure}[tbp]
\centering

\begin{subfigure}[t]{0.40\textwidth}
    \centering
    \includegraphics[width=\linewidth,trim={2 7 0 6},clip]
    {\detokenize{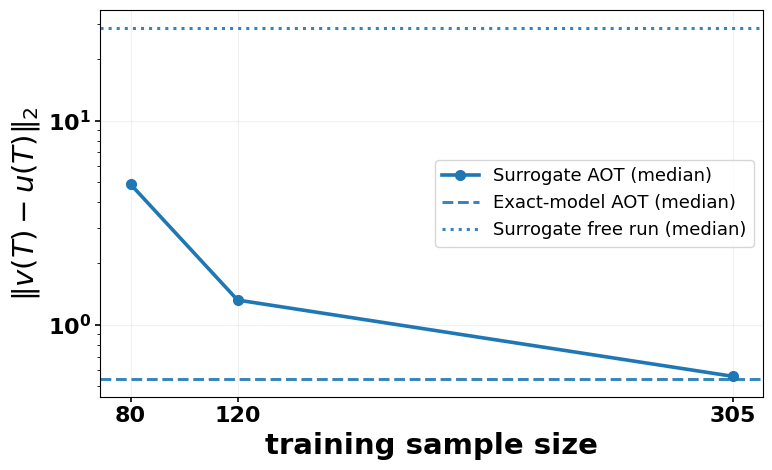}}
    \caption{Absolute tracking error.}
\end{subfigure}
\hspace{0.015\textwidth}
\begin{subfigure}[t]{0.40\textwidth}
    \centering
    \includegraphics[width=\linewidth,trim={0 7 0 6},clip]
    {\detokenize{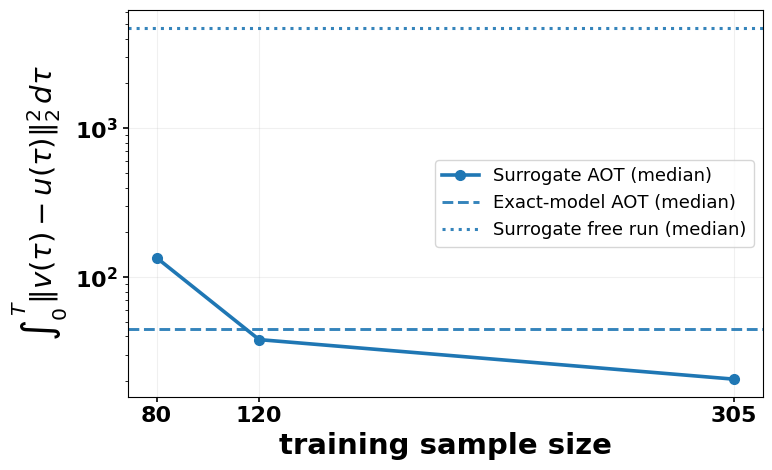}}
    \caption{Integrated squared error.}
\end{subfigure}

\vspace{0.6em}

\begin{subfigure}[t]{0.40\textwidth}
    \centering
    \includegraphics[width=\linewidth,trim={2 7 0 6},clip]
    {\detokenize{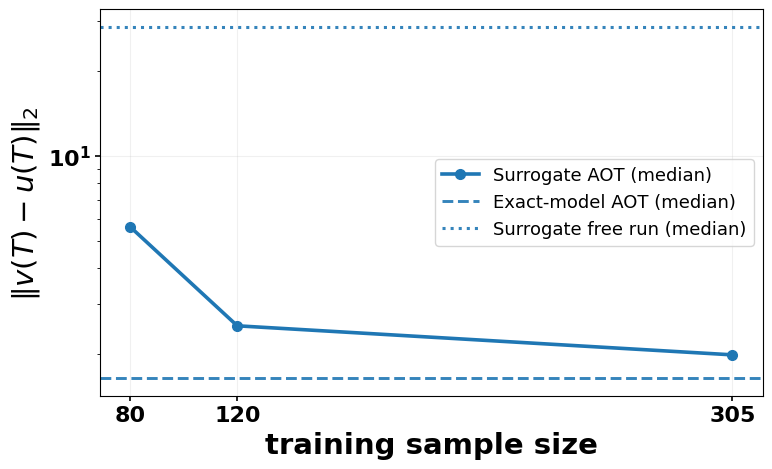}}
    \caption{Absolute tracking error (noisy observations).}
\end{subfigure}
\hspace{0.015\textwidth}
\begin{subfigure}[t]{0.40\textwidth}
    \centering
    \includegraphics[width=\linewidth,trim={0 7 0 6},clip]
    {\detokenize{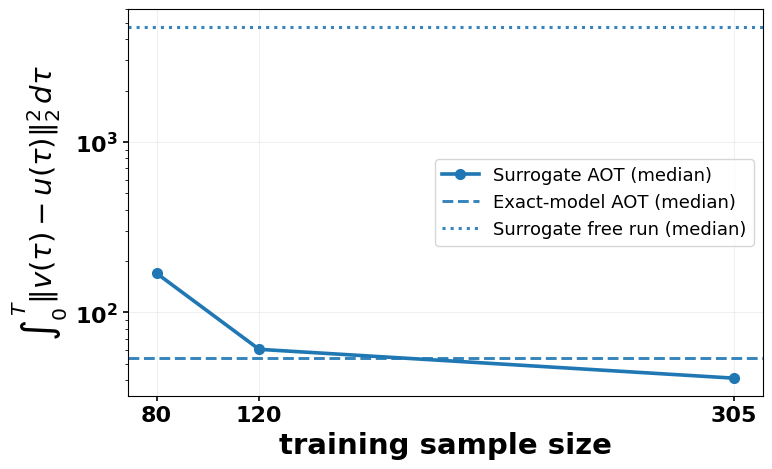}}
    \caption{Integrated squared error (noisy observations).}
\end{subfigure}

\caption{
Sensitivity of surrogate AOT performance to the training sample size in the direct vector-field learning route.
}
\label{fig:numerics-sample-summary-dft-1}
\end{figure}

\subsection{Solution-map learning}
\label{subsec:numerics-route2}
In the second learning route, we approximate the short-time solution map and then convert the learned map into an induced drift, as described in Subsection~\ref{subsec:bridge-route2}. This provides a more flexible and less problem-specific surrogate than the Lorenz--96 dictionary used in the direct vector-field learning route, without requiring prior knowledge of the structure of the vector field.

\paragraph{Training data set}
For each sampled state \(u^{(m)}(t_j)\in \mathcal X_{\rm train}\), we compute the short-time map label \(S_{\Delta t}(u^{(m)}(t_j))\), where \(S_{\Delta t}\) is the lag-\(\Delta t\) solution map defined in \eqref{eq:solution-map}. We also compute the corresponding first-order derivative label \(DS_{\Delta t}(u^{(m)}(t_j))\), which is the derivative information used in the solution-map learning error \(\eta_M^S\) in \eqref{eq:bridge-operator-errors}. Thus, the training data consist of the triples
\[
\Bigl(
 u^{(m)}(t_j),\,
S_{\Delta t}(u^{(m)}(t_j)),\,
DS_{\Delta t}(u^{(m)}(t_j))
\Bigr).
\]
 In the numerical implementation, we fix the solution-map lag at \(\Delta t=0.05\) for all solution-map learning experiments. The map label \(S_{\Delta t}(u^{(m)}(t_j))\) is then obtained by integrating the Lorenz--96 system from the initial state \(u^{(m)}(t_j)\) over the interval
\([0,\Delta t]\).
The Jacobian label \(DS_{\Delta t}(u^{(m)}(t_j))\) is computed by integrating the associated variational equation
\[
\dot J(t)=DF(u(t;u^{(m)}(t_j)))\,J(t),
\qquad J(0)=I,
\]
over \([0,\Delta t]\) with the same time discretization; the terminal value \(J(\Delta t)\) gives \(DS_{\Delta t}(u^{(m)}(t_j))\).

Our main surrogate follows the solution-map learning approach in Subsection~\ref{subsec:complexity-route2}. We train a ReLU network \(G_\theta:\R^d\to\R^d\) to approximate \(S_{\Delta t}\) on the above training data, and define the induced local drift by
\[
\widehat F_M(z)=\frac{G_\theta(z)-z}{\Delta t}.
\]
The global surrogate drift \(F_M\) is then obtained from \(\widehat F_M\) by the cutoff construction described in Subsection~\ref{subsec:bridge-route2}. The complexity analysis for the solution-map learning route in Subsection~\ref{subsec:complexity-route2} is formulated in the DSRN framework, since that architecture is more convenient for establishing Sobolev approximation and generalization bounds. In the numerical experiments, however, we use a standard fully connected ReLU network as a simpler practical proxy; this is easier to implement and already sufficient to test the solution-map learning mechanism predicted by the theory. Concretely, the network has width \(512\) and depth \(4\), and is trained with the loss
\[
\mathcal L(\theta)
=
\frac{1}{M}\sum_{j=1}^M
\bigl\|G_\theta(z_j)-S_{\Delta t}(z_j)\bigr\|_2^2
+
\lambda_{\rm jac}\,
\frac{1}{M}\sum_{j=1}^M
\bigl\|DG_\theta(z_j)-DS_{\Delta t}(z_j)\bigr\|_{\mathrm F}^2,
\]
with \(\lambda_{\rm jac}=0.20\), where \(\{z_j\}_{j=1}^M\) denotes the sampled training states in \(\mathcal X_{\mathrm{train}}\). Here \(DG_\theta(z_j)\) is computed by automatic differentiation in \textbf{PyTorch}. Before training, each input coordinate is normalized using the empirical mean and standard deviation computed from the training set. The final model is chosen as the one achieving the best performance on a held-out validation set.

\subsubsection{Exact-model AOT baseline}

As in the direct vector-field learning route, we begin with the exact-model AOT baseline under a representative configuration of band-limited spectral measurements. Specifically, we fix the resolution parameter at the median value of the tested DFT resolution grid
\[
h \in \{0.0,0.015,0.03,0.048,0.067\},
\]
which corresponds to the DFT truncation orders \(K\in\{40,38,36,34,32\}\). We use the same baseline choice in the subsequent surrogate experiments, so that all comparisons are performed under a common feedback resolution. For the nudging parameter, we take the smallest value in the tested grid
\[
\mu\in\{20,40,60,80\},
\]
to emphasize that the exact-model nudged dynamics already synchronize reliably even under the weakest tested nudging strength.  
The surrogate AOT runs below use separately tuned \(\mu\), allowing the feedback strength to compensate for the additional sensitivity introduced by model error.

Compared with the direct vector-field learning route based on dictionary learning, where the Lorenz--96-specific dictionary is nearly realizable and the surrogate error is already small, we use a slightly stronger nudging-parameter grid for the solution-map learning route. This choice helps compensate for the larger residual error of the solution-map-based surrogate and provides a more aggressive correction of the learned dynamics.

Figure~\ref{fig:numerics-exact-baseline-dft} shows the same qualitative behavior as in the preceding direct vector-field learning experiments. Even under this conservative parameter choice, the exact-model nudged dynamics synchronize rapidly and maintain a small long-time tracking error, consistent with Theorem~\ref{thm:conv}.

\begin{figure}[tbp]
\centering

\begin{subfigure}[t]{0.40\textwidth}
    \centering
    \includegraphics[width=\linewidth,trim={0 7 0 7},clip]
    {\detokenize{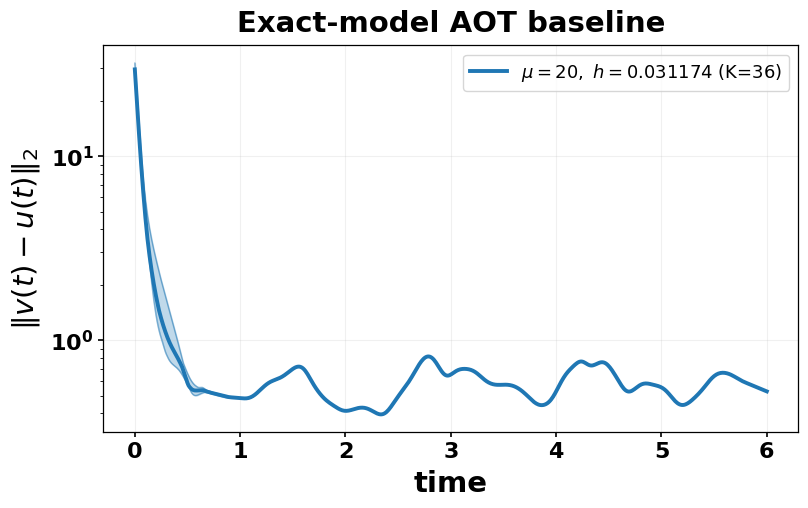}}
    \caption{Absolute tracking error.}
\end{subfigure}
\hspace{0.015\textwidth}
\begin{subfigure}[t]{0.40\textwidth}
    \centering
    \includegraphics[width=1.05\linewidth,trim={0 7 0 7},clip]
    {\detokenize{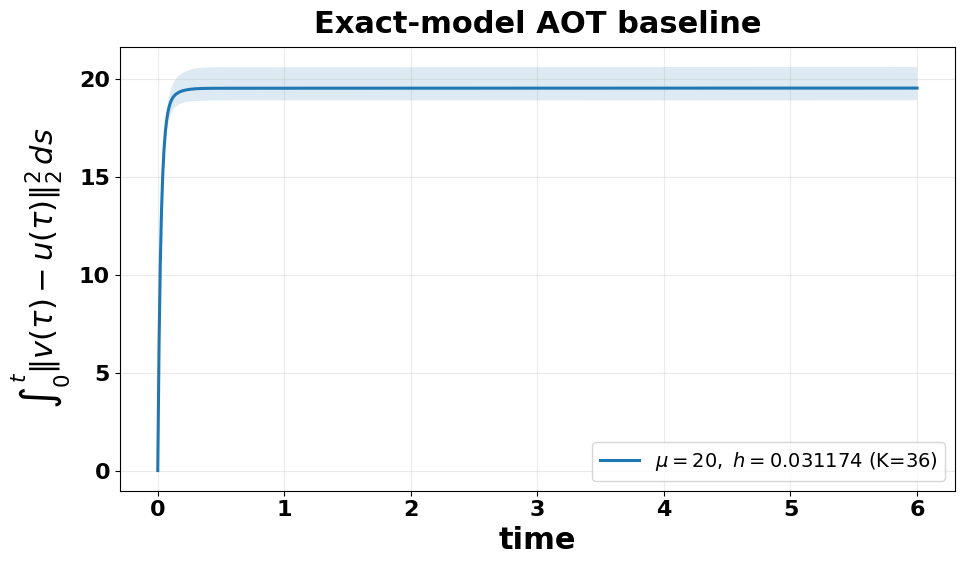}}
    \caption{Integrated squared error.}
\end{subfigure}

\caption{
Tracking performance of the exact-model AOT baseline.
}
\label{fig:numerics-exact-baseline-dft}
\end{figure}

\subsubsection{Surrogate AOT obtained from solution-map learning}

We now turn to the surrogate obtained from solution-map learning. We use the same comparison protocol as in the direct vector-field learning route, replacing the exact drift \(F\) by the drift \(F_M\) induced by the learned short-time solution map. In all cases, we compare exact-model AOT, surrogate AOT, and the corresponding free surrogate run.

As in the direct vector-field learning route, we use the procedure described in Appendix~\ref{app:mu-selection-dft} to choose the nudging strength from the prescribed grid \(\mu\in\{20,40,60,80\}\) before running the main comparisons. This gives \(\mu^\ast=80\) in the noiseless setting and \(\mu^\ast=40\) in the noisy-observation setting, which are used throughout the following subsections.

Figure~\ref{fig:numerics-surrogate-errors-dft} shows that, even when the surrogate is constructed from a learned short-time solution map rather than a directly learned vector field, surrogate AOT can still track the true trajectory in both the noiseless and noisy settings. As in the direct vector-field learning route, the surrogate introduces an additional model-error floor, while the noisy-observation setting further increases the residual level due to stochastic noise in the feedback. Overall, the experiment again demonstrates that the AOT algorithm can stabilize learned surrogate dynamics, even when the surrogate approximation error is more visible.

\begin{figure}[tbp]
\centering

\begin{subfigure}[t]{0.44\textwidth}
    \centering
    \includegraphics[width=0.9\linewidth,trim={0 7 0 7},clip]
    {\detokenize{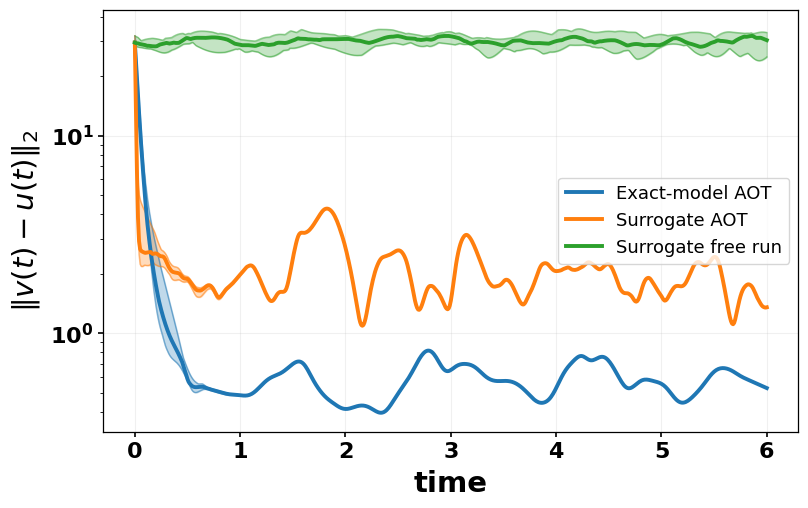}}
    \caption{Tracking error comparison.}
\end{subfigure}
\hspace{-0.015\textwidth}
\begin{subfigure}[t]{0.44\textwidth}
    \centering
    \includegraphics[width=0.9\linewidth,trim={0 7 0 7},clip]
    {\detokenize{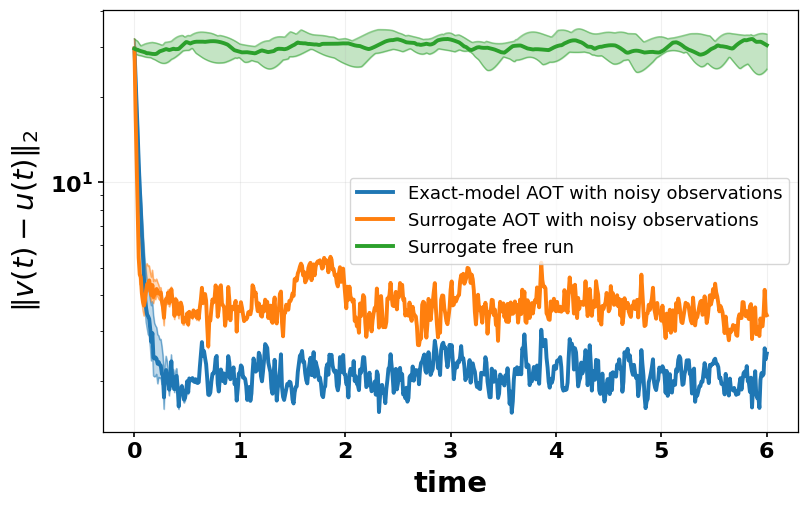}}
    \caption{Tracking error comparison (noisy observations).}
\end{subfigure}

\caption{
Tracking-error comparisons for surrogate AOT under band-limited spectral measurements in the solution-map learning route.
}
\label{fig:numerics-surrogate-errors-dft}
\end{figure}

To further assess the effectiveness of the AOT algorithm with learned surrogate dynamics, we also conduct representative coordinate-level comparisons in the noiseless and noisy settings; see Figures~\ref{fig:numerics-coordinates-dft} and~\ref{fig:numerics-coordinates-dft-noisy}, respectively. In the noiseless case, Figure~\ref{fig:numerics-coordinates-dft} shows the same qualitative behavior as in the direct vector-field learning route: exact-model AOT synchronizes most rapidly, surrogate AOT stays much closer to the truth than the free surrogate, and the remaining discrepancy is more visible because the solution-map surrogate is learned by a generic ReLU network, leading to a larger model error than in the problem-specific dictionary-learning route. Under noisy observations, Figure~\ref{fig:numerics-coordinates-dft-noisy} shows that the same ordering persists, although the noise induces visible fluctuations and a larger residual level. Above all, surrogate AOT still clearly outperforms the free surrogate, confirming that the AOT algorithm continues to stabilize the learned dynamics in the stochastic setting.

\begin{figure}[tbp]
\centering
\includegraphics[width=0.95\linewidth,trim={0 8 0 8},clip]
{\detokenize{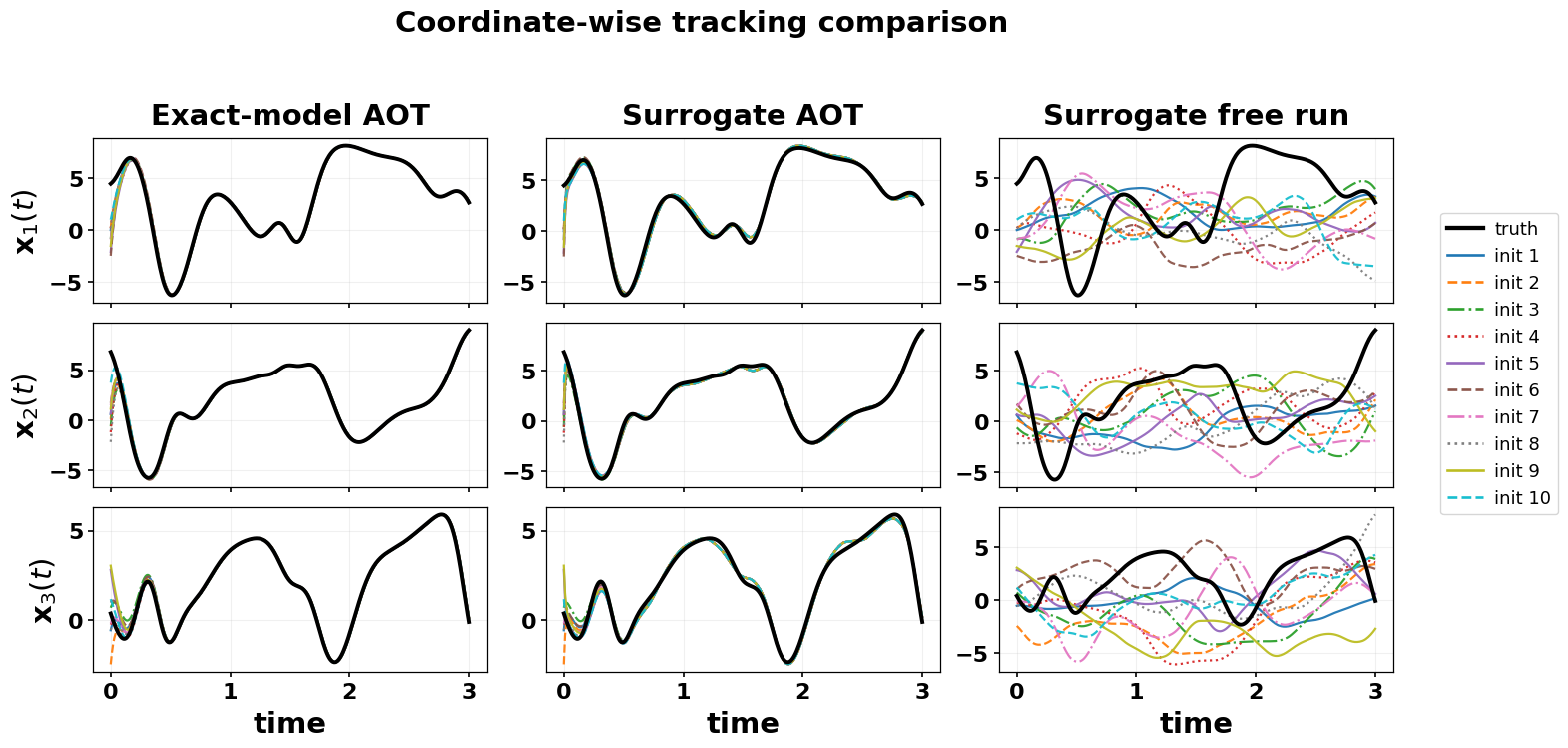}}
\caption{
Coordinate-wise tracking comparison in the solution-map learning route.
}
\label{fig:numerics-coordinates-dft}
\end{figure}

\begin{figure}[tbp]
\centering
\includegraphics[width=0.95\linewidth,trim={0 8 0 8},clip]
{\detokenize{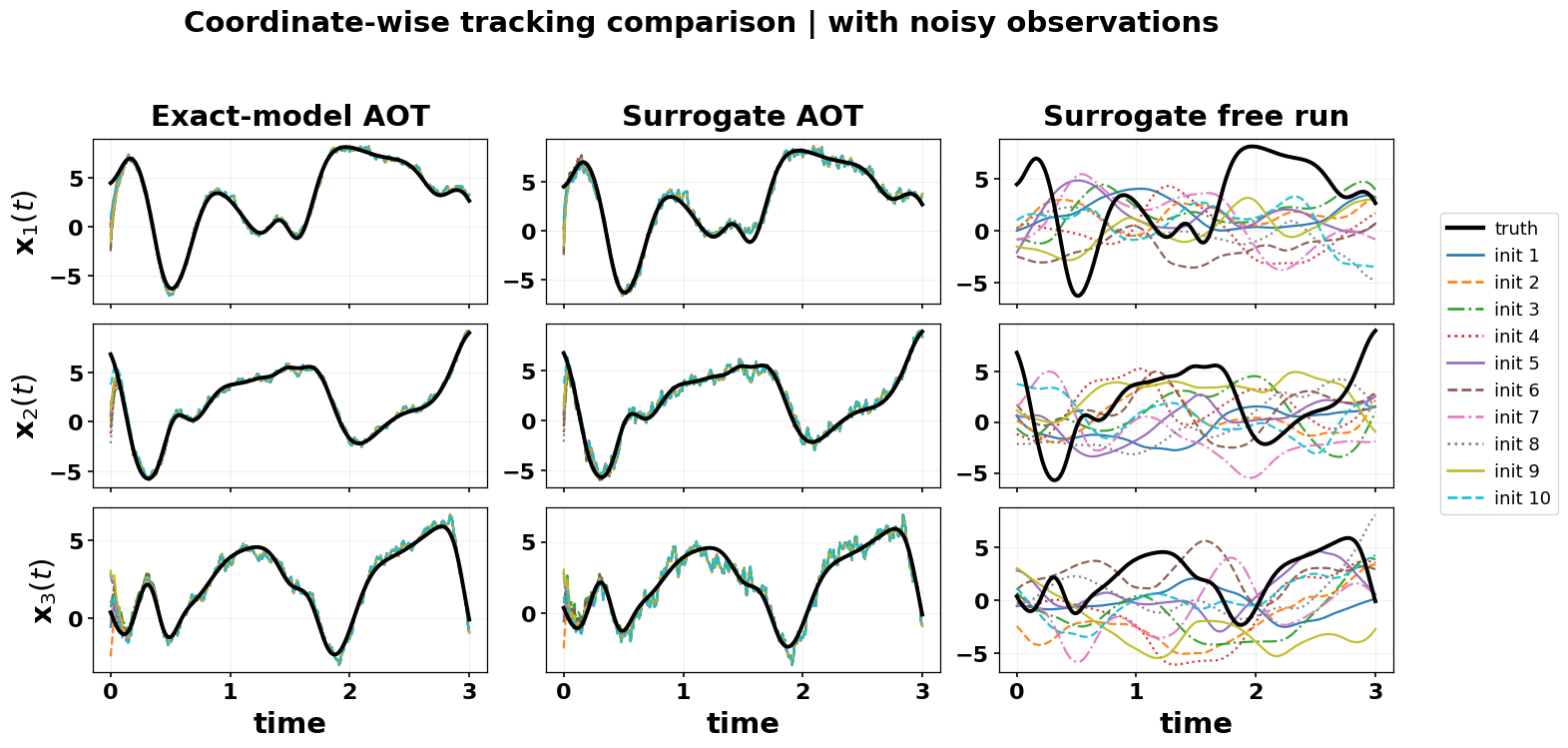}}
\caption{
Coordinate-wise tracking comparison in the solution-map learning route with noisy observations.
}
\label{fig:numerics-coordinates-dft-noisy}
\end{figure}

\subsubsection{Sensitivity with respect to the feedback resolution}

Similarly, we examine the sensitivity of the solution-map-based surrogate nudged dynamics to the band-limited spectral feedback resolution in the noiseless setting. We fix \(\mu=\mu^\ast=80\) and sweep over
\(h\in \{0.0,0.015,0.03,0.048,0.067\}.\)

\begin{figure}[t]
\centering

\begin{subfigure}[t]{0.40\textwidth}
    \centering
    \includegraphics[width=\linewidth,trim={0 7 0 7},clip]
    {\detokenize{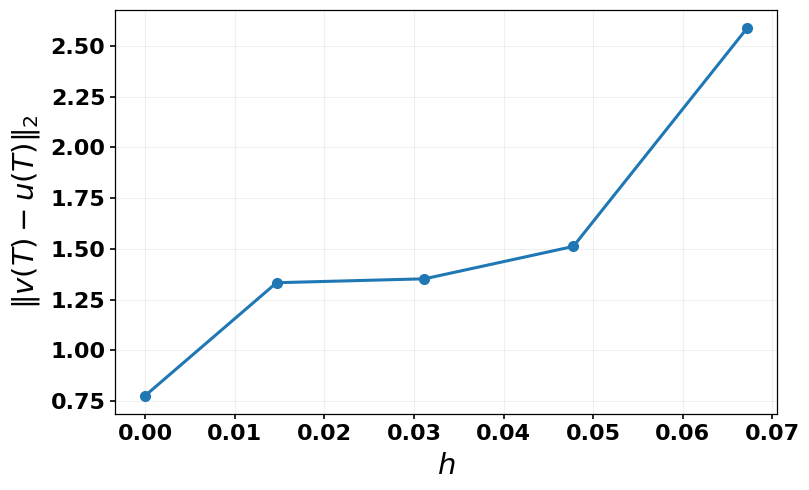}}
    \caption{Absolute tracking error.}
\end{subfigure}
\hspace{0.015\textwidth}
\begin{subfigure}[t]{0.40\textwidth}
    \centering
    \includegraphics[width=\linewidth,trim={0 7 0 7},clip]
    {\detokenize{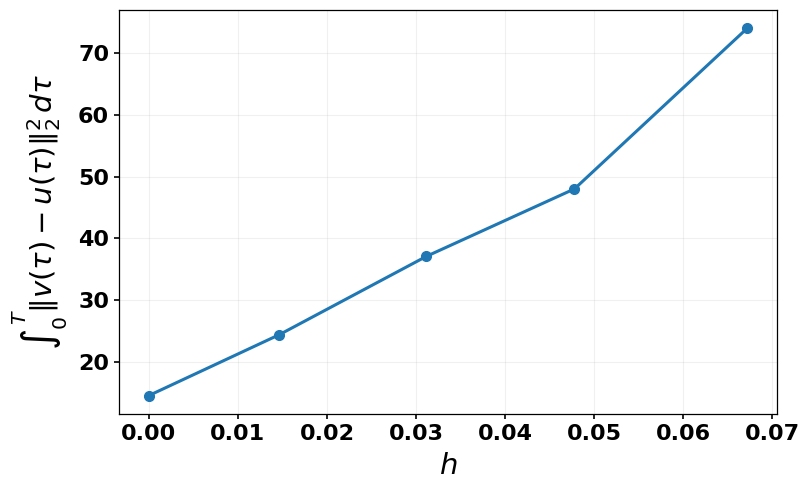}}
    \caption{Integrated squared error.}
\end{subfigure}

\caption{
Sensitivity of surrogate AOT performance to the feedback-resolution parameter \(h\) in the solution-map learning route.
}
\label{fig:numerics-h-summary-dft}
\end{figure}

Figure~\ref{fig:numerics-h-summary-dft} shows the same qualitative trend as in the exact-model and direct vector-field learning cases: finer feedback resolution improves tracking, while coarser feedback degrades it. The dependence on \(h\) is somewhat more visible here, which is consistent with the larger residual error introduced by the ReLU approximation of the short-time solution map.

\subsubsection{Sensitivity with respect to the training sample size}

We next study the effect of the training sample size in the solution-map learning route. As before, we use a nested data construction: starting from a common largest pool of trajectory segments, we form smaller training sets by subsampling from the same pool and retrain the surrogate on each set. This reduces the influence of differences in data quality.

\begin{figure}[tbp]
\centering

\begin{subfigure}[t]{0.40\textwidth}
    \centering
    \includegraphics[width=\linewidth,trim={0 7 0 6},clip]
    {\detokenize{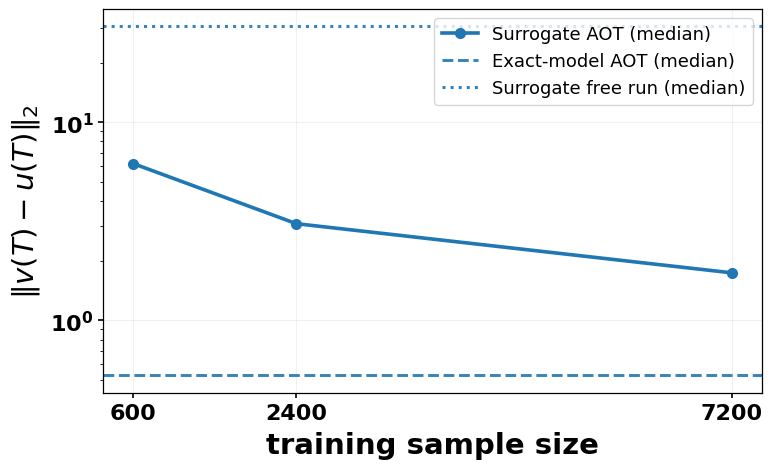}}
    \caption{Absolute tracking error.}
\end{subfigure}
\hspace{0.015\textwidth}
\begin{subfigure}[t]{0.40\textwidth}
    \centering
    \includegraphics[width=\linewidth,trim={0 7 0 6},clip]
    {\detokenize{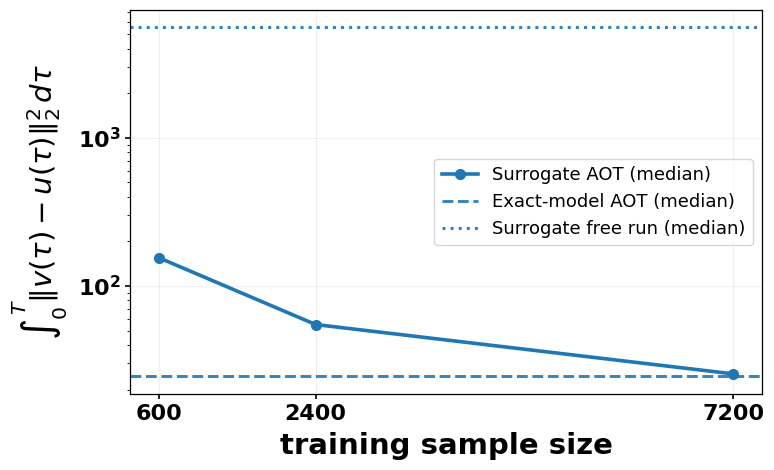}}
    \caption{Integrated squared error.}
\end{subfigure}

\vspace{0.6em}

\begin{subfigure}[t]{0.40\textwidth}
    \centering
    \includegraphics[width=\linewidth,trim={0 7 0 6},clip]
    {\detokenize{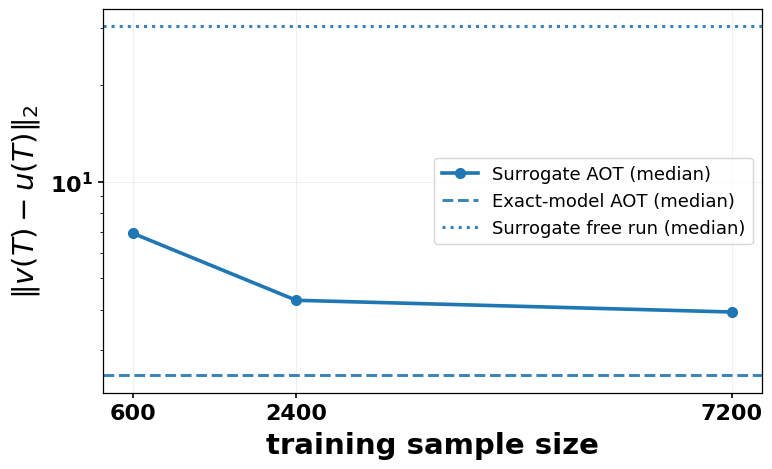}}
    \caption{Absolute tracking error (noisy observations).}
\end{subfigure}
\hspace{0.015\textwidth}
\begin{subfigure}[t]{0.40\textwidth}
    \centering
    \includegraphics[width=\linewidth,trim={0 7 0 6},clip]
    {\detokenize{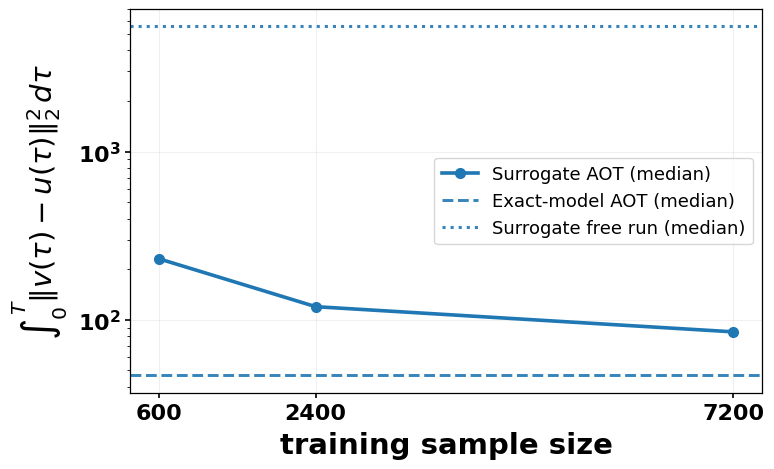}}
    \caption{Integrated squared error (noisy observations).}
\end{subfigure}

\caption{
Sensitivity of surrogate AOT performance to the training sample size in the solution-map learning route.
}
\label{fig:numerics-sample-summary-dft}
\end{figure}

Figure~\ref{fig:numerics-sample-summary-dft} shows that larger training sets generally lead to better tracking. This is consistent with the tracking analysis in Theorem~\ref{thm:sur-track}: improving the approximation of the solution map reduces the surrogate residuals and hence lowers the tracking error floor. In the noisy setting, the same trend remains visible, although it is partially masked by the observation-noise floor.

\subsection{Discussion of numerical findings}

Our numerical experiments support the main qualitative predictions of the tracking analysis. The exact-model nudged dynamics exhibit the expected dependence on the feedback resolution and nudging strength. In both learning routes, surrogate AOT substantially improves over the corresponding free learned dynamics, showing that the AOT algorithm remains effective when the exact drift is replaced by a learned surrogate.

The sensitivity studies are also consistent with the theory: finer feedback resolution and larger training sets improve long-time tracking. Compared with the dictionary-learning route, the solution-map learning route displays a more visible residual error floor, consistent with its use of a generic ReLU approximation rather than a nearly realizable, problem-specific dictionary.

\section{Conclusions}
\label{sec:conclusion}

This paper developed a unified finite-dimensional framework for continuous data assimilation with learned surrogate dynamics. We analyzed an exact-model AOT baseline under structural assumptions on the dynamics and the feedback operator, rather than using model-specific arguments. Within this framework, we established global well-posedness, post-absorption bounds for the true and nudged systems, and exponential tracking guarantees in both the noise-free and noisy-observation settings. We then incorporated learned surrogate dynamics into the AOT framework and quantified how the resulting model error affects synchronization. This gives explicit tracking bounds in which the long-time error can be controlled by local surrogate residuals on the post-absorption region and, in the stochastic setting, by the observation noise.

For constructing AOT-valid surrogates, we provided two learning routes, direct vector-field learning and short-time solution-map learning, and derived checkable criteria linking the corresponding learning errors to the residual quantities that enter the tracking estimates, without necessarily requiring explicit prior structural knowledge of the force field. We further provided learning-theoretic guarantees showing that such surrogates can be learned from finite samples while preserving the long-time synchronization mechanism, thereby tying the statistical analysis directly to the dynamical criterion required by the AOT tracking theory.

The main results are formulated in a finite-dimensional setting, where the standing assumptions in Section~\ref{subsec:baseline-setting} directly imply global well-posedness and the equivalence of the \(H\)- and \(V\)-norms turns \(H\)-level post-absorption bounds into the \(V\)-level bounds needed for the tracking estimates. These properties are no longer automatic in an infinite-dimensional Hilbert space setting. Nevertheless, the underlying well-posedness and tracking arguments are based on energy estimates and are not intrinsically finite-dimensional. Thus, in an infinite-dimensional formulation where the true system is known to admit a global strong solution at the required regularity level, the well-posedness of the corresponding nudged and surrogate-nudged systems can be obtained under similar structural assumptions by standard Galerkin approximation and energy-estimate arguments. If, in addition, the required post-absorption \(V\)-bounds are available, either by assumption or by separate \(V\)-level energy estimates, then the deterministic tracking and surrogate-tracking results of Sections~\ref{sec:baseline} and \ref{sec:surrogate} carry over in the same form, with the same type of conditions on \(\mu\) and \(h\), up to changes in the constants. This perspective is consistent with the original AOT framework \cite{AOT2013}, fluid and geophysical models \cite{biswas2021cda3dns,balakrishna2022determining,JollyMartinezTiti2017}, and abstract semilinear parabolic formulations \cite{delsarto2026cda}. The same interpretation applies to the noisy-observation results when the stochastic forcing is well defined and the quadratic-variation term in the energy estimate
is finite. By contrast, the sample-complexity results in Section~\ref{sec:complexity} remain finite-dimensional. Extending them to infinite-dimensional input spaces and surrogate classes
requires distinct approximation and statistical-complexity theory, which we leave for future work.

\section*{Acknowledgments}
The authors were partly funded by the NSF CAREER award DMS-2237628.

\bibliographystyle{siam}
\bibliography{reference}

\newpage
\appendix

\section{Additional example verifications}
\label{app:examples}

\begin{proof}[\textbf{Verification for the Stuart--Landau oscillator}]
For \eqref{eq:stuart-landau}, we take
\[
H=V=\R^2,\qquad A=\Id,\qquad \nu=1,\qquad f=0,
\]
and
\[
N(u):=-(\lambda+1)u-\omega Ju+|u|^2u.
\]
Assumption~\ref{ass:H1} is immediate.

We first record the basic energy identity. Since \(J\) is skew-symmetric, we have
\[
\ip{Jz}{z}_H=0
\qquad \forall\,z\in\R^2.
\]
Hence, for every \(z\in\R^2\),
\[
\ip{N(z)}{z}_H
=
-(\lambda+1)|z|^2-\omega\ip{Jz}{z}_H+|z|^4
=
|z|^4-(\lambda+1)|z|^2.
\]

To verify Assumption~\ref{ass:H4}, we compute
\[
\nu\,z^\top A z+\ip{N(z)}{z}_H
=
|z|^2+|z|^4-(\lambda+1)|z|^2
=
|z|^4-\lambda |z|^2.
\]
Let \(r:=|z|^2\ge0\). Then, for any fixed \(\alpha>0\),
\[
-\lambda r+r^2
=
\alpha r+\bigl(r^2-(\lambda+\alpha)r\bigr)
=
\alpha r+\left(r-\frac{\lambda+\alpha}{2}\right)^2
-\frac{(\lambda+\alpha)^2}{4}.
\]
Therefore
\[
-\lambda r+r^2
\ge
\alpha r-\frac{(\lambda+\alpha)^2}{4}.
\]
Equivalently,
\[
\nu\,z^\top A z+\ip{N(z)}{z}_H
\ge
\alpha\,\normH{z}^2-\beta,
\qquad
\beta:=\frac{(\lambda+\alpha)^2}{4}.
\]
Thus Assumption~\ref{ass:H4} holds for every choice of \(\alpha>0\), with the
corresponding \(\beta\) above.

We now verify Assumption~\ref{ass:H2}. From the identity for
\(\ip{N(z)}{z}_H\) above,
\[
\ip{N(z)}{z}_H
=
|z|^4-(\lambda+1)|z|^2
\ge
-(|\lambda|+1)|z|^2.
\]
Thus Assumption~\ref{ass:H2}\emph{(i)} holds with
\(C_{\mathrm E}=|\lambda|+1\).

Since \(N\in C^\infty(\R^2;\R^2)\), it is locally Lipschitz on \(\R^2\).
Moreover, Assumption~\ref{ass:H2}\emph{(ii)} follows from
Remark~\ref{rem:examples-local-mono}.
\end{proof}

\begin{proof}[\textbf{Verification for the FitzHugh--Nagumo system}]
Here we use \((u_1,u_2)\) to denote the intrinsic phase variables of the
FitzHugh--Nagumo system. Let \(u=(u_1,u_2)^\top\). We fit \eqref{eq:fhn} into
\eqref{eq:truth} by taking
\[
H=V=\R^2,\qquad
A=
\begin{pmatrix}
1&0\\
0&\varepsilon b
\end{pmatrix},
\qquad
\nu=1,
\qquad
f=
\begin{pmatrix}
I\\
\varepsilon a
\end{pmatrix},
\]
and
\[
N(u)=
\begin{pmatrix}
-2u_1+\frac{u_1^3}{3}+u_2\\
-\varepsilon u_1
\end{pmatrix}.
\]
Assumption~\ref{ass:H1} holds because \(A\) is symmetric positive definite, with \(\epsilon,b>0\).

To verify Assumption~\ref{ass:H4}, let \(z=(z_1,z_2)^\top\in\R^2\). We have
\[
z^\top A z+\ip{N(z)}{z}_H
=
z_1^2+\varepsilon b\,z_2^2
+\left(-2z_1+\frac{z_1^3}{3}+z_2\right)z_1
-\varepsilon z_1z_2,
\]
and hence
\[
z^\top A z+\ip{N(z)}{z}_H
=
\frac{z_1^4}{3}-z_1^2+(1-\varepsilon)z_1z_2+\varepsilon b\,z_2^2.
\]
Fix any \(\alpha\in(0,\varepsilon b)\). Then
\[
z^\top A z+\ip{N(z)}{z}_H-\alpha\normH{z}^2
=
\frac{z_1^4}{3}
-(1+\alpha)z_1^2
+(1-\varepsilon)z_1z_2
+(\varepsilon b-\alpha)z_2^2.
\]
Since \(\varepsilon b-\alpha>0\), Young's inequality gives
\[
(1-\varepsilon)z_1z_2
\ge
-\frac{(1-\varepsilon)^2}{4(\varepsilon b-\alpha)}\,z_1^2
-(\varepsilon b-\alpha)z_2^2.
\]
Therefore
\[
z^\top A z+\ip{N(z)}{z}_H-\alpha\normH{z}^2
\ge
\frac{z_1^4}{3}
-
C_\alpha z_1^2,
\quad
C_\alpha
:=
1+\alpha+\frac{(1-\varepsilon)^2}{4(\varepsilon b-\alpha)}.
\]
Setting \(r:=z_1^2\ge0\), we have
\[
\frac{r^2}{3}-C_\alpha r
\ge
-\frac{3}{4}C_\alpha^2.
\]
Hence
\[
z^\top A z+\ip{N(z)}{z}_H
\ge
\alpha \normH{z}^2-\beta,
\qquad
\beta:=\frac{3}{4}C_\alpha^2.
\]
Thus Assumption~\ref{ass:H4} holds for every \(\alpha\in(0,\varepsilon b)\),
with the corresponding \(\beta\) defined above.

We now verify Assumption~\ref{ass:H2}. First,
\[
\ip{N(z)}{z}_H
=
z_1\left(-2z_1+\frac{z_1^3}{3}+z_2\right)+z_2(-\varepsilon z_1)
=
\frac{z_1^4}{3}-2z_1^2+(1-\varepsilon)z_1z_2.
\]
By Young's inequality,
\[
|(1-\varepsilon)z_1z_2|
\le
\frac{|1-\varepsilon|}{2}(z_1^2+z_2^2).
\]
Therefore
\[
\ip{N(z)}{z}_H
\ge
\frac{z_1^4}{3}
-
\left(2+\frac{|1-\varepsilon|}{2}\right)z_1^2
-
\frac{|1-\varepsilon|}{2}z_2^2
\ge
-C_{\mathrm E}\|z\|_2^2,
\]
with
\(C_{\mathrm E}:=
2+\frac{|1-\varepsilon|}{2}.\)
Thus Assumption~\ref{ass:H2}\emph{(i)} holds.

Since \(N\) is polynomial, it is locally Lipschitz on \(\R^2\). Moreover,
Assumption~\ref{ass:H2}\emph{(ii)} follows from
Remark~\ref{rem:examples-local-mono}.
\end{proof}

\begin{proof}[\textbf{Verification for the Lorenz--96 system}]
For the Lorenz--96 system \eqref{eq:l96}, we take
\[
H=V=\R^d,\qquad A=\Id,\qquad \nu=1,\qquad f=\mathsf{f}\mathbf 1,
\]
and
\[
N(u)_i:=-(u_{i+1}-u_{i-2})u_{i-1}.
\]
Assumption~\ref{ass:H1} is immediate since \(A=\Id\) is symmetric positive
definite.

We next verify Assumption~\ref{ass:H4}. For a
general point \(z\in\R^d\), a direct computation gives,
\[
\ip{N(z)}{z}_H
=
-\sum_{i=1}^d z_i(z_{i+1}-z_{i-2})z_{i-1}
=
-\sum_{i=1}^d z_iz_{i+1}z_{i-1}
+\sum_{i=1}^d z_iz_{i-2}z_{i-1}.
\]
By periodicity, the second sum is a reindexing of the first. Indeed, with
\(j=i-1\),
\[
\sum_{i=1}^d z_iz_{i-2}z_{i-1}
=
\sum_{j=1}^d z_{j+1}z_{j-1}z_j
=
\sum_{j=1}^d z_jz_{j+1}z_{j-1}.
\]
Hence the two terms cancel, and therefore
\(\ip{N(z)}{z}_H=0\), for all \(z\in\R^d.\)
Consequently,
\[
\nu\,z^\top A z+\ip{N(z)}{z}_H
=
\|z\|_2^2.
\]
Thus Assumption~\ref{ass:H4} holds with
\(\alpha=1,
\,
\beta=0.\)

We now verify Assumption~\ref{ass:H2}. Since
\(\ip{N(z)}{z}_H=0\), Assumption~\ref{ass:H2}\emph{(i)} holds with
\(C_{\mathrm E}=0\).

Since \(N\) is a smooth polynomial vector field, it is locally Lipschitz on
\(\R^d\). Moreover, Assumption~\ref{ass:H2}\emph{(ii)} follows from
Remark~\ref{rem:examples-local-mono}.
This completes the verification.
\end{proof}

\begin{proof}[\textbf{Verification for finite-dimensional Galerkin-type models of viscous incompressible flows}]
We verify that a standard class of finite-dimensional Galerkin-type models for
viscous incompressible flows fits into the abstract framework of
Section~\ref{sec:baseline}. Consider reduced models of the form
\begin{equation}\label{eq:galerkin-fluid-app}
\dot u+\nu\Lambda u+\mathcal B(u,u)=g,
\qquad
u(t)\in\R^d,
\end{equation}
where \(\Lambda\) is symmetric positive definite and
\(\mathcal B:\R^d\times\R^d\to\R^d\) is bilinear. The verification below uses
the standard incompressible energy-cancellation identity
\begin{equation}\label{eq:galerkin-energy-preserving-app}
\ip{\mathcal B(z,z)}{z}_H=0,
\qquad
\forall\,z\in\R^d.
\end{equation}

This structure arises, for example, from Fourier--Galerkin truncations of the
two-dimensional Navier--Stokes equations on a periodic domain, and more
generally from Galerkin reductions based on an \(L^2\)-orthonormal
divergence-free reduced velocity space with boundary conditions preserving the
incompressible energy identity, such as periodic, no-penetration, or no-slip
conditions. The \(L^2\)-orthonormality allows the \(L^2\)-energy of the
reduced velocity field to be written as the Euclidean energy of its coefficient
vector in \(\R^d\), while the divergence-free and boundary assumptions remove
the pressure contribution and the boundary term in the convective energy
balance.

To indicate the origin of the bilinear term, let \(X_d=\operatorname{span}\{\varphi_1,\dots,\varphi_d\}\) be an \(L^2\)-orthonormal divergence-free reduced velocity space satisfying the above boundary assumptions, and write
\[
u_d(x,t)=\sum_{j=1}^d u_j(t)\varphi_j(x).
\]
Projecting the convective nonlinearity \((u_d\cdot\nabla)u_d\) onto \(X_d\)
gives a quadratic term with components
\[
\mathcal B_i(u,u)=\sum_{j,k=1}^d b_{ijk}u_ju_k,
\qquad
b_{ijk}:=\ip{(\varphi_j\cdot\nabla)\varphi_k}{\varphi_i}_{L^2}.
\]
Indeed,
\[
\ip{(u_d\cdot\nabla)u_d}{\varphi_i}_{L^2}
=
\sum_{j,k=1}^d u_j u_k
\ip{(\varphi_j\cdot\nabla)\varphi_k}{\varphi_i}_{L^2}.
\]
Moreover, if \(u_z=\sum_{j=1}^d z_j\varphi_j\), then
\[
\ip{\mathcal B(z,z)}{z}_H
=
\ip{(u_z\cdot\nabla)u_z}{u_z}_{L^2}
=
\frac12\int_\Omega u_z\cdot\nabla |u_z|^2\,dx.
\]
Using integration by parts,
\[
\frac12\int_\Omega u_z\cdot\nabla |u_z|^2\,dx
=
\frac12\int_{\partial\Omega} |u_z|^2(u_z\cdot n)\,dS
-
\frac12\int_\Omega (\nabla\cdot u_z)|u_z|^2\,dx.
\]
The second term vanishes because \(u_z\) is divergence-free, and the boundary
term vanishes under the stated periodic, no-penetration, or no-slip boundary
conditions. Hence \eqref{eq:galerkin-energy-preserving-app} holds.

We now verify Assumptions~\ref{ass:H1}, \ref{ass:H4}, and \ref{ass:H2}. Set
\[
H=V=\R^d,
\qquad
A=\Lambda,
\qquad
f=g,
\qquad
N(u)=\mathcal B(u,u).
\]
Since \(\Lambda\) is symmetric positive definite,
Assumption~\ref{ass:H1} holds immediately.

For Assumption~\ref{ass:H4}, using
\eqref{eq:galerkin-energy-preserving-app}, we obtain, for any \(z\in\R^d\),
\[
\nu\,z^\top A z+\ip{N(z)}{z}_H
=
\nu\,z^\top\Lambda z
\ge
\nu\,\lambda_{\min}(\Lambda)\normH{z}^2.
\]
Thus Assumption~\ref{ass:H4} holds with
\(\alpha=\nu\lambda_{\min}(\Lambda),
\,
\beta=0.\)

We now verify Assumption~\ref{ass:H2}. First,
\[
\ip{N(z)}{z}_H
=
\ip{\mathcal B(z,z)}{z}_H
=
0,
\qquad
\forall\,z\in\R^d.
\]
Thus Assumption~\ref{ass:H2}\emph{(i)} holds with \(C_{\mathrm E}=0\).

It remains to verify the local Lipschitz property and
Assumption~\ref{ass:H2}\emph{(ii)}. Since \(\mathcal B\) is bilinear on the
finite-dimensional space \(\R^d\), there is a constant \(C_{\mathcal B}>0\)
such that
\[
\|\mathcal B(z,z')\|_2
\le C_{\mathcal B}\|z\|_2\|z'\|_2,
\qquad
\forall\,z,z'\in\R^d.
\]
Therefore,
\[
N(z')-N(z)
=
\mathcal B(z'-z,z')+\mathcal B(z,z'-z),
\]
and hence
\[
\|N(z')-N(z)\|_2
\le
C_{\mathcal B}\bigl(\|z\|_2+\|z'\|_2\bigr)\|z'-z\|_2.
\]
On the ball \(\|z\|_2,\|z'\|_2\le R\), this gives
\[
\|N(z')-N(z)\|_2
\le
2C_{\mathcal B}R\,\|z'-z\|_2.
\]
In particular, \(N\) is locally Lipschitz on \(\R^d\). Moreover,
\[
-\ip{N(z')-N(z)}{z'-z}_H
\le
\|N(z')-N(z)\|_2\,\|z'-z\|_2
\le
2C_{\mathcal B}R\,\|z'-z\|_2^2.
\]
Thus Assumption~\ref{ass:H2}\emph{(ii)} holds on bounded \(H\)-balls, with
\(C_R=2C_{\mathcal B}R\).

We have therefore verified that any reduced model of the form
\eqref{eq:galerkin-fluid-app} satisfying the structural assumptions above
belongs to the abstract system class of Section~\ref{sec:baseline}. In
particular, this includes Fourier--Galerkin truncations of the two-dimensional
Navier--Stokes equations on periodic domains, as well as divergence-free
Galerkin reductions preserving the incompressible energy identity. Once a
feedback operator \(I_h\) satisfying Assumption~\ref{ass:H3} is specified, the
full nudging framework applies to these finite-dimensional Galerkin-type
models.
\end{proof}

\section{Proofs for the exact-model AOT baseline}
\label{app:baseline}

\begin{lemma}[Lower bound for the interpolant feedback]
\label{lem:ih-lower}
Suppose Assumption~\ref{ass:H3} holds. Then, for every \(z\in\R^d\),
\begin{equation}\label{eq:ih-lower}
\ip{I_h z}{z}_H
\ge
\frac12\,\normH{z}^2
-
\frac{c_0^2 h^2}{2}\,\normV{z}^2.
\end{equation}
\end{lemma}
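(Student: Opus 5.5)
The plan is to write \(I_hz = z-(z-I_hz)\) and expand the inner product, so that only the approximation property \eqref{eq:interp} of Assumption~\ref{ass:H3} and elementary inequalities are needed. Concretely, for every \(z\in\R^d\) we have the exact identity
\[
\ip{I_hz}{z}_H=\normH{z}^2-\ip{z-I_hz}{z}_H .
\]
The first term is already the full \(H\)-energy. Everything reduces to bounding the defect term \(\ip{z-I_hz}{z}_H\) from above by at most half of \(\normH{z}^2\) plus a \(V\)-norm remainder.

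For the defect term, we apply Cauchy--Schwarz and then Young's inequality with weight one:
\[
\ip{z-I_hz}{z}_H\le \normH{z-I_hz}\,\normH{z}\le \frac12\normH{z-I_hz}^2+\frac12\normH{z}^2 .
\]
Next we invoke \eqref{eq:interp}, which gives \(\normH{z-I_hz}^2\le c_0^2h^2\normV{z}^2\). Substituting this into the identity above yields
\[
\ip{I_hz}{z}_H\ge \normH{z}^2-\frac12\normH{z}^2-\frac{c_0^2h^2}{2}\normV{z}^2 ,
\]
which is exactly \eqref{eq:ih-lower}.

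No step is a genuine obstacle. The only choice to get right is the Young weight: it should split the product evenly, so that the coefficient \(\tfrac12\) appears on \(\normH{z}^2\) and the coefficient \(\tfrac{c_0^2h^2}{2}\) appears on \(\normV{z}^2\), matching the constants used later in Theorem~\ref{thm:sur-track}. The argument is valid for every \(h\in[0,1]\) and uses neither the symmetry of \(I_h\) nor the norm equivalence \eqref{eq:norm-equivalence}. The same proof therefore also establishes Lemma~\ref{lem:ih-lower-finitedim}.
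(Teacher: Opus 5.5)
Your proposal is correct and follows essentially the same argument as the paper: the identity $I_hz = z-(z-I_hz)$, Cauchy--Schwarz on the defect term, Young's inequality with equal weights, and the approximation bound \eqref{eq:interp}. The only difference is that the paper applies \eqref{eq:interp} before Young's inequality rather than after, which is an inessential reordering.
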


\begin{proof}
Using the identity
\[
I_h z = z-(z-I_h z),
\]
we write
\[
\ip{I_h z}{z}_H
=
\ip{z}{z}_H-\ip{z-I_h z}{z}_H
=
\normH{z}^2-\ip{z-I_h z}{z}_H.
\]
By Cauchy--Schwarz and the approximation property \eqref{eq:interp},
\[
\bigl|\ip{z-I_h z}{z}_H\bigr|
\le
\normH{z-I_h z}\,\normH{z}
\le
c_0 h\,\normV{z}\,\normH{z}.
\]
Applying Young's inequality in the form
\[
ab\le \frac12 a^2+\frac12 b^2
\]
with \(a=c_0 h\,\normV{z}\) and \(b=\normH{z}\), we obtain
\[
c_0 h\,\normV{z}\,\normH{z}
\le
\frac{c_0^2 h^2}{2}\normV{z}^2+\frac12\normH{z}^2.
\]
Therefore,
\[
-\ip{z-I_h z}{z}_H
\ge
-\frac{c_0^2 h^2}{2}\normV{z}^2-\frac12\normH{z}^2.
\]
Substituting this estimate into the previous identity yields
\[
\ip{I_h z}{z}_H
\ge
\frac12\,\normH{z}^2-\frac{c_0^2 h^2}{2}\normV{z}^2,
\]
which is exactly \eqref{eq:ih-lower}.
\end{proof}

\begin{proof}[Proof of Proposition~\ref{prop:wp}]
We first consider the true system. Define
\[
F(z):=-\nu Az-N(z)+f.
\]
By Assumption~\ref{ass:H2}, the mapping \(N\) is locally Lipschitz on \(\R^d\).
Hence \(F\) is also locally Lipschitz on \(\R^d\). The Picard--Lindel\"of
theorem therefore yields, for every \(u_0\in\R^d\), a unique maximal solution
\[
u\in C^1([0,T_{\max}^u);\R^d)
\]
for some \(T_{\max}^u\in(0,\infty]\).

It remains to show that \(T_{\max}^u=\infty\). Set
\(y(t):=\normH{u(t)}^2.\)
Taking the \(H\)-inner product of \eqref{eq:truth} with \(u(t)\), we obtain
\[
\frac12 y'(t)+\nu\,u(t)^\top A u(t)
=
\ip{f}{u(t)}_H-\ip{N(u(t))}{u(t)}_H.
\]
By Assumption~\ref{ass:H2}\emph{(i)}, we have
\[
-\ip{N(u(t))}{u(t)}_H\le C_{\mathrm E}\,y(t).
\]
Dropping the nonnegative term \(\nu\,u(t)^\top A u(t)\), and estimating the
forcing term by Young's inequality,
\[
\ip{f}{u(t)}_H
\le
\frac12\normH{f}^2+\frac12 y(t),
\]
we obtain
\[
\frac12 y'(t)\le \frac12\normH{f}^2+\Bigl(C_{\mathrm E}+\frac12\Bigr)y(t),
\]
that is,
\[
y'(t)\le \normH{f}^2+(2C_{\mathrm E}+1)y(t),
\qquad t\in(0,T_{\max}^u).
\]
Gronwall's inequality then shows that \(y(t)\) remains bounded on every compact
subinterval of \([0,T_{\max}^u)\). Since, in finite dimensions, a maximal
solution of a locally Lipschitz ODE can fail to extend only through finite-time
blow-up, no such blow-up can occur. Therefore \(T_{\max}^u=\infty\), and the
true solution is global.

We now turn to the exact-model nudged system. Since \(u\in C^1([0,\infty);\R^d)\), the map
\(t\mapsto I_hu(t)\) is continuous on \([0,\infty)\). Define
\[
G(t,z):=-\nu Az-N(z)+f-\mu\bigl(I_hz-I_hu(t)\bigr).
\]
For each fixed \(t\), the map \(z\mapsto G(t,z)\) is locally Lipschitz on
\(\R^d\), uniformly for \(t\) in bounded intervals, and \(G\) is continuous in
\(t\). Standard existence and uniqueness theory for nonautonomous ODEs
therefore yields, for every \(v_0\in\R^d\), a unique maximal solution
\[
v\in C^1([0,T_{\max}^v);\R^d)
\]
for some \(T_{\max}^v\in(0,\infty]\).

To prove global existence, fix \(T>0\). Since \(u\) is continuous on \([0,T]\),
we could define
\[
M_T:=\sup_{t\in[0,T]}\normH{u(t)}<\infty.
\]
Let
\(q(t):=\normH{v(t)}^2.\)
Taking the \(H\)-inner product of \eqref{eq:aot} with \(v(t)\), we obtain
\[
\frac12 q'(t)+\nu\,v(t)^\top A v(t)
=
\ip{f}{v(t)}_H-\ip{N(v(t))}{v(t)}_H
-\mu\,\ip{I_hv(t)}{v(t)}_H
+\mu\,\ip{I_hu(t)}{v(t)}_H.
\]
We estimate the terms on the right-hand side separately. By
Assumption~\ref{ass:H2}\emph{(i)},
\[
-\ip{N(v(t))}{v(t)}_H\le C_{\mathrm E}q(t).
\]
According to Remark~\ref{rem:Ih-bounded}, we have
\[
-\mu\,\ip{I_hv(t)}{v(t)}_H
\le
\mu\,\normH{I_hv(t)}\,\normH{v(t)}
\le
\mu C_I\,q(t).
\]
Moreover,
\[
\ip{f}{v(t)}_H
\le
\frac12\normH{f}^2+\frac12 q(t),
\]
and similarly,
\[
\mu\,\ip{I_hu(t)}{v(t)}_H
\le
\mu\,\normH{I_hu(t)}\,\normH{v(t)}
\le
\frac{\mu^2}{2}\normH{I_hu(t)}^2+\frac12 q(t)
\le
\frac{\mu^2C_I^2}{2}M_T^2+\frac12 q(t).
\]
Dropping again the nonnegative dissipation term \(\nu\,v(t)^\top A v(t)\), we
find
\[
\frac12 q'(t)
\le
\frac12\normH{f}^2+\frac{\mu^2C_I^2}{2}M_T^2
+\bigl(C_{\mathrm E}+\mu C_I+1\bigr)q(t),
\qquad
t\in(0,T\wedge T_{\max}^v).
\]
Thus there exist constants \(a_T,b_T>0\), depending only on \(T\), the system
parameters, and the already constructed true solution \(u\), such that
\[
q'(t)\le a_T+b_T q(t),
\qquad
t\in(0,T\wedge T_{\max}^v).
\]
Gronwall's inequality implies that \(q(t)\) remains bounded on
\([0,T\wedge T_{\max}^v)\). Since \(T>0\) is arbitrary, finite-time blow-up is
impossible, and therefore \(T_{\max}^v=\infty\). Hence the nudged solution is
global as well.
\end{proof}

\begin{proof}[Proof of Proposition~\ref{prop:dissipativity}]
We begin with the true system. Define
\(y(t):=\normH{u(t)}^2.\)
Taking the \(H\)-inner product of \eqref{eq:truth} with \(u(t)\), we obtain
\[
\frac12 y'(t)+\nu\,u(t)^\top A u(t)+\ip{N(u(t))}{u(t)}_H
=
\ip{f}{u(t)}_H.
\]
By Assumption~\ref{ass:H4},
\[
\frac12 y'(t)+\alpha y(t)\le \beta+\ip{f}{u(t)}_H.
\]
We then estimate the forcing term by Young's inequality,
\[
\ip{f}{u(t)}_H
\le
\frac{\alpha}{2}\,y(t)+\frac{1}{2\alpha}\normH{f}^2.
\]
It follows that
\[
\frac12 y'(t)+\frac{\alpha}{2}y(t)
\le
\beta+\frac{1}{2\alpha}\normH{f}^2.
\]
Equivalently,
\[
y'(t)+\alpha y(t)
\le
2\beta+\frac{1}{\alpha}\normH{f}^2.
\]
Applying the integrating-factor method yields
\[
y(t)
\le
e^{-\alpha t}y(0)
+
\frac{2\beta+\normH{f}^2/\alpha}{\alpha}\bigl(1-e^{-\alpha t}\bigr),
\qquad
t\ge0,
\]
which is exactly \eqref{eq:H-abs-bound}.

We now consider the nudged system. By the truth estimate already established,
\[
\normH{u(t)}^2
\le
R_u^2
:=
\max\left\{
\normH{u_0}^2,\,
\frac{2\beta+\normH{f}^2/\alpha}{\alpha}
\right\},
\qquad
t\ge0.
\]
Let
\(y(t):=\normH{v(t)}^2.\)
Taking the \(H\)-inner product of \eqref{eq:aot} with \(v(t)\), we then obtain
\[
\frac12 y'(t)+\nu\,v(t)^\top A v(t)
=
\ip{f}{v(t)}_H-\ip{N(v(t))}{v(t)}_H
-\mu\,\ip{I_hv(t)}{v(t)}_H
+\mu\,\ip{I_hu(t)}{v(t)}_H.
\]
By \eqref{eq:onesided}, we have
\[
-\ip{N(v(t))}{v(t)}_H\le C_{\mathrm E}y(t).
\]
By Lemma~\ref{lem:ih-lower},
\[
-\mu\,\ip{I_hv(t)}{v(t)}_H
\le
-\frac{\mu}{2}\normH{v(t)}^2+\frac{\mu c_0^2h^2}{2}\normV{v(t)}^2.
\]
Substituting these estimates into the energy identity gives
\[
\frac12 y'(t)
+
\Bigl(\nu-\frac{\mu c_0^2h^2}{2}\Bigr)\normV{v(t)}^2
+
\frac{\mu}{2}\normH{v(t)}^2
\le
\ip{f}{v(t)}_H
+
C_{\mathrm E}y(t)
+
\mu\,\ip{I_hu(t)}{v(t)}_H.
\]
Since \(\mu c_0^2h^2<\nu\), we have
\[
\nu-\frac{\mu c_0^2h^2}{2}\ge \frac{\nu}{2}.
\]
Using coercivity, we further obtain
\[
\Bigl(\nu-\frac{\mu c_0^2h^2}{2}\Bigr)\normV{v(t)}^2
\ge
\frac{\nu}{2}\normV{v(t)}^2
\ge
\frac{\nu\lambda_1}{2}\normH{v(t)}^2.
\]
Therefore,
\[
\frac12 y'(t)
+
\Bigl(\frac{\nu\lambda_1}{2}+\frac{\mu}{2}-C_{\mathrm E}\Bigr)y(t)
\le
\ip{f}{v(t)}_H+\mu\,\ip{I_hu(t)}{v(t)}_H.
\]
Recalling the definition of \(\delta\), we rewrite this as
\[
\frac12 y'(t)+\delta y(t)
\le
\ip{f}{v(t)}_H+\mu\,\ip{I_hu(t)}{v(t)}_H.
\]

By \eqref{eq:stab},
\[
\normH{I_hu(t)}\le C_I R_u,
\qquad t\ge0.
\]
We now estimate the two terms on the right-hand side. By Young's inequality,
\[
\ip{f}{v(t)}_H
\le
\frac{1}{\delta}\normH{f}^2+\frac{\delta}{4}\normH{v(t)}^2,
\]
and similarly,
\[
\mu\,\ip{I_hu(t)}{v(t)}_H
\le
\frac{\mu^2}{\delta}\normH{I_hu(t)}^2+\frac{\delta}{4}\normH{v(t)}^2
\le
\frac{\mu^2 C_I^2}{\delta}R_u^2+\frac{\delta}{4}\normH{v(t)}^2.
\]
Substituting these bounds into the previous differential inequality yields
\[
\frac12 y'(t)+\delta y(t)
\le
\frac{1}{\delta}\normH{f}^2
+
\frac{\mu^2 C_I^2}{\delta}R_u^2
+
\frac{\delta}{2}y(t).
\]
Rearranging, we obtain
\[
y'(t)+\delta y(t)
\le
\frac{2}{\delta}\normH{f}^2
+
\frac{2\mu^2 C_I^2}{\delta}R_u^2
=:G.
\]
Applying the integrating-factor method gives
\[
y(t)\le e^{-\delta t}y(0)+\frac{G}{\delta}\bigl(1-e^{-\delta t}\bigr),
\qquad t\ge0.
\]
Hence
\[
\normH{v(t)}^2
\le
\max\Bigl\{\normH{v_0}^2,\frac{G}{\delta}\Bigr\},
\qquad t\ge0.
\]
In particular, the conclusion of the proposition holds with
\[
T_H:=0,
\qquad
R_H:=\max\Bigl\{\normH{v_0},\sqrt{\frac{G}{\delta}}\Bigr\}.
\]
\end{proof}

\begin{proof}[Proof of Corollary~\ref{cor:common-absorbing}]
By Proposition~\ref{prop:dissipativity}, the true solution satisfies
\[
\normH{u(t)}^2
\le
\max\left\{
\normH{u_0}^2,\,
\frac{2\beta+\normH{f}^2/\alpha}{\alpha}
\right\},
\qquad t\ge0.
\]
Hence, defining
\[
R_u:=\max\left\{
\normH{u_0},\,
\sqrt{\frac{2\beta+\normH{f}^2/\alpha}{\alpha}}
\right\},
\]
we have
\[
\sup_{t\ge0}\normH{u(t)}\le R_u.
\]
Again by Proposition~\ref{prop:dissipativity}, there exist constants
\(T_H\ge0\) and \(R_H>0\) such that
\[
\sup_{t\ge T_H}\normH{v(t)}\le R_H.
\]
Set
\[
T_\ast:=T_H,
\qquad
\widetilde R_\ast:=\max\{R_u,R_H\}.
\]
Then, for every \(t\ge T_\ast\),
\[
\normH{u(t)}\le \widetilde R_\ast,
\qquad
\normH{v(t)}\le \widetilde R_\ast.
\]
This proves \eqref{eq:common-absorbing-HV} with \(R_\ast\) replaced by
\(\widetilde R_\ast\).

By norm equivalence \eqref{eq:norm-equivalence},
\[
\normV{z}\le \sqrt{\lambda_{\max}}\,\normH{z},
\qquad z\in\R^d.
\]
Hence, for every \(t\ge T_\ast\),
\[
\normV{u(t)}\le \sqrt{\lambda_{\max}}\,\widetilde R_\ast,
\qquad
\normV{v(t)}\le \sqrt{\lambda_{\max}}\,\widetilde R_\ast.
\]
Therefore, if we define
\[
R_\ast:=\max\{1,\sqrt{\lambda_{\max}}\}\,\widetilde R_\ast,
\]
then both \eqref{eq:common-absorbing-HV} and
\eqref{eq:common-absorbing-V} follow.
\end{proof}

\begin{proof}[Proof of Theorem~\ref{thm:conv}]
Let
\(w:=v-u.\)
Then, subtracting \eqref{eq:truth} from \eqref{eq:aot}, we obtain
\[
\dot w+\nu Aw+\bigl(N(v)-N(u)\bigr)+\mu I_hw=0.
\]
Taking the \(H\)-inner product with \(w(t)\), we obtain
\begin{equation}\label{eq:w-energy}
\frac12\frac{d}{dt}\normH{w(t)}^2
+\nu\normV{w(t)}^2
+\mu\,\ip{I_hw(t)}{w(t)}_H
=
-\ip{N(v(t))-N(u(t))}{w(t)}_H.
\end{equation}
By Lemma~\ref{lem:ih-lower},
\[
\mu\,\ip{I_hw(t)}{w(t)}_H
\ge
\frac{\mu}{2}\normH{w(t)}^2
-
\frac{\mu c_0^2 h^2}{2}\normV{w(t)}^2.
\]
Substituting this bound into \eqref{eq:w-energy} yields
\[
\frac12\frac{d}{dt}\normH{w(t)}^2
+
\Bigl(\nu-\frac{\mu c_0^2 h^2}{2}\Bigr)\normV{w(t)}^2
+
\frac{\mu}{2}\normH{w(t)}^2
\le
-\ip{N(v(t))-N(u(t))}{w(t)}_H.
\]
For \(t\ge T_\ast\), Corollary~\ref{cor:common-absorbing} implies
\[
\normH{u(t)}\le R_\ast,
\qquad
\normH{v(t)}\le R_\ast.
\]
Hence, by Assumption~\ref{ass:H2}\emph{(ii)} with \(R=R_\ast\),
\[
-\ip{N(v(t))-N(u(t))}{w(t)}_H
\le
C_\ast\,\normH{w(t)}^2.
\]
Therefore
\[
\frac12\frac{d}{dt}\normH{w(t)}^2
+
\Bigl(\nu-\frac{\mu c_0^2 h^2}{2}\Bigr)\normV{w(t)}^2
+
\Bigl(\frac{\mu}{2}-C_\ast\Bigr)\normH{w(t)}^2
\le 0.
\]
Under the condition \(\mu c_0^2 h^2<\nu\), the \(V\)-term is nonnegative and thus can
be dropped, giving
\[
\frac12\frac{d}{dt}\normH{w(t)}^2
+
\Bigl(\frac{\mu}{2}-C_\ast\Bigr)\normH{w(t)}^2
\le 0,
\qquad
t\ge T_\ast.
\]
Since \(\mu>2C_\ast\), Gronwall's inequality yields
\[
\normH{w(t)}^2
\le
\exp\bigl(-(\mu-2C_\ast)(t-T_\ast)\bigr)\,
\normH{w(T_\ast)}^2,
\qquad t\ge T_\ast,
\]
which is exactly \eqref{eq:exp-conv}.
\end{proof}

\section{Proofs for Subsection~\ref{subsec:baseline-noisy}}
\label{app:baseline-noisy}

\begin{proof}[Proof of Proposition~\ref{prop:stoch-wp-diss}]
We begin by deriving an a priori \(H\)-moment estimate for the stochastic
nudged system \eqref{eq:aot-noisy}. 


Let
\[
b(t,z):=f-\nu Az-N(z)-\mu\bigl(I_hz-I_hu(t)\bigr).
\]
Since \(u\in C^1([0,\infty);\R^d)\) is deterministic and \(N\) is locally
Lipschitz, the drift \(b(t,\cdot)\) is locally Lipschitz uniformly on compact
time intervals. The diffusion coefficient \(\mu\Gamma_h\) is constant.
Therefore, by the standard finite-dimensional local well-posedness theorem for
SDEs with locally Lipschitz coefficients, \eqref{eq:aot-noisy} admits a unique
maximal strong solution up to its maximal existence time.

Fix a time horizon \(T>0\). For \(R>\normH{v_0}\), define the stopping time
\[
\tau_R:=\inf\{t\ge0:\normH{v(t)}\ge R\},
\]
and set
\[
\theta_R:=\min\{T,\tau_R\}.
\]
We first carry out the energy estimate on the stopped interval
\([0,\theta_R]\), where the stopped solution remains bounded by \(R\).
This localization justifies the estimate before global existence is known.
We derive below a bound with constants independent of \(R\), which allows us
to send \(R\to\infty\) and rule out finite-time explosion.

Applying It\^o's formula to \(\normH{v(t)}^2\) on \([0,\theta_R]\), we obtain
\begin{equation}
    \begin{aligned}
d\normH{v(t)}^2
&=
\Bigl(
-2\nu \normV{v(t)}^2
-2\ip{N(v(t))}{v(t)}_H
-2\mu \ip{I_hv(t)}{v(t)}_H
\notag\\
&\qquad
+2\mu \ip{I_hu(t)}{v(t)}_H
+2\ip{f}{v(t)}_H
+\mu^2 \Tr(\Gamma_h\Gamma_h^\top)
\Bigr)\,dt
+
2\mu \ip{v(t)}{\Gamma_h\,dW_t}_H .
\label{eq:proof-stoch-ito}
\end{aligned}
\end{equation}

By Assumption~\ref{ass:H2}\emph{(i)}, we have
\[
-\ip{N(z)}{z}_H \le C_{\mathrm E}\normH{z}^2.
\]
Moreover, Assumption~\ref{ass:H3} gives
\[
\ip{I_hz}{z}_H
=
\normH{z}^2-\ip{z-I_hz}{z}_H
\ge
\normH{z}^2-c_0h\normV{z}\normH{z}.
\]
Using Young's inequality,
\[
c_0h\normV{z}\normH{z}
\le
\frac12 c_0^2h^2 \normV{z}^2 + \frac12 \normH{z}^2,
\]
and hence
\begin{equation}\label{eq:proof-Ih-lower-v}
\ip{I_hz}{z}_H
\ge
\frac12 \normH{z}^2 - \frac12 c_0^2h^2 \normV{z}^2.
\end{equation}
Substituting these bounds into \eqref{eq:proof-stoch-ito}, we infer that
\begin{align}
d\normH{v(t)}^2
&\le
\Bigl(
-(2\nu-\mu c_0^2h^2)\normV{v(t)}^2
-(\mu-2C_{\mathrm E})\normH{v(t)}^2
\notag\\
&\qquad
+2\mu \ip{I_hu(t)}{v(t)}_H
+2\ip{f}{v(t)}_H
+\mu^2 \Tr(\Gamma_h\Gamma_h^\top)
\Bigr)\,dt
+
2\mu \ip{v(t)}{\Gamma_h\,dW_t}_H .
\label{eq:proof-stoch-pre}
\end{align}

Since \(\mu c_0^2h^2<\nu\), we have
\(2\nu-\mu c_0^2h^2>\nu\). Thus
\[
-(2\nu-\mu c_0^2h^2)\normV{z}^2
\le
-\nu\normV{z}^2
\le
-\nu\lambda_1\normH{z}^2.
\]
Recalling that
\(\delta=\frac{\nu\lambda_1}{2}+\frac{\mu}{2}-C_{\mathrm E}\),
we obtain
\begin{equation}\label{eq:proof-main-damp-v}
-(2\nu-\mu c_0^2h^2)\normV{z}^2
-(\mu-2C_{\mathrm E})\normH{z}^2
\le
-2\delta \normH{z}^2.
\end{equation}

We next estimate the forcing and observation terms. By
Remark~\ref{rem:Ih-bounded},
\(\normH{I_hu(t)}\le C_I \normH{u(t)}.\)
Thus Young's inequality gives
\[
2\mu \ip{I_hu(t)}{v(t)}_H
\le
\frac{\delta}{2}\normH{v(t)}^2
+
\frac{2\mu^2 C_I^2}{\delta}\normH{u(t)}^2,
\]
and similarly,
\[
2\ip{f}{v(t)}_H
\le
\frac{\delta}{2}\normH{v(t)}^2
+
\frac{2}{\delta}\normH{f}^2.
\]
Combining these estimates with \eqref{eq:proof-stoch-pre} and
\eqref{eq:proof-main-damp-v}, we arrive at
\begin{align}
d\normH{v(t)}^2
&\le
\Bigl(
-\delta \normH{v(t)}^2
+
\frac{2}{\delta}\normH{f}^2
+
\frac{2\mu^2 C_I^2}{\delta}\normH{u(t)}^2
+
\mu^2 \Tr(\Gamma_h\Gamma_h^\top)
\Bigr)\,dt
\notag\\
&\qquad
+
2\mu \ip{v(t)}{\Gamma_h\,dW_t}_H .
\label{eq:proof-v-final}
\end{align}

We now use this stopped estimate to rule out explosion. Integrating \eqref{eq:sur-noisy-bdd-pre} over \([0,\theta_R]\) and taking expectations, the stochastic integral drops out by the martingale property. Dropping the nonpositive damping term, we obtain
\[
\mathbb E\normH{v(\theta_R)}^2
\le
\normH{v_0}^2
+
\int_0^T
\left(
\frac{2}{\delta}\normH{f}^2
+
\frac{2\mu^2 C_I^2}{\delta}\normH{u(t)}^2
+
\mu^2 \Tr(\Gamma_h\Gamma_h^\top)
\right)\,dt .
\]
Since \(u\) is continuous on \([0,T]\), the right-hand side is bounded by a
constant \(C_T\) independent of \(R\). By continuity of the sample paths,
\(\normH{v(\tau_R)}=R\) on the event \(\{\tau_R\le T\}\). Hence
\[
R^2\,\mathbb P(\tau_R\le T)
\le
\mathbb E\normH{v(\theta_R)}^2
\le C_T.
\]
Let \(\zeta\) denote the maximal existence time of the local solution. Since
the solution has continuous paths up to \(\zeta\), we have
\(\zeta=\lim_{R\to\infty}\tau_R .\)
Therefore
\[
\mathbb P(\zeta\le T)
\le
\lim_{R\to\infty}\mathbb P(\tau_R\le T)
=0.
\]
Since \(T>0\) was arbitrary, \(\zeta=\infty\) almost surely. This proves global
well-posedness.

Having established global existence, we repeat the above It\^o energy estimate
for the global solution. Taking expectations and using that the stochastic
integral has mean zero, we derive
\[
\frac{d}{dt}\,\mathbb E\normH{v(t)}^2
\le
-\delta\,\mathbb E\normH{v(t)}^2
+
\frac{2}{\delta}\normH{f}^2
+
\frac{2\mu^2 C_I^2}{\delta}\normH{u(t)}^2
+
\mu^2 \Tr(\Gamma_h\Gamma_h^\top).
\]
Applying Gronwall's inequality yields \eqref{eq:stoch-moment-bound}.

It remains to derive the asymptotic bound \eqref{eq:stoch-absorbing}. By
Proposition~\ref{prop:dissipativity},
\(\limsup_{t\to\infty}\normH{u(t)}^2 \le R_u^2.\)
Fix \(\varepsilon>0\). Then there exists \(T_\varepsilon\ge0\) such that
\[
\normH{u(t)}^2 \le R_u^2+\varepsilon,
\qquad
\forall t\ge T_\varepsilon.
\]
Applying the preceding differential inequality on \([T_\varepsilon,\infty)\),
we obtain, for all \(t\ge T_\varepsilon\),
\begin{align*}
\mathbb E\normH{v(t)}^2
&\le
e^{-\delta(t-T_\varepsilon)}
\mathbb E\normH{v(T_\varepsilon)}^2
\\
&\quad
+
\frac{1}{\delta}
\left(
\frac{2}{\delta}\normH{f}^2
+
\frac{2\mu^2 C_I^2}{\delta}(R_u^2+\varepsilon)
+
\mu^2 \Tr(\Gamma_h\Gamma_h^\top)
\right)
\Bigl(1-e^{-\delta(t-T_\varepsilon)}\Bigr).
\end{align*}
Letting \(t\to\infty\) and then \(\varepsilon\downarrow0\) proves
\eqref{eq:stoch-absorbing}. The final assertion follows immediately.
\end{proof}

\begin{proof}[Proof of Theorem~\ref{thm:stoch-tracking}]
We now turn to the tracking estimate. As in the deterministic case, we derive an evolution equation for the error \(w(t):=v(t)-u(t)\) and perform an \(H\)-energy estimate. The difference in the stochastic setting is that the error equation contains an additional noise term, which contributes through the It\^o correction term.

Subtracting \eqref{eq:truth} from \eqref{eq:aot-noisy}, we obtain
\begin{equation}\label{eq:proof-stoch-error}
dw(t)+\nu A w(t)\,dt+\bigl(N(v(t))-N(u(t))\bigr)\,dt
=
-\mu I_h w(t)\,dt+\mu \Gamma_h\,dW_t.
\end{equation}

Applying It\^o's formula to \(\normH{w(t)}^2\), we find
\begin{align}
d\normH{w(t)}^2
&=
\Bigl(
-2\nu \normV{w(t)}^2
-2\ip{N(v(t))-N(u(t))}{w(t)}_H
-2\mu \ip{I_hw(t)}{w(t)}_H
\notag\\
&\qquad\qquad
+
\mu^2 \Tr(\Gamma_h\Gamma_h^\top)
\Bigr)\,dt
+
2\mu \ip{w(t)}{\Gamma_h\,dW_t}_H.
\label{eq:proof-stoch-error-ito}
\end{align}

By Assumption~\ref{ass:H3}, the same estimate as in
\eqref{eq:proof-Ih-lower-v} gives
\[
\ip{I_hw}{w}_H
\ge
\frac12 \normH{w}^2
-
\frac12 c_0^2h^2 \normV{w}^2.
\]
Hence
\[
-2\mu \ip{I_hw}{w}_H
\le
-\mu \normH{w}^2 + \mu c_0^2h^2 \normV{w}^2.
\]
On the other hand, Assumption~\ref{ass:H2-global} yields
\[
-2\ip{N(v)-N(u)}{w}_H
\le
2C_{\rm gl}\normH{w}^2.
\]
Substituting these estimates into \eqref{eq:proof-stoch-error-ito}, we obtain
\begin{align}
d\normH{w(t)}^2
&\le
\Bigl(
-(2\nu-\mu c_0^2h^2)\normV{w(t)}^2
-(\mu-2C_{\rm gl})\normH{w(t)}^2
\notag\\
&\qquad\qquad
+
\mu^2 \Tr(\Gamma_h\Gamma_h^\top)
\Bigr)\,dt
+
2\mu \ip{w(t)}{\Gamma_h\,dW_t}_H.
\label{eq:proof-track-pre}
\end{align}

Since \(\mu c_0^2h^2<\nu\), the \(V\)-term on the right-hand side is
nonpositive and can be discarded. Therefore,
\[
d\normH{w(t)}^2
\le
\Bigl(
-(\mu-2C_{\rm gl})\normH{w(t)}^2
+
\mu^2 \Tr(\Gamma_h\Gamma_h^\top)
\Bigr)\,dt
+
2\mu \ip{w(t)}{\Gamma_h\,dW_t}_H.
\]
Taking expectations and using that the stochastic integral has mean zero, we
obtain
\[
\frac{d}{dt}\,\mathbb E\normH{w(t)}^2
\le
-(\mu-2C_{\rm gl})\,\mathbb E\normH{w(t)}^2
+
\mu^2 \Tr(\Gamma_h\Gamma_h^\top).
\]
Since \(\mu>2C_{\rm gl}\), Gronwall's inequality gives
\eqref{eq:stoch-tracking}. Letting \(t\to\infty\) proves
\eqref{eq:stoch-floor}.
\end{proof}

\section{Proofs for the surrogate extension and post-absorption region}
\label{app:sur-extension}

\begin{proof}[Proof of Proposition~\ref{prop:sur-wp-cutoff}]
Let
\[
 \mathcal K:=\{z\in\R^d:\normV{z}\le R_{\mathrm{ext}}^{+}\}.
\]
By assumption, \(\widehat F_M\) is locally Lipschitz on an open neighborhood
\(U\supset  \mathcal K \nc\). Since \( \mathcal K \) is compact, \(\widehat F_M\) is bounded and
Lipschitz on \( \mathcal K \). Moreover, by construction,
\[
\chi(z)=0
\qquad\text{whenever }\normV{z}\ge R_{\mathrm{ext}}^{+},
\]
so \(\operatorname{supp}\chi\subset  \mathcal K \Subset U\). Define
\[
G(z):=\chi(z)\widehat F_M(z),
\qquad z\in U.
\]
Then we can naturally extend \(G\) to \(\R^d\) by setting \(G(z)=0\) for
\(z\notin U\). Since \(\chi\) vanishes in a neighborhood of
\(\R^d\setminus U\), this extension remains locally Lipschitz on \(\R^d\).
Given that \(G\) vanishes outside \(\operatorname{supp}\chi\), and that
\(\widehat F_M\) is bounded on \( \mathcal K \), there exists \(M_G>0\) such that
\begin{equation}\label{eq:proof-sur-wp-G-bound}
\normH{G(z)}\le M_G,
\qquad \forall z\in\R^d.
\end{equation}

Next, write
\[
F_M(z)=G(z)+\bigl(1-\chi(z)\bigr)\bigl(f-Bz\bigr).
\]
The first term is globally defined and locally Lipschitz. The second term is
the product of a smooth bounded cutoff and an affine map. Hence \(F_M\) is
locally Lipschitz on \(\R^d\). Moreover, using
\eqref{eq:proof-sur-wp-G-bound}, \(0\le 1-\chi\le 1\), and the fact that \(B\)
is fixed, we obtain the linear-growth bound
\begin{equation}\label{eq:proof-sur-wp-FM-growth}
\normH{F_M(z)}
\le
C_M\bigl(1+\normH{z}\bigr),
\qquad \forall z\in\R^d,
\end{equation}
for some constant \(C_M>0\).

Now fix an arbitrary initial pair \((u_0,v_0)\in\R^d\times\R^d\). By
Proposition~\ref{prop:wp}, the true system \eqref{eq:sur-truth} admits a
unique global solution
\(u\in C^1([0,\infty);\R^d)\),
with \(u(0)=u_0\). We fix this deterministic trajectory throughout the rest of
the proof. The surrogate nudged equation \eqref{eq:sur-aot} can then be written
as the nonautonomous ODE
\[
\dot v=b(t,v),
\qquad
b(t,z):=F_M(z)-\mu I_h z+\mu I_h u(t).
\]
Since \(u\in C([0,\infty);\R^d)\) and \(I_h\) is linear, the map
\(t\mapsto I_hu(t)\) is continuous. Together with the local Lipschitz property
of \(F_M\), this implies that \(b\) is continuous in \(t\) and locally
Lipschitz in \(z\), uniformly on compact time intervals. Therefore, the
standard finite-dimensional ODE theorem gives a unique maximal solution
\[
v\in C^1([0,T_{\max});\R^d)
\]
for some \(T_{\max}\in(0,\infty]\).

It remains to show that \(T_{\max}=\infty\). Fix \(T>0\). Since \(u\) is
continuous on \([0,T]\),
\[
M_u(T):=\sup_{0\le t\le T}\normH{u(t)}<\infty .
\]
Using \eqref{eq:proof-sur-wp-FM-growth} and the \(H\)-boundedness of \(I_h\),
we have, for \(0\le t\le T\),
\[
\normH{b(t,z)}
\le
C_M\bigl(1+\normH{z}\bigr)
+\mu C_I\normH{z}
+\mu C_I M_u(T)
\le
a_T+b_T\normH{z}
\]
for suitable constants \(a_T,b_T>0\). Hence, along the maximal solution,
\[
\frac{d}{dt}\normH{v(t)}
\le
a_T+b_T\normH{v(t)}
\qquad\text{for a.e. }t\in[0,\min\{T,T_{\max}\}).
\]
By Gr\"onwall's inequality, \(\normH{v(t)}\) remains bounded on every bounded
time interval. Therefore no finite-time blow-up can occur, and the maximal
solution extends globally. Thus
\[
v\in C^1([0,\infty);\R^d).
\]
Uniqueness follows from the local Lipschitz property of \(b(t,\cdot)\) on bounded sets.
\end{proof}

\begin{proof}[Proof of Proposition~\ref{prop:sur-absorb}]
Fix an arbitrary initial pair \((u_0,v_0)\in\R^d\times\R^d\), and let
\(u\) and \(v\) denote the corresponding global solutions of
\eqref{eq:sur-truth} and \eqref{eq:sur-aot}. Define the cutoff correction
\[
G_M(z):=F_M(z)-\bigl(f-Bz\bigr)
=\chi(z)\bigl(\widehat F_M(z)-f+Bz\bigr).
\]
Since the support of \(\chi\) is contained in
\[
\mathcal{K} \nc :=\{z\in\R^d:\normV{z}\le R_{\mathrm{ext}}^{+}\},
\]
and \(\widehat F_M\) is locally bounded on an open neighborhood of \(  \mathcal{K} \nc \), the
map \(G_M\) is bounded on \(\R^d\): there exists \(M_G>0\) such that
\begin{equation}\label{eq:proof-sur-absorb-GM}
\normH{G_M(z)}\le M_G,
\qquad \forall z\in\R^d.
\end{equation}
Hence the surrogate equation may be written as
\[
\dot v
=
f-Bv+G_M(v)-\mu I_hv+\mu I_hu.
\]

Taking the \(H\)-inner product with \(v\), we obtain
\[
\frac12\frac{d}{dt}\normH{v}^2
=
\ip{f}{v}_H-\ip{Bv}{v}_H+\ip{G_M(v)}{v}_H
-\mu\ip{I_hv}{v}_H+\mu\ip{I_hu}{v}_H.
\]
Using the symmetry and positive definiteness of \(B\), the boundedness of
\(G_M\), and Assumption~\ref{ass:H3}\emph{(ii)}, we infer
\[
\ip{Bv}{v}_H\ge \lambda_B\normH{v}^2,
\qquad
\bigl|\ip{G_M(v)}{v}_H\bigr|\le M_G\normH{v},
\]
and
\[
-\mu\ip{I_hv}{v}_H
\le
\mu\normH{I_hv}\,\normH{v}
\le
\mu C_I\normH{v}^2,
\]
\[
\mu\ip{I_hu}{v}_H
\le
\mu\normH{I_hu}\,\normH{v}
\le
\mu C_I\normH{u}\,\normH{v}.
\]
Therefore
\begin{equation}\label{eq:proof-sur-absorb-energy}
\frac12\frac{d}{dt}\normH{v}^2
\le
-(\lambda_B-\mu C_I)\normH{v}^2
+\bigl(\normH{f}+M_G+\mu C_I\normH{u}\bigr)\normH{v}.
\end{equation}

By Proposition~\ref{prop:dissipativity}, the true solution \(u\) is
eventually bounded in \(H\): there exist constants \(T_u\ge0\) and
\(R_u^H>0\), depending only on the system parameters and the initial truth
state, such that
\[
\sup_{t\ge T_u}\normH{u(t)}\le R_u^H.
\]
Hence, for \(t\ge T_u\), \eqref{eq:proof-sur-absorb-energy} gives
\[
\frac12\frac{d}{dt}\normH{v}^2
\le
-\rho_B\normH{v}^2 + C_u\normH{v},
\]
where
\[
\rho_B:=\lambda_B-\mu C_I>0,
\qquad
C_u:=\normH{f}+M_G+\mu C_I R_u^H.
\]
Applying Young's inequality,
\[
C_u\normH{v}
\le
\frac{\rho_B}{2}\normH{v}^2+\frac{C_u^2}{2\rho_B},
\]
we arrive at
\[
\frac{d}{dt}\normH{v}^2
\le
-\rho_B\normH{v}^2+\frac{C_u^2}{\rho_B},
\qquad t\ge T_u.
\]
A Gr\"onwall argument yields
\[
\normH{v(t)}^2
\le
e^{-\rho_B(t-T_u)}\normH{v(T_u)}^2
+\frac{C_u^2}{\rho_B^2}\bigl(1-e^{-\rho_B(t-T_u)}\bigr),
\qquad t\ge T_u.
\]
Therefore
\[
\sup_{t\ge T_u}\normH{v(t)}
\le
\max\left\{\normH{v(T_u)},\frac{C_u}{\rho_B}\right\}
=:R_M^H.
\]
Thus \eqref{eq:sur-H-absorb} holds with \(T_M:=T_u\).

Finally, by norm equivalence,
\[
\normV{v(t)}\le \sqrt{\lambda_{\max}}\normH{v(t)},
\qquad t\ge T_M.
\]
Hence \eqref{eq:sur-V-absorb} follows with
\(R_M:=\sqrt{\lambda_{\max}}\,R_M^H.\)
\end{proof}

\begin{proof}[Proof of Corollary~\ref{cor:sur-common-ball}]
Fix an arbitrary initial pair \((u_0,v_0)\in\R^d\times\R^d\), and let
\(u\) and \(v\) denote the corresponding solutions of
\eqref{eq:sur-truth} and \eqref{eq:sur-aot}.

By Proposition~\ref{prop:dissipativity}, there exist constants \(T_u\ge0\) and
\(R_u^H>0\) such that
\[
\sup_{t\ge T_u}\normH{u(t)}\le R_u^H.
\]
Using the norm equivalence \eqref{eq:norm-equivalence}, we obtain
\[
\sup_{t\ge T_u}\normV{u(t)}
\le
\sqrt{\lambda_{\max}}\,R_u^H
=:R_u.
\]

On the other hand, Proposition~\ref{prop:sur-absorb} yields constants
\(T_v\ge0\) and \(R_v>0\) such that
\[
\sup_{t\ge T_v}\normV{v(t)}\le R_v.
\]

Now set
\[
T_\ast:=\max\{T_u,T_v\},
\qquad
R_\ast:=\max\{R_u,R_v\}.
\]
Then, for every \(t\ge T_\ast\),
\[
\normV{u(t)}\le R_\ast,
\qquad
\normV{v(t)}\le R_\ast,
\]
which is exactly \eqref{eq:sur-common-ball}.
\end{proof}

\begin{proof}[Proof of Lemma~\ref{lem:ih-lower-finitedim}]
We write
\[
\ip{I_hz}{z}_H
=
\ip{z}{z}_H-\ip{z-I_hz}{z}_H
=
\normH{z}^2-\ip{z-I_hz}{z}_H.
\]
By Cauchy--Schwarz and Assumption~\ref{ass:H3}\emph{(i)},
\[
-\ip{z-I_hz}{z}_H
\ge
-\normH{z-I_hz}\,\normH{z}
\ge
-c_0h\,\normV{z}\,\normH{z}.
\]
Hence
\[
\ip{I_hz}{z}_H
\ge
\normH{z}^2-c_0h\,\normV{z}\,\normH{z}.
\]
Applying Young's inequality in the form
\[
ab\le \frac12 a^2+\frac12 b^2
\]
with \(a=\normH{z}\) and \(b=c_0h\,\normV{z}\), we obtain
\[
c_0h\,\normV{z}\,\normH{z}
\le
\frac12\normH{z}^2+\frac{c_0^2h^2}{2}\normV{z}^2.
\]
Substituting this into the previous inequality yields
\[
\ip{I_hz}{z}_H
\ge
\frac12\,\normH{z}^2-\frac{c_0^2h^2}{2}\normV{z}^2,
\]
which proves \eqref{eq:ih-lower-finitedim}.
\end{proof}

\begin{proof}[Proof of Lemma~\ref{lem:sur-squeeze-F}]
Let \(z,z'\in\cB_\ast\). Since \(\normV{z}\le R_\ast\) and \(\normV{z'}\le R_\ast\),
the norm equivalence \eqref{eq:norm-equivalence} implies
\[
\normH{z}\le \frac{R_\ast}{\sqrt{\lambda_1}}=R_H^\ast,
\qquad
\normH{z'}\le \frac{R_\ast}{\sqrt{\lambda_1}}=R_H^\ast.
\]
Hence Assumption~\ref{ass:H2}\emph{(ii)} applies with radius \(R_H^\ast\), so
\[
-\ip{N(z')-N(z)}{z'-z}_H
\le
C_{R_H^\ast}\normH{z'-z}^2.
\]

Now recall that
\[
F(z)=f-\nu Az-N(z).
\]
Therefore
\[
F(z')-F(z)
=
-\nu A(z'-z)-\bigl(N(z')-N(z)\bigr),
\]
and thus
\[
\ip{F(z')-F(z)}{z'-z}_H
=
-\nu\ip{A(z'-z)}{z'-z}_H
-\ip{N(z')-N(z)}{z'-z}_H.
\]
Since \(\ip{A(z'-z)}{z'-z}_H=\normV{z'-z}^2\), we obtain
\[
\ip{F(z')-F(z)}{z'-z}_H
=
-\nu\normV{z'-z}^2
-\ip{N(z')-N(z)}{z'-z}_H
\le
-\nu\normV{z'-z}^2
+
C_{R_H^\ast}\normH{z'-z}^2.
\]
Setting
\(C_{\mathrm{sq}}:=C_{R_H^\ast}\)
gives \eqref{eq:sur-squeeze-F}.
\end{proof}

\begin{proof}[Proof of Lemma~\ref{lem:sur-squeeze-FM}]
Write \(F_M=F+r_M\). By Lemma~\ref{lem:sur-squeeze-F}, we have
\[
\ip{F(z')-F(z)}{z'-z}_H
\le
-\nu\,\normV{z'-z}^2
+
C_{\mathrm{sq}}\,\normH{z'-z}^2.
\]
Moreover, by Cauchy--Schwarz and the definition of \(\ell_M\) in
\eqref{eq:sur-residual-def},
\[
\ip{r_M(z')-r_M(z)}{z'-z}_H
\le
\normH{r_M(z')-r_M(z)}\,\normH{z'-z}
\le
\ell_M\,\normH{z'-z}^2.
\]
Adding the two estimates gives \eqref{eq:sur-squeeze-FM}.
\end{proof}

\section{Proofs for Subsection~\ref{subsec:sur-noisy}}
\label{app:sur-noisy}

\begin{proof}[Proof of Proposition~\ref{prop:sur-wp-noisy-bdd}]
We use the same stopping argument as in
Proposition~\ref{prop:stoch-wp-diss}, but the dissipative estimate now comes
from the cutoff-extended surrogate drift.

Set
\[
b(t,z):=F_M(z)-\mu\bigl(I_hz-I_hu(t)\bigr).
\]
By Proposition~\ref{prop:sur-wp-cutoff}, \(F_M\) is locally Lipschitz on
\(\R^d\). Since \(u\in C^1([0,\infty);\R^d)\) is deterministic and \(I_h\) is
linear, \(b(t,\cdot)\) is locally Lipschitz uniformly on compact time
intervals. Moreover, the diffusion coefficient is the constant matrix \(\mu\Gamma_h\).
Thus the standard finite-dimensional SDE theorem gives a unique maximal strong
solution.

Fix \(T>0\). For \(R>\normH{v_0}\), we define
\[
\tau_R:=\inf\{t\ge0:\normH{v(t)}\ge R\},
\qquad
\theta_R:=\min\{T,\tau_R\}.
\]
All estimates below are first understood on the stopped interval
\([0,\theta_R]\). Applying It\^o's formula to \(\normH{v(t)}^2\) gives
\begin{align}
d\normH{v(t)}^2
&=
\Bigl(
2\ip{F_M(v(t))}{v(t)}_H
-2\mu \ip{I_hv(t)}{v(t)}_H
+2\mu \ip{I_hu(t)}{v(t)}_H
\notag\\
&\qquad\qquad
+\mu^2\Tr(\Gamma_h\Gamma_h^\top)
\Bigr)\,dt
+
2\mu\ip{v(t)}{\Gamma_h\,dW_t}_H .
\label{eq:sur-noisy-bdd-ito}
\end{align}

By Assumption~\ref{ass:sur-global-diss},
\[
2\ip{F_M(z)}{z}_H
\le
2\beta_M-2\alpha_M\normH{z}^2.
\]
The boundedness of \(I_h\) gives
\[
-2\mu \ip{I_hz}{z}_H
\le
2\mu\normH{I_hz}\normH{z}
\le
2\mu C_I\normH{z}^2,
\]
and
\[
2\mu \ip{I_hu(t)}{v(t)}_H
\le
2\mu C_I\normH{u(t)}\normH{v(t)}.
\]
Let
\(\vartheta_M:=\alpha_M-\mu C_I>0.\)
Then Young's inequality implies
\[
2\mu C_I\normH{u(t)}\normH{v(t)}
\le
\vartheta_M\normH{v(t)}^2
+
\frac{\mu^2C_I^2}{\vartheta_M}\normH{u(t)}^2.
\]
Substituting these estimates into \eqref{eq:sur-noisy-bdd-ito}, we obtain
\begin{align}
d\normH{v(t)}^2
&\le
\Bigl(
-\vartheta_M\normH{v(t)}^2
+
2\beta_M
+
\frac{\mu^2C_I^2}{\vartheta_M}\normH{u(t)}^2
+
\mu^2\Tr(\Gamma_h\Gamma_h^\top)
\Bigr)\,dt
\notag\\
&\qquad
+
2\mu\ip{v(t)}{\Gamma_h\,dW_t}_H .
\label{eq:sur-noisy-bdd-pre}
\end{align}

After integrating up to \(\theta_R\) and taking expectations, the stochastic
integral vanishes by the martingale property. Dropping the nonpositive damping
term yields
\[
\mathbb E\normH{v(\theta_R)}^2
\le
\normH{v_0}^2
+
\int_0^T
\left(
2\beta_M
+
\frac{\mu^2C_I^2}{\vartheta_M}\normH{u(t)}^2
+
\mu^2\Tr(\Gamma_h\Gamma_h^\top)
\right)\,dt .
\]
Since \(u\) is continuous on \([0,T]\), the right-hand side is bounded by a
constant \(C_T\) independent of \(R\). On \(\{\tau_R\le T\}\), path continuity
gives \(\normH{v(\tau_R)}=R\), and hence
\[
R^2\,\mathbb P(\tau_R\le T)
\le
\mathbb E\normH{v(\theta_R)}^2
\le C_T.
\]
Letting \(R\to\infty\) rules out finite-time explosion on \([0,T]\). Since
\(T>0\) was arbitrary, the maximal strong solution is global almost surely.

For the global solution, the differential inequality
\eqref{eq:sur-noisy-bdd-pre} can now be averaged directly. Thus
\[
\frac{d}{dt}\,\mathbb E\normH{v(t)}^2
\le
-\vartheta_M\,\mathbb E\normH{v(t)}^2
+
2\beta_M
+
\frac{\mu^2C_I^2}{\vartheta_M}\normH{u(t)}^2
+
\mu^2\Tr(\Gamma_h\Gamma_h^\top).
\]
Gronwall's inequality gives \eqref{eq:sur-noisy-moment-bound}.

It remains to pass to the long-time bound. By
Proposition~\ref{prop:dissipativity},
\[
\limsup_{t\to\infty}\normH{u(t)}^2 \le R_u^2.
\]
Fix \(\varepsilon>0\). Then there exists \(T_\varepsilon\ge0\) such that
\[
\normH{u(t)}^2 \le R_u^2+\varepsilon,
\qquad
\forall t\ge T_\varepsilon.
\]
Using the preceding moment inequality on \([T_\varepsilon,\infty)\), we obtain,
for all \(t\ge T_\varepsilon\),
\begin{align*}
\mathbb E\normH{v(t)}^2
&\le
e^{-\vartheta_M(t-T_\varepsilon)}
\mathbb E\normH{v(T_\varepsilon)}^2
\\
&\quad
+
\frac{1}{\vartheta_M}
\left(
2\beta_M
+
\frac{\mu^2 C_I^2}{\vartheta_M}(R_u^2+\varepsilon)
+
\mu^2\Tr(\Gamma_h\Gamma_h^\top)
\right)
\Bigl(1-e^{-\vartheta_M(t-T_\varepsilon)}\Bigr).
\end{align*}
Taking \(t\to\infty\) and then \(\varepsilon\downarrow0\) proves
\eqref{eq:sur-noisy-moment-absorb}. The final assertion follows immediately.
\end{proof}

\begin{proof}[Proof of Theorem~\ref{thm:sur-track-noisy}]
We now derive the tracking estimate. The argument parallels that of
Theorem~\ref{thm:sur-track}, with the only new contribution arising from the
quadratic variation of the stochastic forcing.

Set
\(w:=v-u.\)
Then subtracting \eqref{eq:sur-truth} from \eqref{eq:sur-aot-noisy}, we obtain
\begin{equation}\label{eq:sur-track-noisy-error}
dw
=
\Bigl(
F_M(v)-F(u)-\mu I_hw
\Bigr)\,dt
+
\mu\Gamma_h\,dW_t.
\end{equation}
Applying It\^o's formula to \(\normH{w}^2\), we find
\begin{align}
d\normH{w}^2
&=
2\ip{F_M(v)-F(u)}{w}_H\,dt
-2\mu\,\ip{I_hw}{w}_H\,dt
+\mu^2\Tr(\Gamma_h\Gamma_h^\top)\,dt
\notag\\
&\qquad
+
2\mu \ip{w}{\Gamma_h\,dW_t}_H.
\label{eq:sur-track-noisy-ito}
\end{align}

We decompose
\[
\ip{F_M(v)-F(u)}{w}_H
=
\ip{F_M(v)-F_M(u)}{w}_H+\ip{r_M(u)}{w}_H.
\]
By Assumption~\ref{ass:sur-global-sq},
\begin{equation}\label{eq:sur-FM-sq-use-noisy}
\ip{F_M(v)-F_M(u)}{w}_H
\le
-\nu\,\normV{w}^2
+
C_{M,\mathrm{gl}}\normH{w}^2.
\end{equation}

For \(t\ge T_\ast\), Corollary~\ref{cor:sur-common-ball} implies that
\(u(t)\in\cB_\ast\). Hence, by Definition~\ref{def:sur-residual}, we have
\(\normH{r_M(u(t))}\le \delta_M.\)
Therefore, by Cauchy--Schwarz and the norm equivalence
\eqref{eq:norm-equivalence},
\begin{equation}\label{eq:sur-resid-term-noisy}
\ip{r_M(u)}{w}_H
\le
\normH{r_M(u)}\,\normH{w}
\le
\delta_M\,\normH{w}
\le
\lambda_1^{-1/2}\delta_M\,\normV{w}.
\end{equation}
Finally, Lemma~\ref{lem:ih-lower-finitedim} yields
\begin{equation}\label{eq:sur-ih-use-noisy}
-\mu\,\ip{I_hw}{w}_H
\le
-\frac{\mu}{2}\normH{w}^2
+
\frac{\mu c_0^2h^2}{2}\normV{w}^2.
\end{equation}

Substituting \eqref{eq:sur-FM-sq-use-noisy}--\eqref{eq:sur-ih-use-noisy} into
\eqref{eq:sur-track-noisy-ito}, we obtain for \(t\ge T_\ast\),
\begin{align}
d\normH{w}^2
&\le
\Bigl(
-2\Bigl(\nu-\frac{\mu c_0^2h^2}{2}\Bigr)\normV{w}^2
-\Bigl(\mu-2C_{M,\mathrm{gl}}\Bigr)\normH{w}^2
\notag\\
&\qquad\qquad
+
2\lambda_1^{-1/2}\delta_M\,\normV{w}
+
\mu^2\Tr(\Gamma_h\Gamma_h^\top)
\Bigr)\,dt
+
2\mu \ip{w}{\Gamma_h\,dW_t}_H.
\label{eq:sur-track-noisy-pre}
\end{align}
Recall that \(\nu_{\mathrm{eff}}:=\nu-\frac{\mu c_0^2h^2}{2}>0\) and \(\gamma_{M,\mathrm{gl}}:=\mu-2C_{M,\mathrm{gl}}>0\).
Then
\[
d\normH{w}^2
\le
\Bigl(
-2\nu_{\mathrm{eff}}\normV{w}^2
-\gamma_{M,\mathrm{gl}}\normH{w}^2
+
2\lambda_1^{-1/2}\delta_M\,\normV{w}
+
\mu^2\Tr(\Gamma_h\Gamma_h^\top)
\Bigr)\,dt
+
2\mu \ip{w}{\Gamma_h\,dW_t}_H.
\]
Applying Young's inequality to the residual term with parameter
\(\nu_{\mathrm{eff}}\), we obtain
\[
2\lambda_1^{-1/2}\delta_M\,\normV{w}
\le
\nu_{\mathrm{eff}}\normV{w}^2
+
\frac{\delta_M^2}{\lambda_1\nu_{\mathrm{eff}}}.
\]
Substituting this estimate and discarding the remaining nonpositive term
\(-\nu_{\mathrm{eff}}\normV{w}^2\), we arrive at
\[
d\normH{w}^2
\le
\left(
-\gamma_{M,\mathrm{gl}}\normH{w}^2
+
\frac{\delta_M^2}{\lambda_1\nu_{\mathrm{eff}}}
+
\mu^2\Tr(\Gamma_h\Gamma_h^\top)
\right)\,dt
+
2\mu \ip{w}{\Gamma_h\,dW_t}_H.
\]
Taking expectations and using that the stochastic integral has mean zero, we
obtain
\[
\frac{d}{dt}\,\mathbb E\normH{w(t)}^2
\le
-\gamma_{M,\mathrm{gl}}\,\mathbb E\normH{w(t)}^2
+
\frac{\delta_M^2}{\lambda_1\nu_{\mathrm{eff}}}
+
\mu^2\Tr(\Gamma_h\Gamma_h^\top),
\qquad t\ge T_\ast.
\]
An application of Gronwall's inequality on \([T_\ast,t]\) yields
\eqref{eq:sur-track-bound-noisy}. Letting \(t\to\infty\) proves
\eqref{eq:sur-track-floor-noisy}.
\end{proof}

\section{Proofs for the bridge from learning errors to residual bounds}
\label{app:bridge}

\begin{proof}[Proof of Proposition~\ref{prop:bridge-route1}]
Since \(F_M\equiv \widehat F_M\) on \(\cB_\ast\), the residual
\(r_M:=F_M-F\)
satisfies
\[
r_M(z)=\widehat F_M(z)-F(z),
\qquad z\in\cB_\ast.
\]
Therefore, by definition,
\[
\delta_M
=
\sup_{z\in\cB_\ast}\normH{r_M(z)}
=
\sup_{z\in\cB_\ast}\normH{\widehat F_M(z)-F(z)}
=
\varepsilon_M^F.
\]
In particular, we have
\(\delta_M\le \varepsilon_M^F.\)

To estimate \(\ell_M\), note that \(\cB_\ast\) is convex, since it is a closed
ball in the norm induced by the positive definite matrix \(A\). Hence for any
\(z,z'\in\cB_\ast\), the segment
\[
z_\theta:=z+\theta(z'-z),
\qquad \theta\in[0,1],
\]
remains in \(\cB_\ast\). Applying the mean value theorem to
\(r_M=\widehat F_M-F\), we obtain
\[
r_M(z')-r_M(z)
=
\int_0^1 Dr_M(z_\theta)(z'-z)\,d\theta.
\]
Taking norms gives
\[
\normH{r_M(z')-r_M(z)}
\le
\left(\sup_{z\in\cB_\ast}\|Dr_M(z)\|_{\mathcal L(H,H)}\right)\normH{z'-z}.
\]
Since
\[
Dr_M(z)=D\widehat F_M(z)-DF(z),
\]
we deduce
\[
\normH{r_M(z')-r_M(z)}
\le
\eta_M^F\,\normH{z'-z}.
\]
Taking the supremum over all \(z\neq z'\) in \(\cB_\ast\) yields
\(\ell_M\le \eta_M^F.\)
Thus \eqref{eq:bridge-route1-residual} holds.

The final claim follows immediately by substituting these bounds into
Theorem~\ref{thm:sur-track}.
\end{proof}

\begin{proof}[Proof of Lemma~\ref{lem:bridge-flow-expansion}]
We first prove the flow expansion \eqref{eq:bridge-flow-expansion}. Since
\[
\dot u = F(u),
\qquad
u(0)=z,
\]
then the variation-of-constants formula gives
\[
S_{\Delta t}(z)
=
z+\int_0^{\Delta t}F(S_\tau(z))\,d\tau.
\]
Hence, we have
\[
S_{\Delta t}(z)
=
z+\Delta t\,F(z)+R_{\Delta t}(z),
\]
where
\[
R_{\Delta t}(z)
:=
\int_0^{\Delta t}\bigl(F(S_\tau(z))-F(z)\bigr)\,d\tau.
\]

Let
\[
M_0:=\sup_{z'\in\overline{\cB_\ast^{+}}}\normH{F(z')},
\qquad
M_1:=\sup_{z'\in\overline{\cB_\ast^{+}}}\|DF(z')\|_{\mathcal L(H,H)},
\]
and let \(L_1\) denote a Lipschitz constant for \(DF\) on
\(\overline{\cB_\ast^{+}}\). By assumption, these quantities are finite. Since
\(S_\tau(z)\in\cB_\ast^{+}\) for all \(z\in\cB_\ast\) and \(\tau\in[0,\Delta t]\), we
have
\[
\normH{S_\tau(z)-z}
=
\left\|\int_0^\tau F(S_\theta(z))\,d\theta\right\|_H
\le
M_0 \tau.
\]
Therefore, by the Lipschitz continuity of \(F\) on \(\overline{\cB_\ast^{+}}\),
\[
\normH{F(S_\tau(z))-F(z)}
\le
M_1 \normH{S_\tau(z)-z}
\le
M_0 M_1 \tau.
\]
Integrating in \(\tau\) yields
\[
\normH{R_{\Delta t}(z)}
\le
\int_0^{\Delta t} M_0M_1 \tau\,d\tau
=
\frac12 M_0M_1 \Delta t^2.
\]
Thus \eqref{eq:bridge-flow-expansion} holds, with any constant
\(C_{\mathrm{flow}}\) such that
\(C_{\mathrm{flow}}\ge \frac12 M_0M_1.\)

We now prove the derivative expansion \eqref{eq:bridge-Dflow-expansion}. Let
\[
J_\tau(z):=DS_\tau(z).
\]
Then \(J_\tau(z)\) satisfies the variational equation
\[
\frac{d}{d\tau}J_\tau(z)=DF(S_\tau(z))\,J_\tau(z),
\qquad
J_0(z)=I.
\]
By Gronwall's inequality,
\[
\|J_\tau(z)\|_{\mathcal L(H,H)}\le e^{M_1 \tau},
\qquad
0\le \tau\le \Delta t.
\]
Integrating the variational equation from \(0\) to \(\Delta t\), we obtain
\[
DS_{\Delta t}(z)
=
I+\int_0^{\Delta t}DF(S_\tau(z))J_\tau(z)\,d\tau.
\]
Thus
\[
DS_{\Delta t}(z)
=
I+\Delta t\,DF(z)+\widetilde R_{\Delta t}(z),
\]
where
\[
\widetilde R_{\Delta t}(z)
:=
\int_0^{\Delta t}\bigl(DF(S_\tau(z))J_\tau(z)-DF(z)\bigr)\,d\tau.
\]
We split the integrand as
\[
DF(S_\tau(z))J_\tau(z)-DF(z)
=
\bigl(DF(S_\tau(z))-DF(z)\bigr)J_\tau(z)
+
DF(z)\bigl(J_\tau(z)-I\bigr).
\]
For the first term, using the Lipschitz continuity of \(DF\),
\[
\|(DF(S_\tau(z))-DF(z))J_\tau(z)\|
\le
L_1 \normH{S_\tau(z)-z}\,e^{M_1 \tau}
\le
L_1 M_0 \tau\,e^{M_1\Delta t}.
\]
For the second term, we note that
\[
J_\tau(z)-I
=
\int_0^\tau DF(S_\theta(z))J_\theta(z)\,d\theta,
\]
so that
\[
\|J_\tau(z)-I\|
\le
\int_0^\tau M_1 e^{M_1\theta}\,d\theta
\le
M_1 e^{M_1\Delta t}\tau.
\]
Hence
\[
\|DF(z)(J_\tau(z)-I)\|
\le
M_1^2 e^{M_1\Delta t}\tau.
\]
Combining the two bounds and integrating in \(\tau\), we find
\[
\|\widetilde R_{\Delta t}(z)\|
\le
\left(L_1M_0 e^{M_1\Delta t}+M_1^2 e^{M_1\Delta t}\right)\int_0^{\Delta t}\tau\,d\tau
\le
C_{\mathrm{flow}}\Delta t^2
\]
for a suitable enlarged constant \(C_{\mathrm{flow}}\). This proves
\eqref{eq:bridge-Dflow-expansion}.
\end{proof}

\begin{proof}[Proof of Proposition~\ref{prop:bridge-route2}]
By definition,
\[
\widehat F_M(z)
=
\frac{\widehat S_{\Delta t}^{(M)}(z)-z}{\Delta t}.
\]
Using the flow expansion from Lemma~\ref{lem:bridge-flow-expansion},
\[
S_{\Delta t}(z)=z+\Delta t F(z)+R_{\Delta t}(z),
\]
we obtain
\[
F(z)
=
\frac{S_{\Delta t}(z)-z}{\Delta t}
-
\frac{R_{\Delta t}(z)}{\Delta t}.
\]
Therefore
\[
\widehat F_M(z)-F(z)
=
\frac{\widehat S_{\Delta t}^{(M)}(z)-S_{\Delta t}(z)}{\Delta t}
+
\frac{R_{\Delta t}(z)}{\Delta t}.
\]
Taking norms and using \eqref{eq:bridge-flow-expansion}, we get
\[
\normH{\widehat F_M(z)-F(z)}
\le
\frac{\varepsilon_M^S}{\Delta t}
+
C_{\mathrm{flow}}\Delta t,
\qquad z\in\cB_\ast.
\]
Since \(F_M\equiv \widehat F_M\) on \(\cB_\ast\), it follows that
\[
\delta_M
=
\sup_{z\in\cB_\ast}\normH{F_M(z)-F(z)}
\le
\frac{\varepsilon_M^S}{\Delta t}
+
C_{\mathrm{flow}}\Delta t.
\]
This proves \eqref{eq:bridge-delta-bound}.

For the Lipschitz residual, differentiate \eqref{eq:bridge-def-FhatM} to obtain
\[
D\widehat F_M(z)
=
\frac{D\widehat S_{\Delta t}^{(M)}(z)-I}{\Delta t}.
\]
On the other hand, by \eqref{eq:bridge-Dflow-expansion},
\[
DS_{\Delta t}(z)
=
I+\Delta t DF(z)+\widetilde R_{\Delta t}(z),
\]
so
\[
DF(z)
=
\frac{DS_{\Delta t}(z)-I}{\Delta t}
-
\frac{\widetilde R_{\Delta t}(z)}{\Delta t}.
\]
Hence
\[
D\widehat F_M(z)-DF(z)
=
\frac{D\widehat S_{\Delta t}^{(M)}(z)-DS_{\Delta t}(z)}{\Delta t}
+
\frac{\widetilde R_{\Delta t}(z)}{\Delta t}.
\]
Taking operator norms and using \eqref{eq:bridge-Dflow-expansion}, we obtain
\[
\|D\widehat F_M(z)-DF(z)\|_{\mathcal L(H,H)}
\le
\frac{\eta_M^S}{\Delta t}
+
C_{\mathrm{flow}}\Delta t,
\qquad z\in\cB_\ast.
\]

Now set
\(r_M:=F_M-F.\)
Since \(F_M\equiv \widehat F_M\) on \(\cB_\ast\), we have on \(\cB_\ast\),
\[
Dr_M(z)=D\widehat F_M(z)-DF(z).
\]
Arguing exactly as in the proof of Proposition~\ref{prop:bridge-route1}, using
the convexity of \(\cB_\ast\) and the mean value theorem, we conclude that
\[
\ell_M
\le
\sup_{z\in\cB_\ast}\|Dr_M(z)\|_{\mathcal L(H,H)}
\le
\frac{\eta_M^S}{\Delta t}
+
C_{\mathrm{flow}}\Delta t.
\]
This proves \eqref{eq:bridge-ell-bound}.

The final statement follows by substituting
\eqref{eq:bridge-delta-bound}--\eqref{eq:bridge-ell-bound} into
Theorem~\ref{thm:sur-track}.
\end{proof}

\section{Direct vector-field learning via dictionary learning}
\label{app:route1}

\begin{proof}[Proof of Lemma~\ref{lem:dict-bounds}]
Since \(\cB_\ast\) is compact and each \(\varphi_k\) is \(C^2\) on \(\mathcal U_{\mathrm{ext}}\), the feature map \(\phi\) and its Jacobian \(D\phi\) are continuous on a neighborhood of \(\cB_\ast\). Hence both are bounded on \(\cB_\ast\), which gives the claimed constants \(B_0\) and \(B_1\).
\end{proof}

\begin{proof}[Proof of Lemma~\ref{lem:dict-vJ-vs-theta}]
By \eqref{eq:dict-candidate} and Assumption~\ref{ass:dict-realizable},
\[
F_\Theta(z)-F(z)
=
(\Theta-\Theta^\star)^\top\phi(z).
\]
Therefore, we have
\[
\normH{F_\Theta(z)-F(z)}
\le
\|\Theta-\Theta^\star\|_{\mathrm{op}}\,\|\phi(z)\|_2.
\]
Taking the supremum over \(z\in\cB_\ast\) and using Lemma~\ref{lem:dict-bounds} gives
\[
\sup_{z\in\cB_\ast}\normH{F_\Theta(z)-F(z)}
\le
B_0\,\|\Theta-\Theta^\star\|_{\mathrm{op}}.
\]

Similarly, following from
\[
DF_\Theta(z)-DF(z)
=
(\Theta-\Theta^\star)^\top D\phi(z),
\]
we obtain
\[
\|DF_\Theta(z)-DF(z)\|_{\mathcal L(H,H)}
\le
\|\Theta-\Theta^\star\|_{\mathrm{op}}\,
\|D\phi(z)\|_{\mathcal L(H,\R^p)}.
\]
Taking the supremum over \(z\in\cB_\ast\) and applying Lemma~\ref{lem:dict-bounds} yields
\[
\sup_{z\in\cB_\ast}
\|DF_\Theta(z)-DF(z)\|_{\mathcal L(H,H)}
\le
B_1\,\|\Theta-\Theta^\star\|_{\mathrm{op}}.
\]
\end{proof}

\begin{proof}[Proof of Proposition~\ref{prop:dict-param}]
Fix \(\ell\in\{1,\dots,d\}\), and set
\[
\mathbf x^{(m)}:=\phi(z^{(m)})\in\R^p,
\qquad
y^{(m)}:=Y_\ell^{(m)}\in\R.
\]
By \eqref{eq:dict-dataset} and Assumption~\ref{ass:dict-realizable},
\[
y^{(m)}
=
\langle \beta^\star_{(\ell)},\mathbf x^{(m)}\rangle
+
\xi_\ell^{(m)},
\qquad
\beta^\star_{(\ell)}:=\Theta^\star_{\cdot,\ell}.
\]
Since the multivariate least-squares problem \eqref{eq:dict-ols} separates
over the output coordinates, the corresponding OLS estimator is
\[
\hat\beta_{(\ell)}:=\hat\Theta_{M,\cdot,\ell}.
\]
Thus each coordinate is a random-design linear regression problem.

We apply \cite[Theorem~1 and Remark~9]{HsuKakadeZhang2014} with
\(\lambda=0\), where the covariate dimension in the cited theorem is \(p\).
We first check the assumptions in that result. Regarding the bounded leverage
condition \cite[Condition~1]{HsuKakadeZhang2014}, Lemma~\ref{lem:dict-bounds}
and Assumption~\ref{ass:dict-sigma} imply
\[
\|\Sigma^{-1/2}\mathbf x^{(m)}\|_2
\le
\|\Sigma^{-1/2}\|_{\mathrm{op}}\|\mathbf x^{(m)}\|_2
\le
\frac{B_0}{\sqrt{\lambda_{\min}}}
\qquad \text{a.s.}
\]
Hence Condition~1 holds with a leverage parameter satisfying
\(\rho_0^2 p\le B_0^2/\lambda_{\min}\). As for the noise condition
\cite[Condition~2]{HsuKakadeZhang2014}, Assumption~\ref{ass:dict-subg} applied
with \(a=e_\ell\) gives that \(\xi_\ell^{(m)}\) is conditionally mean-zero and
\(\sigma\)-sub-Gaussian. Finally, regarding the approximation condition
\cite[Condition~3]{HsuKakadeZhang2014}, the support condition
\(z^{(m)}\in\cB_\ast\) together with Assumption~\ref{ass:dict-realizable}
implies
\[
\E\bigl[y^{(m)}\mid \mathbf x^{(m)}\bigr]
=
\langle \beta^\star_{(\ell)},\mathbf x^{(m)}\rangle.
\]
Thus the deterministic approximation term vanishes, so Condition~3 holds with
\(b_0=0\).

Consequently, for absolute constants \(C_{\mathrm{cov}},C_{\mathrm{ols}}>0\),
if
\[
M
\ge
C_{\mathrm{cov}}\,
\frac{B_0^2}{\lambda_{\min}}(p+t),
\]
then, after increasing \(C_{\mathrm{cov}}\) if necessary to meet the harmless
lower restriction on \(t\) in \cite[Theorem~1]{HsuKakadeZhang2014}, we have
with probability at least \(1-3e^{-t}\),
\[
\|\hat\beta_{(\ell)}-\beta^\star_{(\ell)}\|_2
\le
\frac{C_{\mathrm{ols}}\sigma}{\sqrt{M\lambda_{\min}}}
\sqrt{p+t}.
\]
Here we used \(\Sigma\succeq \lambda_{\min}I_p\) to convert the corresponding
covariance-norm estimate into the Euclidean parameter bound.

Taking \(t=\log(3d/\delta)\) and applying a union bound over
\(\ell=1,\dots,d\), we obtain, with probability at least \(1-\delta\),
\[
\max_{1\le \ell\le d}
\|\hat\beta_{(\ell)}-\beta^\star_{(\ell)}\|_2
\le
\frac{C_{\mathrm{ols}}\sigma}{\sqrt{M\lambda_{\min}}}
\sqrt{p+\log(3d/\delta)}.
\]
Since the columns of \(\hat\Theta_M-\Theta^\star\) are
\(\hat\beta_{(\ell)}-\beta^\star_{(\ell)}\), we have
\[
\|\hat\Theta_M-\Theta^\star\|_{\mathrm{op}}
\le
\|\hat\Theta_M-\Theta^\star\|_{\mathrm F}
\le
\sqrt d\,
\max_{1\le \ell\le d}
\|\hat\beta_{(\ell)}-\beta^\star_{(\ell)}\|_2.
\]
Combining the last two displays gives
\[
\|\hat\Theta_M-\Theta^\star\|_{\mathrm{op}}
\le
\frac{C_{\mathrm{ols}}\sqrt d}{\sqrt{M\lambda_{\min}}}
\,\sigma\sqrt{p+\log(3d/\delta)}.
\]
This proves \eqref{eq:dict-param-bound}.
\end{proof}

\begin{proof}[Proof of Theorem~\ref{thm:dict-complexity}]
By Proposition~\ref{prop:dict-param}, whenever
\(M
\ge
C_{\mathrm{cov}}\,
\frac{B_0^2}{\lambda_{\min}}
\bigl(p+\log(3d/\delta)\bigr),\)
we have, with probability at least \(1-\delta\),
\[
\|\hat\Theta_M-\Theta^\star\|_{\mathrm{op}}
\le
\frac{C_{\mathrm{ols}}\sqrt d}{\sqrt{M\lambda_{\min}}}
\,\sigma\sqrt{p+\log(3d/\delta)}.
\]
Moreover, since \(\widehat F_M=F_{\hat\Theta_M}\), for every realization of \(\hat\Theta_M\), Lemma~\ref{lem:dict-vJ-vs-theta} gives,
\[
\varepsilon_M^F
\le
B_0\|\hat\Theta_M-\Theta^\star\|_{\mathrm{op}},
\qquad
\eta_M^F
\le
B_1\|\hat\Theta_M-\Theta^\star\|_{\mathrm{op}}.
\]
It follows that
\[
\max\{\varepsilon_M^F,\eta_M^F\}
\le
\max\{B_0,B_1\}\,
\|\hat\Theta_M-\Theta^\star\|_{\mathrm{op}}.
\]
Combining this with \eqref{eq:dict-param-bound}, we obtain
\[
\max\{\varepsilon_M^F,\eta_M^F\}
\le
\frac{C_{\mathrm{dict}}}{\sqrt M}
\,\sigma\sqrt{p+\log(3d/\delta)}
\]
with probability at least \(1-\delta\). Therefore, if \(M\) also satisfies the
second lower bound in \eqref{eq:dict-sample-complexity}, then
\[
\max\{\varepsilon_M^F,\eta_M^F\}
\le
\min\{\bar\varepsilon,\bar\eta\},
\]
and hence
\[
\varepsilon_M^F\le \bar\varepsilon,
\qquad
\eta_M^F\le \bar\eta.
\]
Since
\[
C_{\mathrm{dict}}^2
=
\frac{C_{\mathrm{ols}}^2 d\,\max\{B_0,B_1\}^2}{\lambda_{\min}},
\]
the stated order of the sample-size requirement follows after absorbing
constants depending only on
\(B_0,B_1,\lambda_{\min},C_{\mathrm{cov}},C_{\mathrm{ols}}\). This proves the
claim.
\end{proof}

\begin{proof}[Proof of Corollary~\ref{cor:dict-aot-valid}]
For any \(M\) satisfying \eqref{eq:dict-sample-complexity},
Theorem~\ref{thm:dict-complexity} ensures that, with probability at least \(1-\delta\), 
\[
\varepsilon_M^F\le \bar\varepsilon,
\qquad
\eta_M^F\le \bar\eta.
\]
We work on this high-probability event. Since the cutoff-extended surrogate
agrees with the learned local drift on \(\cB_\ast\),
Proposition~\ref{prop:bridge-route1} gives
\[
\delta_M\le \varepsilon_M^F\le \bar\varepsilon,
\qquad
\ell_M\le \eta_M^F\le \bar\eta.
\]
This proves item \emph{(i)}.

The assumptions
\(\mu>2\bigl(C_{\mathrm{sq}}+\bar\eta\bigr),
\,
\mu c_0^2h^2<\nu\)
then imply
\[
\gamma_M
:=
\mu-2\bigl(C_{\mathrm{sq}}+\ell_M\bigr)
\ge
\mu-2\bigl(C_{\mathrm{sq}}+\bar\eta\bigr)
=
\gamma_{\mathrm F}>0,
\]
and
\[
\nu_{\mathrm{eff}}
=
\nu-\frac{\mu c_0^2h^2}{2}>0.
\]
Thus the residual-quantity hypotheses of Theorem~\ref{thm:sur-track} are satisfied. Let
\(w=v-u\). By \eqref{eq:sur-track-error}, for every \(t\ge T_\ast\),
\[
\frac{d}{dt}\normH{w}^2
\le
-\gamma_M\normH{w}^2
+
\frac{\delta_M^2}{\lambda_1\nu_{\mathrm{eff}}}.
\]
Using the learning-error bounds
\(\delta_M\le \varepsilon_M^F\le\bar\varepsilon,
\,
\ell_M\le \eta_M^F\le\bar\eta,\)
we obtain the weaker estimate
\[
\frac{d}{dt}\normH{w}^2
\le
-\gamma_{\mathrm F}\normH{w}^2
+
\frac{\bar\varepsilon^2}{\lambda_1\nu_{\mathrm{eff}}},
\qquad t\ge T_\ast.
\]
Solving this scalar differential inequality gives
\[
\|v(t)-u(t)\|_H^2
\le
e^{-\gamma_{\mathrm F}(t-T_\ast)}
\|v(T_\ast)-u(T_\ast)\|_H^2
+
\frac{\bar\varepsilon^2}{\lambda_1\nu_{\mathrm{eff}}\gamma_{\mathrm F}}
\Bigl(1-e^{-\gamma_{\mathrm F}(t-T_\ast)}\Bigr),
\]
which is \eqref{eq:dict-aot-valid-bound}. Letting \(t\to\infty\) gives
\eqref{eq:dict-aot-valid-floor}.
\end{proof}

\section{Proofs for the DSRN-based complexity analysis}
\label{app:dsrn}

\begin{proof}[Proof of Proposition~\ref{prop:r2-dsrn-approx}]
We first transfer the approximation problem from \(\cB_\ast\) to the unit cube,
where the DSRN Sobolev approximation theorem applies, and then pull the
resulting approximant back to \(\cB_\ast\). Throughout the proof, we use the
convention that Sobolev spaces on \(\cB_\ast\) are understood through its
interior.

Since this interior is a bounded Lipschitz domain, there exists a bounded
Sobolev extension operator
\[
E:W^{m,\infty}(\cB_\ast;H)\to W^{m,\infty}(Q;H),
\]
where \(Q\subset\R^d\) is a cube containing \(\cB_\ast\), such that
\[
\|E\mathsf g\|_{W^{m,\infty}(Q;H)}
\le
C_{\mathrm{ext}}\|\mathsf g\|_{W^{m,\infty}(\cB_\ast;H)}
\qquad
\forall\,\mathsf g\in W^{m,\infty}(\cB_\ast;H).
\]
Let \(T:(0,1)^d\to Q\) be an affine bijection and define
\[
G:=(ES_{\Delta t})\circ T\in W^{m,\infty}((0,1)^d;H).
\]
By the affine change-of-variables formula for Sobolev norms, there exists a
constant \(C_T>0\), depending only on the geometry of \(Q\), such that
\[
\|G\|_{W^{m,\infty}((0,1)^d;H)}
\le
C_T\|ES_{\Delta t}\|_{W^{m,\infty}(Q;H)}
\le
C_T C_{\mathrm{ext}}
\|S_{\Delta t}\|_{W^{m,\infty}(\cB_\ast;H)}.
\]

We now apply \cite[Corollary~2]{yang2023nearlyoptimalvcdimensionpseudodimension} on the unit cube, with
\(p=\infty\), source regularity \(m\), and target Sobolev order \(s\), to the
\(H\simeq\R^d\)-valued map \(G\). Here the \(H\)-valued Sobolev norm is
understood through the equivalent coordinate Sobolev norms. Thus, there exist
integers \(N_0,L_0\in\N\), depending only on \(d,m,s\), and
\(\|G\|_{W^{m,\infty}((0,1)^d;H)}\), such that for every \(N\ge N_0\) and
\(L\ge L_0\), there exists a DSRN \(\widetilde{\mathsf h}_{N,L}\) on
\((0,1)^d\) satisfying
\[
\|\widetilde{\mathsf h}_{N,L}-G\|_{W^{s,\infty}((0,1)^d;H)}
\le
C
\|G\|_{W^{m,\infty}((0,1)^d;H)}
N^{-2(m-s)/d}L^{-2(m-s)/d},
\]
and
\[
\mathrm{width}(\widetilde{\mathsf h}_{N,L})
\le
C_{\mathrm{wd}}\,N\log(2+N),
\qquad
\mathrm{depth}(\widetilde{\mathsf h}_{N,L})
\le
C_{\mathrm{dp}}\,L\log(2+L).
\]
Here \(C,C_{\mathrm{wd}},C_{\mathrm{dp}}\) may depend on the input and output
dimensions, the Sobolev orders, and the architectural constants in the DSRN
approximation theorem, but not on \(N\) or \(L\).

Define
\[
\mathsf h^\star_{N,L}
:=
\widetilde{\mathsf h}_{N,L}\circ T^{-1}\big|_{\cB_\ast}.
\]
The fixed affine map \(T^{-1}\) can be absorbed into the first affine layer,
and restriction to \(\cB_\ast\) does not change the order of width or depth.
Hence the same width-depth bounds hold for \(\mathsf h^\star_{N,L}\), up to
constants depending only on the geometry of \(Q\). Moreover, by restriction to
\(\cB_\ast\) and another affine change of variables,
\[
\|\mathsf h^\star_{N,L}-S_{\Delta t}\|_{W^{s,\infty}(\cB_\ast;H)}
\le
C
\|\widetilde{\mathsf h}_{N,L}-G\|_{W^{s,\infty}((0,1)^d;H)}.
\]
Combining the preceding estimates yields
\[
\|\mathsf h^\star_{N,L}-S_{\Delta t}\|_{W^{s,\infty}(\cB_\ast;H)}
\le
A_0\,N^{-2(m-s)/d}L^{-2(m-s)/d},
\]
where
\[
A_0
:=
C_{\mathrm{app}}
\bigl(1+\|S_{\Delta t}\|_{W^{m,\infty}(\cB_\ast;H)}\bigr),
\]
and \(C_{\mathrm{app}}>0\) depends only on \(d,m,s\), and the geometry of
\(\cB_\ast\). Since \(\cB_\ast\) has finite Lebesgue measure, the same estimate
also gives
\[
\|\mathsf h^\star_{N,L}-S_{\Delta t}\|_{W^{s,2}(\cB_\ast;H)}
\le
C A_0\,N^{-2(m-s)/d}L^{-2(m-s)/d}.
\]
Absorbing this additional constant into \(A_0\), we keep the notation
\[
\|\mathsf h^\star_{N,L}-S_{\Delta t}\|_{W^{s,2}(\cB_\ast;H)}
\le
A_0\,N^{-2(m-s)/d}L^{-2(m-s)/d}.
\]

Set
\[
\varepsilon_0
:=
\min\Bigl\{
1,\,
A_0\,N_0^{-4(m-s)/d},\,
A_0\,L_0^{-4(m-s)/d}
\Bigr\}.
\]
Fix \(0<\varepsilon_{\rm app}\le\varepsilon_0\), and choose
\[
N_{\rm app}
=
L_{\rm app}
:=
\left\lceil
\left(
\frac{A_0}{\varepsilon_{\rm app}}
\right)^{\frac{d}{4(m-s)}}
\right\rceil.
\]
Then \(N_{\rm app}\ge N_0\) and \(L_{\rm app}\ge L_0\), so the preceding
approximation estimate applies. Moreover,
\[
A_0
N_{\rm app}^{-2(m-s)/d}
L_{\rm app}^{-2(m-s)/d}
\le
\varepsilon_{\rm app}.
\]
Therefore
\[
\|\mathsf h^\star_{N_{\rm app},L_{\rm app}}
-S_{\Delta t}\|_{W^{s,2}(\cB_\ast;H)}
\le
\varepsilon_{\rm app}.
\]
This proves \eqref{eq:r2-approx}.

Finally, define
\[
W_{\rm app}
:=
\left\lceil
C_{\mathrm{wd}}\,N_{\rm app}\log(2+N_{\rm app})
\right\rceil,
\qquad
D_{\rm app}
:=
\left\lceil
C_{\mathrm{dp}}\,L_{\rm app}\log(2+L_{\rm app})
\right\rceil.
\]
Since the preceding \(W^{s,\infty}\)-estimate and
\(\varepsilon_{\rm app}\le1\) give a uniform \(W^{s,\infty}\)-bound on
\(\mathsf h^\star_{N_{\rm app},L_{\rm app}}\), this approximant satisfies the
envelope restriction after taking the fixed envelope level \(C_{\mathrm{env}}\)
large enough in Assumption~\ref{ass:r2-dsrn-envelope}. Hence
\[
\mathsf h^\star_{\varepsilon_{\rm app}}
:=
\mathsf h^\star_{N_{\rm app},L_{\rm app}}
\in
\mathcal H^{\mathrm{DSRN}}_{W_{\rm app},D_{\rm app}}.
\]
Since
\[
N_{\rm app}=L_{\rm app}
\lesssim
\varepsilon_{\rm app}^{-\frac{d}{4(m-s)}},
\]
we obtain
\[
W_{\rm app}
\le
C_W\,\varepsilon_{\rm app}^{-\frac{d}{4(m-s)}}
\log \bigl(2+\varepsilon_{\rm app}^{-1}\bigr),
\qquad
D_{\rm app}
\le
C_D\,\varepsilon_{\rm app}^{-\frac{d}{4(m-s)}}
\log \bigl(2+\varepsilon_{\rm app}^{-1}\bigr),
\]
after absorbing constants into \(C_W\) and \(C_D\). This proves
\eqref{eq:r2-WD-from-epsapp}.
\end{proof}

\begin{proof}[Proof of Lemma~\ref{lem:r2-loss-compare}]
By definition,
\[
\mathcal L_s^\varrho(\mathsf h)
=
\sum_{|\alpha|\le s}
\int_{\cB_\ast}
\|D^\alpha(\mathsf h-S_{\Delta t})(z)\|_2^2\,\omega(z)\,dz.
\]
Using the lower and upper bounds in Assumption~\ref{ass:r2-mu-density}, we
obtain
\[
\omega_{\min}
\sum_{|\alpha|\le s}
\int_{\cB_\ast}
\|D^\alpha(\mathsf h-S_{\Delta t})(z)\|_2^2\,dz
\le
\mathcal L_s^\varrho(\mathsf h)
\le
\omega_{\max}
\sum_{|\alpha|\le s}
\int_{\cB_\ast}
\|D^\alpha(\mathsf h-S_{\Delta t})(z)\|_2^2\,dz.
\]
The two sums on the left and right are precisely
\(\mathcal L_s^{\mathrm{Leb}}(\mathsf h)\), which gives
\eqref{eq:r2-loss-compare}.
\end{proof}

\begin{proof}[Proof of Proposition~\ref{prop:r2-deriv-complexity}]
Fix \(|\alpha|\le s\) and \(1\le \ell\le d\). The class
\(\mathcal G_{\alpha,\ell;W,D}\) consists of scalar derivatives of a fixed
output coordinate of DSRNs with width at most \(W\) and depth at most \(D\).
We use the derivative pseudo-dimension bound for DSRNs in
\cite[Theorem~5]{yang2023dsrn}, together with the higher-order derivative
extension discussed there.

For fixed derivative order \(|\alpha|\) and fixed output coordinate \(\ell\),
this bound gives
\[
\Pdim(\mathcal G_{\alpha,\ell;W,D})
\le
C_{\alpha,\ell}\,
W^2D^2\log(2+W)\,\log(2+D),
\]
where \(C_{\alpha,\ell}\) is independent of \(W\) and \(D\). Since
\(|\alpha|\le s\), the output coordinate \(\ell\) ranges over
\(\{1,\dots,d\}\), and \(s\) is fixed throughout this subsection, all
dependence on \(\alpha\) and \(\ell\) may be absorbed into a constant depending
only on \(d\) and \(s\). Hence there exists
\(C_{\mathrm{pdim}}=C_{\mathrm{pdim}}(d,s)>0\) such that
\[
\Pdim(\mathcal G_{\alpha,\ell;W,D})
\le
C_{\mathrm{pdim}}\,
W^2D^2\log(2+W)\,\log(2+D).
\]
This proves \eqref{eq:r2-pdim}.
\end{proof}

\begin{proof}[Proof of Lemma~\ref{lem:r2-envelope}]
By Assumption~\ref{ass:r2-dsrn-envelope},
\[
\sup_{z\in\cB_\ast}
\|\mathcal Z_s\mathsf h(z)\|_2^2
=
\sup_{z\in\cB_\ast}
\sum_{|\alpha|\le s}\|D^\alpha \mathsf h(z)\|_2^2
\le
C_{\mathrm{env}}^2
\qquad
\forall\,\mathsf h\in\mathcal H_{W,D}^{\mathrm{DSRN}}.
\]
It remains to control \(\mathcal Z_s S_{\Delta t}\). Since
\(S_{\Delta t}\in W^{m,\infty}(\cB_\ast;H)\) with \(m>s\), there exists a
constant \(C>0\), depending only on \(d,m,s\), and the geometry of
\(\cB_\ast\), such that
\[
\sup_{z\in\cB_\ast}
\sum_{|\alpha|\le s}\|D^\alpha S_{\Delta t}(z)\|_2^2
\le
C\|S_{\Delta t}\|_{W^{m,\infty}(\cB_\ast;H)}^2.
\]
Consequently, after enlarging constants if necessary, there exists
\(C_{\mathrm{feat}}\ge1\) such that
\[
\sup_{z\in\cB_\ast}\|\mathcal Z_s\mathsf h(z)\|_2\le C_{\mathrm{feat}},
\qquad
\sup_{z\in\cB_\ast}\|\mathcal Z_sS_{\Delta t}(z)\|_2\le C_{\mathrm{feat}}
\qquad
\forall\,\mathsf h\in\mathcal H_{W,D}^{\mathrm{DSRN}}.
\]
This proves \eqref{eq:r2-feature-envelope}.

For the loss envelope, let \(g\in\mathcal F_{s,W,D}\). Then, for some
\(\mathsf h\in\mathcal H_{W,D}^{\mathrm{DSRN}}\),
\[
g(z)
=
\|\mathcal Z_s\mathsf h(z)-\mathcal Z_sS_{\Delta t}(z)\|_2^2.
\]
By the triangle inequality and \eqref{eq:r2-feature-envelope},
\[
\|\mathcal Z_s\mathsf h(z)-\mathcal Z_sS_{\Delta t}(z)\|_2
\le
\|\mathcal Z_s\mathsf h(z)\|_2+\|\mathcal Z_sS_{\Delta t}(z)\|_2
\le
2C_{\mathrm{feat}}.
\]
Squaring gives
\[
0\le g(z)\le 4C_{\mathrm{feat}}^2,
\]
which is \eqref{eq:r2-loss-envelope}.
\end{proof}

\begin{proof}[Proof of Lemma~\ref{lem:r2-covering}]
Fix a sample
\(\mathbf z:=(z^{(1)},\dots,z^{(M)})\in \cB_\ast^M\)
and an accuracy level \(\varepsilon\in(0,1]\). Throughout the proof, we use
external empirical covers: the centers of the cover need not belong to the
function class being covered. For scalar-valued functions \(g,g'\) on
\(\{z^{(1)},\dots,z^{(M)}\}\), write
\[
\|g-g'\|_{\infty,\mathbf z}
:=
\max_{1\le j\le M}|g(z^{(j)})-g'(z^{(j)})|.
\]
For vector-valued functions \(U,V\) on \(\{z^{(1)},\dots,z^{(M)}\}\), write
\[
\|U-V\|_{\infty,\mathbf z;2}
:=
\max_{1\le j\le M}\|U(z^{(j)})-V(z^{(j)})\|_2.
\]

Let
\[
\Pi_s:=\{(\alpha,\ell): |\alpha|\le s,\ 1\le \ell\le d\},
\qquad
|\Pi_s|=r_s=d\binom{d+s}{s}.
\]
For each \((\alpha,\ell)\in \Pi_s\), recall that
\[
\mathcal G_{\alpha,\ell;W,D}
=
\bigl\{
z\mapsto D^\alpha \mathsf h_\ell(z):\
\mathsf h\in\mathcal H_{W,D}^{\mathrm{DSRN}}
\bigr\}.
\]

We first bound the empirical covering number of each scalar derivative class.
By Lemma~\ref{lem:r2-envelope}, every function in
\(\mathcal G_{\alpha,\ell;W,D}\) is uniformly bounded in absolute value by
\(C_{\mathrm{feat}}\). Let
\[
p_{\alpha,\ell}:=\Pdim(\mathcal G_{\alpha,\ell;W,D}).
\]
By Proposition~\ref{prop:r2-deriv-complexity},
\[
p_{\alpha,\ell}
\le
C(d,s)\,W^2D^2\log(2+W)\log(2+D).
\]
Applying the uniform empirical \(L^\infty\)-covering estimate in
\cite[Lemma~8]{yang2023dsrn} with \(B=C_{\mathrm{feat}}\), \(n=M\), and
\(\varepsilon=\varepsilon_{\mathrm{cov}}\), we get, when
\(M\ge p_{\alpha,\ell}\),
\[
\log \mathcal N_\infty \bigl(
\varepsilon_{\mathrm{cov}},\mathcal G_{\alpha,\ell;W,D}|_{\mathbf z}
\bigr)
\le
p_{\alpha,\ell}
\log \Bigl(
\frac{2eM C_{\mathrm{feat}}}
{\varepsilon_{\mathrm{cov}}p_{\alpha,\ell}}
\Bigr).
\]
When \(M<p_{\alpha,\ell}\), we use the boundedness directly. Since
\(\mathcal G_{\alpha,\ell;W,D}|_{\mathbf z}\subset
[-C_{\mathrm{feat}},C_{\mathrm{feat}}]^M\), a standard grid cover gives
\[
\log \mathcal N_\infty\bigl(
\varepsilon_{\mathrm{cov}},
\mathcal G_{\alpha,\ell;W,D}|_{\mathbf z}
\bigr)
\le
M\log\Bigl(\frac{3C_{\mathrm{feat}}}{\varepsilon_{\mathrm{cov}}}\Bigr).
\]
Since \(M<p_{\alpha,\ell}\), this is bounded by the same expression as above,
after enlarging the constant. Since \(p_{\alpha,\ell}\ge1\), in both cases
there exists a constant \(C_1=C_1(d,s)>0\) such that, for every
\(\varepsilon_{\mathrm{cov}}\in(0,1]\),
\begin{equation}\label{eq:r2-scalar-cover-proof}
\log \mathcal N_\infty \bigl(
\varepsilon_{\mathrm{cov}},\mathcal G_{\alpha,\ell;W,D}|_{\mathbf z}
\bigr)
\le
C_1\,W^2D^2\log(2+W)\,\log(2+D)\,
\log \Bigl(\frac{C_1\,C_{\mathrm{feat}}\,M}{\varepsilon_{\mathrm{cov}}}\Bigr).
\end{equation}
Here the cited result controls the uniform empirical covering number, so it
also applies to the fixed sample \(\mathbf z\).

We now choose
\begin{equation}\label{eq:r2-eps-cov-choice-proof}
\varepsilon_{\mathrm{cov}}
:=
\frac{\varepsilon}{8C_{\mathrm{feat}}\sqrt{r_s}}.
\end{equation}
For each \((\alpha,\ell)\in\Pi_s\), let
\(\mathcal C_{\alpha,\ell}
\subset \R^{M}\)
be an empirical \(L^\infty\)-cover of
\(\mathcal G_{\alpha,\ell;W,D}|_{\mathbf z}\) at radius
\(\varepsilon_{\mathrm{cov}}\), whose cardinality realizes
\(\mathcal N_\infty(\varepsilon_{\mathrm{cov}},
\mathcal G_{\alpha,\ell;W,D}|_{\mathbf z})\) up to the usual ceiling
convention. By \eqref{eq:r2-scalar-cover-proof}, each such cover satisfies
\begin{equation}\label{eq:r2-card-bound-proof}
\log |\mathcal C_{\alpha,\ell}|
\le
C_1\,W^2D^2\log(2+W)\,\log(2+D)\,
\log \Bigl(\frac{C_1\,C_{\mathrm{feat}}\,M}{\varepsilon_{\mathrm{cov}}}\Bigr).
\end{equation}

Taking the Cartesian product of these scalar covers over all
\((\alpha,\ell)\in\Pi_s\), we obtain a finite family
\[
\mathcal C_{\mathrm{vec}}
:=
\prod_{(\alpha,\ell)\in\Pi_s}\mathcal C_{\alpha,\ell}.
\]
Each element of \(\mathcal C_{\mathrm{vec}}\) defines a vector-valued map on
the sample \(\{z^{(1)},\dots,z^{(M)}\}\), with one coordinate for each pair
\((\alpha,\ell)\in\Pi_s\). We claim that \(\mathcal C_{\mathrm{vec}}\) gives an
empirical \(L^\infty\)-cover of the restricted feature class
\[
\mathcal Z_s\mathcal H_{W,D}^{\mathrm{DSRN}}
:=
\bigl\{
z\mapsto \mathcal Z_s\mathsf h(z):\
\mathsf h\in\mathcal H_{W,D}^{\mathrm{DSRN}}
\bigr\}
\]
at radius \(\varepsilon/(8C_{\mathrm{feat}})\) in the norm
\(\|\cdot\|_{\infty,\mathbf z;2}\).

Indeed, let \(\mathsf h\in\mathcal H_{W,D}^{\mathrm{DSRN}}\) be arbitrary. For
each \((\alpha,\ell)\in\Pi_s\), since \(\mathcal C_{\alpha,\ell}\) covers the
corresponding scalar restriction, choose
\(c_{\alpha,\ell}\in\mathcal C_{\alpha,\ell}\) such that
\[
\max_{1\le j\le M}
\bigl|
D^\alpha \mathsf h_\ell(z^{(j)})-c_{\alpha,\ell}(z^{(j)})
\bigr|
\le
\varepsilon_{\mathrm{cov}}.
\]
Collecting these coordinates defines an element
\(\widetilde{\mathcal Z}_s^{\,\mathsf h}\in \mathcal C_{\mathrm{vec}}\).
Then, for every \(1\le j\le M\),
\[
\bigl\|
\mathcal Z_s\mathsf h(z^{(j)})
-\widetilde{\mathcal Z}_s^{\,\mathsf h}(z^{(j)})
\bigr\|_2^2
=
\sum_{(\alpha,\ell)\in\Pi_s}
\bigl|
D^\alpha \mathsf h_\ell(z^{(j)})-c_{\alpha,\ell}(z^{(j)})
\bigr|^2
\le
r_s\varepsilon_{\mathrm{cov}}^2.
\]
Taking square roots and then the maximum over \(j\) gives
\begin{equation}\label{eq:r2-feature-cover-proof}
\bigl\|
\mathcal Z_s\mathsf h-\widetilde{\mathcal Z}_s^{\,\mathsf h}
\bigr\|_{\infty,\mathbf z;2}
\le
\sqrt{r_s}\,\varepsilon_{\mathrm{cov}}
=
\frac{\varepsilon}{8C_{\mathrm{feat}}}.
\end{equation}

We next pass from the feature class to the loss class. For each \(z\in\cB_\ast\),
define
\[
\Psi_z(\eta):=\|\eta-\mathcal Z_s S_{\Delta t}(z)\|_2^2,
\qquad \eta\in\R^{r_s}.
\]
By Lemma~\ref{lem:r2-envelope},
\[
\|\mathcal Z_s\mathsf h(z)\|_2\le C_{\mathrm{feat}},
\qquad
\|\mathcal Z_s S_{\Delta t}(z)\|_2\le C_{\mathrm{feat}}
\qquad
\forall\,z\in\cB_\ast,\ \forall\,\mathsf h\in\mathcal H_{W,D}^{\mathrm{DSRN}}.
\]
Moreover, if \(\eta'\) lies within distance \(\varepsilon/(8C_{\mathrm{feat}})\)
of some vector of norm at most \(C_{\mathrm{feat}}\), then, since
\(\varepsilon\le 1\) and \(C_{\mathrm{feat}}\ge 1\),
\[
\|\eta'\|_2
\le
C_{\mathrm{feat}}+\frac{\varepsilon}{8C_{\mathrm{feat}}}
\le
C_{\mathrm{feat}}+\frac18
\le
2C_{\mathrm{feat}}.
\]
Accordingly, whenever
\[
\|\eta\|_2\le C_{\mathrm{feat}},
\qquad
\|\eta'\|_2\le 2C_{\mathrm{feat}},
\qquad
\|\mathcal Z_sS_{\Delta t}(z)\|_2\le C_{\mathrm{feat}},
\]
we have
\[
|\Psi_z(\eta)-\Psi_z(\eta')|
=
\bigl|(\eta-\eta')\cdot (\eta+\eta'-2\mathcal Z_sS_{\Delta t}(z))\bigr|
\le
5C_{\mathrm{feat}}\|\eta-\eta'\|_2.
\]
Applying this with
\[
\eta=\mathcal Z_s\mathsf h(z^{(j)}),
\qquad
\eta'=\widetilde{\mathcal Z}_s^{\,\mathsf h}(z^{(j)}),
\]
and using \eqref{eq:r2-feature-cover-proof}, we obtain
\[
\max_{1\le j\le M}
\Bigl|
\Psi_{z^{(j)}}\bigl(\mathcal Z_s\mathsf h(z^{(j)})\bigr)
-
\Psi_{z^{(j)}}\bigl(\widetilde{\mathcal Z}_s^{\,\mathsf h}(z^{(j)})\bigr)
\Bigr|
\le
5C_{\mathrm{feat}}\cdot \frac{\varepsilon}{8C_{\mathrm{feat}}}
<
\varepsilon.
\]
By the definition of \(\Psi_z\), the family obtained by applying the loss map
pointwise to \(\mathcal C_{\mathrm{vec}}\) gives an empirical
\(L^\infty\)-cover of the restricted loss class
\(\mathcal F_{s,W,D}|_{\mathbf z}\) at radius \(\varepsilon\).

It remains to count the size of this cover. Since
\(|\Pi_s|=r_s\), \eqref{eq:r2-card-bound-proof} gives
\[
\log \mathcal N_\infty \bigl(
\varepsilon,\mathcal F_{s,W,D}|_{\mathbf z}
\bigr)
\le
\sum_{(\alpha,\ell)\in\Pi_s}
\log |\mathcal C_{\alpha,\ell}|
\le
r_s\,C_1\,W^2D^2\log(2+W)\,\log(2+D)\,
\log \Bigl(\frac{C_1\,C_{\mathrm{feat}}\,M}{\varepsilon_{\mathrm{cov}}}\Bigr).
\]
Substituting the choice \eqref{eq:r2-eps-cov-choice-proof} of
\(\varepsilon_{\mathrm{cov}}\), we get
\[
\frac{C_1\,C_{\mathrm{feat}}\,M}{\varepsilon_{\mathrm{cov}}}
=
\frac{8C_1\,C_{\mathrm{feat}}^2\,\sqrt{r_s}\,M}{\varepsilon}.
\]
Since \(r_s=r_s(d,s)\) depends only on \(d\) and \(s\), we absorb it into the
constant. Therefore, after redefining the constant as
\(C_{\mathrm{cov}}=C_{\mathrm{cov}}(d,s)\), we arrive at
\[
\log \mathcal N_\infty \bigl(
\varepsilon,\mathcal F_{s,W,D}|_{z^{(1)},\dots,z^{(M)}}
\bigr)
\le
C_{\mathrm{cov}}\,
W^2D^2\log(2+W)\,\log(2+D)\,
\log \Bigl(
\frac{C_{\mathrm{cov}}\,C_{\mathrm{feat}}^2\,M}{\varepsilon}
\Bigr),
\]
which proves \eqref{eq:r2-covering}.
\end{proof}

\begin{proof}[Proof of Proposition~\ref{prop:r2-uniform-generalization}]
Set
\(\Delta_M
:=
\sup_{\mathsf h\in\mathcal H_{W,D}^{\mathrm{DSRN}}}
\bigl|
\widehat{\mathcal L}_{s,M}(\mathsf h)-\mathcal L_s^\varrho(\mathsf h)
\bigr|.\)
We first reduce the problem to a uniform deviation bound for a
\([0,1]\)-valued loss class. By Lemma~\ref{lem:r2-envelope}, every function
\(g\in\mathcal F_{s,W,D}\) satisfies
\[
0\le g(z)\le 4C_{\mathrm{feat}}^2
\qquad
\forall\,z\in\cB_\ast.
\]
Let
\[
B_{\mathcal F}:=4C_{\mathrm{feat}}^2,
\qquad
\widetilde{\mathcal F}_{s,W,D}
:=
\left\{\frac{g}{B_{\mathcal F}}: g\in\mathcal F_{s,W,D}\right\}.
\]
Then \(\widetilde{\mathcal F}_{s,W,D}\) is a class of \([0,1]\)-valued
functions.

By Theorem~17.1 of \cite{AnthonyBartlett1999}, there exist universal constants
\(c_0,c_1>0\) such that, for every \(t\in(0,1]\),
\[
\mathbb P \left(
\sup_{g\in \widetilde{\mathcal F}_{s,W,D}}
\left|
\mathbb E_{\varrho}[g]-\frac1M\sum_{j=1}^M g(Z_j)
\right|
>t
\right)
\le
4\,
\mathcal N_1 \left(
c_1 t,\widetilde{\mathcal F}_{s,W,D},2M
\right)
\exp \bigl(-c_0Mt^2\bigr).
\]
Since the empirical \(L^1\) distance is bounded by the empirical
\(L^\infty\) distance on every fixed sample,
\[
\mathcal N_1 \left(
c_1 t,\widetilde{\mathcal F}_{s,W,D},2M
\right)
\le
\mathcal N_\infty \left(
c_1 t,\widetilde{\mathcal F}_{s,W,D},2M
\right).
\]
Thus
\begin{equation}\label{eq:r2-unif-dev-normalized-proof}
\mathbb P \left(
\sup_{g\in \widetilde{\mathcal F}_{s,W,D}}
\left|
\mathbb E_{\varrho}[g]-\frac1M\sum_{j=1}^M g(Z_j)
\right|
>t
\right)
\le
4\,
\mathcal N_\infty \left(
c_1 t,\widetilde{\mathcal F}_{s,W,D},2M
\right)
\exp \bigl(-c_0Mt^2\bigr).
\end{equation}

Rescaling from \(\widetilde{\mathcal F}_{s,W,D}\) back to
\(\mathcal F_{s,W,D}\), and taking \(t=a/B_{\mathcal F}\), gives, for every
\(a\in(0,1]\),
\begin{equation}\label{eq:r2-unif-dev-rescaled-proof}
\mathbb P \left(\Delta_M>a\right)
\le
4\,
\mathcal N_\infty \left(
c_1a,\mathcal F_{s,W,D},2M
\right)
\exp \left(-c_0M\frac{a^2}{B_{\mathcal F}^2}\right).
\end{equation}
Indeed,
\[
\mathcal N_\infty \left(
\frac{c_1a}{B_{\mathcal F}},
\widetilde{\mathcal F}_{s,W,D},2M
\right)
=
\mathcal N_\infty \left(
c_1a,
\mathcal F_{s,W,D},2M
\right).
\]

Here \(\mathcal N_\infty(\cdot,\mathcal F,2M)\) denotes the uniform empirical
\(L^\infty\)-covering number over samples of size \(2M\). Since
Lemma~\ref{lem:r2-covering} holds for every sample, it gives the same bound for
this uniform covering number. Hence, applying Lemma~\ref{lem:r2-covering} with
radius \(c_1a\) and sample size \(2M\), and absorbing fixed numerical constants,
we get
\[
\log \mathcal N_\infty \left(
c_1a,\mathcal F_{s,W,D},2M
\right)
\le
C_{\mathrm{cov}}\,
W^2D^2\log(2+W)\,\log(2+D)\,
\log \left(
\frac{C_{\mathrm{cov}}\,C_{\mathrm{feat}}^2\,M}{a}
\right).
\]
Substituting this into \eqref{eq:r2-unif-dev-rescaled-proof}, we obtain
\begin{equation}\label{eq:r2-unif-dev-with-cover-proof}
\mathbb P \left(\Delta_M>a\right)
\le
4\exp \Bigg(
C_{\mathrm{cov}}\,
W^2D^2\log(2+W)\,\log(2+D)\,
\log \left(
\frac{C_{\mathrm{cov}}\,C_{\mathrm{feat}}^2\,M}{a}
\right)
-
c_0M\frac{a^2}{B_{\mathcal F}^2}
\Bigg).
\end{equation}

We now turn this tail bound into an explicit sample-size condition. Specialize
to \(a=\varepsilon_{\rm gen}\) and introduce
\[
K_{W,D}:=W^2D^2\log(2+W)\,\log(2+D),
\qquad
\Gamma_{W,D,\delta}
:=
K_{W,D}+\log\frac{2}{\delta}.
\]
Let
\[
M_0
:=
\widetilde C_{\mathrm{gen}}
\frac{\Gamma_{W,D,\delta}}{\varepsilon_{\rm gen}^2}
\log \Bigl(
\frac{\widetilde C_{\mathrm{gen}}\Gamma_{W,D,\delta}}
{\varepsilon_{\rm gen}^3}
\Bigr),
\]
where \(\widetilde C_{\mathrm{gen}}\ge1\) will be chosen sufficiently large.
Assume \(M\ge M_0\).

Define the exponent in \eqref{eq:r2-unif-dev-with-cover-proof} by
\[
\Psi(M)
:=
C_{\mathrm{cov}}\,K_{W,D}\,
\log \left(
\frac{C_{\mathrm{cov}}\,C_{\mathrm{feat}}^2\,M}{\varepsilon_{\rm gen}}
\right)
-
c_0M\frac{\varepsilon_{\rm gen}^2}{B_{\mathcal F}^2}.
\]
We next show that, once \(M\ge M_0\), this exponent decreases as \(M\)
increases. Indeed,
\[
\Psi'(M)
=
\frac{C_{\mathrm{cov}}\,K_{W,D}}{M}
-
c_0\frac{\varepsilon_{\rm gen}^2}{B_{\mathcal F}^2}.
\]
Since \(\Gamma_{W,D,\delta}\ge K_{W,D}\), by enlarging
\(\widetilde C_{\mathrm{gen}}\) if necessary we may ensure that
\[
M_0
\ge
\frac{2C_{\mathrm{cov}}B_{\mathcal F}^2}{c_0}
\frac{K_{W,D}}{\varepsilon_{\rm gen}^2}.
\]
Hence, for every \(M\ge M_0\),
\[
\Psi'(M)
\le
\frac{C_{\mathrm{cov}}\,K_{W,D}}{M_0}
-
c_0\frac{\varepsilon_{\rm gen}^2}{B_{\mathcal F}^2}
\le
-\frac{c_0}{2}\frac{\varepsilon_{\rm gen}^2}{B_{\mathcal F}^2}
<0.
\]
Therefore \(\Psi\) is decreasing on \([M_0,\infty)\), so it remains only to
bound \(\Psi(M_0)\).

Set
\[
A
:=
\frac{\widetilde C_{\mathrm{gen}}\Gamma_{W,D,\delta}}
{\varepsilon_{\rm gen}^3}.
\]
Then
\[
M_0=\varepsilon_{\rm gen}^{-2}\,\widetilde C_{\mathrm{gen}}
\Gamma_{W,D,\delta}\log A.
\]
Consequently,
\[
\frac{C_{\mathrm{cov}}\,C_{\mathrm{feat}}^2\,M_0}{\varepsilon_{\rm gen}}
=
C_{\mathrm{cov}}\,C_{\mathrm{feat}}^2\,
\widetilde C_{\mathrm{gen}}\Gamma_{W,D,\delta}\,
\varepsilon_{\rm gen}^{-3}\log A
=
C_{\mathrm{cov}}\,C_{\mathrm{feat}}^2\,A\log A.
\]
After enlarging \(\widetilde C_{\mathrm{gen}}\) if necessary, we may assume
\(A\ge e\). Using the elementary bound
\[
\log(cA\log A)\le C_1\log A
\qquad\text{for all }A\ge e,
\]
with constants \(c,C_1>0\) depending only on
\(C_{\mathrm{cov}}\) and \(C_{\mathrm{feat}}\), we obtain
\[
\log \left(
\frac{C_{\mathrm{cov}}\,C_{\mathrm{feat}}^2\,M_0}{\varepsilon_{\rm gen}}
\right)
\le
C_1\log A.
\]
Substituting this bound into \(\Psi(M_0)\) yields
\[
\Psi(M_0)
\le
C_1C_{\mathrm{cov}}K_{W,D}\log A
-
c_0\widetilde C_{\mathrm{gen}}
\frac{\Gamma_{W,D,\delta}}{B_{\mathcal F}^2}\log A.
\]
Since \(K_{W,D}\le \Gamma_{W,D,\delta}\), we get
\[
\Psi(M_0)
\le
\left(
C_1C_{\mathrm{cov}}
-
c_0\widetilde C_{\mathrm{gen}}B_{\mathcal F}^{-2}
\right)
\Gamma_{W,D,\delta}\log A.
\]
Choose \(\widetilde C_{\mathrm{gen}}\) sufficiently large so that
\[
c_0\widetilde C_{\mathrm{gen}}B_{\mathcal F}^{-2}
\ge
C_1C_{\mathrm{cov}}+2.
\]
Then
\[
\Psi(M_0)\le -2\,\Gamma_{W,D,\delta}\log A.
\]
Since \(A\ge e\), we have \(\log A\ge1\), and therefore
\[
\Psi(M_0)\le -2\,\Gamma_{W,D,\delta}
\le
-2\log\frac{2}{\delta}
\le
-\log\frac{4}{\delta}.
\]
Because \(\Psi\) is decreasing on \([M_0,\infty)\), the same bound holds for
every \(M\ge M_0\). Returning to
\eqref{eq:r2-unif-dev-with-cover-proof} with \(a=\varepsilon_{\rm gen}\), we
conclude that
\[
\mathbb P \left(\Delta_M>\varepsilon_{\rm gen}\right)
\le
4e^{-\log(4/\delta)}
=
\delta.
\]
Equivalently, with probability at least \(1-\delta\), we have
\(\Delta_M\le \varepsilon_{\rm gen}\). That is,
\[
\sup_{\mathsf h\in\mathcal H_{W,D}^{\mathrm{DSRN}}}
\bigl|
\widehat{\mathcal L}_{s,M}(\mathsf h)-\mathcal L_s^\varrho(\mathsf h)
\bigr|
\le
\varepsilon_{\rm gen}.
\]
This proves \eqref{eq:r2-generalization}.
\end{proof}

\begin{proof}[Proof of Proposition~\ref{prop:r2-aerm-generalization}]
On the event in Proposition~\ref{prop:r2-uniform-generalization}, we have
\[
\mathcal L_s^\varrho(\widehat S_{\Delta t}^{(M)})
\le
\widehat{\mathcal L}_{s,M}(\widehat S_{\Delta t}^{(M)})
+
\varepsilon_{\rm gen}.
\]
By Assumption~\ref{ass:r2-aerm},
\[
\widehat{\mathcal L}_{s,M}(\widehat S_{\Delta t}^{(M)})
\le
\inf_{\mathsf h\in\mathcal H_{W,D}^{\mathrm{DSRN}}}
\widehat{\mathcal L}_{s,M}(\mathsf h)
+
\varepsilon_{\mathrm{opt}}.
\]
Applying the same uniform generalization bound again,
\[
\inf_{\mathsf h\in\mathcal H_{W,D}^{\mathrm{DSRN}}}
\widehat{\mathcal L}_{s,M}(\mathsf h)
\le
\inf_{\mathsf h\in\mathcal H_{W,D}^{\mathrm{DSRN}}}
\mathcal L_s^\varrho(\mathsf h)
+
\varepsilon_{\rm gen}.
\]
Combining the three inequalities yields
\[
\mathcal L_s^\varrho(\widehat S_{\Delta t}^{(M)})
\le
\inf_{\mathsf h\in\mathcal H_{W,D}^{\mathrm{DSRN}}}\mathcal L_s^\varrho(\mathsf h)
+
2\varepsilon_{\rm gen}
+
\varepsilon_{\mathrm{opt}},
\]
which proves \eqref{eq:r2-aerm-generalization}.
\end{proof}

\begin{proof}[Proof of Lemma~\ref{lem:r2-loss-to-W1inf}]
Let
\(e:=\mathsf h-S_{\Delta t}\in W^{s,2}(\cB_\ast;H).\)
Since \(s>\frac d2+1\) by \eqref{eq:r2-s-def}, the Sobolev embedding theorem on
the bounded Lipschitz domain \(\cB_\ast\) yields a constant
\(C_{\mathrm{SE}}>0\), depending only on \(d\), \(s\), and \(\cB_\ast\), such
that
\[
\|e\|_{W^{1,\infty}(\cB_\ast;H)}
\le
C_{\mathrm{SE}}\|e\|_{W^{s,2}(\cB_\ast;H)}.
\]
Moreover, by Lemma~\ref{lem:r2-loss-compare},
\[
\omega_{\min}\,\|e\|_{W^{s,2}(\cB_\ast;H)}^2
=
\omega_{\min}\,\mathcal L_s^{\mathrm{Leb}}(\mathsf h)
\le
\mathcal L_s^\varrho(\mathsf h).
\]
Hence
\[
\|e\|_{W^{s,2}(\cB_\ast;H)}
\le
\omega_{\min}^{-1/2}\,
\bigl(\mathcal L_s^\varrho(\mathsf h)\bigr)^{1/2}.
\]
Combining the last two displays, we obtain
\[
\|\mathsf h-S_{\Delta t}\|_{W^{1,\infty}(\cB_\ast;H)}
\le
C_{\mathrm{SE}}\,\omega_{\min}^{-1/2}\,
\bigl(\mathcal L_s^\varrho(\mathsf h)\bigr)^{1/2}.
\]
This proves \eqref{eq:r2-loss-to-W1inf} after setting
\[
C_{\mathrm{emb}}:=C_{\mathrm{SE}}\,\omega_{\min}^{-1/2}.
\]
The bounds \eqref{eq:r2-loss-to-operator} and \eqref{eq:r2-loss-to-jac} are
immediate consequences of \eqref{eq:r2-loss-to-W1inf}.
\end{proof}

\begin{proof}[Proof of Theorem~\ref{thm:r2-complexity}]
Set
\(\varepsilon_{\rm app}:=\varepsilon_\ast.\)
Since \(\varepsilon_\ast\le \varepsilon_0\) by
\eqref{eq:r2-eps-star}, Proposition~\ref{prop:r2-dsrn-approx} gives
architecture bounds \(W,D\in\N\) and a DSRN
\[
\mathsf h^\star_{\varepsilon_{\rm app}}
\in
\mathcal H^{\mathrm{DSRN}}_{W,D}
\]
such that
\[
\|\mathsf h^\star_{\varepsilon_{\rm app}}-S_{\Delta t}\|_{W^{s,2}(\cB_\ast;H)}
\le
\varepsilon_{\rm app}.
\]
By Lemma~\ref{lem:r2-loss-compare},
\[
\mathcal L_s^\varrho(\mathsf h^\star_{\varepsilon_{\rm app}})
\le
\omega_{\max}\,\mathcal L_s^{\mathrm{Leb}}(\mathsf h^\star_{\varepsilon_{\rm app}})
\le
\omega_{\max}\,\varepsilon_{\rm app}^2
=
\omega_{\max}\,\varepsilon_\ast^2.
\]
Hence
\[
\inf_{\mathsf h\in\mathcal H_{W,D}^{\mathrm{DSRN}}}
\mathcal L_s^\varrho(\mathsf h)
\le
\omega_{\max}\,\varepsilon_\ast^2.
\]

Choose
\(\varepsilon_{\rm gen}:=\varepsilon_\ast^2.\)
Since \(W,D\) satisfy \eqref{eq:r2-WD-choice}, we have
\[
W^2D^2\log(2+W)\,\log(2+D)
\le
C\,\varepsilon_\ast^{-\frac{d}{m-s}}
\,\mathrm{polylog} \Bigl(\frac1{\varepsilon_\ast}\Bigr)
\]
for a constant \(C>0\) depending only on
\(d,m,s,\cB_\ast,\|S_{\Delta t}\|_{W^{m,\infty}(\cB_\ast;H)}\). After enlarging
\(C_{\mathrm{samp}}\) if necessary, the lower bound
\eqref{eq:r2-sample-complexity} guarantees the sample-size condition
\eqref{eq:r2-gen-sample} required in
Proposition~\ref{prop:r2-uniform-generalization}, with
\(\varepsilon_{\rm gen}=\varepsilon_\ast^2\).

By Proposition~\ref{prop:r2-aerm-generalization}, with probability at least
\(1-\delta\),
\[
\mathcal L_s^\varrho\bigl(\widehat S_{\Delta t}^{(M)}\bigr)
\le
\inf_{\mathsf h\in\mathcal H_{W,D}^{\mathrm{DSRN}}}\mathcal L_s^\varrho(\mathsf h)
+
2\varepsilon_{\rm gen}
+
\varepsilon_{\mathrm{opt}}.
\]
Using the bound on the infimum and \eqref{eq:r2-opt-small}, we obtain
\[
\mathcal L_s^\varrho\bigl(\widehat S_{\Delta t}^{(M)}\bigr)
\le
\omega_{\max}\,\varepsilon_\ast^2
+
2\varepsilon_\ast^2
+
\varepsilon_\ast^2
=
(\omega_{\max}+3)\,\varepsilon_\ast^2.
\]

Applying Lemma~\ref{lem:r2-loss-to-W1inf}, we obtain
\[
\sup_{z\in\cB_\ast}
\normH{\widehat S_{\Delta t}^{(M)}(z)-S_{\Delta t}(z)}
\le
C_{\mathrm{emb}}
\bigl(\mathcal L_s^\varrho(\widehat S_{\Delta t}^{(M)})\bigr)^{1/2}
\le
C_{\mathrm{emb}}\sqrt{\omega_{\max}+3}\,\varepsilon_\ast,
\]
and
\[
\sup_{z\in\cB_\ast}
\|D\widehat S_{\Delta t}^{(M)}(z)-DS_{\Delta t}(z)\|_{\mathcal L(H,H)}
\le
C_{\mathrm{emb}}
\bigl(\mathcal L_s^\varrho(\widehat S_{\Delta t}^{(M)})\bigr)^{1/2}
\le
C_{\mathrm{emb}}\sqrt{\omega_{\max}+3}\,\varepsilon_\ast.
\]
By the definition \eqref{eq:r2-eps-star} of \(\varepsilon_\ast\), this yields
\[
\sup_{z\in\cB_\ast}
\normH{\widehat S_{\Delta t}^{(M)}(z)-S_{\Delta t}(z)}
\le
\frac12\min\{\bar\varepsilon_S,\bar\eta_S,1\}
\le
\bar\varepsilon_S,
\]
and
\[
\sup_{z\in\cB_\ast}
\|D\widehat S_{\Delta t}^{(M)}(z)-DS_{\Delta t}(z)\|_{\mathcal L(H,H)}
\le
\frac12\min\{\bar\varepsilon_S,\bar\eta_S,1\}
\le
\bar\eta_S.
\]
These are precisely the bounds
\[
\varepsilon_M^S\le \bar\varepsilon_S,
\qquad
\eta_M^S\le \bar\eta_S.
\]
This proves the claim.
\end{proof}

\begin{proof}[Proof of Corollary~\ref{cor:r2-aot-valid}]
Since \(M\) satisfies \eqref{eq:r2-sample-complexity},
Theorem~\ref{thm:r2-complexity} gives, with probability at least \(1-\delta\), we have
\[
\varepsilon_M^S\le \bar\varepsilon_S,
\qquad
\eta_M^S\le \bar\eta_S.
\]
We work on this high-probability event. Applying
Proposition~\ref{prop:bridge-route2}, we obtain
\[
\delta_M
\le
\frac{\varepsilon_M^S}{\Delta t}+C_{\mathrm{flow}}\Delta t
\le
\bar\delta_{\mathrm S},
\]
and
\[
\ell_M
\le
\frac{\eta_M^S}{\Delta t}+C_{\mathrm{flow}}\Delta t
\le
\bar\ell_{\mathrm S}.
\]
This proves item \emph{(i)}.

The assumptions
\[
\mu>2\bigl(C_{\mathrm{sq}}+\bar\ell_{\mathrm S}\bigr),
\qquad
\mu c_0^2 h^2<\nu
\]
then imply
\[
\gamma_M
:=
\mu-2\bigl(C_{\mathrm{sq}}+\ell_M\bigr)
\ge
\mu-2\bigl(C_{\mathrm{sq}}+\bar\ell_{\mathrm S}\bigr)
=
\gamma_{\mathrm S}>0,
\]
and
\[
\nu_{\mathrm{eff}}
=
\nu-\frac{\mu c_0^2h^2}{2}>0.
\]
Thus the hypotheses of Theorem~\ref{thm:sur-track} are satisfied. Let
\(w=v-u\). By \eqref{eq:sur-track-error}, for every \(t\ge T_\ast\),
\[
\frac{d}{dt}\normH{w}^2
\le
-\gamma_M\normH{w}^2
+
\frac{\delta_M^2}{\lambda_1\nu_{\mathrm{eff}}}.
\]
Using the learning-error bounds through
\(\delta_M\le\bar\delta_{\mathrm S}\) and
\(\gamma_M\ge\gamma_{\mathrm S}\), we obtain the weaker estimate
\[
\frac{d}{dt}\normH{w}^2
\le
-\gamma_{\mathrm S}\normH{w}^2
+
\frac{\bar\delta_{\mathrm S}^2}{\lambda_1\nu_{\mathrm{eff}}},
\qquad t\ge T_\ast.
\]
Solving this scalar differential inequality gives
\[
\|v(t)-u(t)\|_H^2
\le
e^{-\gamma_{\mathrm S}(t-T_\ast)}
\|v(T_\ast)-u(T_\ast)\|_H^2
+
\frac{\bar\delta_{\mathrm S}^2}
{\lambda_1\,\nu_{\mathrm{eff}}\,\gamma_{\mathrm S}}
\Bigl(1-e^{-\gamma_{\mathrm S}(t-T_\ast)}\Bigr),
\]
which is \eqref{eq:r2-aot-valid-bound}. Letting \(t\to\infty\) gives
\eqref{eq:r2-aot-valid-floor}.
\end{proof}

\section{Choice of the nudging parameter}
\label{app:mu-selection-dft}

The tracking theorems identify qualitative conditions under which nudging is effective, but the constants entering those conditions are not explicitly available in practice. Accordingly, we do not interpret the numerical choice of \(\mu\) as approximating a theorem-level optimum. Instead, for each learning route and each observation regime, we choose \(\mu\) through a simple calibration step based on one true trajectory generated from an arbitrary initial state, one random initialization of the nudged state, and one representative feedback resolution from the corresponding experiment.

More precisely, for each candidate \(\mu\) in the prescribed grid, we evolve the surrogate nudged dynamics for this single calibration run and compute the integrated squared tracking error
\[
J(\mu)=\int_0^T \|v_\mu(t)-u(t)\|_2^2\,dt.
\]
We then choose the value of \(\mu\) that minimizes \(J(\mu)\) in this calibration run and keep it fixed in all subsequent multi-initialization summaries. In this way, the choice of \(\mu\) should be viewed as a validation-style selection of a reasonable operating point, rather than as an oracle choice based on the full collection of reported experiments. For the experiments reported here, the resulting values are \(\mu^\ast=25\) and \(\mu^\ast=20\) in the direct vector-field learning route, and \(\mu^\ast=80\) and \(\mu^\ast=40\) in the solution-map learning route, in the noiseless and noisy-observation settings, respectively.

\begin{figure}[tbp]
\begin{subfigure}[t]{0.50\textwidth}
    \centering
    \includegraphics[width=0.75\linewidth]
    {\detokenize{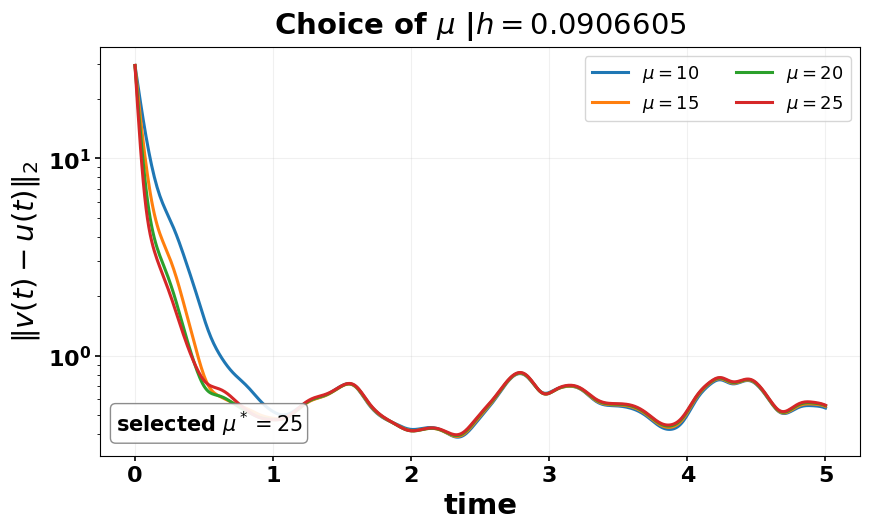}}
    \caption{Choice of \(\mu\) for surrogate AOT.}
\end{subfigure}\hfill
\begin{subfigure}[t]{0.50\textwidth}
    \centering
    \includegraphics[width=0.75\linewidth]
    {\detokenize{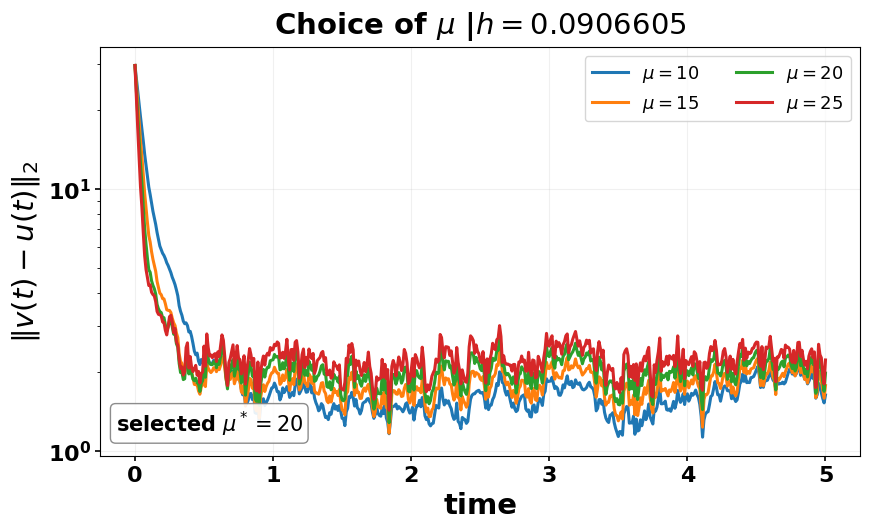}}
    \caption{Choice of \(\mu\) for surrogate AOT with noisy observations.}
\end{subfigure}
\caption{
Choice of the nudging parameter \(\mu\) for surrogate AOT under
band-limited spectral measurements in the direct
vector-field learning route. 
}
\label{fig:numerics-mu-selection-dft-1}
\end{figure}
\begin{figure}[H]
\centering
\begin{subfigure}[t]{0.50\textwidth}
    \centering
    \includegraphics[width=0.75\linewidth]
    {\detokenize{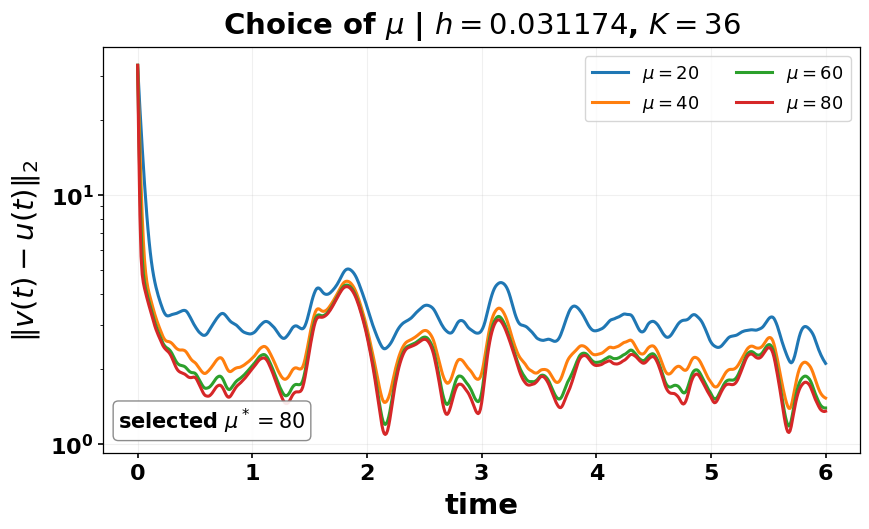}}
    \caption{Choice of \(\mu\) for surrogate AOT.}
\end{subfigure}\hfill
\begin{subfigure}[t]{0.50\textwidth}
    \centering
    \includegraphics[width=0.75\linewidth]
    {\detokenize{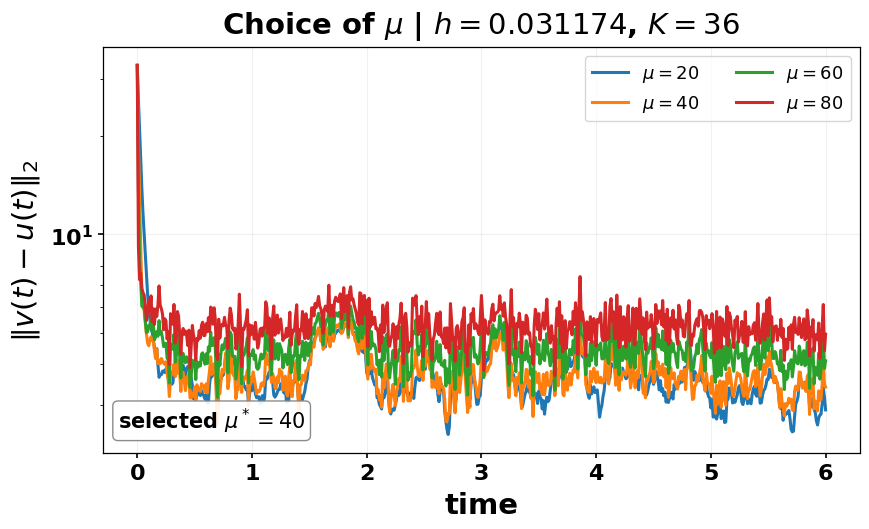}}
    \caption{Choice of \(\mu\) for surrogate AOT with noisy observations.}
\end{subfigure}
\caption{
Choice of the nudging parameter \(\mu\) for surrogate AOT under
band-limited spectral measurements in the solution-map
learning route. 
}
\label{fig:numerics-mu-selection-dft}
\end{figure}
\FloatBarrier

\end{document}